\documentclass[a4paper,10pt]{article}
\usepackage[margin=0.80in,bottom=0.90in,top=0.85in]{geometry}

\usepackage[english]{babel}
\usepackage{chngcntr}
\usepackage[utf8]{inputenc}
\usepackage{amsmath,mathrsfs}
\usepackage{indentfirst}
\usepackage{fancyhdr}
\usepackage{amssymb}
\usepackage{amsthm}
\usepackage[dvips]{graphicx}
\usepackage{color}
\usepackage{xcolor}
\usepackage{cancel}
\usepackage{subcaption}
\usepackage[dvipsnames]{xcolor}
\usepackage{eucal}
\usepackage{latexsym}
\usepackage{chngcntr}
\usepackage{faktor}
\usepackage{microtype}
\usepackage{newlfont}
\usepackage{enumitem}
\usepackage{geometry}
\usepackage{nicematrix}
\usepackage{tikz}
\usepackage{todonotes}

\usepackage{amsmath, amsfonts, amsthm,amssymb,  bbm}

\usepackage{tikz-cd}
\usepackage{multicol}
\usepackage{cancel}
\usepackage{mathtools}
\usepackage{graphicx}

\definecolor{mytealblue}{rgb}{0.12,0.47,0.71}
\usepackage[colorlinks=true,
            citecolor=mytealblue,
            linkcolor=mytealblue,
            urlcolor=mytealblue]{hyperref}

\newcommand{\e}{\epsilon}
\newcommand{\cO}{\mathcal{O}}

\newcommand{\ck}{\mathtt{c}_{\kappa}}
\newcommand{\tth}{\mathtt{h}}
\newcommand{\ttf}{\mathtt{f}_\e}
\newcommand{\pa}{\partial}

\newcommand{\cA}{{\mathcal A}}
\newcommand{\cJ}{{\mathcal J}}

\newcommand{\N}{{\mathbb{N}}}
\newcommand{\R}{{\mathbb{R}}}
\newcommand{\Z}{{\mathbb{Z}}}
\newcommand{\T}{{\mathbb{T}}}

\newcommand{\vet}[2]{\begin{bmatrix}#1 \\ #2 \end{bmatrix}}
\newcommand{\cB}{\mathcal{B}}

\newcommand{\fR}{\mathfrak{R}}

\newcommand{\im}{\textup{i}}

\newcommand{\und}[1]{\underline{#1}}

\newcommand{\cL}{\mathcal{L}}

\newcommand{\vertiii}[1]{{\left\vert\kern-0.25ex\left\vert\kern-0.25ex\left\vert #1 
    \right\vert\kern-0.25ex\right\vert\kern-0.25ex\right\vert}}
\newcommand{\opnorm}[1]{{\vert\kern-0.25ex\vert\kern-0.25ex\vert #1 
    \vert\kern-0.25ex\vert\kern-0.25ex\vert}}

\theoremstyle{plain}
\newtheorem{lemma}{Lemma}
\newtheorem{theorem}[lemma]{Theorem}

\newtheorem{remark}[lemma]{Remark}
\newtheorem{definition}[lemma]{Definition}

\theoremstyle{definition} 

\numberwithin{equation}{section}
\counterwithin{lemma}{section}

\title{Spectral structure of the Benjamin–Feir instability \\ in deep-water gravity–capillary Stokes waves}

\begin{document}

\author{Ting-Yang Hsiao\footnote{International School for Advanced Studies (SISSA), Via Bonomea 265, 34136, Trieste, Italy. (Corresponding authors)  \newline
	\textit{Emails:} \texttt{thsiao@sissa.it}, \texttt{xiwang@sissa.it}}, \  Xinyang Wang$^*$}

\maketitle

\begin{abstract}
We investigate the Benjamin-Feir instability of small-amplitude gravity-capillary Stokes waves in deep water for the full water wave equations. While modulational instability has been classically predicted by formal asymptotic approaches, such as nonlinear Schrödinger approximations, a complete spectral description at the level of the Euler equations has remained open. We perform a rigorous Bloch-Floquet spectral analysis of the linearized operator and describe the splitting of the multiple eigenvalues at the origin. In the unstable regime, we identify a pair of eigenvalues with non-zero real part forming the characteristic ``figure-eight'' pattern in the complex plane. As a consequence, we recover sharp instability and stability regions in terms of the surface tension parameter, thereby providing a fully rigorous justification of the classical predictions in the gravity-capillary setting.
\end{abstract}

\tableofcontents

\section{Introduction}\label{sec1}
In 1847, Stokes \cite{stokes} first studied two-dimensional periodic traveling waves for the pure gravity water waves equations, namely solutions that propagate at constant speed without changing shape. Such solutions, now called \emph{Stokes waves}, represent one of the earliest examples of global-in-time solutions for dispersive PDEs. The extensive literature on this topic is surveyed in \cite{ebb}.

A fundamental question concerns their stability under long-wave perturbations. In 1967, Benjamin and Feir \cite{BF} discovered that small-amplitude Stokes waves in deep water are unstable with respect to long-wavelength perturbations, a phenomenon now known as \emph{Benjamin--Feir} (or modulational) instability. Similar conclusions were obtained independently by Lighthill \cite{Li} and Zakharov \cite{Zak1}.

Despite these early developments, a rigorous spectral analysis of this instability remained a major challenge for several decades. The first mathematical proof was obtained by Bridges--Mielke \cite{BrM} in finite depth, using spatial dynamics and center manifold reduction. More recently, alternative approaches based on Evans function techniques have been developed, see e.g. Hur--Yang \cite{HY}. In contrast, the infinite-depth case is substantially more delicate due to the lack of compactness, and a complete proof of modulational instability was obtained only recently by Nguyen--Strauss \cite{NS}. In the last few years, new spectral methods have led to a much more detailed understanding of the instability mechanism. In particular, Berti--Maspero--Ventura \cite{BMV1, BMV3,BMV_ed} provided a complete description of the $L^2(\mathbb{R})$-spectrum of the linearized operator near the origin, both in deep water and in finite depth, showing that the unstable eigenvalues trace a characteristic ``figure-eight'' pattern in the complex plane.

On the other hand, gravity-capillary waves are commonly observed in nature. 
In the 1970s, classical works in nonlinear wave theory by Djordjevic--Redekopp \cite{DjRe} and Ablowitz--Segur \cite{AbSe}, carried out in the finite-depth setting, showed that surface tension modifies the dispersion relation, in particular by increasing the group velocity $c_g$ (see \eqref{id.DjRe} for a comparison with our notation). 
As a consequence, the balance between dispersion and nonlinearity is altered, leading to significant changes in the onset and structure of modulational instability. More recently, rigorous results have been obtained by Hur--Yang \cite{HY2}, Sun--Wahlen \cite{SW}, and Hsiao--Maspero \cite{HM}, which confirm that, in the instability regions identified by Djordjevic--Redekopp, the linearized operator exhibits unstable spectrum near the origin. However, despite these advances, a complete spectral description of the Benjamin--Feir instability for gravity--capillary waves is still lacking. In particular, the precise structure of the spectrum near the origin, including the full splitting of the eigenvalues and the geometry of the unstable branches, has not yet been established at the level of the full water waves equations. The present paper fills this gap in the deep-water setting, providing a complete description of the Benjamin--Feir spectrum for gravity-capillary Stokes waves.

In this paper, we prove that, for all surface tension $\kappa \geq 0$, the four eigenvalues of the linearized water-wave problem near the origin undergo a complete splitting for sufficiently small amplitude $\epsilon$ and Floquet exponent $\mu$, yielding a full spectral description of the Benjamin--Feir instability. 
The instability is governed by the Whitham--Benjamin function $\mathsf{e}_{\mathrm{WB}}(\kappa)$ \eqref{Def:eWB}, which changes sign at $\kappa=\kappa_c:=2/\sqrt{3}-1$ and is singular at the resonant value $\kappa=1/2$, separating the unstable region $\mathcal{U}$ \eqref{def:cU} from the stable region $\mathcal{S}$ \eqref{def:cS}. 
In particular, for $\kappa\in\mathcal{U}$, the unstable eigenvalues trace a complete ``figure-eight'' in the complex plane (see, Figure~\ref{fig:eight_combined}), providing a rigorous spectral description of the Benjamin--Feir mechanism beyond the formal modulational regime. This reveals that surface tension fundamentally alters the modulational instability mechanism. 
In contrast to the pure gravity case, where small-amplitude deep-water Stokes waves are always modulationally unstable near the origin, the presence of surface tension introduces a nontrivial dependence on the capillarity parameter $\kappa$. In particular, the critical value $\kappa = \kappa_c$ marks the transition between stable and unstable regimes, while the resonant value $\kappa = 1/2$ corresponds to a singular configuration where the spectral structure degenerates. 
These thresholds characterize the change in the qualitative behavior of the spectrum and determine the onset and geometry of the Benjamin--Feir instability. Let us now present precisely our results.
 
\paragraph{Benjamin--Feir instability for gravity-capillary water-waves.} We consider the two-dimensional water-wave equations governing an inviscid fluid of infinite depth under the influence of gravity \(g\) (which we normalize to \(g=1\)) and surface tension \(\kappa \geq 0\).  
We study the linear stability of a \(2\pi\)-periodic Stokes wave of small amplitude \(0<\epsilon\ll 1\) and speed $c_\epsilon = \ck + \mathcal{O}(\epsilon^2)$, where $\ck = \sqrt{1+\kappa}$. Linearizing the water-wave equations around this Stokes wave and passing to the co-moving frame with speed \(c_\epsilon\), we obtain a linear, time-independent system of the form $h_t = \mathcal{L}_\epsilon h$, where \(\mathcal{L}_\epsilon = \mathcal{L}_\epsilon(\kappa)\) is a linear operator with \(2\pi\)-periodic coefficients (see \eqref{mathcal L e}).  
This operator is obtained by conjugating the linearized system in Zakharov variables via the good unknown of Alinhac \eqref{alinhac} and the invertible Levi--Civita transformation \eqref{LC}. The operator $\mathcal{L}_\epsilon$ possesses a defective eigenvalue at $0$ of algebraic multiplicity four, arising from the symmetries of the water wave equations.  
The problem is to determine whether the linear system admits exponentially growing solutions of the form $h(t,x) = \mathrm{Re}\big(e^{\lambda t} e^{\mathrm{i}\mu x} v(x)\big)$, where $v(x)$ is $2\pi$-periodic, $\mu \in \mathbb{R}$ is the Floquet exponent, and $\lambda$ has positive real part. By Bloch--Floquet theory, such $\lambda$ corresponds to an eigenvalue of the operator $\mathcal{L}_{\mu,\epsilon} := e^{-\mathrm{i}\mu x}\,\mathcal{L}_\epsilon\,e^{\mathrm{i}\mu x}$
acting on \(2\pi\)-periodic functions.

The main result of this paper establishes, for all \(\kappa \geq 0\), the complete splitting of the four eigenvalues of \(\mathcal{L}_{\mu,\epsilon} = \mathcal{L}_{\mu,\epsilon}(\kappa)\) near zero, for sufficiently small \(\epsilon\) and $\mu$; see Theorem \ref{Complete BF thm}.  
We first state Theorem \ref{thm:main}, which describes the ``figure-eight'' structure formed by the Benjamin--Feir unstable eigenvalues. This figure-eight arises precisely when $\kappa$ belongs to a certain open set $\mathcal{U}$ (defined in \eqref{def:cU}), previously identified by Djordjević--Redekopp \cite{DjRe} and Ablowitz--Segur \cite{AbSe} through formal modulational analysis with deep-water limit. Our results provide a rigorous justification of the Benjamin--Feir instability mechanism beyond the formal modulational regime.

Let us introduce the Whitham--Benjamin function of $\kappa$ as follows:
\begin{align} \label{Def:eWB}
    \mathsf{e}_{\mathrm{WB}}:=\mathsf{e}_{\mathrm{WB}}(\kappa)
    = \frac{6\,\kappa ^4+15\,\kappa ^3+28\,\kappa ^2+47\,\kappa -8}
    {8\,(2\,\kappa -1)\,(\kappa +1)^2}.
\end{align}
We observe that $\mathsf{e}_{\mathrm{WB}}$ is singular at $\kappa = 1/2$. 
To characterize such singular (resonant) values, we introduce the set
\begin{equation}\label{def:fR}
    \fR := \bigcup_{n \geq 2} \fR_n 
    := \left\{ \kappa \in \R_{\geq 0} \;:\; \kappa = \frac{1}{n} \right\}.
\end{equation}
In particular, 
\begin{equation}\label{denneq0}
    2\kappa - 1 \neq 0 \quad \text{whenever} \quad \kappa \notin \fR_2.
\end{equation}
We recall that resonance occurs when $\kappa = 1/n$, in which case distinct Fourier modes share the same dispersion relation, namely
\begin{align*}
    (1+\kappa k^2)k - \ck^2 k^2 = 0,
    \quad \text{for } k = 1 \text{ and } k = n,
\end{align*}
where $\ck$ denotes the phase speed of the linear gravity-capillary wave. 
At such resonant values, the linearized operator becomes degenerate and allows the coexistence of multiple harmonics; this phenomenon is known as \emph{Wilton ripples} (see, for example, \cite{reeder1981wiltonI, reeder1981wiltonII}). In the present work, we exclude all resonant values $\kappa \in \fR_n$ with $n \geq 2$. 
These non-resonance conditions are necessary for the construction of $2\pi$-periodic Stokes waves; see the discussion following Theorem~\ref{Thm: Stokes expansion}. We define the \emph{unstable} region
\begin{equation}\label{def:cU}
    \mathcal{U}
    := \left\{ \kappa \in \R_{\geq 0} \setminus \fR 
    \;:\; \kappa < \frac{2\sqrt{3}}{3}-1 
    \ \text{or} \ 
    \kappa > \frac{1}{2} \right\},
\end{equation}
and the \emph{stable} region
\begin{equation} \label{def:cS}
    \mathcal{S}
    := \left\{ \kappa \in \R_{\geq 0} \setminus \fR 
    \;:\; \frac{2\sqrt{3}}{3}-1 < \kappa < \frac{1}{2} \right\}.
\end{equation}

Our main result shows that, for $\kappa \in \mathcal{U}$, the spectrum of the linearized water wave operator $\cL_{\mu,\e}$ near the origin exhibits a complete ``figure-eight'' configuration. 
In contrast, for $\kappa \in \mathcal{S}$, the spectrum near the origin remains purely imaginary. In Figure~\ref{fig: eWB}, we plot the Whitham--Benjamin function $\mathsf{e}_{\mathrm{WB}}(\kappa)$. 
In particular, $\mathsf{e}_{\mathrm{WB}}(\kappa)$ vanishes at 
\[
\kappa_c := \frac{2\sqrt{3}}{3} - 1=0.1547005\ldots,
\]
and is singular at $\kappa = 1/2$. 
Consequently, it is negative for $\kappa_c < \kappa < \frac{1}{2}$, and positive for $0 \leq \kappa < \kappa_c$ or $\kappa > \frac{1}{2}$. These results are consistent with those obtained by Djordjevic--Redekopp \cite{DjRe} and Ablowitz--Segur \cite{AbSe} in the formal infinite-depth limit. 
Further discussion of this connection will be given after Theorem~\ref{thm:main}.

Finally, we introduce the following functions:
\begin{align} \label{e11e22e12}
    \mathsf{e}_{11} :=\mathsf{e}_{11}(\kappa):=\frac{2\kappa^2+\kappa+8}{8(1-2\kappa)\ck},\quad \mathsf{e}_{22}:=\mathsf{e}_{22}(\kappa):=\frac{-3\kappa^2-6\kappa+1}{\ck^3},\quad\mathsf{e}_{12}:=\mathsf{e}_{12}(\kappa):=\frac{1+3\kappa}{\ck}.
\end{align}
Both Djordjevic--Redekopp \cite{DjRe} and Ablowitz--Segur \cite{AbSe} formally established the linear instability of Stokes waves under long-wave perturbations via a modulational approximation leading to a nonlinear Schr\"odinger equation. 
To facilitate a precise comparison with our results, we relate the coefficients $\mathsf{e}_{11}(\kappa)$, $\mathsf{e}_{22}(\kappa)$, and $\mathsf{e}_{12}(\kappa)$ to those appearing in \cite{DjRe,AbSe}.
More specifically, let $ \nu(\kappa):=\lim_{\tth\rightarrow \infty}\nu(\tth,\kappa) $ and 
$\lambda(\kappa):=\lim_{\tth\rightarrow \infty}\lambda (\tth,\kappa)$, as defined in \cite[formula (2.17)]{DjRe}, and $c_g(\kappa):=\lim_{\tth\rightarrow \infty} c_g(\tth,\kappa)$ as in \cite[(2.7)]{DjRe}. Then the following identities hold:
\begin{equation}\label{id.DjRe}
\mathsf{e}_{11}(\kappa)=-\frac{(1+\kappa)\nu(\kappa)}{16\lambda(\kappa)}   \ , \qquad \mathsf{e}_{22}(\kappa) = - 8 \lambda (\kappa)\ , \qquad \mathsf{e}_{12}(\kappa)=2 c_g(\kappa)\,.
\end{equation}
Here we have normalized the physical parameters by setting the gravitational constant $g=1$ and the wavenumber $k=1$, in agreement with the conventions adopted in this paper. The identities \eqref{id.DjRe} can be readily verified (e.g., using MATLAB R2024b).
   In particular, our  instability criterion in \eqref{def:cU}, namely 
   $\mathsf{e}_{\mathrm{WB}}(\kappa) > 0$, is equivalent to 
   $
   \nu(\kappa)\lambda (\kappa) < 0$ 
   which coincides with the condition derived in \cite[Section 3]{DjRe}. 

\begin{figure}[h!]
    \centering         
    \includegraphics[width=0.7\textwidth]{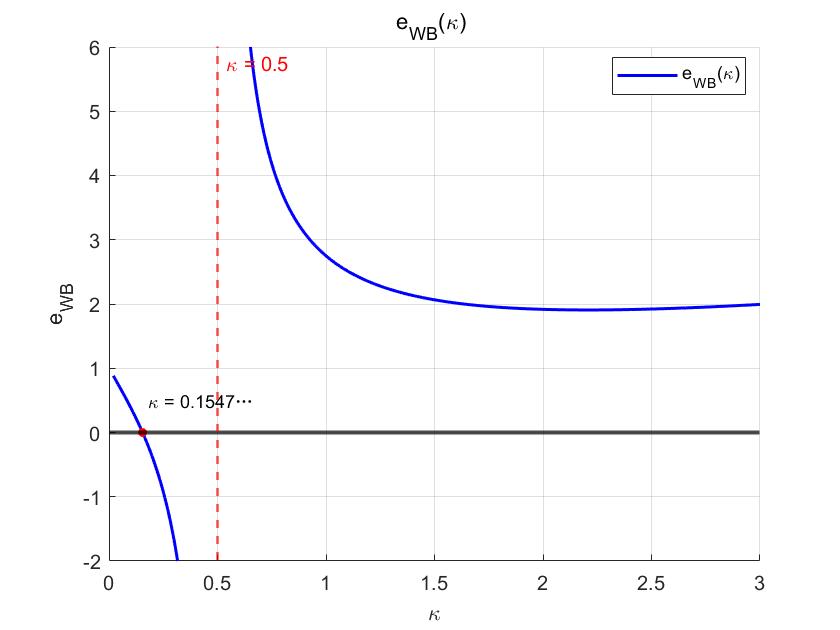} 
    \caption{Plot of the Whitham--Benjamin function $\mathsf{e}_{\mathrm{WB}}(\kappa)$. 
The sign of $\mathsf{e}_{\mathrm{WB}}$ changes at $\kappa=\kappa_c=0.1547005\ldots$ and $\kappa=1/2$, 
in agreement with the deep-water limit obtained by Djordjevic--Redekopp \cite[Section 3]{DjRe} via the NLS approximation.}\label{fig: eWB} 
\end{figure}

\begin{figure}[h!]
    \centering
    \begin{minipage}{0.495\textwidth}
        \centering
        \includegraphics[width=\linewidth]{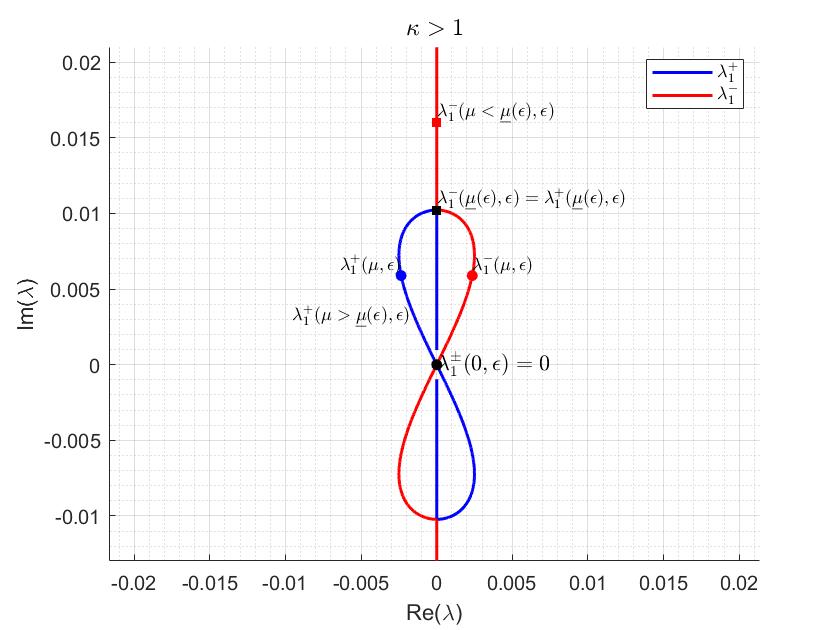}   
    \end{minipage}
    \hfill   
    \begin{minipage}{0.495\textwidth}
        \centering
        \includegraphics[width=\linewidth]{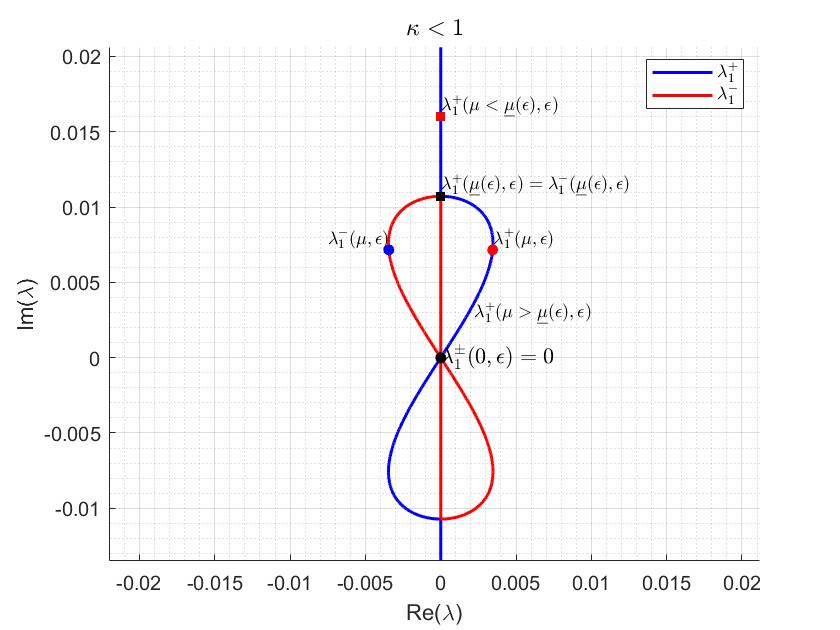}
    \end{minipage}
    
    \caption{Stability results shown in Theorem~\ref{thm:main}. Traces of the eigenvalues $\lambda_1^{\pm}(\mu,\e)$ in the complex $\lambda$-plane at fixed $|\e|\ll1$ as $\mu$ varies. If $\kappa > 1$ (Left), for $\mu \in (0,\underline{\mu}(\e))$ the eigenvalues fill the portion of the 8 in $\{\mathrm{Im}(\lambda) < 0\}$ and for $\mu \in (-\underline{\mu}(\e),0)$ the symmetric portion in $\{\mathrm{Im}(\lambda) > 0\}$. If $\kappa < 1$ (Right), for $\mu \in (0,\underline{\mu}(\e))$ the eigenvalues fill the portion of the 8 in $\{\mathrm{Im}(\lambda) > 0\}$ and for $\mu \in (-\underline{\mu}(\e),0)$ the symmetric portion in $\{\mathrm{Im}(\lambda) < 0\}$.}
\label{fig:eight_combined}
\end{figure}

Along  the paper we denote by $r(\e^{m_1} \mu^{n_1}, \ldots, \e^{m_p} \mu^{n_p})$
a real analytic function fulfilling  for some $C >0$ and $\e, \mu$ sufficiently small, the estimate  $| r(\e^{m_1} \mu^{n_1}, \ldots, \e^{m_p} \mu^{n_p}) | \leq 
C \sum_{j=1}^p
 |\e|^{m_j} |\mu|^{n_j}
$, where  the constant $C:=C(\kappa)$ is uniform for $\kappa$ in any compact set of $\R_{\geq 0}$.

\begin{theorem}[Benjamin--Feir unstable eigenvalues] \label{thm:main} 
Let  $\kappa\in \mathcal{U} $, c.f. \eqref{def:cU}.
There exist $\epsilon_1, \mu_0 > 0$ and an analytic function $\und{\mu} : [0, \epsilon_1) \to [0, \mu_0)$,
of the form
\begin{align} \label{und mu}
\und{\mu}(\epsilon) =  \epsilon (1 + r(\epsilon))\sqrt{\frac{8\,\mathsf{e}_{11}(\kappa)}{\mathsf{e}_{22}(\kappa)}}, 
\end{align}
such that, for any $\epsilon \in [0, \epsilon_1)$, the operator $\mathcal{L}_{\mu, \epsilon}$ has two eigenvalues $\lambda^{\pm}_1(\mu, \epsilon)$ of the form
\begin{equation}\label{eigelemu}
\begin{cases}
\im\frac{1}{2}\breve{\mathtt{c}}_{\kappa}\mu + \im r_{2}(\mu \epsilon^{2}, \mu^{2}\epsilon, \mu^{3}) 
\pm \frac{1}{8} \mu (1+r(\epsilon,\mu)) \, \sqrt{\Delta_{\mathrm{BF}}(\kappa,\mu,\epsilon)}, 
& \forall \mu \in [0, \und{\mu}(\epsilon)), \\[1em]
\im\frac{1}{2}\breve{\mathtt{c}}_{\kappa}\mu + \im r_{2}(\mu \epsilon^{2}, \mu^{2}\epsilon, \mu^{3}), 
& \mu = \und{\mu}(\epsilon), \\[1em]
\im\frac{1}{2}\breve{\mathtt{c}}_{\kappa}\mu + \im r_{2}(\mu \epsilon^{2}, \mu^{2}\epsilon, \mu^{3}) 
\pm \im\frac{1}{8} \mu 
(1+r(\epsilon,\mu)) \, \sqrt{|\Delta_{\mathrm{BF}}(\kappa,\mu,\epsilon)|}, 
& \forall \mu \in (\und{\mu}(\epsilon), \mu_0),
\end{cases}
\end{equation}
where $\breve{\mathtt{c}}_{\kappa} := 2 \ck - \mathsf{e}_{12}(\kappa)$ and 
$\Delta_{\mathrm{BF}}(\kappa; \mu, \epsilon)$ is the \emph{Benjamin--Feir discriminant function}
\begin{align}\label{BFDF}
    \Delta_{\mathrm{BF}}(\kappa;\mu,\epsilon):=8\,\mathsf{e}_{\mathrm{WB}}(\kappa)\e^2+r_1(\e^3,\mu\e^4)-\mathsf{e}^2_{22}(\kappa)\mu^2\left(1+r''_1(\e,\mu)\right),
\end{align}
where $\mathsf{e}_{\mathrm{WB}}(\kappa)$, $\mathsf{e}_{22}(\kappa)$, and $\mathsf{e}_{12}(\kappa)$ in \eqref{e11e22e12}.
Note that, since  $\kappa\in \mathcal{U} $, for any $0 < \epsilon < \epsilon_1$ (depending on $\kappa$), 
the function $\Delta_{\mathrm{BF}}(\kappa; \mu, \epsilon)>0$ is positive, respectively $<0$, 
provided $0 < \mu < \und{\mu}(\epsilon)$, respectively $\mu > \und{\mu}(\epsilon)$.
\end{theorem}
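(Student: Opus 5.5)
This will follow the Berti--Maspero--Ventura scheme (\cite{BMV1}), adapted to $\kappa\ge0$. I take from the construction of $\mathcal{L}_{\mu,\epsilon}$ that it is Hamiltonian and reversible, $\mathcal{L}_{\mu,\epsilon}=\mathcal{J}\mathcal{B}_{\mu,\epsilon}$, and analytic in $(\mu,\epsilon)$.

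\emph{Step 1: reduction to a $4\times4$ matrix.} At $(\mu,\epsilon)=(0,0)$ the eigenvalue $0$ is isolated, with generalized kernel spanned by the four Fourier modes $\cos x,\sin x,1,$ and the constant in the second component. Non-resonance ($\kappa\notin\fR$) keeps all other eigenvalues away from the origin. Kato's similarity transformation then gives a basis $\{f^\pm_k(\mu,\epsilon)\}$ of the perturbed generalized eigenspace, depending analytically on $(\mu,\epsilon)$ and preserving the symplectic and reversible structures. The spectrum of $\mathcal{L}_{\mu,\epsilon}$ near $0$ therefore coincides with that of a $4\times4$ Hamiltonian, reversibility-preserving matrix
\[
\mathtt{L}_{\mu,\epsilon}=\mathtt{J}_4\mathtt{B}_{\mu,\epsilon},\qquad \mathtt{B}_{\mu,\epsilon}=\begin{pmatrix}E & F\\ F^* & G\end{pmatrix}.
\]
Here $E$, $G$, $F$ are $2\times2$ blocks. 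Their entries are expanded to the orders needed using the Stokes expansion (Theorem~\ref{Thm: Stokes expansion}) and the Taylor expansions of the Kato projectors.

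\emph{Step 2: identify the leading coefficients.} The key outputs are
\[
E=\begin{pmatrix}\mathsf{e}_{11}\epsilon^2(1+r)-\tfrac{\mathsf{e}_{22}}{8}\mu^2(1+r) & \im\big(\tfrac12\mathsf{e}_{12}\mu+r\big)\\ -\im\big(\tfrac12\mathsf{e}_{12}\mu+r\big) & -\tfrac{\mathsf{e}_{22}}{8}\mu^2(1+r)\end{pmatrix},
\]
together with $G=\mathrm{diag}(1+r,\ \mu\tanh$-type$)$ and $F=O(\mu\epsilon,\mu^3)$. Computing $\mathsf{e}_{11}$ needs the $\epsilon^2$-corrections of the Stokes wave: the coefficients $\eta_2,\psi_2$ and $c_2$, which carry the factor $(1-2\kappa)^{-1}$. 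It also needs the second-order Kato expansion. This is where most of the computation lies.

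\emph{Step 3: block decoupling.} The coupling $F$ between the Benjamin--Feir block $E$ and the block $G$ (coming from the zero modes) must be removed. This step is the main obstacle. The difficulty is that $\mathsf{e}_{\mathrm{WB}}$ is not $\mathsf{e}_{11}$ alone: it includes a correction $-\mathsf{e}_{12}$-type$\times$(mean-flow coupling)$^2$ produced by eliminating $F$. This correction must be captured exactly at order $\epsilon^2$. I would first apply a symplectic change of basis that kills the $\epsilon\mu$ terms of $F$ to leading order, with the generator solved from a homological equation $\mathtt{J}_2E\,X - X\,\mathtt{J}_2G=\ldots$; it is invertible because $G$ is nondegenerate in the direction conjugate to the constant. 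I would then iterate by a fixed point in analytic functions until $F=O(\mu\epsilon^3,\ldots)$. Tracking the renormalised $E$ then yields $\epsilon^2$-coefficient $8^{-1}\cdot 8\mathsf{e}_{\mathrm{WB}}$-compatible entries. The claimed rational formula should be checked symbolically at this point.

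\emph{Step 4: eigenvalues of the $2\times2$ block.} For a reversible Hamiltonian $2\times2$ block $\mathtt{J}_2E$, the eigenvalues are $\im\,\tfrac12\breve{\mathtt c}_\kappa\mu+\im r\pm\sqrt{\det\text{-type}}$. Factoring out $\mu^2/64$ gives exactly $\tfrac18\mu(1+r)\sqrt{\Delta_{\mathrm{BF}}}$, with $\Delta_{\mathrm{BF}}$ as in \eqref{BFDF}. For $\kappa\in\mathcal{U}$ we have $\mathsf{e}_{\mathrm{WB}}>0$, so by the implicit function theorem applied to $\Delta_{\mathrm{BF}}(\kappa;\mu,\epsilon)/\epsilon^2$ in the variable $\mu/\epsilon$ the zero set is a single analytic curve $\und\mu(\epsilon)$ of the form \eqref{und mu}. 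The sign of $\Delta_{\mathrm{BF}}$ on either side of this curve gives the three cases of \eqref{eigelemu}.
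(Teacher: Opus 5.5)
Your architecture matches the paper's: Kato basis, the $4\times4$ Hamiltonian matrix $\mathsf{J}_4\mathsf{B}_{\mu,\e}$, symplectic block-decoupling through a Sylvester equation plus an implicit-function step, then the $2\times2$ block and an IFT for $\und{\mu}(\e)$. However, Step 3 rests on a finite-depth mechanism that is false in infinite depth, and as a result your proposal never pins down the coefficient that decides stability.

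In deep water $G_{22}=\mu+r(\mu^2\e,\mu^3)$, because $|D+\mu|$ acts on constants as multiplication by $\mu$. It is not $\mu\tanh(\tth\mu)\sim\tth\mu^2$. Moreover the coupling has no $O(\e)$ entry: $F_{11}=r(\e^3,\mu\e^2,\mu^2\e,\mu^3)$, and the mean-flow entry is $F_{12}=\sqrt{\ck}\,\mu\e+\dots$. Eliminating $F$ therefore changes $E$ only at relative order $\mu$. The generator has $x_{21}=O(\e)$, and the induced correction is $\widetilde{E}_{11}\approx x_{21}F_{12}=O(\mu\e^2)$, which is the Schur-complement size $|F_{12}|^2/G_{22}$. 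In finite depth, $G_{22}\approx\tth\mu^2$ and $F_{11}=O(\e)$, so the same mechanism does produce $O(\e^2)$ corrections; that is the effect you have in mind.

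Here there is no $\e^2$ mean-flow term. In fact $\mathsf{e}_{\mathrm{WB}}=\mathsf{e}_{11}\mathsf{e}_{22}$ exactly, since $(2\kappa^2+\kappa+8)(3\kappa^2+6\kappa-1)=6\kappa^4+15\kappa^3+28\kappa^2+47\kappa-8$. It falls straight out of your own Step 4 as $-E_{11}E_{22}=\frac{\mu^2}{64}(8\mathsf{e}_{11}\mathsf{e}_{22}\e^2-\mathsf{e}_{22}^2\mu^2)$, and the nonlinear coefficient is simply $\mathsf{e}_{11}=2c_2$. So your Steps 3 and 4 are not consistent with each other, the ``main obstacle'' does not exist, and the identity you defer to a symbolic check is exactly what connects the computed matrix to the region $\mathcal{U}$.

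The delicate point of the decoupling is also not the one you state. The Sylvester system is singular at $\mu=0$: both $\mathsf{J}_2E$ and $\mathsf{J}_2G$ are nilpotent there, and $\det\mathsf{A}=\mu^2(1+r)$. So nondegeneracy of $G_{11}$ does not give invertibility. The generator is analytic only because each forcing entry that meets an $O(\mu^{-1})$ entry of $\mathsf{A}^{-1}$ carries a factor $\mu$: this holds for $F_{12}$, $F_{21}=r(\mu\e,\mu^3)$ and $F_{22}=r(\mu^2\e,\mu^3)$.

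These divisibilities, and the form of $E$ you take as given (in particular $E_{22}=-\mathsf{e}_{22}\mu^2/8+\dots$ with no lower-order terms), rest on two structural inputs your sketch omits:
\begin{itemize}
\item $E_{22}(0,\e)\equiv0$, obtained from $\mathsf{L}_\e(0)^2=0$ on the generalized kernel together with $\mathsf{e}_{11}\neq0$; from this one gets $\mathcal{B}_\e g_1^-(0,\e)=\mathcal{B}_\e g_0^-(0,\e)=0$.
\item The renormalised basis $g_1^-=f_1^--n f_0^-$, which makes $g_1^-(0,\e)$ mean-free. Without it, the $\mu\Pi_0$ part of $|D+\mu|$ puts a term linear in $\mu$ into $E_{22}$.
\end{itemize}

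Finally, $F$ is not $O(\mu\e,\mu^3)$: $F_{11}$ has a pure $\e^3$ part. This is harmless, since the corresponding column of $\mathsf{A}^{-1}$ is regular, but the paper removes it beforehand with the shear $\tilde{Y}$, $m=-F_{11}/G_{11}$.
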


Let us make some comments:

\begin{enumerate}
\item {\sc Benjamin--Feir eigenvalues.}
Whenever $\kappa \in \mathcal{U}$, for Floquet parameters $0<\mu<\underline{\mu}(\epsilon)$, the eigenvalues 
$\lambda^\pm_1(\mu,\epsilon)$ have opposite non-zero real parts, according to \eqref{eigelemu}. The corresponding ``figure-eight'' is realized in the upper or lower half-line provided that 
$\breve{\mathtt{c}}_{\kappa}\neq 0$ (i.e. $\kappa\neq 1$). When $0\leq\kappa<1$, the imaginary part in \eqref{eigelemu} is positive for $\mu>0$, and the eigenvalues $\lambda^\pm_1(\mu,\epsilon)$ lie in the upper half-plane $\mathrm{Im}(\lambda)>0$. As $\mu \to \underline{\mu}(\epsilon)$, the two eigenvalues $\lambda^\pm_1(\mu,\epsilon)$ collide on the imaginary axis away from the origin, and for $\mu>\underline{\mu}(\epsilon)$ they remain purely imaginary and move along it. In particular, for $\mu \in [0,\underline{\mu}(\epsilon)]$, one obtains the upper branch of the ``figure-eight'', see Figure \ref{fig:eight_combined}, which is well approximated by the curves
\begin{equation}\label{appdr}
\mu \mapsto \Big( \pm\frac{\mu}{8}  \sqrt{8\epsilon^2\mathsf{e}_{\mathrm{WB}}(\kappa)-\mu^2\mathsf{e}^2_{22}(\kappa)},  \ 
\frac{\mu}{2} \breve{\mathtt c}_{\kappa}  \Big) \,.
\end{equation}
For $\mu<0$, the operator $\mathcal{L}_{\mu,\epsilon}$ possesses the symmetric eigenvalues 
$\overline{\lambda_1^{\pm}(-\mu,\epsilon)}$ in the lower half-plane $\mathrm{Im}(\lambda)<0$, which generate the lower branch of the ``figure-eight''. Viceversa, in the region $\kappa>1$ where the function $\breve{\mathtt{c}}_{\kappa}<0$,   the imaginary part \eqref{appdr} is negative as $\mu>0$ and the ``figure-eight'' forms in the lower semiplane as 
$ \mu \in [0, \underline \mu(\e)]$  (of course the upper ``figure-eight'' is formed when $\mu <0$ by  symmetry).
 
\item {\sc Relation with \cite{HM}.}
    Our Benjamin--Feir function \(\mathsf{e}_{\mathrm{WB}}(\kappa)\) should be viewed as the deep-water analogue of the finite-depth index introduced in \cite{HM}. Indeed, denoting by \(\widetilde{\mathsf{e}}_{\mathrm{WB}}(\kappa,\tth)\) the corresponding function in the finite-depth regime, one formally recovers the present quantity in the limit \(\tth\to\infty\), namely $\lim_{\tth\to\infty}\widetilde{\mathsf{e}}_{\mathrm{WB}}(\kappa,\tth)
=
\mathsf{e}_{\mathrm{WB}}(\kappa)$. Thus the Benjamin--Feir criterion obtained here is fully consistent with the one derived in \cite{HM}, and can be regarded as its deep-water limiting form.
 
\item {\sc Relation with \cite{BMV1}.}
    Our result reduces to \cite{BMV1} at zero capillarity $\kappa = 0$. In particular, the function 
 $\mathsf{e}_{\mathrm{WB}}(0)$ at zero-capillarity reduces to $1$, hence the Benjamin--Feir instability occurs for Floquet exponents in a small interval 
$0<\mu<\und{\mu}(\epsilon)$ of size $\mathcal{O}(\epsilon)$.

    \item {\sc Complete spectrum near $0$.}
    In Theorem \ref{thm:main} we have described just the two unstable eigenvalues of $\cL_{\mu,\e}$ close to zero for $ \kappa \in \mathcal{U}$. 
 There are also two larger 
purely imaginary eigenvalues of order $ \cO(\sqrt{\mu}) $, see Theorem \ref{Complete BF thm}. In contrast to the finite-depth setting \cite{HM}, where the corresponding eigenvalues are of order $\cO(\mu)$, here they scale as $\cO(\sqrt{\mu})$, reflecting a different spectral degeneracy near the origin.

\end{enumerate}

Before closing the introduction, we place our results within the broader picture of the spectral and dynamical theory for water waves. Concerning the global structure of the spectrum, \cite{HY2, SW} show that, for suitable choices of depth and capillarity, a first isola of unstable spectrum may occur away from the origin. In addition, \cite{SW} proves that the spectrum of the linearized operator is purely imaginary outside a sufficiently large ball. It would be of significant interest to combine these results with the analysis of isolae developed in \cite{BCMV}, in order to obtain a complete description of the full spectrum of the linearized gravity–capillary water-wave operator.

We also recall several complementary directions in the literature. Modulational instability under transverse perturbations has been investigated in \cite{CNS, CNS2, JRSY, HTW}. High-frequency instabilities for two-dimensional Stokes waves in the pure gravity case have been studied in \cite{HY, BMV4, BCMV}, together with numerical investigations in \cite{Mc1, Mc2, CD, BDS, CDT, DT}. Nonlinear aspects of modulational instability have been addressed in \cite{ChenSu}, while instability mechanisms driven by energy cascades have been analyzed in \cite{LMMT, MM_ARMA}. Finally, various shallow-water and approximate models have been considered in \cite{HP, HJ, BHJ, BJ, GH, HK, BBHZ, BHR, MR, BMP2025}.

\section{The full Benjamin–Feir spectrum of gravity-capillary waves}
In this section we present the complete spectral Theorem \ref{Complete BF thm}.
First let us introduce the gravity-capillary water waves equations and the Stokes wave solutions.      \\
{\bf The water-waves equations.} 
We consider the Euler equations of hydrodynamics for a two-dimensional perfect and incompressible fluid  under the action of gravity and capillary forces at the free surface.
The fluid fills the region 
    \begin{align*}
       \mathcal{D}_\eta:=\{(x,y)\in \mathbb{R}\times \mathbb{R}: -\infty < y <\eta(t,x)\}\ ,
    \end{align*}
with infinite depth. The irrotational velocity
field is the gradient of a harmonic scalar potential  $\Phi = \Phi(t,x,y)$ determined by  its trace $\psi(t,x):= \Phi(t, x,\eta(t,x))$ at the free surface $y=\eta(t,x)$. 
Actually $\Phi$ is
the unique solution of the elliptic equation $\Delta \Phi  = 0$ in $\mathcal{D}_\eta$  with Dirichlet datum
$\Phi(t, x,\eta(t, x))=\psi(t, x)$ and $\Phi_y(t,x,y) = 0$ at $y = - \infty$. 

 Imposing that the fluid particles
at the free surface remain on it along the evolution (kinematic boundary condition) and that the pressure of
the fluid plus the capillary forces at the free surface is equal to the constant atmospheric pressure (dynamic boundary condition), the time evolution of the fluid is determined by the non-local quasi-linear equations \cite{Zak68, CS}
\begin{equation} \label{Craig-Sulem formula}
\left\{\begin{aligned}
    \eta_t&=G(\eta)\psi\\
    \psi_t&=-g\eta-\frac{\psi_x^2}{2}+\frac{1}{2(1+\eta^2_x)}\left(G(\eta)\psi+\eta_x\psi_x\right)^2+\kappa\left(\frac{\eta_x}{(1+\eta_x^2)^{\frac{1}{2}}}\right)_x,
\end{aligned}\right.
\end{equation}
where $g>0$ is the gravity constant and $G(\eta)$ denotes the Dirichlet-Neumann operator $[G(\eta)\psi](x):=\Phi_y(x,\eta(x))-\Phi_x(x,\eta(x))\eta_x(x)$. 
 In the sequel,
with no loss of generality, we set the gravity constant $g=1$.

System \eqref{Craig-Sulem formula} is Hamiltonian and can be written as
\begin{equation} \label{pa_t eta psi}
    \begin{aligned}
        \partial_t \begin{bmatrix}
\eta\\
\psi 
\end{bmatrix}=
\mathcal{J}\begin{bmatrix}
\nabla_\eta \mathcal{H}\\
\nabla_\psi \mathcal{H} 
\end{bmatrix}
    \end{aligned},~~\mathcal{J}:=
    \begin{bmatrix}
0& \mathrm{Id}\\
-\mathrm{Id} &0
\end{bmatrix},
\end{equation}
where $\nabla$ denote the $L^2$-gradient, and the Hamiltonian 
\begin{align} \label{mathcal H}
\mathcal{H}(\eta,\psi):=\int_{\mathbb{T}} \left(\frac{1}{2}\psi G(\eta)\psi+\frac{1}{2}g\eta^2+\kappa(\sqrt{1+\eta^2_x}-1)\right) dx    
\end{align}
is the sum of the kinetic and potential energy of the fluid. 
In addition,  the water waves system \eqref{Craig-Sulem formula} is reversible with respect to the involution
\begin{equation} \label{rho involution}
    \begin{aligned}
        \rho \begin{bmatrix}
            \eta(x)\\
            \psi(x)
        \end{bmatrix}:=
        \begin{bmatrix}
            \eta(-x)\\
            -\psi(-x)
        \end{bmatrix},~~\mathrm{i.e.}~\mathcal{H}\circ \rho=\mathcal{H},
    \end{aligned}
\end{equation}
and it is space invariant.

 \paragraph{Stokes waves.} In a moving reference frame with constant speed $c$ (and with normalized gravity $g=1)$, the water waves system \eqref{Craig-Sulem formula} becomes
 \begin{equation} \label{in the reference frame}
\left\{\begin{aligned}
    \eta_t&=c\eta_x+G(\eta)\psi\\
    \psi_t&=c\psi_x-\eta-\frac{\psi_x^2}{2}+\frac{1}{2(1+\eta^2_x)}\left(G(\eta)\psi+\eta_x\psi_x\right)^2+\kappa\left(\frac{\eta_x}{(1+\eta_x^2)^{\frac{1}{2}}}\right)_x.
\end{aligned}\right.
\end{equation}
We consider small amplitude {\em Stokes waves solutions}, namely stationary solutions of \eqref{in the reference frame} which we further require to be  $2\pi$-periodic in space.
 The bifurcation of small-amplitude Stokes waves from the trivial solution was first studied for pure gravity water waves by Stokes \cite{stokes}, Levi-Civita \cite{LC}, Nekrasov \cite{Nek}, Struik \cite{Struik}. 
 In our setting, the existence and analyticity of  a bifurcating branch of traveling waves follows from the Crandall--Rabinowitz theorem. We denote $\mathbb{T}:=\mathbb{R}\setminus 2\pi\mathbb{Z}$ and $B(r):=\{x\in\mathbb{R}: |x|<r\}$ the open ball of radius $r$ centered at zero.
 \begin{theorem} [Stokes waves] \label{Thm: Stokes expansion} 
 Let $\kappa \in \R_{\geq 0}  \setminus \fR$, with  $\fR$  in \eqref{def:fR}. There 
 exist $\e_*:=\e_*(\kappa) >0$ and a unique family  of real analytic 
 solutions $(\eta_\e(x), \psi_\e(x), c_\e)$, parameterized by the amplitude $|\e| \leq \e_*$, of 
\begin{equation}\label{travelingWWstokes}
c \, \eta_x+G(\eta)\psi = 0 \, , \quad 
c \, \psi_x -  \eta - \dfrac{\psi_x^2}{2} + 
\dfrac{1}{2(1+\eta_x^2)} \big( G(\eta) \psi + \eta_x \psi_x \big)^2  +
\kappa\left(\frac{\eta_x}{(1+\eta_x^2)^{\frac{1}{2}}}\right)_x
= 0 \, , 
\end{equation}
  such that
 $ \eta_\e (x), \psi_\e (x) $ are $2\pi$-periodic;  $\eta_\e (x) $ is even
and $\psi_\e (x) $ is odd, of the form 
 \begin{equation}\label{exp:Sto}
 \begin{aligned}
  & \eta_\e (x) = \e \cos(x)+\e^2\eta_2^{[2]}\cos(2x) +\cO(\e^3)  , \\ 
  & \psi_\e (x)  =  \e \ck \sin(x)+\e^2 \psi_2^{[2]}\sin(2x)+\cO(\e^3),  \\
  & c_\e = \ck  +\e^2 c_2+\cO(\e^3)\quad \text{where} \quad \ck := (1+\kappa)^{\frac{1}{2}}\, , 
   \end{aligned}
  \end{equation}
and 
\begin{align}\label{expcoef}
 &\eta_{2}^{[2]} := \frac{\ck^2}{2\left( 1 -2\,\kappa\right) } , \qquad \psi_{2}^{[2]} :=  \frac{\ck^3}{2\left( 1 -2\,\kappa\right) }, \qquad c_2 := \frac{2\kappa^2+\kappa+8}{16\ck\left( 1 -2\,\kappa\right)}.
\end{align}
More precisely for any  $ \sigma \geq  0 $ and $ {s > \frac72} $, there exists $ \e_*>0 $ such that
the map $\e \mapsto (\eta_\e, \psi_\e, c_\e)$ is analytic from $B(\e_*) \to H^{\sigma,s}_{\mathtt{ev}} (\T)\times H^{\sigma,s}_{\mathtt{odd}}(\T)\times \R$, where 
$ H^{\sigma,s}_{\mathtt{ev}}(\T) $, respectively $ H^{\sigma,s}_{\mathtt{odd}}(\T) $, denote the  space of even, respectively odd, 
 real valued $ 2 \pi $-periodic analytic functions
$ u(x) = \sum_{k \in \mathbb{Z}} u_k e^{\im k x} $
such that $ \| u \|_{\sigma,s}^2 := \sum_{k \in \mathbb{Z}} |u_k|^2 \langle k \rangle^{2s} 
e^{2 \sigma |k|} < + \infty$. 
\end{theorem}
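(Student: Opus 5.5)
The plan is the classical Crandall--Rabinowitz bifurcation argument from a simple eigenvalue, applied to the traveling-wave system \eqref{travelingWWstokes} viewed as a map
\[
\mathcal{F}(\eta,\psi,c) : H^{\sigma,s}_{\mathtt{ev}}\times H^{\sigma,s}_{\mathtt{odd}}\times\R \to H^{\sigma,s-1}_{\mathtt{odd}}\times H^{\sigma,s-2}_{\mathtt{ev}} .
\]
First we check that $\mathcal{F}$ is well defined and real analytic. For the Dirichlet--Neumann operator in infinite depth, analyticity of $\eta\mapsto G(\eta)$ in the analytic Sobolev scale $H^{\sigma,s}$ for small $\eta$ is classical (Craig--Sulem, Lannes), with $s>7/2$ leaving room for the curvature term. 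The reversibility \eqref{rho involution} ensures that $\mathcal{F}$ maps (even, odd) pairs to (odd, even) pairs. Moreover, $\mathcal{F}(0,0,c)=0$ for every $c$.

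Next we compute the linearization at the trivial solution:
\[
d\mathcal{F}(0,0,c)[\hat\eta,\hat\psi]=\big(c\hat\eta_x+|D|\hat\psi,\; c\hat\psi_x-\hat\eta+\kappa\hat\eta_{xx}\big).
\]
On the modes $\hat\eta=a\cos(kx)$, $\hat\psi=b\sin(kx)$, this is a $2\times2$ matrix whose determinant vanishes exactly when $c^2k^2=k(1+\kappa k^2)$. At $c=\ck$ the mode $k=1$ is always in the kernel, and the hypothesis $\kappa\notin\fR$ excludes every other $k\ge2$. Hence the kernel is one dimensional, spanned by $(\cos x,\ck\sin x)$. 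The operator is also Fredholm of index zero: it is elliptic in the symbol sense, since for large $k$ the determinant grows like $\kappa k^3$ when $\kappa>0$, or like $k$ when $\kappa=0$, with the appropriate loss of derivatives built into the target spaces. Its range is the annihilator of the adjoint kernel. The transversality condition requires $\partial_c d\mathcal{F}[\cos x,\ck\sin x]\notin\mathrm{Range}$; this amounts to $(-\sin x,\ck\cos x)$ having nonzero pairing with the cokernel vector, which is a direct $2\times 2$ check. Crandall--Rabinowitz then yields a unique analytic curve $\e\mapsto(\eta_\e,\psi_\e,c_\e)$ with the leading terms in \eqref{exp:Sto}. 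The same argument works for every $\sigma\ge0$, with $\e_*$ depending on $\sigma$ and $s$.

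Finally, the coefficients \eqref{expcoef} are obtained by a Lyapunov--Schmidt/Stokes expansion. We insert the ansatz and use the expansion $G(\eta)=|D|+G_1(\eta)+\dots$ with $G_1(\eta)\psi=-\partial_x(\eta\psi_x)-|D|(\eta|D|\psi)$. At order $\e^2$ we must solve for the second harmonic, and the only denominator that appears is $\det$ at $k=2$, which is proportional to $1-2\kappa$. This gives $\eta_2^{[2]}$ and $\psi_2^{[2]}$. The coefficient $c_2$ comes from the solvability condition at order $\e^3$, projecting onto the first harmonic, where the curvature term contributes its cubic part $-\tfrac32\kappa(\eta_x^3)_x$.

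I expect the main obstacle to be this order-$\e^3$ bookkeeping, which needs the expansion of $G(\eta)$ up to second order in $\eta$ together with the cubic capillary term. The abstract part is standard, apart from verifying the analytic-class mapping properties of $G(\eta)$, which we would cite.
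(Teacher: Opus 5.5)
Your proposal is correct and follows essentially the same route as the paper. The paper invokes Crandall--Rabinowitz via the one-dimensional kernel of $\mathcal{B}_0$, checked with the same $2\times 2$ determinant $(1+\kappa k^2)k-\ck^2k^2$ in Lemma~\ref{lem:B0}; it then expands order by order, obtaining $c_1=0$ and $\eta_2^{[2]},\psi_2^{[2]}$ from the $k=2$ block (denominator $\propto 1-2\kappa$), and $c_2$ from the solvability condition at order $\e^3$ using $G_1,G_2$.

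Two small slips need fixing. First, the cubic part of the capillary term is $-\tfrac{\kappa}{2}(\eta_x^3)_x=-\tfrac32\kappa\,\eta_x^2\eta_{xx}$, not $-\tfrac32\kappa(\eta_x^3)_x$; with your coefficient $c_2$ would come out wrong. Second, for $\kappa=0$ the determinant grows like $k^2$, not $k$, and the second component loses only one derivative. So in that case the target must be $H^{\sigma,s-1}\times H^{\sigma,s-1}$ for the linearization to be Fredholm.
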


The expansions in 
\eqref{exp:Sto}--\eqref{expcoef} are proved in Appendix \ref{sec:App2}. 
The condition $\kappa\not \in \fR$ is used in Lemma \ref{lem:B0} to ensure that the kernel of the linearized operator at the flat surface is one-dimensional. 
Indeed $\kappa \not \in \fR$ 
is equivalent to ask that  the function
\begin{equation}\label{speedn}
n\mapsto \sqrt{\frac{1+\kappa n^2}{n}} \ , 
\end{equation}
which is the quotient between the dispersion relation and the wave number, is injective on $\N$, which is enough for constructing Stokes waves with the fixed spatial period of $2\pi$. Remark that when $\kappa \in \fR$,  higher-order resonances happen. Nevertheless, traveling waves -- going under the name of Wilton ripples-- can be constructed  \cite{Wilton,reeder}.

\paragraph{Linearization.}
We linearize system
\eqref{in the reference frame} at the Stokes waves $(\eta_\epsilon(x),\psi_\epsilon(x))$ given in Theorem \ref{Thm: Stokes expansion} and evaluate $c$ at $c_\epsilon$. 
Using the shape derivative formula \cite{LD, LD book} 
$
\mathrm{d}_\eta G(\eta)[\hat \eta][\psi] = - G(\eta)(B\hat \eta) - \pa_x( V \hat \eta), 
$
where
the functions $(V(x),B(x))$ are the horizontal and vertical components of the velocity field $(\Phi_x,\Phi_y)$ at the free surface and are given by
\begin{align} \label{espV}
   V:= V(x)&:=-B(\eta_\epsilon)_x+(\psi_\epsilon)_x,\\ \label{espB}
  B:=  B(x)&:=\frac{G(\eta_\epsilon)\psi_\epsilon+(\psi_\epsilon)_x(\eta_\epsilon)_x}{1+(\eta_\epsilon)^2_x}=\frac{(\psi_\epsilon)_x-c_\epsilon}{1+(\eta_\epsilon)^2_x} (\eta_\epsilon)_x,
\end{align}
one obtains the real, autonomous linearized  system
\small
\begin{equation} \label{first linear eq}
    \begin{aligned}
        \begin{bmatrix}
            \hat{\eta}_t\\
            \hat{\psi}_t
        \end{bmatrix}=
        \left[\begin{array}{c|c} 
	 -G(\eta_\epsilon)B-\partial_x\circ(V-c_\epsilon) & G(\eta_\epsilon)  \\ 
	\hline 
	-1+B(V-c_\epsilon)\partial_x-B\partial_x\circ(V-c_\epsilon)-BG(\eta_\epsilon)\circ B+\kappa \partial_x \circ l \circ\partial_x & -(V-c_\epsilon)\partial_x+B G(\eta_\epsilon)
\end{array}\right] 
        \begin{bmatrix}
            \hat{\eta}\\
            \hat{\psi}
        \end{bmatrix},
    \end{aligned}
\end{equation}
\normalsize
where
\begin{align} \label{esp l}
   l:= l(x):=\dfrac{1}{(1+(\eta_\e)_x^2)^{\frac{3}{2}}}.
\end{align}
The map  $\e \to (V, B, l)$ is analytic as a map $B(\e_0) \to H^{\sigma, s-1}_{\mathtt{ev}}(\T) \times H^{\sigma, s-1}_{\mathtt{odd}}(\T) \times H^{\sigma, s-1}_{\mathtt{ev}}(\T) $. 
The real system \eqref{first linear eq} is Hamiltonian, i.e. of the form
$\cJ \cA$ with $\cA$ symmetric, $\cA = \cA^\top$, where the transpose is taken with respect to the real scalar product of $L^2(\T, \R) \times L^2(\T, \R)$.
Moreover the linear operator in \eqref{first linear eq} is reversible, namely it anti-commutes with the involution $\rho$ in \eqref{rho involution}. \\
Next, we conjugate \eqref{first linear eq} by using the time-independent ``good unknown of Alinhac'' linear transformation 
\begin{align}\label{alinhac}
    \begin{bmatrix}
        \hat{\eta}\\
        \hat{\psi}
    \end{bmatrix}:= Z
    \begin{bmatrix}
        u\\
        v
    \end{bmatrix},~~Z=\begin{bmatrix}
        1&0\\
        B&1
    \end{bmatrix},
    ~~Z^{-1}=\begin{bmatrix}
        1&0\\
        -B&1
    \end{bmatrix},
\end{align}
yielding the linear system 
\begin{align} \label{second linear eq}
    \begin{bmatrix}
        u_t\\
        v_t
    \end{bmatrix}=  \widetilde{\cL}_\e 
    \begin{bmatrix}
        u\\
        v
    \end{bmatrix} \ , \qquad 
    \widetilde{\cL}_\e := \begin{bmatrix}
        -\partial_x\circ (V-c_\epsilon)& G(\eta_\epsilon)\\
        -1-(V-c_\epsilon)B_x+\kappa \partial_x\circ l\circ \partial_x ~~& -(V-c_\epsilon)\partial_x
    \end{bmatrix} \ , 
\end{align}
which  is Hamiltonian and reversible since the transformation $Z$ is symplectic, $\mathrm{i.e.}$ $Z^T\mathcal{J} Z=\mathcal{J}$, and satisfies $Z\circ \rho=\rho\circ Z$.

Next, 
we perform a conformal change of variables to flatten 
the water surface. 
By \cite[Appendix A]{BBHM}, 
 there exists a diffeomorphism of $\mathbb{T}$,
 $ x\mapsto x+\mathfrak{p}(x)$, with a small $2\pi$-periodic  function $\mathfrak{p}(x)$ such that, by defining the associated composition operator $ (\mathfrak{P}u)(x) := u(x+\mathfrak{p}(x))$, the Dirichlet-Neumann operator writes as \cite[Lemma A.5]{BBHM}
\begin{equation}\label{Gneta}
 G(\eta_\e) = \pa_x \circ \mathfrak{P}^{-1} \circ {\mathfrak H} 
 \circ \mathfrak{P} \, , 
\end{equation}
where $ {\mathfrak H} $ is the Hilbert transform, i.e. the  Fourier multiplier operator
$$
 \mathfrak{H}(e^{\im j x}):= - \im\, \textup{sign}(j) e^{\im j x} \, , 
 \quad  \forall j \in \Z \setminus \{0\} \, , 
 \quad \mathfrak{H}(1) := 0 \, . 
$$
The function $\mathfrak p(x)$ is determined as a fixed point of 
(see  \cite[formula (A.15)]{BBHM})
\begin{equation} \label{def:ttf}
\mathfrak{p}  = \mathfrak{H}[\eta_\e ( x + \mathfrak{p}(x))]  
  \end{equation}
  As proved in \cite{BMV3}, the map $\e \to \mathfrak{p}$ is analytic as a map $B(\e_0) \to H^s_{\mathtt{odd}}(\T) $.
In addition, in  Appendix \ref{sec:App2} we prove the expansion
\begin{equation}
 \label{expfe}
 \begin{aligned}
&  \mathfrak p(x)  = \e  \sin(x) +\e^2 \frac{2-\kappa}{2\,\left( 1 -2\,\kappa\right)} \sin(2x)+\cO(\e^3) \, ,
   \end{aligned}
 \end{equation}
Under the symplectic and reversibility-preserving change of variables 
\begin{align}\label{LC}
    h=\mathcal{P}\begin{bmatrix}
        u\\
        v
\end{bmatrix},~~\mathcal{P}=\begin{bmatrix}
    (1+\mathfrak{p}_x)\mathfrak{P} & 0\\
    0 & \mathfrak{P}
\end{bmatrix} \ , 
\end{align}
one  transforms the system \eqref{second linear eq} into the linear system $h_t=\mathcal{L}_\epsilon h$ where $\mathcal{L}_\epsilon$ is the Hamiltonian and reversible real operator
\begin{equation} \label{mathcal L e}
\begin{aligned}
    \mathcal{L}_\epsilon:= \mathcal{L}_{\epsilon}(\kappa) :=\mathcal{P} \widetilde{\cL}_\e \mathcal{P}^{-1} = &\begin{bmatrix}
\partial_x\circ(\ck+p_\epsilon(x)) &  |D| \\
        -(1+a_\epsilon(x))+\kappa \Sigma_\e & ~~(\ck+p_\epsilon(x))\partial_x
    \end{bmatrix}\\
    =&\begin{bmatrix}
        0& \mathrm{Id}\\
        -\mathrm{Id}& 0
    \end{bmatrix}\begin{bmatrix}
 (1+a_\epsilon(x))-\kappa \Sigma_\e& ~~-(\ck+p_\epsilon(x))\partial_x\\
 \partial_x\circ(\ck+p_\epsilon(x))& ~~|D| 
    \end{bmatrix},
\end{aligned}    
\end{equation}
where the functions $p_\e(x)$ and $a_\e(x)$ are given by
\begin{equation}\label{def:pa}
\ck+p_\e(x) :=  \displaystyle{\frac{ c_\e-V(x+\mathfrak{p}(x))}{ 1+\mathfrak{p}_x(x)}} \, , \quad 1+a_\e(x):=   \displaystyle{\frac{1+ (V(x + \mathfrak{p}(x)) - c_\e)
 B_x(x + \mathfrak{p}(x))  }{1+\mathfrak{p}_x(x)}} \,, 
\end{equation}
and the operator $\Sigma_\e$ is given by
\begin{equation}\label{def:Sigma g}
\begin{aligned}
  &   \Sigma_\e: =\frac{1}{1+\mathfrak{p}_x}\partial_x \circ g_\e(x)\circ \partial_x \circ \frac{1}{1+\mathfrak{p}_x}\,,\qquad g_\e(x): = \frac{l(x+\mathfrak{p}(x))}{1+\mathfrak{p}_x}\,.
\end{aligned}    
\end{equation}
By the analyticity result of the map $\e \mapsto (V, B, l)$ given above,  the map
$\e \to (p_\e, a_\e, g_\e)$ is analytic as a map $B(\e_0) \to H^s_{\mathtt{ev}}(\T)\times H^s_{\mathtt{ev}}(\T) \times H^s_{\mathtt{ev}}(\T)$.
In Appendix \ref{sec:App2} we provide their Taylor expansions, that we collect in the following lemma:
\begin{lemma}\label{lem:pa.exp}
The analytic functions $p_\e (x) $ and $a_\e (x) $  in \eqref{def:pa} 
are even in $ x $, and
\begin{equation}\label{SN1}
p_\e (x)  
= \e p_1 (x) + \e^2 p_2 (x)  + \cO(\e^3) \, , \qquad   
a_\e (x)  
= \e a_1(x) +\e^2 a_2 (x) + \cO(\e^3) \, , 
\end{equation}
where
\begin{align}\label{pino1fd}
     p_1(x)  =   p_1^{[1]} \cos(x)\, , \qquad p_2(x) =p_2^{[0]}+p_2^{[2]}\cos(2x)\, ,
\end{align}
\begin{align} \label{pino2fd}
     p_1^{[1]} := -2\ck\,, \qquad p_2^{[0]} := 
    \frac{-30\kappa^2-15\kappa+24
}{   16\ck (1-2\kappa)}\, , \qquad \quad p_2^{[2]} :=-\frac{2\ck^3}{1-2\kappa},
\end{align}
and 
 \begin{align} \label{aino1fd}
a_1(x)  = a_1^{[1]}\cos(x)\, , \qquad \qquad a_2(x)=a_2^{[0]}+a_2^{[2]}\cos(2x)\, ,
\end{align}
\begin{align}
\label{aino2fd} 
a_1^{[1]}:= -(2+\kappa)\, ,\qquad a_2^{[0]}:=\frac{4+3\kappa}{2}\, ,\qquad a_2^{[2]} := -\frac{10\kappa^2+11\kappa+4}{2(1-2\kappa)}.
\end{align}  
The function $g_\e(x)$ in \eqref{def:Sigma g} is even in $x$ and expands as
\begin{equation} \label{SN1 g}
    g_\e(x) = 1+ \e g_1(x)  + \e^2 g_2(x) + \cO(\e^3),
\end{equation}
where
\begin{align}\label{exp:g1}
    g_1(x) &= -\cos(x),\, \qquad  g_2(x)=-\frac{1}{4}-\frac{6\kappa+3}{4(1-2\kappa)}\cos(2x)\,.
\end{align}    
Finally the self-adjoint operator $\Sigma_\e$ in \eqref{def:Sigma g} expands as
\begin{equation} \label{Sigma epsilon}
    \Sigma_\e = \pa^2_{x} + \e \Sigma_1 + \e^2 \Sigma_2 + \cO(\e^3), 
\end{equation}
where
\begin{align}\label{Sigma 1}
    \Sigma_j:=& d_j(x)\,\pa^2_{x}+e_j(x)\pa_{x} + h_j(x), \quad j=1,2 
\end{align}
and
\begin{align}
\label{d1x}
    d_1(x)&=-3 \cos(x)\,, \quad 
     e_1(x) = 3 \sin(x) \,,\quad h_1(x)=\cos(x)\,,  \\
    d_2(x)&= d_2^{[0]} + d_2^{[2]} \cos(2x)  \,, \quad 
    d_2^{[0]}:= \frac{9}{4} \,, 
    \ \ 
    d_2^{[2]}:= -\frac{9+18\kappa}{4(1-2\kappa)}\,,
    \\
     e_2(x)& =  e_2^{[2]} \sin(2x)  \,,  \ \ 
     e_2^{[2]} := \frac{18\kappa+9}{2(1-2\kappa)}\,,
     \\
     h_2(x)& = h_2^{[0]} + h_2^{[2]} \cos(2x) \,, \ \ 
     h_2^{[0]}:= -\frac{1}{2} \,, 
    \ \ \label{h2 h02}
    h_2^{[2]}:=\frac{9+6\kappa}{2(1-2\kappa)}.
\end{align}
\end{lemma}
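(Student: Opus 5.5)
The statement to prove is Lemma \ref{lem:pa.exp}. The plan is a direct Taylor expansion in $\e$ of the explicit formulas \eqref{def:pa} and \eqref{def:Sigma g}, fed by the Stokes expansions \eqref{exp:Sto}--\eqref{expcoef} and the expansion \eqref{expfe} of $\mathfrak p$.

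\textbf{Step 1: parity.} Since $\eta_\e$ is even and $\psi_\e$ is odd, \eqref{espB} shows that $B$ is odd and $V$ is even. By \eqref{def:ttf}, $\mathfrak p$ is odd, so $\mathfrak p_x$ is even. Composition with $x+\mathfrak p(x)$ (an odd diffeomorphism) preserves parity. Hence $p_\e$, $a_\e$ and $g_\e$ are even. Analyticity in $\e$ was already recorded above, so only the coefficients remain.

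\textbf{Step 2: expand $B$, $V$ and $l$ to order $\e^2$.} Using the closed form in \eqref{espB}, $B=((\psi_\e)_x-c_\e)(\eta_\e)_x/(1+(\eta_\e)_x^2)$, we get
\[
B=\e\,\ck\sin x+\e^2\big(-\tfrac{\ck}{2}\sin 2x\cdot 1+2\ck\eta_2^{[2]}\sin 2x\big)+\cO(\e^3),
\]
where the $\e^2$ term is to be collected carefully from the products. Then $V=(\psi_\e)_x-B(\eta_\e)_x$ and $l=1-\tfrac32(\eta_\e)_x^2+\cO(\e^4)$.

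\textbf{Step 3: compose with $x+\mathfrak p(x)$.} For each function $f=f_1\e+f_2\e^2+\dots$ we use
\[
f(x+\mathfrak p)=\e f_1+\e^2\big(f_2+f_1'\,\mathfrak p_1\big)+\cO(\e^3),\qquad \mathfrak p_1=\sin x,
\]
and $(1+\mathfrak p_x)^{-1}=1-\e\cos x+\e^2\big(\cos^2x-\mathfrak p_{2,x}\big)+\dots$, with $\mathfrak p_2$ read off from \eqref{expfe}. Inserting these into \eqref{def:pa} and \eqref{def:Sigma g} and reducing products of trigonometric functions to Fourier modes gives $p_1,p_2,a_1,a_2,g_1,g_2$. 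Two consistency checks are available. First, $p_1^{[1]}=-2\ck$ comes from $-V_1-\ck\,\mathfrak p_{1,x}$. Second, $g_1=-\cos x$ comes from $l=1+\cO(\e^2)$ and $(1+\mathfrak p_x)^{-1}$.

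\textbf{Step 4: expand $\Sigma_\e$.} Write $m:=(1+\mathfrak p_x)^{-1}$. Then
\[
\Sigma_\e=m\,\pa_x\, g_\e\, \pa_x\, m = m^2g_\e\,\pa_x^2+\big(m(g_\e m)_x+m g_\e m_x\big)\pa_x+m(g_\e m_x)_x .
\]
Expanding the coefficients of this operator to second order gives $d_j,e_j,h_j$. For example, $d_1=-2\cos x-\cos x=-3\cos x$, which matches \eqref{d1x}. Self-adjointness is immediate from the symmetric form $m\,\pa_x g_\e\pa_x m$.

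\textbf{Main obstacle.} The only real difficulty is the second-order constants $p_2^{[0]}$, $a_2^{[0]}$ and $a_2^{[2]}$. These depend on $c_2$, $\eta_2^{[2]}$, $\psi_2^{[2]}$ and $\mathfrak p_2$. In particular they require the Stokes-wave coefficients \eqref{expcoef}, which must themselves be derived in Appendix \ref{sec:App2}. That derivation goes by expanding \eqref{travelingWWstokes} to third order with $G(\eta)$ expanded to second order in $\eta$; $c_2$ is fixed by the solvability condition on the $\cos x$ mode, with $1-2\kappa\neq0$ by \eqref{denneq0}. I would organize this bookkeeping symbolically, as the paper does with MATLAB, and cross-check against the known $\kappa=0$ values of \cite{BMV1}.
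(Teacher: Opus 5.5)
Your proposal is correct and matches the paper's proof in Appendix~\ref{sec:App2}. You expand $B$, $V$ and $l$ to second order; your $\e^2$ coefficients of $B$ and $V$ agree with \eqref{espB1}--\eqref{espV1}. You then compose with $x+\mathfrak p(x)$, expand $(1+\mathfrak p_x)^{-1}$, and read off the coefficients of $\Sigma_\e$ from $m\,\pa_x g_\e \pa_x m$.

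The only extra shortcut the paper uses is to rewrite $1+a_\e=(1+\mathfrak p_x)^{-1}-(\ck+p_\e)\,B_x(x+\mathfrak p)$, so the already computed $p_1$ can be reused when computing $a_2$.
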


\paragraph{Bloch-Floquet expansions.} 
Since the operator $\mathcal{L}_\e$ in \eqref{mathcal L e} has $2\pi$-periodic coefficients, Bloch-Floquet theory  \cite{RS78} implies that $\lambda\in\mathbb{C}$ belongs to the $L^2(\mathbb{R})-$spectrum of $\mathcal{L}_\e$ if and only if there exists a nontrivial Floquet–Bloch mode $\tilde{h}(x)=e^{\im \mu x} v(x)$, where $v$ is $2\pi-$periodic and $\mu\in[-\frac{1}{2},\frac{1}{2})$, such that $\lambda \tilde{h}=\mathcal{L}_\e \tilde{h}$, or equivalently, $\lambda v=e^{-\im \mu x}\mathcal{L}_\e e^{\im \mu x}v$. Therefore, we obtain
\begin{align*}
    \sigma_{L^2(\mathbb{R})}(\mathcal{L}_\e)=\bigcup_{\mu\in[-\frac{1}{2},\frac{1}{2})} \sigma_{L^2(\mathbb{T})}(\mathcal{L}_{\mu,\e})~~\mbox{where}~~~\mathcal{L}_{\mu,\e}:=e^{-\im\mu x}\mathcal{L}_\e e^{\im\mu x}.
\end{align*}
In particular, if $\lambda$ is an eigenvalue of $\mathcal{L}_{\mu,\e}$ on $L^2(\mathbb{T},\mathbb{C}^2)$ with eigenvector $v(x)$, then $h(t,x)=e^{\lambda t}e^{\im \mu x}v(x)$ solves $h_t=\mathcal{L}_\e h$. We pause to remark that: \\
$(i)$ if $A=Op(a)$ is a pseudo-differential operator with symbol $a(x,\xi)$, which is $2\pi$-periodic in $x$, then $A_\mu:=e^{-\im\mu x}A e^{\im\mu x}=Op(a(x,\xi+\mu))$. \\
$(ii)$ If $A$ is a real operator then $\overline{A_\mu}=A_{-\mu}$. As a consequence the spectrum $\sigma(A_{-\mu})=\overline{\sigma(A_\mu)}$ and we can study $\sigma(A_\mu)$ just for $\mu>0$.\\
$(iii)$  $\sigma(A_\mu)$ is a $1$-periodic set with respect to $\mu$, so one can restrict to $\mu\in[0,\frac{1}{2})$.

By the previous remarks the Floquet operator associated with the real operator $\mathcal{L}_\e$ in \eqref{mathcal L e} is the complex Hamiltonian and reversible operator
\begin{equation} \label{mathcal L mu e}
\begin{aligned}
    \mathcal{L}_{\mu,\e}:=&\begin{bmatrix}
(\partial_x+\im\mu)\circ(\ck+p_\epsilon(x)) &  |D+\mu| \\
        -(1+a_\epsilon(x))+\kappa \Sigma_{\mu,\e} & ~~(\ck+p_\epsilon(x))(\partial_x+\im\mu)
    \end{bmatrix}\\
    =&\underbrace{\begin{bmatrix}
        0& \mathrm{Id}\\
        -\mathrm{Id}& 0
    \end{bmatrix}}_{=:\mathcal{J}}\underbrace{\begin{bmatrix}
 (1+a_\epsilon(x))-\kappa \Sigma_{\mu,\e}& ~~-(\ck+p_\epsilon(x))(\partial_x+\im\mu)\\
 (\partial_x+\im\mu)\circ(\ck+p_\epsilon(x))& ~~|D+\mu| 
    \end{bmatrix}}_{=:\mathfrak{B}_{\mu,\e}},
\end{aligned}    
\end{equation}
where $\Sigma_{\mu,\e}:=e^{-\im\mu x}\Sigma_\e e^{\im\mu x}$ is given by the selfadjoint operator
\begin{align}\label{Sigma}
    \Sigma_{\mu,\e}=\frac{1}{1+\mathfrak{p}_x}(\partial_x + \im \mu) \circ g_\e(x)\circ (\partial_x+\im\mu) \circ \frac{1}{1+\mathfrak{p}_x} \ . 
\end{align}

We regard $\mathcal{L}_{\mu,\e}$ as an operator with domain $Y:=H^2(\mathbb{T},\mathbb{C})\times H^1(\mathbb{T},\mathbb{C})$ and range $X:=L^2(\mathbb{T},\mathbb{C})\times L^2(\mathbb{T},\mathbb{C})$, equipped with the complex scalar product
\begin{align} \label{complex product}
    (f,g):=\frac{1}{2\pi}\int_0^{2\pi} \left(f_1\overline{g_1}+f_2\overline{g_2} \right)\,\mathrm{d}x,~~\forall~f=\vet{f_1}{f_2},~~g=\vet{g_1}{g_2}\in L^2(\mathbb{T},\mathbb{C}^2).
\end{align}
We also denote $\|f\|^2=(f,f)$.

The complex operator $\mathcal{L}_{\mu,\e}$ in \eqref{mathcal L mu e} is complex Hamiltonian and reversible. Recall that if $\mathcal{L} : Y \to X$ is a complex linear operator, we say that it is 
\begin{itemize}
    \item \textbf{Complex Hamiltonian:} if there exists a self-adjoint operator, namely $\mathcal{B}=\mathcal{B}^*$, where $\mathcal{B}^*$ (with domain $Y$) is the adjoint with respect to the complex scalar product \eqref{complex product} such that $\mathcal{L} = \mathcal{J} \mathcal{B}$.

    \item \textbf{Reversible:} if 
    \begin{align} \label{reversible}
    \mathcal{L} \circ \overline{\rho} = -\overline{\rho} \circ \mathcal{L}, \quad \text{where} \quad 
    \overline{\rho} \begin{bmatrix} \eta(x) \\ \psi(x) \end{bmatrix} := 
    \begin{bmatrix} \overline{\eta}(-x) \\ -\overline{\psi}(-x) \end{bmatrix} \ . 
    \end{align}
\end{itemize}

The property \eqref{reversible} for $\mathcal{L}_{\mu,\e}$ follows because $\mathcal{L}_\e$ is a real operator which is reversible with respect to the involution $\rho$ in \eqref{rho involution}. Equivalently, since $\mathcal{J}\circ \overline{\rho}=-\overline{\rho}\circ \mathcal{J}$, the self-adjoint operator $\mathfrak{B}_{\mu,\e}$ is reversibility-preserving, $\mathrm{i.e.}$
\begin{align}\label{B rho=rho B}
    \mathfrak{B}_{\mu,\e}\circ\overline{\rho}=\overline{\rho}\circ \mathfrak{B}_{\mu,\e}.
\end{align}
In addition $(\mu,\e)\rightarrow \mathcal{L}_{\mu,\e}\in\mathcal{L}( Y,X)$ is analytic, since the functions $\e \mapsto a_\e, ~p_\e$ and $g_\e$ defined in \eqref{SN1}, \eqref{SN1 g} are analytic and $\mathcal{L}_{\mu,\e}$ is analytic with respect to $\mu$, since, for any $\mu\in[-\frac{1}{2},\frac{1}{2})$, recall also the identity \cite[Section 5.1]{NS}
\begin{align} \label{D+mu}
    |D+\mu|=|D|+\mu(\mathrm{sgn}(D)+\Pi_0),~~{\forall \mu\in[0,\frac{1}{2})},
\end{align}
where $\mathrm{sgn}(D)$ is the Fourier multiplier operator, acting on $2\pi$-periodic functions, with symbol
\begin{equation} \label{sgn D}
\mathrm{sgn}(k):=1 \ \ \forall k >0 \ , \ \
\mathrm{sgn}(0):=0  , \ \ \ 
\mathrm{sgn}(k):= -1 \ \ \forall k < 0 \  , 
\end{equation}
and $\Pi_0$ is the projector operator on the zero mode, $\Pi_0 f(x):=\frac{1}{2\pi}\int_\mathbb{T} f(x) dx$.

\smallskip 

Our goal is to prove the existence of eigenvalues of $\mathcal{L}_{\mu,\e}$ in \eqref{mathcal L mu e} with non zero real part. We remark that the Hamiltonian structure of $\mathcal{L}_{\mu,\e}$ implies that eigenvalues with non zero real part may arise only from multiple eigenvalues of $\mathcal{L}_{\mu,0}$ (``Krein criterion''), because if $\lambda$ is an eigenvalue of $\mathcal{L}_{\mu,\e}$ then also $-\overline{\lambda}$ is, and the total algebraic multiplicity of the eigenvalues is conserved under small perturbation. We now describe the spectrum of $\mathcal{L}_{\mu,0}$.

\paragraph{The spectrum of $\mathcal{L}_{\mu,0}$.} The spectrum of the Fourier multiplier matrix operator 
\begin{equation} \label{mathcal L mu 0}
\begin{aligned}
\mathcal{L}_{\mu,0}:=&\begin{bmatrix}
\ck(\partial_x+\im\mu)&  |D+\mu|\\
        -1+\kappa (\pa_x + \im \mu)^2 & ~~\ck(\partial_x+\im\mu)
    \end{bmatrix}
\end{aligned}    
\end{equation}
consists of the purely imaginary eigenvalues $\{\lambda^{\pm}_k(\mu),~k\in\mathbb{Z}\}$, where
\begin{align} \label{eigenvalues of Lmu0}
    \lambda^{\pm}_k(\mu):=\im\left(\ck(\pm k+\mu)\mp \sqrt{\left(1+\kappa(k\pm\mu)^2\right)\, |k\pm \mu|}\right) \ . 
\end{align}
For $\kappa \in\R_{\geq 0}  \setminus \fR$ (see \eqref{def:fR}), at $\mu=0$ the real operator $\mathcal{L}_{0,0}$ possesses the eigenvalue $0$ with algebraic multiplicity $4$, 
\begin{align*}
    \lambda^+_0(0)=\lambda^-_0(0)=\lambda^+_1(0)=\lambda^-_1(0)=0,
\end{align*}
and geometric multiplicity $3$. A real basis of the kernel of $\mathcal{L}_{0,0}$ is 
\begin{align} \label{eigenfunc of mathcall L00}
    f^+_1:=\vet{\frac{1}{\sqrt{\ck}}\cos(x)}{\sqrt{\ck}\sin(x)}, ~~f^-_1:=\vet{-\frac{1}{\sqrt{\ck}}\sin(x)}{\sqrt{\ck}\cos(x)}, ~~f^-_0:=\vet{0}{1},
\end{align}
together with the generalized eigenvector
\begin{align} \label{eigenfunc f+0}
    f^+_0:=\vet{1}{0}, ~~\mathcal{L}_{0,0} f^+_0=-f^-_0.
\end{align}
Furthermore $0$ is an isolated eigenvalue for $\mathcal{L}_{0,0}$, namely the spectrum $\sigma(\mathcal{L}_{0,0})$ decomposes in two separated parts,
\begin{align} \label{sigma decomposition}
    \sigma(\mathcal{L}_{0,0})=\sigma'(\mathcal{L}_{0,0})\cup\sigma''(\mathcal{L}_{0,0}), ~~\mbox{where}~~\sigma'(\mathcal{L}_{0,0}):=\{0\},
\end{align}
and $\sigma''(\mathcal{L}_{0,0}):=\{\lambda^{\sigma}_k(0),~k\neq 0,1, ~\sigma=\pm\}$.

In addition, following the proof  in \cite[Lemma 4.1]{NS}, the operator $\mathcal{L}_{0,\e}$ possesses, for any sufficiently small $\e\neq 0$, the eigenvalue $0$ with a four dimensional generalized Kernel, spanned by $\e$-dependent vectors $U_1~,\Tilde{U}_2,~U_3,~U_4$ satisfying, for some real constant $\alpha_\e$, 
\begin{equation} \label{237}
    \begin{aligned}
        &\mathcal{L}_{0,\e} U_1=0, ~\mathcal{L}_{0,\e}\tilde{U}_2=0,~\mathcal{L}_{0,\e} U_3=\alpha_\e \tilde{U}_2,~\mathcal{L}_{0,\e} U_4=-U_1,~~U_1=\vet{0}{1}.
    \end{aligned}
\end{equation}
By Kato's perturbation theory for any $\mu,\e\neq 0$ sufficiently small, the perturbed spectrum $\sigma(\mathcal{L}_{\mu,\e})$ admits a disjoint decomposition as 
\begin{align} \label{disjoint decomposition of spectrum}
    \sigma(\mathcal{L}_{\mu,\e})=\sigma'(\mathcal{L}_{\mu,\e})\cup \sigma''(\mathcal{L}_{\mu,\e}),
\end{align}
where $\sigma'(\mathcal{L}_{\mu,\e})$ consists of $4$ eigenvalues close to $0$. We denote by $\mathcal{V}_{\mu,\e}$ the spectral subspace associated with $\sigma'(\mathcal{L}_{\mu,\e})$, which has dimension $4$ and it is invariant by $\mathcal{L}_{\mu,\e}$. Our goal is to prove that, for $\e$ small, for values of the Floquet exponent $\mu$ in an interval of order $\e$, the $4\times 4$ matrix which represents the operator $\mathcal{L}_{\mu,\e}:\mathcal{V}_{\mu,\e}\rightarrow \mathcal{V}_{\mu,\e}$ possesses a pair of eigenvalues close to zero with opposite non zero real parts. 

Before stating our main result, let us introduce a notation that we shall use throughout the paper.

\begin{itemize}
\item
\textbf{Notation:} we denote by $\mathcal{O}(\mu^{m_1} \epsilon^{n_1}, \dots, \mu^{m_p} \epsilon^{n_p})$, $m_j, n_j \in \mathbb{N}$ (for us $\mathbb{N} := \{1,2, \dots \}$), analytic functions of $(\mu, \epsilon)$ with values in a Banach space $X$ which satisfy, for some $C > 0$, the bound
\[
\|\mathcal{O}(\mu^{m_j} \epsilon^{n_j})\|_X \leq C \sum_{j=1}^{p} |\mu|^{m_j} |\epsilon|^{n_j}
\]
for small values of $(\mu, \epsilon)$. Similarly we denote $r_k(\mu^{m_1} \epsilon^{n_1}, \dots, \mu^{m_p} \epsilon^{n_p})$ scalar functions $\mathcal{O}(\mu^{m_1} \epsilon^{n_1}, \dots, \mu^{m_p} \epsilon^{n_p})$ which are also \textit{real} analytic.
\end{itemize}

Our main spectral result is the following one:
\begin{theorem}[Complete Benjamin-Feir spectrum] \label{Complete BF thm}
Let $\kappa \in \R_{\geq 0}  \setminus \fR$, where $\fR$ is defined in \eqref{def:fR}. There exist $\epsilon_{0}, \mu_{0} > 0$, such that, for any $0 < \mu < \mu_{0}$ and $0 \leq \epsilon < \epsilon_{0}$, the operator $\mathcal{L}_{\mu,\epsilon} : \mathcal{V}_{\mu,\epsilon} \to \mathcal{V}_{\mu,\epsilon}$ can be represented by a $4 \times 4$ matrix of the form
\begin{align} \label{U S diag}
\begin{pmatrix}
\mathsf{U} & 0 \\
0 & \mathsf{S}
\end{pmatrix},
\end{align}
where $U$ and $S$ are $2 \times 2$ matrices, with identical diagonal entries each, of the form
\begin{equation} \label{U S}
\begin{aligned}
\mathsf{U} &= 
\begin{pmatrix}
\im \left( \frac{1}{2}\breve{\mathtt{c}}_{\kappa} \mu + r_{2}(\mu \epsilon^{2}, \mu^{2}\epsilon, \mu^{3}) \right) 
& -\mathsf{e}_{22}\frac{\mu^2}{8}(1 + r_{5}(\epsilon,\mu)) \\
\mathsf{e}_{22}\frac{\mu^2}{8}(1+r''(\e,\mu))-\mathsf{e}_{11} \epsilon^2(1+r'(\e,\mu\e^2))  
& \im \left( \frac{1}{2}\breve{\mathtt{c}}_{\kappa} \mu + r_{2}(\mu \epsilon^{2}, \mu^{2}\epsilon, \mu^{3}) \right) 
\end{pmatrix},\\
\mathsf{S} &= 
\begin{pmatrix}
\im\ck\mu+\im r_{9}(\mu \epsilon^{2}, \mu^{2}\epsilon,\mu^3) & \mu + r_{10}(\mu^2 \epsilon,\mu^3) \\[1ex]
- (1+\kappa\mu^2) + r_{8}(\e^3,\mu \epsilon^{2}, \mu^2\e,\mu^{3} ) &\im\ck\mu +\im r_{9}(\mu \epsilon^{2}, \mu^{2}\epsilon,\mu^3)
\end{pmatrix},
\end{aligned}
\end{equation}
The eigenvalues of $\mathsf{U}$ have the form
\begin{align} \label{lambda 1}
\lambda_{1}^{\pm}(\mu,\epsilon) 
= \im\frac{1}{2}\breve{\mathtt{c}}_{\kappa}\mu + \im r_{2}(\mu \epsilon^{2}, \mu^{2}\epsilon, \mu^{3}) 
\pm \frac{1}{8} \mu \sqrt{1+r_{5}(\epsilon,\mu)} \, \sqrt{\Delta_{\mathrm{BF}}(\kappa,\mu,\epsilon)},
\end{align}
where $\Delta_\mathrm{BF}(\kappa;\mu,\epsilon)$  is the Benjamin--Feir discriminant function in \eqref{BFDF}. 
The eigenvalues  in \eqref{lambda 1} have non-zero real part if and only if $\Delta_{\mathrm{BF}}(\kappa;\mu,\epsilon)>0$.

The eigenvalues of the matrix $\mathsf{S}$ are a pair of purely imaginary eigenvalues of the form
\begin{align} \label{lambda zero}
\lambda_{0}^{\pm}(\mu,\epsilon) = \im  \mu \ck + r_{9}(\epsilon^{2}, \mu \epsilon,\mu^2)) \mp \im \sqrt{\mu  (1 + r(\mu^2,\mu\e,\e^2))}.
\end{align}
For $\epsilon = 0$ the eigenvalues $\lambda_{1}^{\pm}(\mu,0), \lambda_{0}^{\pm}(\mu,0)$ coincide with those in \eqref{eigenvalues of Lmu0}.
\end{theorem}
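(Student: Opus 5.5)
We follow the Kato similarity–transformation scheme of Berti--Maspero--Ventura \cite{BMV1,BMV3}, adapted to the capillary term $\kappa\Sigma_{\mu,\e}$ and to the $\kappa$-dependent Stokes coefficients of Theorem~\ref{Thm: Stokes expansion} and Lemma~\ref{lem:pa.exp}.

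\textbf{Step 1: symplectic, reversible basis of $\mathcal{V}_{\mu,\e}$.} Let $P_{\mu,\e}$ be the Riesz projector onto $\mathcal{V}_{\mu,\e}$, taken over a small circle around $0$ that separates $\sigma'$ from $\sigma''$ as in \eqref{disjoint decomposition of spectrum}. Kato's transformation operator $U_{\mu,\e}=(\mathrm{Id}-(P_{\mu,\e}-P_{0,0})^2)^{-1/2}[P_{\mu,\e}P_{0,0}+(\mathrm{Id}-P_{\mu,\e})(\mathrm{Id}-P_{0,0})]$ is symplectic and reversibility preserving. It is also analytic in $(\mu,\e)$ as an operator on $X$, thanks to \eqref{D+mu}. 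We apply it to the basis $f^\pm_1,f^\pm_0$ of \eqref{eigenfunc of mathcall L00}--\eqref{eigenfunc f+0} and set $f^\sigma_k(\mu,\e):=U_{\mu,\e}f^\sigma_k$. In this basis, $\mathcal{L}_{\mu,\e}|_{\mathcal{V}_{\mu,\e}}$ is represented by $\mathtt{L}_{\mu,\e}=\mathtt{J}_4\mathtt{B}_{\mu,\e}$, where $\mathtt{B}_{\mu,\e}$ is self-adjoint with entries $(\mathfrak{B}_{\mu,\e}f^{\sigma'}_{k'},f^\sigma_k)$. Reversibility forces the entries to be alternately real and purely imaginary. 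This gives the block form
\[
\mathtt{B}_{\mu,\e}=\begin{pmatrix}E&F\\F^*&G\end{pmatrix},\qquad E=\begin{pmatrix}e_{11}&\im e_{12}\\-\im e_{12}&e_{22}\end{pmatrix}.
\]

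\textbf{Step 2: Taylor expansion of the entries.} This is the computational core. We expand $f^\sigma_k(\mu,\e)$ to second order in $\e$ and first order in $\mu$, using $P_{\mu,\e}=-\frac{1}{2\pi\im}\oint(\mathcal{L}_{\mu,\e}-\lambda)^{-1}d\lambda$ and the expansions of Lemma~\ref{lem:pa.exp}. The corrections are obtained by inverting $\mathcal{L}_{0,0}$ on the first and second harmonics. These inversions involve denominators proportional to $1-2\kappa$, which is nonzero by \eqref{denneq0}; all other harmonics stay nonresonant because $\kappa\notin\fR$. We then compute the entries:
\begin{itemize}
\item $e_{11}=\mathsf{e}_{11}\e^2(1+r)+\ldots$, where the $\e^2$ coefficient collects $a_2^{[0]}$, $p_2^{[0]}$, $\kappa h_2,\kappa d_2$ and the second-order corrections of the eigenvectors;
\item $e_{22}=\mathsf{e}_{22}\mu^2/8+\ldots$;
\item $e_{12}=\mathsf{e}_{12}\mu/2+\ldots$.
\end{itemize}
A key structural input is that $\mathcal{L}_{0,\e}$ has a four-dimensional generalized kernel \eqref{237}. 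This forces several entries to vanish at $\mu=0$ for every $\e$, which fixes the divisibility by $\mu$ of the remainders $r(\cdot)$. In the $G$ block, the term $|D+\mu|$ acting on the zero mode via $\Pi_0$ produces the entry $\mu$. This is the origin of the $\sqrt{\mu}$ scaling of $\lambda_0^\pm$, instead of the $\mathcal{O}(\mu)$ scaling found in finite depth.

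\textbf{Step 3: block decoupling.} The coupling $F$ is small, of size $\mathcal{O}(\mu\e,\mu^3)$, but the diagonal blocks are not spectrally separated at leading order. We therefore first make a symplectic change of coordinates that rescales the $G$ block, so that $\mathsf{S}$ has $\mathcal{O}(1)$ off-diagonal entry $-(1+\kappa\mu^2)$ and becomes invertible and separated from $\mathsf U$. We then remove $F$ by an iterative Hamiltonian block-decoupling. Each step solves a Sylvester equation whose invertibility comes from $\mathtt{J}_2 G$ being invertible. The iteration converges, yields \eqref{U S diag}, and only modifies the entries of $\mathsf U$ by the remainders $r_2,r_5,r',r''$ recorded in \eqref{U S}.

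\textbf{Step 4: eigenvalues.} The eigenvalues of a $2\times2$ matrix with equal diagonal entries are read off directly from \eqref{U S}: $\lambda_1^\pm=\text{diag}\pm\sqrt{\mathsf U_{12}\mathsf U_{21}}$. Factoring out $\mu^2/64$, we get $\mathsf{U}_{12}\mathsf{U}_{21}=\frac{\mu^2}{64}(1+r_5)\Delta_{\mathrm{BF}}$, using $8\mathsf{e}_{22}\mathsf{e}_{11}=\mathsf{e}_{\mathrm{WB}}$. This identity, together with the value of $\breve{\mathtt c}_\kappa$, is verified by algebra from \eqref{Def:eWB} and \eqref{e11e22e12}. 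This gives \eqref{lambda 1}, and the real part of $\lambda_1^\pm$ is nonzero exactly when $\Delta_{\mathrm{BF}}>0$. The same direct computation on $\mathsf S$ gives \eqref{lambda zero}.

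\textbf{Main obstacle.} I expect the hardest step to be Step 2: the $\e^2$ expansion of $e_{11}$ and its exact agreement with $\mathsf{e}_{11}$. This requires second-order corrections of all four basis vectors. It also requires handling the capillary operator $\Sigma_{\mu,\e}$, whose second-order pieces carry the resonant $(1-2\kappa)$ denominators. I would first carry out this computation symbolically and check the $\kappa=0$ case against \cite{BMV1}.
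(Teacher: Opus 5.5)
Your Steps 1 and 4 and the overall plan of Step 2 match the paper. The real gap is Step 3. \textbf{Step 3 ignores that the block decoupling is a singular perturbation problem.} After decoupling, the blocks' eigenvalues are the four eigenvalues of $\mathsf L_{\mu,\e}$: two of size $O(\mu)$ and two $\approx\pm\im\sqrt{\mu}$. So no change of coordinates can make $\mathsf S$ ``separated'' from $\mathsf U$ uniformly as $\mu\to0$. Nor is $\mathsf J_2G$ uniformly invertible, since $\det G=\mu(1+\ldots)$ because $G_{22}=\mu+r_{10}$. Concretely, the Sylvester equation $D_1\tilde X-\tilde XD_0=-\mathsf J_2F$ is the real system $\mathsf A\vec x=\vec f$ with $\det\mathsf A=\mu^2(1+r(\mu,\mu\e))$, i.e.\ $\mathsf A^{-1}=\mu^{-1}\mathcal B(\mu,\e)$. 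Each solve therefore loses a factor $\mu^{-1}$. Your sentence ``the iteration converges'' is exactly what must be proved, uniformly for $0<\mu<\mu_0$ and with remainders analytic up to $\mu=0$. Your size claim $F=O(\mu\e,\mu^3)$ is also wrong: $F_{11}(0,\e)=r(\e^3)$ is not divisible by $\mu$. The paper's mechanism has three parts:
\begin{enumerate}
\item It conjugates by the explicit symplectic matrix $\tilde Y=\mathrm{Id}_4+m\begin{pmatrix}0&-P\\Q&0\end{pmatrix}$ with $m=-F_{11}/G_{11}$. This kills $F_{11}$, so every remaining coupling entry carries a factor $\mu$.
\item It solves the linear Sylvester equation with the explicit $\mathsf A^{-1}$ and checks that $\tilde X=\mathsf A^{-1}\vec f$ extends analytically to $\mu=0$. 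This reduces the coupling to $F^{(2)}$ of size $\mu^2\mathcal O(\e^3,\mu\e^2,\mu^3\e,\mu^5)$.
\item It solves the nonlinear homological equation by the implicit function theorem, written as $\vec x=\mathcal B\vec\nu+\mathcal B\vec g(\vec x)$. There the factor $\mu$ in $\vec f=\mu\vec\nu+\mu\vec g$ cancels the $\mu^{-1}$ of $\mathsf A^{-1}$.
\end{enumerate}
This $\mu$-bookkeeping is the missing idea; without it your Step 3 gives no control as $\mu\to0^+$.

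\textbf{Step 2 also omits a normalization that the stated form of $\mathsf U$ depends on.} The generalized-kernel argument ($\mathsf L_\e(0)^2=0$ together with $\mathsf e_{11}\neq0$) only gives $E_{22}(0,\e)=0$. The form $\mathsf U_{12}=-\mathsf e_{22}\frac{\mu^2}{8}(1+r_5(\e,\mu))$ also requires $\partial_\mu E_{22}(0,\e)=0$. In the Kato basis, the $\mathcal B^\sharp$ and $\mathcal B^s$ parts and the $\mathrm{sgn}(D)$ part contribute nothing to this derivative by parity and reality. The $\mu\Pi_0$ part of $\mathcal B^\flat$, however, contributes $n(0,\e)^2$. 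Here $n(0,\e)=O(\e^2)$ is the mean of the second component of $f_1^-(0,\e)$, which is in general nonzero. The same mean also puts a term $\mu\,n(0,\e)$ into $F_{22}$. The paper removes both by passing to the basis $\mathcal G$, with $g_1^-=f_1^--nf_0^-$, so that $g_1^-(0,\e)$ has zero average. In your basis $E_{22}$ would contain $\mu\,n(0,\e)^2$, which dominates $\mu^2$ when $\mu\ll n(0,\e)^2$. The decoupling would then have to cancel it with a correction of the same size, which contradicts your claim that it only adds $r_2,r_5,r',r''$ to $\mathsf U$. A minor slip: Step 4 needs $\mathsf e_{11}\mathsf e_{22}=\mathsf e_{\mathrm{WB}}$, not $8\mathsf e_{11}\mathsf e_{22}=\mathsf e_{\mathrm{WB}}$, since the factor $8$ is already in $\Delta_{\mathrm{BF}}$.
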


\begin{remark}
At $\epsilon = 0$, the eigenvalues in \eqref{lambda 1} have the Taylor expansion
\[
\lambda_{1}^{\pm}(\mu,0) 
= \im \left( \ck - \frac{1}{2} \mathsf{e}_{12}(\kappa) \right) \mu 
\pm \im \frac{1}{8}|\mathsf{e}_{22}(\kappa)| \mu^{2} + \mathcal{O}(\mu^{3}),
\]
which coincides with the one of $\lambda_{1}^{\pm}(\mu)$ in \eqref{eigenvalues of Lmu0}, in view of the coefficients $\mathsf{e}_{12}(\kappa)$ and $\mathsf{e}_{22}(\kappa)$ defined in \eqref{e11e22e12}.
\end{remark}
We conclude this section by describing our approach in detail.

\paragraph{Ideas and scheme of proof.}
The proof follows the strategy developed in \cite{BMV1,BMV3}, and relies on Kato's theory of similarity transformations together with a block-decoupling procedure. 
Using Kato’s theory, we analytically continue the unperturbed symplectic basis of the generalized kernel of the linearized operator at the flat surface into a symplectic basis 
\(\{f_k^{\sigma}(\mu,\epsilon)\}\) of the perturbed spectral subspace $\mathcal{V}_{\mu,\e}$, depending analytically on \((\mu,\epsilon)\). 
The expansion of this basis in \((\mu,\epsilon)\) is provided in Lemma~\ref{expansion of the basis F}. We then construct a new symplectic and reversible basis $\{g^{\sigma}_k(\mu,\e)\}$ of $\mathcal{V}_{\mu,\e}$, depending analytically on $(\mu,\e)$, with the additional property that $g^-_1(0,\e)$ has zero spatial average (see Lemma~\ref{new expansion of the basis G}). 
This choice allows us to represent the action of the operator \(\mathscr{L}_{\mu,\epsilon}\) on $\mathcal{V}_{\mu,\e}$ as a \(4\times4\) Hamiltonian and reversible matrix $\mathsf{L}_{\mu,\epsilon}$ (Lemma~\ref{matrix representation mathsf L}), which takes the form (see Lemma~\ref{B decomposition})
\begin{equation}\label{eq:L mu epsilon structure}
\mathsf{L}_{\mu,\epsilon}
= \mathsf{J}_4
\begin{pmatrix}
E & F \\[2pt]
F^{*} & G
\end{pmatrix}
=
\begin{pmatrix}
\mathsf{J}_2 E & \mathsf{J}_2 F \\[2pt]
\mathsf{J}_2 F^{*} & \mathsf{J}_2 G
\end{pmatrix},
\qquad
\mathsf{J}_4 =
\begin{pmatrix}
\mathsf{J}_2 & 0 \\[2pt]
0 & \mathsf{J}_2
\end{pmatrix},
\quad
\mathsf{J}_2 =
\begin{pmatrix}
0 & 1 \\[2pt]
-1 & 0
\end{pmatrix}.
\end{equation}

The blocks $E = E^*$, $F$, and $G = G^*$ are $2\times2$ matrices admitting the expansions \eqref{E}--\eqref{G 2nd}. 
In particular, the matrix $E$ reads
\begin{equation} \label{E first}
\begin{aligned} 
E &= 
\begin{pmatrix}
\mathsf{e}_{11}(\kappa) \epsilon^2(1+r'(\e,\mu\e^2))- \mathsf{e}_{22}(\kappa)\frac{\mu^2}{8}(1+r''(\e,\mu))  &
\im\left( \frac{1}{2}\mathsf{e}_{12}(\kappa) \mu + r_2(\mu \epsilon^2,\mu^2 \epsilon, \mu^3)\right) \\
- \im\left( \frac{1}{2}\mathsf{e}_{12}(\kappa) \mu + r_2(\mu \epsilon^2,\mu^2 \epsilon, \mu^3)\right) &
-\mathsf{e}_{22}(\kappa)\frac{\mu^2}{8}(1+r_5(\e,\mu))
\end{pmatrix},
\end{aligned}
\end{equation}
The coefficients $\mathsf{e}_{11}(\kappa)$, $\mathsf{e}_{12}(\kappa)$, and $\mathsf{e}_{22}(\kappa)$ are defined in \eqref{e11e22e12}. 
We note that $\mathsf{e}_{11}$ is singular at $\kappa=1/2$, while $\mathsf{e}_{22}$ vanishes at $\kappa=\kappa_c$. By analyzing the eigenvalues of the submatrix $\mathsf{J}_2 E$, we show that it possesses a pair of nonzero real eigenvalues if and only if 
\[
\mathsf{e}_{\mathrm{WB}}(\kappa)
= \mathsf{e}_{11}(\kappa)\mathsf{e}_{22}(\kappa) > 0,
\]
provided $0<\mu<\underline{\mu}(\epsilon)\sim\epsilon$. 
To extend this instability to the full matrix $\mathsf{L}_{\mu,\epsilon}$, our goal is to eliminate the coupling term $\mathsf{J}_2 F$. 
This is achieved in Section~\ref{sec:BD} via a block-diagonalization procedure inspired by KAM theory. 

\medskip

We conclude with several remarks. The problem is of singular perturbation type, since the spectra of $\mathsf{J}_2 E$ and $\mathsf{J}_2 G$ both collapse to zero as $\mu \to 0$. 
In the gravity--capillary setting, the analysis is significantly more delicate than in the pure gravity case of \cite{BMV1}, due to the dependence of the coefficients on the capillarity parameter $\kappa$, which must be tracked precisely. 

We also emphasize that the deep-water problem cannot be obtained by simply passing to the limit $\tth\to\infty$ in the finite-depth analysis of \cite{HM}. 
Indeed, this is a singular limit at the level of the linearized operator and its spectral reduction: the compactness properties available in finite depth are lost in the infinite-depth regime, and the reduction procedures differ substantially. 
This discrepancy is already visible at the level of the dispersion relation. 
In finite depth, the relevant scaling involves $\sqrt{\mu\tanh(\tth\mu)} \sim \tth\,\mu$ for small $\mu$, whereas in the deep-water limit one has $\sqrt{\mu\tanh(\tth\mu)} \to \sqrt{\mu}$ as $\tth\to+\infty$. 
Thus the limits do not commute, and lead to fundamentally different scaling regimes. 
Consequently, the finite-depth and deep-water spectral problems require separate analyses, even though the resulting instability coefficients are formally related through the limit $\tth\to\infty$.

\section{Perturbative Approach to the Separated Eigenvalues}

In this section, we analyze the splitting of the eigenvalues of $\mathcal{L}_{\mu, \epsilon}$ close to 0 for small values of $\mu$ and $\epsilon$, using Kato’s similarity transformation theory \cite[I-§4-6, II-§4]{Kato1966} and \cite{BMV1, BMV3, BMV_ed}. To this end, it is convenient to rewrite the operator $\mathcal{L}_{\mu, \epsilon}$ in \eqref{mathcal L mu e} as
\begin{equation} \label{mathcal L= ichmu+mathscr L}
\mathcal{L}_{\mu, \epsilon} = \im \ck \mu + \mathscr{L}_{\mu, \epsilon}, \quad \mu \in (0,\frac{1}{2}),
\end{equation}
where, using also \eqref{D+mu}, $\mathscr{L}_{\mu, \epsilon}$ is the Hamiltonian operator
\begin{equation} \label{mathscr L mu e}
\mathscr{L}_{\mu, \epsilon} = \mathcal{J} \mathcal{B}_{\mu, \epsilon},
\end{equation}
with $\mathcal{B}_{\mu, \epsilon}$ the self-adjoint operator
\begin{equation} \label{mathcal B mu e}
\mathcal{B}_{\mu, \epsilon} := \begin{bmatrix} 1 + a_{\epsilon}(x)-\kappa\Sigma_{\mu,\e} & -(\ck + p_{\epsilon}(x)) \partial_x - \im \mu p_{\epsilon}(x) \\ \partial_x \circ(\ck + p_{\epsilon}(x)) + \im \mu p_{\epsilon}(x) & |D|+\mu(\mathrm{sgn}(D)+\Pi_0)  \end{bmatrix} \ ,
\quad \Sigma_{\mu,\e} \mbox{ in } \eqref{Sigma} \ . 
\end{equation}
In addition  $\mathscr{L}_{\mu, \epsilon}$   is also complex-reversible, namely it satisfies, by \eqref{reversible},
\begin{equation} \label{mathscr rho=-rho mathscr}
\mathscr{L}_{\mu, \epsilon} \circ \bar{\rho} = - \bar{\rho} \circ \mathscr{L}_{\mu, \epsilon},
\end{equation}
whereas $\mathcal{B}_{\mu, \epsilon}$ is reversibility-preserving, i.e. fulfills \eqref{B rho=rho B}. Note also that $\mathcal{B}_{0, \epsilon}$ is a real operator.

The scalar operator $\im \ck \mu \equiv \im \ck \mu \,\text{Id}$ just translates the spectrum of $\mathcal{L}_{\mu, \epsilon}$ along the imaginary axis of the quantity $\im \ck \mu$, that is, in view of \eqref{mathcal L= ichmu+mathscr L},
\begin{equation}
\sigma(\mathcal{L}_{\mu, \epsilon}) = \im \ck \mu + \sigma(\mathscr{L}_{\mu, \epsilon}).
\end{equation}
Thus in the sequel we focus on studying the spectrum of $\mathscr{L}_{\mu, \epsilon}$.

Note also that $\mathscr{L}_{0, \epsilon} = \mathcal{L}_{0, \epsilon}$ for any $\epsilon \geq 0$. In particular $\mathscr{L}_{0,0}$ has zero as an isolated eigenvalue with algebraic multiplicity 4, geometric multiplicity 3 and generalized kernel spanned by the vectors $\{f_1^{+}, f_1^{-}, f_0^{+}, f_0^{-}\}$ in \eqref{eigenfunc of mathcall L00}, \eqref{eigenfunc f+0}; furthermore, its spectrum is separated as in \eqref{sigma decomposition}. For any $\epsilon \neq 0$ small, $\mathscr{L}_{0, \epsilon}$ has zero as an isolated eigenvalue with geometric multiplicity 2, and two generalized eigenvectors satisfying \eqref{237}.

We remark that, in view of \eqref{D+mu}, the operator $\mathscr{L}_{\mu, \epsilon}$ is analytic with respect to $\mu$. The operator $\mathscr{L}_{\mu, \epsilon}: Y \subset X \to X$ has domain $Y := H^2(\mathbb{T},\mathbb{C})\times H^1(\mathbb{T},\mathbb{C})$ and range $X := L^2(\mathbb{T},\mathbb{C})\times L^2(\mathbb{T},\mathbb{C})$.

\begin{lemma} \label{kato thm}
Let $\Gamma$ be a closed, counterclockwise-oriented curve around $0$ in the complex plane separating $\sigma' (\mathscr{L}_{0,0}) = \{0\}$ and the other part of the spectrum $\sigma'' (\mathscr{L}_{0,0})$ in \eqref{sigma decomposition}. There exist $\mu_0,\,\epsilon_0,  > 0$ such that for any $(\mu, \epsilon) \in B(\mu_0) \times B(\epsilon_0)$ the following statements hold:

\begin{enumerate}
    \item \textit{The curve $\Gamma$ belongs to the resolvent set of the operator $\mathscr{L}_{\mu,\epsilon} : Y \subset X \to X$ defined in \eqref{mathscr L mu e}.}
    \item \textit{The operators}
    \begin{equation} \label{Projection P mu e}
        P_{\mu,\epsilon} := - \frac{1}{2\pi \im} \oint_\Gamma (\mathscr{L}_{\mu,\epsilon} - \lambda)^{-1} \ d\lambda : X \to Y
    \end{equation}
    \textit{are well-defined projectors commuting with $\mathscr{L}_{\mu,\epsilon}$, i.e., $P_{\mu,\epsilon}^2 = P_{\mu,\epsilon}$ and $P_{\mu,\epsilon} \mathscr{L}_{\mu,\epsilon} = \mathscr{L}_{\mu,\epsilon} P_{\mu,\epsilon}$. The map $(\mu, \epsilon) \mapsto P_{\mu,\epsilon}$ is analytic from $B(\mu_0) \times B(\epsilon_0)$ to $\mathcal{L}(X,Y)$.}
    \item \textit{The domain $Y$ of the operator $\mathscr{L}_{\mu,\epsilon}$ decomposes as the direct sum}
    \begin{equation} \label{Y=V+ker P}
        Y = \mathcal{V}_{\mu,\epsilon} \oplus \ker(P_{\mu,\epsilon}), \quad \mathcal{V}_{\mu,\epsilon} := \operatorname{Rg}(P_{\mu,\epsilon}) = \ker(\operatorname{Id} - P_{\mu,\epsilon}),
    \end{equation}
    \textit{of closed invariant subspaces, namely $\mathscr{L}_{\mu,\epsilon} : \mathcal{V}_{\mu,\epsilon} \to \mathcal{V}_{\mu,\epsilon}$, $\mathscr{L}_{\mu,\epsilon} : \ker(P_{\mu,\epsilon}) \to \ker(P_{\mu,\epsilon})$. Moreover}
    \begin{equation} \label{spectrum separated by Gamma}
    \begin{aligned}
        \sigma(\mathscr{L}_{\mu,\epsilon}) \cap \{z \in \mathbb{C} \text{ inside } \Gamma\} &= \sigma(\mathscr{L}_{\mu,\epsilon} |_{\mathcal{V}_{\mu,\epsilon}}) = \sigma'(\mathscr{L}_{\mu,\epsilon}), \\
        \sigma(\mathscr{L}_{\mu,\epsilon}) \cap \{z \in \mathbb{C} \text{ outside } \Gamma\} &= \sigma(\mathscr{L}_{\mu,\epsilon} |_{\ker(P_{\mu,\epsilon})}) = \sigma''(\mathscr{L}_{\mu,\epsilon}).
    \end{aligned}
    \end{equation}
    \item \textit{The projectors $P_{\mu,\epsilon}$ are similar to each other; the transformation operators}
    \begin{equation} \label{U transformation operators}
        U_{\mu,\epsilon} := (\operatorname{Id} - (P_{\mu,\epsilon} - P_{0,0})^2)^{-1/2} \big[P_{\mu,\epsilon} P_{0,0} + (\operatorname{Id} - P_{\mu,\epsilon})(\operatorname{Id} - P_{0,0}) \big]
    \end{equation}
    \textit{are bounded and invertible in $Y$ and in $X$, with inverse}
    \begin{equation} \label{U inverse}
        U_{\mu,\epsilon}^{-1} = \big[P_{0,0} P_{\mu,\epsilon} + (\operatorname{Id} - P_{0,0})(\operatorname{Id} - P_{\mu,\epsilon})\big](\operatorname{Id} - (P_{\mu,\epsilon} - P_{0,0})^2)^{-1/2},
    \end{equation}
    \textit{and $U_{\mu,\epsilon} P_{0,0} U_{\mu,\epsilon}^{-1} = P_{\mu,\epsilon}$ as well as $U_{\mu,\epsilon}^{-1} P_{\mu,\epsilon} U_{\mu,\epsilon} = P_{0,0}$. The map $(\mu, \epsilon) \mapsto U_{\mu,\epsilon}$ is analytic from $B(\mu_0) \times B(\epsilon_0)$ to $\mathcal{L}(Y)$.}
    \item \textit{The subspaces $\mathcal{V}_{\mu,\epsilon} = \operatorname{Rg}(P_{\mu,\epsilon})$ are isomorphic to each other: $\mathcal{V}_{\mu,\epsilon} = U_{\mu,\epsilon} \mathcal{V}_{0,0}$. In particular $\dim \mathcal{V}_{\mu,\epsilon} = \dim \mathcal{V}_{0,0} = 4$, for any $(\mu, \epsilon) \in B(\mu_0) \times B(\epsilon_0)$.}
\end{enumerate}
\end{lemma}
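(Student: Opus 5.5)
The plan is to deduce Lemma~\ref{kato thm} from Kato's perturbation theory for analytic families of closed operators, as in \cite{Kato1966} and \cite[Lemma 3.1]{BMV1}. The key input is that $(\mu,\e)\mapsto \mathscr{L}_{\mu,\e}\in\mathcal{L}(Y,X)$ is analytic on a small bidisc. This follows from the analyticity of $\e\mapsto(p_\e,a_\e,g_\e,\mathfrak p)$ and from the identity \eqref{D+mu}, which makes $|D+\mu|$ affine in $\mu$. Hence
\[
\|\mathscr{L}_{\mu,\e}-\mathscr{L}_{0,0}\|_{\mathcal{L}(Y,X)}\leq C(|\mu|+|\e|).
\]
I also use that the graph norm of $\mathscr{L}_{0,0}$ is equivalent to the $Y$-norm. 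This is direct on the Fourier side: the symbol of $\mathscr{L}_{0,0}$ has first row $(\im \ck k,\ |k|)$ and second row $(-1-\kappa k^2,\ \im\ck k)$, so ellipticity (for $\kappa>0$, and also for $\kappa=0$ with $Y=H^1\times H^1$-type adjustments) is checked componentwise.

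\textbf{Item 1.} Since $\Gamma$ is compact and lies in the resolvent set of $\mathscr{L}_{0,0}$, the quantity $M:=\sup_{\lambda\in\Gamma}\|(\mathscr{L}_{0,0}-\lambda)^{-1}\|_{\mathcal{L}(X,Y)}$ is finite. This is computed explicitly from the Fourier multiplier, because the eigenvalues \eqref{eigenvalues of Lmu0} at $\mu=0$ stay at a uniform positive distance from $\Gamma$. A Neumann series then gives
\[
(\mathscr{L}_{\mu,\e}-\lambda)^{-1}=(\mathscr{L}_{0,0}-\lambda)^{-1}\big(\mathrm{Id}+(\mathscr{L}_{\mu,\e}-\mathscr{L}_{0,0})(\mathscr{L}_{0,0}-\lambda)^{-1}\big)^{-1}
\]
whenever $C(\mu_0+\e_0)M<1/2$. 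This representation is analytic in $(\mu,\e)$, uniformly for $\lambda\in\Gamma$.

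\textbf{Item 2.} Integrating this resolvent over $\Gamma$ gives the analytic map $P_{\mu,\e}\in\mathcal{L}(X,Y)$. The projector identity $P^2=P$ follows from the standard resolvent identity argument, using two nested contours. Commutation with $\mathscr{L}_{\mu,\e}$ holds because the resolvent commutes with the operator on $Y$.

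\textbf{Item 3.} The decomposition $Y=\mathrm{Rg}P\oplus\ker P$ is the general Riesz decomposition for the spectral projection, and invariance follows from commutation. Kato's theorem \cite[III-\S6.4]{Kato1966} then gives the spectral splitting \eqref{spectrum separated by Gamma}. Finiteness of the range is part of Item 5.

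\textbf{Items 4 and 5.} Analyticity gives $\|P_{\mu,\e}-P_{0,0}\|<1$ after shrinking $\mu_0,\e_0$, so $(\mathrm{Id}-(P_{\mu,\e}-P_{0,0})^2)^{-1/2}$ is defined by its binomial power series and is analytic. Set $R:=(P-P_0)^2$. I then use Kato's algebraic identities: $R$ commutes with $P$ and $P_0$, and
\[
\big[PP_0+(\mathrm{Id}-P)(\mathrm{Id}-P_0)\big]\big[P_0P+(\mathrm{Id}-P_0)(\mathrm{Id}-P)\big]=\mathrm{Id}-R .
\]
These identities verify \eqref{U inverse} and the intertwining relations $UP_0=PU$. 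Boundedness in both $X$ and $Y$ holds because $P_{\mu,\e}$ maps $X\to Y$. Item 5 follows at once: $\mathcal{V}_{\mu,\e}=U\mathcal{V}_{0,0}$, and $\dim\mathcal{V}_{0,0}=4$ by the generalized kernel of $\mathscr{L}_{0,0}$ described in \eqref{eigenfunc of mathcall L00}--\eqref{eigenfunc f+0}.

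The main obstacle is the uniform resolvent bound in Item 1 in the $\mathcal{L}(X,Y)$ topology. This needs the explicit Fourier computation of $(\mathscr{L}_{0,0}-\lambda)^{-1}$ for all $\lambda\in\Gamma$, together with care that the perturbation, which includes the second-order term $\kappa\Sigma_{\mu,\e}$, is relatively small with respect to $\mathscr{L}_{0,0}$.
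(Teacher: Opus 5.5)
Your proposal is correct and matches the paper's intended proof. The paper skips the argument and refers to \cite[Lemma 3.1]{BMV1}, which consists of exactly your steps: a Neumann-series perturbation of $(\mathscr{L}_{0,0}-\lambda)^{-1}\in\mathcal{L}(X,Y)$ uniformly on $\Gamma$, Riesz projectors via \cite{Kato1966}, and Kato's transformation operators based on $[PP_0+(\mathrm{Id}-P)(\mathrm{Id}-P_0)][P_0P+(\mathrm{Id}-P_0)(\mathrm{Id}-P)]=\mathrm{Id}-(P-P_0)^2$. Your side remark is also right: the bound with $Y=H^2\times H^1$ genuinely uses $\kappa>0$, whereas for $\kappa=0$ one must take $Y=H^1\times H^1$ as in \cite{BMV1}.
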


The proof of Lemma \ref{kato thm} is similar to the one of \cite[Lemma 3.1]{BMV1} and we skip it.  Recalling \eqref{mathscr L mu e}-\eqref{mathscr rho=-rho mathscr}, the Hamiltonian and reversible nature of the operator $\mathscr{L}_{\mu,\e}$ imply additional algebraic properties for spectral projectors $P_{\mu,\e}$ and the transformation operators $U_{\mu,\e}$ as follows.  

\begin{lemma} \label{properties of U and P}
For any $(\mu, \epsilon) \in B(\mu_0) \times B(\epsilon_0)$, the following holds true:

\begin{itemize}
    \item[(i)] The projectors $P_{\mu,\epsilon}$ defined in \eqref{Projection P mu e} are skew-Hamiltonian, namely $\mathcal{J} P_{\mu,\epsilon} = P_{\mu,\epsilon}^* \mathcal{J}$, and reversibility preserving, i.e. $\bar{\rho} P_{\mu,\epsilon} = P_{\mu,\epsilon} \bar{\rho}$.
    
    \item[(ii)] The transformation operators $U_{\mu,\epsilon}$ in \eqref{U transformation operators} are symplectic, namely $U_{\mu,\epsilon}^* \mathcal{J} U_{\mu,\epsilon} = \mathcal{J}$, and reversibility preserving.
    
    \item[(iii)] $P_{0,\epsilon}$ and $U_{0,\epsilon}$ are real operators, i.e. $\bar{P}_{0,\epsilon} = P_{0,\epsilon}$ and $\bar{U}_{0,\epsilon} = U_{0,\epsilon}$.
\end{itemize}
    
\end{lemma}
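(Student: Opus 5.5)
The plan is to derive all three properties from the corresponding structural properties of $\mathscr{L}_{\mu,\epsilon}$, namely $\mathscr{L}_{\mu,\epsilon}=\mathcal{J}\mathcal{B}_{\mu,\epsilon}$ with $\mathcal{B}_{\mu,\epsilon}$ self-adjoint and reversibility preserving as in \eqref{mathscr rho=-rho mathscr}. These identities are transferred first to the resolvent, then to the Riesz projector \eqref{Projection P mu e} via the contour integral, and finally to $U_{\mu,\epsilon}$ via the explicit formula \eqref{U transformation operators}. This follows the scheme of \cite[Lemma 3.2]{BMV1}.

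For (i), I would use $\mathcal{J}^*=-\mathcal{J}=\mathcal{J}^{-1}$ to get
\[
\mathcal{J}\mathscr{L}_{\mu,\epsilon}=-\mathcal{B}_{\mu,\epsilon}=\mathscr{L}_{\mu,\epsilon}^*\mathcal{J},
\]
and hence $\mathcal{J}(\mathscr{L}_{\mu,\epsilon}-\lambda)^{-1}=(\mathscr{L}_{\mu,\epsilon}^*-\lambda)^{-1}\mathcal{J}$ for $\lambda\in\Gamma$.

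\begin{itemize}
\item Integrating over $\Gamma$ gives $\mathcal{J}P_{\mu,\epsilon}=\widetilde P\,\mathcal{J}$, where $\widetilde P$ is the Riesz projector of $\mathscr{L}^*_{\mu,\epsilon}$ along $\Gamma$.
\item The adjoint of $P_{\mu,\epsilon}$ is $-\frac{1}{2\pi\im}\oint_{\overline\Gamma}(\mathscr{L}^*_{\mu,\epsilon}-\lambda)^{-1}d\lambda$, where $\overline\Gamma$ is the conjugate curve with the orientation restored.
\item To conclude $\widetilde P=P_{\mu,\epsilon}^*$, I would choose $\Gamma$ symmetric under complex conjugation, e.g.\ a circle centred at $0$. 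This is allowed, since $\sigma(\mathscr{L}_{0,0})\subset\im\R$. Alternatively, note that both operators are the spectral projector of $\mathscr{L}^*$ onto the part of the spectrum inside the conjugated region, which is $-\overline{\sigma'}$ reflected.
\end{itemize}

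The cleanest route is to take $\Gamma$ to be a circle, which is invariant under $\lambda\mapsto\bar\lambda$ and $\lambda\mapsto-\lambda$. For reversibility, $\bar\rho$ is antilinear and $\bar\rho(\mathscr{L}-\lambda)^{-1}=(-\mathscr{L}-\bar\lambda)^{-1}\bar\rho$. Then $\bar\rho P_{\mu,\epsilon}=P_{\mu,\epsilon}\bar\rho$ follows by the change of variable $\lambda\mapsto-\bar\lambda$ in the contour integral. This change of variable preserves the orientation of a centred circle once the conjugation of the factor $\frac{1}{2\pi\im}$ and of $d\lambda$ is accounted for.

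For (ii), I would first set $Q:=(P_{\mu,\epsilon}-P_{0,0})^2$. Since both $P_{\mu,\epsilon}$ and $P_{0,0}$ are skew-Hamiltonian, so is any polynomial or power series in them:
\[
\mathcal{J}(P_{\mu,\epsilon}-P_{0,0})^2=\big((P_{\mu,\epsilon}-P_{0,0})^*\big)^2\mathcal{J}.
\]
Then $(\mathrm{Id}-Q)^{-1/2}$, defined by its binomial series (convergent because $\|Q\|$ is small), satisfies $\mathcal{J}(\mathrm{Id}-Q)^{-1/2}=((\mathrm{Id}-Q)^{-1/2})^*\mathcal{J}$. Next I would use that $Q$ commutes with $P_{\mu,\epsilon}$ and with $P_{0,0}$, and compute
\[
U^*_{\mu,\epsilon}\mathcal{J}U_{\mu,\epsilon}
=\mathcal{J}\,\big[P_{0,0}P_{\mu,\epsilon}+(\mathrm{Id}-P_{0,0})(\mathrm{Id}-P_{\mu,\epsilon})\big](\mathrm{Id}-Q)^{-1/2}\,U_{\mu,\epsilon}.
\]
By \eqref{U inverse} the right-hand side equals $\mathcal{J}U^{-1}_{\mu,\epsilon}U_{\mu,\epsilon}=\mathcal{J}$. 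Reversibility preservation of $U_{\mu,\epsilon}$ is immediate, because it is built from $P_{\mu,\epsilon}$ and $P_{0,0}$ through real-coefficient series, and both projectors commute with $\bar\rho$.

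For (iii), at $\mu=0$ the operator $\mathscr{L}_{0,\epsilon}$ is real, since $\mathcal{B}_{0,\epsilon}$ is real. Hence $\overline{(\mathscr{L}_{0,\epsilon}-\lambda)^{-1}}=(\mathscr{L}_{0,\epsilon}-\bar\lambda)^{-1}$, and conjugating the integral over a conjugation-symmetric $\Gamma$ gives $\overline{P_{0,\epsilon}}=P_{0,\epsilon}$. The formula for $U$ then yields $\overline{U_{0,\epsilon}}=U_{0,\epsilon}$.

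The main obstacle is the bookkeeping of orientation and complex conjugation under the antilinear maps and adjoints in the contour integrals. Choosing $\Gamma$ to be a circle centred at $0$ removes it.
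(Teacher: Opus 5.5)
Your route is the one the paper takes: it defers to \cite[Lemma 3.2]{BMV1}, which pushes the Hamiltonian, reversible and real structure of $\mathscr{L}_{\mu,\e}$ through the resolvent to the Riesz projector, and then to $U_{\mu,\e}$ via \eqref{U transformation operators}--\eqref{U inverse}. Parts (ii), (iii) and the reversibility half of (i) go through as you describe. The Hamiltonian half of (i), however, contains a sign error.

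Since $\mathcal{J}^*\mathcal{J}=\mathrm{Id}$, one has $\mathscr{L}_{\mu,\e}^*\mathcal{J}=\mathcal{B}_{\mu,\e}\mathcal{J}^*\mathcal{J}=+\mathcal{B}_{\mu,\e}$. So the correct relation is $\mathcal{J}\mathscr{L}_{\mu,\e}=-\mathscr{L}_{\mu,\e}^*\mathcal{J}$. The identity you wrote is the skew-Hamiltonian property, which is what must be proved for $P_{\mu,\e}$, not what $\mathscr{L}_{\mu,\e}$ satisfies. Hence
\[
\mathcal{J}(\mathscr{L}_{\mu,\e}-\lambda)^{-1}=-(\mathscr{L}_{\mu,\e}^*+\lambda)^{-1}\mathcal{J}.
\]
After the substitution $\lambda\mapsto-\lambda$, the contour integral gives $\mathcal{J}P_{\mu,\e}=\widetilde P\,\mathcal{J}$, where $\widetilde P$ is the Riesz projector of $\mathscr{L}^*_{\mu,\e}$ along $-\Gamma$ rather than along $\Gamma$. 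You take $\Gamma$ to be a circle centred at $0$. The map $\lambda\mapsto-\lambda$ sends this circle onto itself preserving orientation, and $\overline{\Gamma}$ with orientation restored is again $\Gamma$. So $\widetilde P$ is still $P^*_{\mu,\e}$, and the conclusion survives once the identity is corrected.

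There is also a smaller imprecision in (ii). The skew-Hamiltonian property is not inherited by arbitrary polynomials in the two projectors, because
$\mathcal{J}AB=A^*\mathcal{J}B=A^*B^*\mathcal{J}=(BA)^*\mathcal{J}$ reverses the order of the factors. It is inherited by powers and power series of the single operator $P_{\mu,\e}-P_{0,0}$, which is all you need for $(\mathrm{Id}-Q)^{-1/2}$. The order reversal is exactly what turns $P_{\mu,\e}P_{0,0}+(\mathrm{Id}-P_{\mu,\e})(\mathrm{Id}-P_{0,0})$ into the middle factor $P_{0,0}P_{\mu,\e}+(\mathrm{Id}-P_{0,0})(\mathrm{Id}-P_{\mu,\e})$ of $U_{\mu,\e}^{-1}$. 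Your displayed computation of $U^*_{\mu,\e}\mathcal{J}U_{\mu,\e}$ is therefore correct as written.
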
 

See \cite[Lemma 3.2]{BMV1} for details. By the previous lemma, the linear involution $\bar{\rho}$ commutes with the spectral projectors $P_{\mu,\epsilon}$ and then $\bar{\rho}$ leaves invariant the subspace $\mathcal{V}_{\mu,\epsilon} = \mathrm{Rg}(P_{\mu,\epsilon})$.

\paragraph{Symplectic and reversible basis of $\mathcal{V}_{\mu,\epsilon}$.} It is convenient to represent the Hamiltonian and reversible operator $\mathscr{L}_{\mu,\epsilon}: \mathcal{V}_{\mu,\epsilon} \to \mathcal{V}_{\mu,\epsilon}$ in a basis which is symplectic and reversible, according to the following definition:

\begin{definition}[Symplectic and reversible basis]  \label{Symplectic and reversible basis}
    A basis $\mathsf{F} := \{ \mathsf{f}_1^+, \mathsf{f}_1^-, \mathsf{f}_0^+, \mathsf{f}_0^- \}$ of $\mathcal{V}_{\mu,\epsilon}$ is \textit{symplectic} if, for any $k, k' = 0,1$,
\begin{equation} \label{basis is symplectic}
    \begin{aligned}
    (\mathcal{J} \mathsf{f}_k^\mp, \mathsf{f}_k^\pm) = \pm 1, ~~(\mathcal{J} \mathsf{f}_k^\sigma, \mathsf{f}_k^\sigma) = 0, \quad \forall \,\sigma = \pm; \\
    \text{if } k \neq k', \text{ then } (\mathcal{J} \mathsf{f}_k^\sigma, \mathsf{f}_{k'}^{\sigma'}) = 0, \quad \forall\, \sigma, \sigma' = \pm.
\end{aligned}
\end{equation}   

This is \textit{reversible} if
\begin{equation} \label{basis is reversible}
    \begin{aligned}
    \bar{\rho} \mathsf{f}_1^+ &= \mathsf{f}_1^+,  ~\bar{\rho} \mathsf{f}_1^- = -\mathsf{f}_1^-,~\bar{\rho} \mathsf{f}_0^+ = \mathsf{f}_0^+,  ~\bar{\rho} \mathsf{f}_0^- = -\mathsf{f}_0^-, \\
    &\text{i.e. } \bar{\rho} \mathsf{f}_k^\sigma = \sigma \mathsf{f}_k^\sigma, \quad \forall\, \sigma = \pm, k = 0,1.
\end{aligned}
\end{equation}
\end{definition} 

We use the following notation along the paper: we denote by $\mathit{even}(x)$ a real $2\pi$-periodic function which is even in $x$, and by $\mathit{odd}(x)$ a real $2\pi$-periodic function which is odd in $x$.

\begin{remark}[Parity structure of the reversible basis \eqref{basis is reversible}]
The elements of a reversible basis \( \mathsf{F}=\{\mathsf{f}^+_1,\mathsf{f}^-_1,\mathsf{f}^+_0,\mathsf{f}^-_0\} \) enjoys specific parity properties. Specifically,
\begin{align} \label{Parity structure}
\mathsf{f}_k^+(x) = 
\begin{bmatrix}
\mathit{even}(x) + \im\,\mathit{odd}(x) \\
\mathit{odd}(x) + \im\,\mathit{even}(x)
\end{bmatrix}, \quad
\mathsf{f}_k^-(x) = 
\begin{bmatrix}
\mathit{odd}(x) + \im\,\mathit{even}(x) \\
\mathit{even}(x) + \im\,\mathit{odd}(x)
\end{bmatrix}.
\end{align}
This structure follows from the reversibility of the problem, specifically from the involution \( \overline{\rho} \) defined in equation \eqref{reversible}, which implies that the real and imaginary parts of each component satisfy definite parity conditions.
\end{remark}

\begin{remark}[Symplectic expansion using the basis in \eqref{basis is symplectic}]
We can express any vector \( \mathsf{f} \in \mathcal{V}_{\mu,\epsilon} \) as a linear combination of the symplectic basis:
\begin{align} \label{remark f expansion}
    \mathsf{f} = \alpha_1^+ \mathsf{f}_1^+ + \alpha_1^- \mathsf{f}_1^- + \alpha_0^+ \mathsf{f}_0^+ + \alpha_0^- \mathsf{f}_0^-,
\end{align}
for suitable coefficients \( \alpha_k^\sigma \in \mathbb{C} \). These coefficients are computed by applying the symplectic form \( \mathcal{J} \), taking \( L^2 \)-scalar products with the basis elements, and using the symplecticity \eqref{basis is symplectic}. Therefore, we may rewrite \eqref{remark f expansion} as
\begin{align} \label{expasion of f by using symplectic basis}
    \mathsf{f}=-(\mathcal{J}\mathsf{f},\mathsf{f}^-_1)\mathsf{f}^+_1+(\mathcal{J}\mathsf{f},\mathsf{f}^+_1)\mathsf{f}^-_1-(\mathcal{J}\mathsf{f},\mathsf{f}^-_0)\mathsf{f}^+_0+(\mathcal{J}\mathsf{f},\mathsf{f}^+_0)\mathsf{f}^-_0.
\end{align}
\end{remark}

We now compute the matrix representation of $\mathscr{L}_{\mu,\e}$ in a symplectic and reversible basis of $\mathcal{V}_{\mu,\e}$.

\begin{lemma} \label{matrix representation mathsf L}
The $4 \times 4$ matrix that represents the Hamiltonian and reversible operator $\mathscr{L}_{\mu,\epsilon} = \mathcal{J} \mathcal{B}_{\mu,\epsilon} : \mathcal{V}_{\mu,\epsilon} \to \mathcal{V}_{\mu,\epsilon}$ with respect to a symplectic and reversible basis $\mathsf{F} = \{ \mathsf{f}_1^{+}, \mathsf{f}_1^{-}, \mathsf{f}_0^{+}, \mathsf{f}_0^{-} \}$ of $\mathcal{V}_{\mu,\epsilon}$ is
\begin{align} \label{J4Bmuepsilon}
  \mathsf{L}_{\mu,\epsilon}:=  \mathsf{J}_4 \mathsf{B}_{\mu,\epsilon}, \quad \mathsf{J}_4 := 
\left(
\begin{array}{c|c}
\mathsf{J}_2 & 0 \\
\hline
0 & \mathsf{J}_2
\end{array}
\right), \quad
    \mathsf{J}_2 := 
    \begin{pmatrix}
        0 & 1 \\
        -1 & 0
    \end{pmatrix}, \quad \text{where} \quad \mathsf{B}_{\mu,\epsilon} = \mathsf{B}_{\mu,\epsilon}^{*}.
\end{align}
The self-adjoint matrix
\begin{align} \label{Bmuepsilon matrix representation}
    \mathsf{B}_{\mu,\epsilon} = \left( 
    \begin{array}{cccc}
        (\mathcal{B}_{\mu,\epsilon} \mathsf{f}_1^{+}, \mathsf{f}_1^{+}) & (\mathcal{B}_{\mu,\epsilon} \mathsf{f}_1^{-}, \mathsf{f}_1^{+}) & (\mathcal{B}_{\mu,\epsilon} \mathsf{f}_0^{+}, \mathsf{f}_1^{+}) & (\mathcal{B}_{\mu,\epsilon} \mathsf{f}_0^{-}, \mathsf{f}_1^{+}) \\
        (\mathcal{B}_{\mu,\epsilon} \mathsf{f}_1^{+}, \mathsf{f}_1^{-}) & (\mathcal{B}_{\mu,\epsilon} \mathsf{f}_1^{-}, \mathsf{f}_1^{-}) & (\mathcal{B}_{\mu,\epsilon} \mathsf{f}_0^{+}, \mathsf{f}_1^{-}) & (\mathcal{B}_{\mu,\epsilon} \mathsf{f}_0^{-}, \mathsf{f}_1^{-}) \\
        (\mathcal{B}_{\mu,\epsilon} \mathsf{f}_1^{+}, \mathsf{f}_0^{+}) & (\mathcal{B}_{\mu,\epsilon} \mathsf{f}_1^{-}, \mathsf{f}_0^{+}) & (\mathcal{B}_{\mu,\epsilon} \mathsf{f}_0^{+}, \mathsf{f}_0^{+}) & (\mathcal{B}_{\mu,\epsilon} \mathsf{f}_0^{-}, \mathsf{f}_0^{+}) \\
        (\mathcal{B}_{\mu,\epsilon} \mathsf{f}_1^{+}, \mathsf{f}_0^{-}) & (\mathcal{B}_{\mu,\epsilon} \mathsf{f}_1^{-}, \mathsf{f}_0^{-}) & (\mathcal{B}_{\mu,\epsilon} \mathsf{f}_0^{+}, \mathsf{f}_0^{-}) & (\mathcal{B}_{\mu,\epsilon} \mathsf{f}_0^{-}, \mathsf{f}_0^{-})
    \end{array}
    \right).
\end{align}
The entries of the matrix $\mathsf{B}_{\mu,\epsilon}$ are alternatively real or purely imaginary: for any $\sigma = \pm, k = 0, 1$,
\begin{align} \label{B are alternatively real or imaginary}
    (\mathcal{B}_{\mu,\epsilon} \mathsf{f}_k^{\sigma}, \mathsf{f}_{k'}^{\sigma}) \text{ is real}, \quad
    (\mathcal{B}_{\mu,\epsilon} \mathsf{f}_k^{\sigma}, \mathsf{f}_{k'}^{-\sigma}) \text{ is purely imaginary}.
\end{align}  
\end{lemma}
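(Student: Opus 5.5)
We compute the action of $\mathscr{L}_{\mu,\epsilon}$ on each basis vector and read off its coordinates using the symplectic expansion \eqref{expasion of f by using symplectic basis}. Since $\mathcal{V}_{\mu,\epsilon}$ is invariant, $\mathscr{L}_{\mu,\epsilon}\mathsf{f}_k^\sigma\in\mathcal{V}_{\mu,\epsilon}$. Applying \eqref{expasion of f by using symplectic basis} to $\mathsf{f}=\mathcal{J}\mathcal{B}_{\mu,\epsilon}\mathsf{f}_k^\sigma$ and using $\mathcal{J}^2=-\mathrm{Id}$ gives
\[
\mathscr{L}_{\mu,\epsilon}\mathsf{f}_k^\sigma=(\mathcal{B}_{\mu,\epsilon}\mathsf{f}_k^\sigma,\mathsf{f}_1^-)\mathsf{f}_1^+-(\mathcal{B}_{\mu,\epsilon}\mathsf{f}_k^\sigma,\mathsf{f}_1^+)\mathsf{f}_1^-+(\mathcal{B}_{\mu,\epsilon}\mathsf{f}_k^\sigma,\mathsf{f}_0^-)\mathsf{f}_0^+-(\mathcal{B}_{\mu,\epsilon}\mathsf{f}_k^\sigma,\mathsf{f}_0^+)\mathsf{f}_0^-.
\]
The column of coordinates of $\mathscr{L}_{\mu,\epsilon}\mathsf{f}_k^\sigma$ is therefore $\mathsf{J}_4$ applied to the column $\big((\mathcal{B}\mathsf{f}_k^\sigma,\mathsf{f}_1^+),(\mathcal{B}\mathsf{f}_k^\sigma,\mathsf{f}_1^-),(\mathcal{B}\mathsf{f}_k^\sigma,\mathsf{f}_0^+),(\mathcal{B}\mathsf{f}_k^\sigma,\mathsf{f}_0^-)\big)^T$. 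Indeed, $\mathsf{J}_2(a,b)^T=(b,-a)^T$ matches the coefficients above. Collecting the columns yields $\mathsf{L}_{\mu,\epsilon}=\mathsf{J}_4\mathsf{B}_{\mu,\epsilon}$ with $\mathsf{B}_{\mu,\epsilon}$ as in \eqref{Bmuepsilon matrix representation}. The matrix is self-adjoint because $\mathcal{B}_{\mu,\epsilon}$ is self-adjoint with respect to \eqref{complex product}, so $(\mathcal{B}\mathsf{f},\mathsf{g})=\overline{(\mathcal{B}\mathsf{g},\mathsf{f})}$.

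We must check that the expansion formula applies, i.e.\ that the basis vectors lie in the domain $Y$. This holds because $\mathcal{V}_{\mu,\epsilon}=\mathrm{Rg}(P_{\mu,\epsilon})\subset Y$ by Lemma \ref{kato thm}. We also need the sign conventions in \eqref{basis is symplectic} to reproduce exactly \eqref{expasion of f by using symplectic basis}, which the paper has already recorded.

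For \eqref{B are alternatively real or imaginary}, we use reversibility. Since $\bar\rho$ is antilinear and an isometry composed with $x\mapsto -x$, it satisfies $(\bar\rho f,\bar\rho g)=\overline{(f,g)}$. By \eqref{B rho=rho B} and \eqref{basis is reversible},
\[
(\mathcal{B}\mathsf{f}_k^\sigma,\mathsf{f}_{k'}^{\sigma'})=\sigma\sigma'(\mathcal{B}\bar\rho\mathsf{f}_k^\sigma,\bar\rho\mathsf{f}_{k'}^{\sigma'})=\sigma\sigma'(\bar\rho\mathcal{B}\mathsf{f}_k^\sigma,\bar\rho\mathsf{f}_{k'}^{\sigma'})=\sigma\sigma'\,\overline{(\mathcal{B}\mathsf{f}_k^\sigma,\mathsf{f}_{k'}^{\sigma'})}.
\]
In the first equality we use that $\sigma,\sigma'$ are real, so the antilinearity introduces no conjugation of these signs. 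Hence the entry is real when $\sigma'=\sigma$ and purely imaginary when $\sigma'=-\sigma$.

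The only delicate step is verifying $(\bar\rho f,\bar\rho g)=\overline{(f,g)}$ componentwise, including the minus sign on the second component. That sign squares away in the product $\overline{f_2}(-x)\,g_2(-x)$, and the change of variables $x\mapsto -x$ preserves the integral over $\mathbb{T}$. The rest is bookkeeping.
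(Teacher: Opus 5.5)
Your proposal is correct and follows the paper's proof essentially step by step. You expand $\mathscr{L}_{\mu,\epsilon}\mathsf{f}_k^\sigma$ with the symplectic coordinate formula, using $\mathcal{J}^2=-\mathrm{Id}$, to read off $\mathsf{J}_4\mathsf{B}_{\mu,\epsilon}$; you inherit self-adjointness from $\mathcal{B}_{\mu,\epsilon}$; and you combine $(\mathsf{f},\mathsf{g})=\overline{(\bar\rho\mathsf{f},\bar\rho\mathsf{g})}$, $\bar\rho\,\mathcal{B}_{\mu,\epsilon}=\mathcal{B}_{\mu,\epsilon}\,\bar\rho$ and $\bar\rho\mathsf{f}_k^\sigma=\sigma\mathsf{f}_k^\sigma$ to get the identity that makes each entry real or purely imaginary.
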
 
\begin{proof}
    Recalling \eqref{mathscr L mu e} and \eqref{expasion of f by using symplectic basis}, for $\sigma=\pm$, $k=0,1$, we obtain
    \begin{equation*} 
        \begin{aligned}
            \mathscr{L}_{\mu,\epsilon}\mathsf{f}^{\sigma}_k&=-(\mathcal{J}\mathscr{L}_{\mu,\epsilon}\mathsf{f}^{\sigma}_k,\mathsf{f}^-_1)\mathsf{f}^+_1+(\mathcal{J}\mathscr{L}_{\mu,\epsilon}\mathsf{f}^{\sigma}_k,\mathsf{f}^+_1)\mathsf{f}^-_1-(\mathcal{J}\mathscr{L}_{\mu,\epsilon}\mathsf{f}^{\sigma}_k,\mathsf{f}^-_0)\mathsf{f}^+_0+(\mathcal{J}\mathscr{L}_{\mu,\epsilon}\mathsf{f}^{\sigma}_k,\mathsf{f}^+_0)\mathsf{f}^-_0\\
            &=(\mathcal{B}_{\mu,\epsilon}\mathsf{f}^{\sigma}_k,\mathsf{f}^-_1)\mathsf{f}^+_1-(\mathcal{B}_{\mu,\epsilon}\mathsf{f}^{\sigma}_k,\mathsf{f}^+_1)\mathsf{f}^-_1+(\mathcal{B}_{\mu,\epsilon}\mathsf{f}^{\sigma}_k,\mathsf{f}^-_0)\mathsf{f}^+_0-(\mathcal{B}_{\mu,\epsilon}\mathsf{f}^{\sigma}_k,\mathsf{f}^+_0)\mathsf{f}^-_0.
        \end{aligned}
    \end{equation*}
This verifies that the matrix representation of $\mathscr{L}_{\mu,\epsilon}$ with respect to $\mathsf{F}$ is $\mathsf{J}_4\mathsf{B}_{\mu,\epsilon}$. Also, the martix $\mathsf{B}_{\mu,\e}$ is self-adjoing because $\mathcal{B}_{\mu,\e}$ is self-adjoint. Next, recall from \eqref{complex product} and \eqref{reversible} that the inner product satisfies
\begin{align} \label{(f,g)=(rho f, rho g)}
    (\mathsf{f},\mathsf{g})=\overline{(\overline{\rho}\mathsf{f},\overline{\rho}\mathsf{g})}.
\end{align}
Then, since $\mathcal{B}_{\mu,\e}$ is both self-adjoint and reversibility-preserving \eqref{B rho=rho B} and \eqref{basis is reversible}, we compute:
\begin{align*}
    (\mathcal{B}_{\mu,\epsilon} \mathsf{f}_k^{\sigma}, \mathsf{f}_{k'}^{\sigma'})=\overline{(\overline{\rho}\mathcal{B}_{\mu,\e} \mathsf{f}^{\sigma}_k,\overline{\rho} \mathsf{f}^{\sigma'}_{k'})}=\overline{(\mathcal{B}_{\mu,\e}\overline{\rho} \mathsf{f}^{\sigma}_k,\overline{\rho} \mathsf{f}^{\sigma'}_{k'})}=\sigma\sigma'\overline{(\mathcal{B}_{\mu,\e}\mathsf{f}^{\sigma}_k,\mathsf{f}^{\sigma'}_{k'})},
\end{align*}
which proves \eqref{B are alternatively real or imaginary}.
\end{proof}

We conclude this section recalling some notation. A $2n \times 2n$, $n = 1, 2$, matrix of the form $\mathsf{L} = \mathsf{J}_{2n} \mathsf{B}$ is \textit{Hamiltonian} if $\mathsf{B}$ is a self-adjoint matrix, i.e. $\mathsf{B} = \mathsf{B}^*$. It is \textit{reversible} if $\mathsf{B}$ is reversibility-preserving, i.e.
\begin{align} \label{reversibility preserving}
\rho_{2n} \circ \mathsf{B} = \mathsf{B} \circ \rho_{2n},
\end{align}
where
\[
\rho_4 := \begin{pmatrix} \rho_2 & 0 \\ 0 & \rho_2 \end{pmatrix}, \quad
\rho_2 := \begin{pmatrix} \mathfrak{c} & 0 \\ 0 & -\mathfrak{c} \end{pmatrix}, 
\]
and $\mathfrak{c} : z \mapsto \bar{z}$ is the conjugation of the complex plane.
Equivalently, $\rho_{2n} \circ \mathsf{L} = - \mathsf{L} \circ \rho_{2n}.$

The transformations preserving the Hamiltonian structure are called \textit{symplectic}, and satisfy
\begin{align} \label{YstarJ4Y=J4}
Y^* \mathsf{J}_4 Y = \mathsf{J}_4.
\end{align}
If $Y$ is symplectic then $Y^*$ and $Y^{-1}$ are symplectic as well. A Hamiltonian matrix $\mathsf{L} = \mathsf{J}_4 \mathsf{B}$, with $\mathsf{B} = \mathsf{B}^*$, is conjugated through a symplectic matrix $Y$ in a new Hamiltonian matrix
\begin{align} \label{L1=J4B}
    \mathsf{L}_1=Y^{-1}\mathsf{L}Y=Y^{-1}\mathsf{J}_4(Y^-)^*Y^*\mathsf{B}Y=\mathsf{J}_4 \mathsf{B}_1\,\,\textit{where}\,\,\mathsf{B}_1:=Y^*\mathsf{B}Y=\mathsf{B}^*_1.
\end{align}
A $4\times 4$ matrix $\mathsf{B} = (\mathsf{B}_{ij})_{i,j=1,\ldots,4}$ is reversibility-preserving if and only if its entries are alternatively real and purely imaginary, namely $\mathsf{B}_{ij}$ is real when $i + j$ is even and purely imaginary otherwise, as in \eqref{B are alternatively real or imaginary}. A $4\times 4$ complex matrix $\mathsf{L} = (\mathsf{L}_{ij})_{i,j=1,\ldots,4}$ is reversible if and only if $\mathsf{L}_{ij}$ is purely imaginary when $i + j$ is even and real otherwise.

We finally mention that the flow of a Hamiltonian reversibility-preserving matrix is symplectic and reversibility-preserving (see \cite[Lemma 3.8]{BMV1}).

\section{Matrix Representation of $\mathscr{L}_{\mu,\e}$ on $\mathcal{V}_{\mu,\e}$} Using the transformation operator $U_{\mu,\e}$ in \eqref{U transformation operators}, we construct the basis of $\mathcal{V}_{\mu,\e}$
\begin{equation} \label{F basis set and f}
    \begin{aligned}
        \mathcal{F}&:=\{f^+_1(\mu,\e),f^-_1(\mu,\e),f^+_0(\mu,\e),f^-_0(\mu,\e)\}, \qquad 
        f^\sigma_k(\mu,\e):=U_{\mu,\e} f^{\sigma}_k,~~\sigma=\pm,~k=0,1,
    \end{aligned}
\end{equation}
where 
\begin{align} \label{eigenfunc of mathcall L00 2}
    f^+_1=\vet{\frac{1}{\sqrt{\ck}}\cos(x)}{\sqrt{\ck}\sin(x)}, ~~f^-_1=\vet{-\frac{1}{\sqrt{\ck}}\sin(x)}{\sqrt{\ck}\cos(x)}, ~~f^+_0=\vet{1}{0},~~f^-_0=\vet{0}{1},
\end{align}
form a basis of $\mathcal{V}_{0,0}=\mathrm{Rg}(P_{0,0})$, c.f. \eqref{eigenfunc of mathcall L00}-\eqref{eigenfunc f+0}. Note that the real valued vectors $\{f^+_1,f^-_1,f^+_0,f^-_0\}$ form a symplectic and reversible basis for $\mathcal{V}_{0,0}$, according to Definition \ref{Symplectic and reversible basis}. Then, by Lemma \ref{kato thm} and Lemma \ref{properties of U and P} we deduce that:
\begin{lemma} \label{F is symplectic and reversible}
    The basis $\mathcal{F}$ of $\mathcal{V}_{\mu,\e}$ defined in \eqref{F basis set and f}, is symplectic and reversible, i.e. satisfies \eqref{basis is symplectic} and \eqref{basis is reversible}. Each map $(\mu,\e) \mapsto f^\sigma_k(\mu,\e)$ is analytic as a map $B(\mu_0)\times B(\mu_0)\rightarrow H^2(\mathbb{T})\times H^1(\mathbb{T})$.
\end{lemma}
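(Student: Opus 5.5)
The plan is to transport the two required properties from the unperturbed basis \eqref{eigenfunc of mathcall L00 2} to the basis $\mathcal{F}$ by means of the two structural properties of $U_{\mu,\e}$ in Lemma \ref{properties of U and P}(ii): symplecticity, $U_{\mu,\e}^*\mathcal{J}U_{\mu,\e}=\mathcal{J}$, and reversibility preservation, $\bar\rho\, U_{\mu,\e}=U_{\mu,\e}\bar\rho$. First I check directly that $\{f^+_1,f^-_1,f^+_0,f^-_0\}$ is symplectic and reversible. For instance,
\[
(\mathcal{J}f^-_1,f^+_1)=\frac{1}{2\pi}\int_0^{2\pi}\big(\sqrt{\ck}\cos x\cdot\tfrac{1}{\sqrt{\ck}}\cos x+\tfrac{1}{\sqrt{\ck}}\sin x\cdot\sqrt{\ck}\sin x\big)\,dx=1 .
\]
Similarly $(\mathcal{J}f^-_0,f^+_0)=1$. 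The products $(\mathcal{J}f^\sigma_k,f^\sigma_k)$ vanish because the two components are paired antisymmetrically. The cross products between $k=0$ and $k=1$ vanish because constants are $L^2$-orthogonal to $\cos x$ and $\sin x$. Reversibility is immediate from the parity of each component: $\cos$ is even and $\sin$ is odd, and $\bar\rho$ maps $(\eta,\psi)\mapsto(\bar\eta(-x),-\bar\psi(-x))$. This gives $\bar\rho f^\sigma_k=\sigma f^\sigma_k$.

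Second, for $f^\sigma_k(\mu,\e)=U_{\mu,\e}f^\sigma_k$ I compute
\[
(\mathcal{J}f^\sigma_k(\mu,\e),f^{\sigma'}_{k'}(\mu,\e))=(U_{\mu,\e}^*\mathcal{J}U_{\mu,\e}f^\sigma_k,f^{\sigma'}_{k'})=(\mathcal{J}f^\sigma_k,f^{\sigma'}_{k'}).
\]
So all the relations \eqref{basis is symplectic} carry over. Likewise,
\[
\bar\rho f^\sigma_k(\mu,\e)=\bar\rho U_{\mu,\e}f^\sigma_k=U_{\mu,\e}\bar\rho f^\sigma_k=\sigma f^\sigma_k(\mu,\e),
\]
which is \eqref{basis is reversible}. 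Because $\bar\rho$ is antilinear, I should note that commuting with $U_{\mu,\e}$ is exactly what is needed here, and no linearity of $\bar\rho$ is used.

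Third, $\mathcal{F}$ is a basis of $\mathcal{V}_{\mu,\e}$ by Lemma \ref{kato thm}(5): $U_{\mu,\e}$ is invertible and maps $\mathcal{V}_{0,0}$ onto $\mathcal{V}_{\mu,\e}$. Linear independence is preserved, and the dimension count ($4$) gives spanning. Analyticity follows from the analyticity of $(\mu,\e)\mapsto U_{\mu,\e}\in\mathcal{L}(Y)$ in Lemma \ref{kato thm}(4), applied to the fixed vectors $f^\sigma_k\in Y=H^2\times H^1$.

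The only delicate point is the justification of Lemma \ref{properties of U and P}(ii) itself, which is quoted from \cite{BMV1}. It relies on the skew-Hamiltonian identity $\mathcal{J}P_{\mu,\e}=P_{\mu,\e}^*\mathcal{J}$ and on the power-series definition of $(\mathrm{Id}-(P_{\mu,\e}-P_{0,0})^2)^{-1/2}$. Since it is assumed here, the argument reduces to the bookkeeping above.
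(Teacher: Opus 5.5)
Your proposal is correct and follows essentially the same argument as the paper. Both transport the symplectic relations and the reversibility of the unperturbed basis through the symplectic, reversibility-preserving operator $U_{\mu,\e}$ of Lemma \ref{properties of U and P}(ii), and both obtain analyticity from that of $U_{\mu,\e}$ in Lemma \ref{kato thm}. Your explicit check that the unperturbed basis is symplectic and reversible, and your appeal to Lemma \ref{kato thm}(5) to see that $\mathcal{F}$ spans $\mathcal{V}_{\mu,\e}$, just supply details the paper leaves implicit.
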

\begin{proof}
By Lemma \ref{properties of U and P}-(ii), the operators $U_{\mu,\epsilon}$ are symplectic and reversibility preserving, i.e. 
$U_{\mu,\epsilon}^* \mathcal{J} U_{\mu,\epsilon} = \mathcal{J}$ and $U_{\mu,\epsilon}\circ\overline{\rho}=\overline{\rho}\circ U_{\mu,\epsilon}$. Recall \eqref{F basis set and f}, i.e. $f_k^\sigma(\mu,\epsilon):=U_{\mu,\epsilon} f_k^\sigma, \sigma=\pm, k=0,1.$
Then for any $k,k',\sigma,\sigma'$,
\begin{align*}
(\mathcal{J} f_k^\sigma(\mu,\epsilon), f_{k'}^{\sigma'}(\mu,\epsilon))
= (\mathcal{J} f_k^\sigma, f_{k'}^{\sigma'}),
\end{align*}
so the symplectic relations \eqref{basis is symplectic} are preserved. Moreover,
\begin{align*}
\overline{\rho} f_k^\sigma(\mu,\epsilon)
= U_{\mu,\epsilon}\circ \overline{\rho} f_k^\sigma
= \sigma f_k^\sigma(\mu,\epsilon),
\end{align*}
which shows that the reversibility conditions \eqref{basis is reversible} hold as well. 
Finally, the analyticity of $f_k^\sigma(\mu,\epsilon)$ follows from the analyticity of 
$U_{\mu,\epsilon}$ (Lemma \ref{kato thm}). 
\end{proof}

We then expand the vectors $f^\sigma_k(\mu,\e)$ in $(\mu,\e)$. We denote by $\mathit{even}_0(x)$ a real, even, $2\pi$-periodic function with zero space average. In the sequel $\cO(\mu^m\e^n)\vet{\mathit{even}(x)}{\mathit{odd}(x)}$ denotes an analytic map in $(\mu,\e)$ with values in $Y=H^2(\mathbb{T},\mathbb{C})\times H^1(\mathbb{T},\mathbb{C})$, whose first component is $\mathit{even}(x)$ and the second one $\mathit{odd}(x)$; we have a similar meaning for $\cO(\mu^m\e^n)\vet{\mathit{odd}(x)}{\mathit{even}(x)}$, etc $\ldots$.

\begin{lemma} [Expansion of the basis $\mathcal{F}$] \label{expansion of the basis F} For small values of $(\mu,\e)$ the basis $\mathcal{F}$ in \eqref{F basis set and f} has the expansion

\begin{equation} \label{43 f+1}
\begin{aligned} 
f_1^+(\mu,\epsilon) &= 
\vet{\frac{1}{\sqrt{\ck}}\cos(x)}{\sqrt{\ck}\sin(x)}
+ \im\mu \frac{1-\kappa}{4\ck^2} 
\vet{\frac{1}{\sqrt{\ck}}\sin(x)}{\sqrt{\ck}\cos(x)}
+ \epsilon
\begin{bmatrix}
\frac{2-\kappa}{(1-2\kappa)\sqrt{\ck}} \cos(2x) \\
\frac{\ck^2\sqrt{\ck}}{1-2\kappa} \sin(2x)
\end{bmatrix} \\
& \quad + \mathcal{O}(\mu^2)
\begin{bmatrix}
\mathit{even}_0(x) + \im \mathit{odd}(x) \\
\mathit{odd}(x) + \im \mathit{even}_0(x)
\end{bmatrix}
+ \mathcal{O}(\epsilon^2)
\begin{bmatrix}
\mathit{even}_0(x) \\
\mathit{odd}(x)
\end{bmatrix} + \im \mu \epsilon
\begin{bmatrix}
\mathit{odd}(x) \\
\mathit{even}(x)
\end{bmatrix}
+ \mathcal{O}(\mu^2 \epsilon, \mu \epsilon^2)\,,
\end{aligned}
\end{equation}

\begin{equation} \label{44 f-1}
\begin{aligned}
f_1^-(\mu,\epsilon) &= 
\vet{-\frac{1}{\sqrt{\ck}}\sin(x)}{\sqrt{\ck}\cos(x)}
+ \im\mu \frac{1-\kappa}{4\ck^2} 
\vet{\frac{1}{\sqrt{\ck}}\cos(x)}{-\sqrt{\ck}\sin(x)}
+ \epsilon
\begin{bmatrix}
-\frac{2-\kappa}{(1-2\kappa)\sqrt{\ck}} \sin(2x) \\
\frac{\ck^2\sqrt{\ck}}{1-2\kappa} \cos(2x)
\end{bmatrix} \\
& \quad + \mathcal{O}(\mu^2)
\begin{bmatrix}
\mathit{odd}(x) + \im \mathit{even}_0(x) \\
\mathit{even}_0(x) + \im \mathit{odd}(x)
\end{bmatrix}
+ \mathcal{O}(\epsilon^2)
\begin{bmatrix}
\mathit{odd}(x) \\
\mathit{even}(x)
\end{bmatrix}   + \im \mu \epsilon
\begin{bmatrix}
\mathit{even}(x) \\
\mathit{odd}(x)
\end{bmatrix}
+ \mathcal{O}(\mu^2 \epsilon, \mu \epsilon^2)\,,
\end{aligned}
\end{equation}

\begin{equation} \label{45 f+0}
\begin{aligned}
f_0^+(\mu,\epsilon) &=
\begin{bmatrix}
1 \\
0
\end{bmatrix}
+  \e\vet{\cos(x)}{-\ck\sin(x)}
+ \textcolor{black}{\frac{\mu \epsilon}{4}
\begin{bmatrix}
\ck^{-2}\cos(x) \\
-\ck^{-1}\sin(x)
\end{bmatrix}}  + \im \mu \epsilon
\begin{bmatrix}
\mathit{odd}(x) \\
\mathit{even}_0(x)
\end{bmatrix} \\
&\quad 
+\mathcal{O}(\epsilon^2)
\begin{bmatrix}
\mathit{even}_0(x) \\
\mathit{odd}(x)
\end{bmatrix}+ \mathcal{O}(\mu^2 \epsilon, \mu \epsilon^2)\,,
\end{aligned}
\end{equation}

\begin{equation} \label{46 f-0}
\begin{aligned}
f_0^-(\mu,\epsilon) &=
\begin{bmatrix}
0 \\
1
\end{bmatrix}
+ \frac{1}{2}\mu \epsilon\left(
\vet{\frac{1}{\ck}\sin(x)}{\cos(x)}
+ \im
\begin{bmatrix}
\mathit{even}_0(x) \\
\mathit{odd}(x)
\end{bmatrix}\right)
+ \mathcal{O}(\mu^2 \epsilon, \mu \epsilon^2)\,.
\end{aligned}
\end{equation}
For $\mu = 0$, the basis $\{ f_j^\pm(0,\epsilon),\, \epsilon = 0 \}$ is real and
\begin{equation} \label{48 f-0=01}
\begin{aligned}
f_1^+(0,\epsilon) = 
\begin{bmatrix}
\mathit{even}_0(x) \\
\mathit{odd}(x)
\end{bmatrix}, \quad
f_1^-(0,\epsilon) = 
\begin{bmatrix}
\mathit{odd}(x) \\
\mathit{even}(x)
\end{bmatrix},\quad  
f_0^+(0,\epsilon) =\vet{1}{0}+
\begin{bmatrix}
\mathit{even}_0(x) \\
\mathit{odd}(x)
\end{bmatrix}, \quad
f_0^-(0,\epsilon) = 
\begin{bmatrix}
0 \\
1
\end{bmatrix}.
\end{aligned}
\end{equation}
\end{lemma}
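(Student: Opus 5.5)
We will follow the scheme of \cite[Lemma 4.1]{BMV1} and \cite{BMV3}. Since $f^\sigma_k(\mu,\e)=U_{\mu,\e}f^\sigma_k$ with $(\mu,\e)\mapsto U_{\mu,\e}$ analytic, we Taylor expand
\[
U_{\mu,\e}=\mathrm{Id}+\e U_{01}+\mu U_{10}+\mu\e U_{11}+\mu^2 U_{20}+\e^2 U_{02}+\cdots ,
\]
and compute the coefficients from the corresponding expansions of $P_{\mu,\e}$. Using $(P_{\mu,\e}-P_{0,0})^2=\cO(\mu^2,\mu\e,\e^2)$, formula \eqref{U transformation operators} gives to first order $U_{\mu,\e}f=P_{\mu,\e}f$ for $f\in\mathcal V_{0,0}$, plus a correction $\frac12(P_{\mu,\e}-P_{0,0})^2 f$ that enters only at quadratic order. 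It then remains to compute $P_{01},P_{10},P_{11}$ applied to the basis vectors \eqref{eigenfunc of mathcall L00 2}.

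For this we expand the resolvent in the contour integral \eqref{Projection P mu e}. Writing $\mathscr L_{\mu,\e}=\mathscr L_{0,0}+\e\mathcal L_1+\mu\mathcal L_{\mu}+\mu\e\mathcal L_{11}+\dots$, we have
\[
P_{01}=\frac{1}{2\pi\im}\oint_\Gamma(\mathscr L_{0,0}-\lambda)^{-1}\mathcal L_1(\mathscr L_{0,0}-\lambda)^{-1}d\lambda ,
\]
and similarly for $P_{10}$. The second-order term $P_{11}$ contains the products with two and three resolvents. Here $\mathcal L_1=\mathcal J\mathcal B_1$ is read off from Lemma~\ref{lem:pa.exp}: it involves $p_1,a_1$ and $\kappa\Sigma_1$. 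The $\mu$-derivative $\mathcal L_\mu$ comes from $\im\mu$ in $\partial_x+\im\mu$, from $\mathrm{sgn}(D)+\Pi_0$, and from the $\mu$-part of $\kappa\Sigma_{\mu,0}$, namely $2\im\kappa\mu\partial_x$.

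The action of $(\mathscr L_{0,0}-\lambda)^{-1}$ on each Fourier mode is explicit through the $2\times2$ symbols of \eqref{mathcal L mu 0}. These have eigenvalues $\lambda^\pm_k(0)$, nonzero for $k\ge2$ by the non-resonance assumption $\kappa\notin\fR$. The generalized kernel at $k=0$ carries a Jordan block, so the resolvent there has a double pole. Applying $\mathcal L_1$ to $f_1^\pm$ produces only modes $0$ and $2$. The residue computation then gives:
\begin{itemize}
\item the mode-$2$ component, which is the explicit $\e$-correction with denominators $1-2\kappa$ (the factor $\lambda_2^\pm(0)$ is proportional to $1-2\kappa$ via \eqref{speedn});
\item a mode-$0$ component, which is killed on the range of $P_{0,0}$, leaving only the average-free terms in \eqref{43 f+1}.
\end{itemize}
For $f_0^+$, $\mathcal L_1 f_0^+$ lands in mode $1$, and the Jordan structure gives the $\e(\cos x,-\ck\sin x)$ term. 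For $f_0^-=[0,1]^T$, $\mathcal L_1 f_0^-=0$, since $p_\e$ multiplies $\partial_x$ of a constant and $|D|1=0$. Hence only $\mu\e$ corrections appear, coming from the $\Pi_0$ and $\im\mu p_\e$ terms.

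The $\mu$-terms for $f_1^\pm$ are obtained directly from $\e=0$. There $\mathcal V_{\mu,0}$ is spanned by Fourier modes $\pm1$ and $0$, so $U_{\mu,0}f_1^\pm$ is computed exactly. The coefficient $\frac{1-\kappa}{4\ck^2}$ arises from the $\mu$-derivative of the normalized eigenvectors of the $2\times2$ symbol at $k=\pm1$, including the capillary contribution $\kappa(1\pm\mu)^2$.

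The structural claims, that is, the parity patterns, the real versus imaginary parts, and \eqref{48 f-0=01}, are not computed but deduced. Reversibility (Lemma~\ref{F is symplectic and reversible}) together with \eqref{Parity structure} fixes the parities. Lemma~\ref{properties of U and P}(iii) gives reality at $\mu=0$, hence the $\mu^n$-odd terms are purely imaginary in the indicated slots.

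The zero average of the even components of $f_1^+(0,\e)$ and $f_0^+(0,\e)-f_0^+$ is obtained from symplecticity: $(\mathcal J f^\sigma_k(0,\e),f_0^-)=(\mathcal J f^\sigma_k,f_0^-)$, and pairing with $f_0^-=[0,1]$ reads the average of the first component. For this we need $f_0^-(0,\e)=[0,1]^T$ exactly. This follows from \eqref{237}: $[0,1]^T$ lies in $\ker\mathscr L_{0,\e}\subset\mathcal V_{0,\e}$, so $P_{0,\e}f_0^-=f_0^-$. Then $(P_{0,\e}-P_{0,0})f_0^-=0$, and the square-root factor acts trivially, giving $U_{0,\e}f_0^-=f_0^-$.

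The main obstacle is the $\mu\e$ coefficients in \eqref{45 f+0}--\eqref{46 f-0}. They require $P_{11}$, which involves double resolvent products through the Jordan block at mode $0$, so higher-order poles must be tracked. They also require the correction from $(P_{\mu,\e}-P_{0,0})^2$. I would handle this by organizing $P_{11}$ as a sum of residues at $\lambda=0$ and evaluating each mode separately, with $\kappa$ kept symbolic. The result would then be checked against the symplectic identities $(\mathcal J f_0^+(\mu,\e),f_0^-(\mu,\e))=1$ and $(\mathcal J f^\sigma_1,f^{\sigma'}_0)=0$ at order $\mu\e$.
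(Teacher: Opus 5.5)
Your overall route is the paper's. You use jets of $U_{\mu,\e}P_{0,0}$: your $\dot P'_{0,0}+\frac12(P'_{0,0}\dot P_{0,0}+\dot P_{0,0}P'_{0,0})P_{0,0}$ equals the paper's $(\dot P'_{0,0}-\frac12P_{0,0}\dot P'_{0,0})P_{0,0}$. You compute residues of the resolvent on $\mathcal V_{0,0}\oplus\mathcal U\oplus\mathcal W$. For the structural claims you use reversibility, reality at $\mu=0$, and symplecticity together with $U_{0,\e}f_0^-=f_0^-$. Reading off the $\mu$-coefficient of $f_1^\pm$ from the $k=\pm1$ symbols at $\e=0$ is a fine shortcut, provided you use $\dot P_{0,0}v_0=(\mathrm{Id}-P_{0,0})\dot v$ rather than the raw derivative of a normalized eigenvector.

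There is, however, a genuine gap. The step ``reality at $\mu=0$, hence the odd-in-$\mu$ terms are purely imaginary'' does not follow: reality of $f^\sigma_k(0,\e)$ says nothing about $\partial_\mu$-derivatives. The symmetry that would give it, $\overline{\mathscr L_{\mu,\e}}=\mathscr L_{-\mu,\e}$, is false for the analytic family you expand. The reason is that $|D+\mu|=|D|+\mu(\mathrm{sgn}(D)+\Pi_0)$ holds only for $\mu\ge0$, and $\mu\Pi_0$ is real. The statement itself refutes your principle: the $\mu\e$-terms of $f_0^\pm$ contain the real vectors $\frac{\mu\e}{4}[\ck^{-2}\cos x,-\ck^{-1}\sin x]^T$ and $\frac12\mu\e[\ck^{-1}\sin x,\cos x]^T$. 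So you have not shown that $\partial_\mu\partial_\e f_1^\pm(0,0)$ is purely imaginary, as claimed in \eqref{43 f+1}--\eqref{44 f-1}.

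The missing idea is to split $\dot{\mathscr L}_{0,0}$ into two pieces:
\begin{itemize}
\item the purely imaginary multiplier $\begin{bmatrix}0&\mathrm{sgn}(D)\\2\im\kappa\pa_x&0\end{bmatrix}$;
\item the real part $\begin{bmatrix}0&\Pi_0\\0&0\end{bmatrix}$.
\end{itemize}
Some contributions to $\dot P'_{0,0}f^\sigma_k$ are built only from $\mathscr L_{0,0}$ and $\mathscr L'_{0,0}$ (real) and from the imaginary piece of $\dot{\mathscr L}_{0,0}$ and $\dot{\mathscr L}'_{0,0}$ (purely imaginary). These are purely imaginary on real vectors, since $\Gamma$ is symmetric about $\R$.

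The $\Pi_0$-contributions must be evaluated by hand. On $f_1^\pm$ they vanish, because $\Pi_0$ kills both $(\mathscr L_{0,0}-\lambda)^{-1}f_1^\pm$ (mode $1$) and $(\mathscr L_{0,0}-\lambda)^{-1}\mathscr L'_{0,0}f_1^\pm$ (mode $2$). The real operator $P_{0,0}$ then preserves imaginarity in the correction $-\frac12P_{0,0}\dot P'_{0,0}$. On $f_0^\pm$ the $\Pi_0$-contributions produce exactly the real terms above. For $f_0^+$ this comes through a higher-order pole at $\lambda=0$, giving $\frac{1}{4\ck^{3/2}}f^+_{-1}\in\mathcal U\subset\ker P_{0,0}$.

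Minor points:
\begin{itemize}
\item $\mathscr L'_{0,0}f_1^\pm$ has only mode $2$, not mode $0$.
\item The $\e$-term of $f_0^+$ comes from the simple pole $-\lambda^{-1}f_0^+$ together with the resolvent on $\mathcal U$. The Jordan part drops out since $\mathscr L'_{0,0}f_0^-=0$.
\item The absence of pure $\mu$-terms in \eqref{45 f+0}--\eqref{46 f-0} needs $f_0^\pm(\mu,0)=f_0^\pm$. This holds because $\mathrm{span}\{f_0^+,f_0^-\}$ is $\mathscr L_{\mu,0}$-invariant with eigenvalues inside $\Gamma$.
\end{itemize}
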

\begin{proof}
    The long computations are given in Appendix \ref{secA1}.
\end{proof}
\paragraph{Second basis of $\mathcal{V}_{\mu,\e}$.}
We now construct from the basis $\mathcal{F}$ in Lemma \ref{expansion of the basis F} another symplectic and reversible basis of $\mathcal{V}_{\mu,\e}$, with an additional property.Note that
the second component of the vector $f_1^-(0,\epsilon)$ is an even function whose space average is not necessarily zero, c.f. \eqref{48 f-0=01}. Thus we introduce the new symplectic and reversible basis of $\mathcal{V}_{\mu,\e}$
\begin{equation} \label{G basis set and g}
    \begin{aligned}
        \mathcal{G}&:=\left\{g^+_1(\mu,\e),g^-_1(\mu,\e),g^+_0(\mu,\e),g^-_0(\mu,\e)\right\},\
    \end{aligned}
\end{equation}
defined by 
\begin{align}
    & g_1^+(\mu,\epsilon):=f_1^+(\mu,\epsilon)\,,\qquad g_1^-(\mu,\epsilon):=f_1^-(\mu,\epsilon)-n(\mu,\epsilon)f_0^-(\mu,\epsilon)\,,\\
           & g_0^+(\mu,\epsilon):=f_0^+(\mu,\epsilon)+n(\mu,\epsilon)f_1^+(\mu,\epsilon)\,,\qquad g_0^-(\mu,\epsilon):=f_0^-(\mu,\epsilon)\,,
\end{align}
with 
\begin{equation}
    n(\mu,\epsilon):=\frac{(f_1^-(\mu,\epsilon),f_0^-(\mu,\epsilon))}{\|f_0^-(\mu,\epsilon)\|^2}\,.
\end{equation}
Note that $n(\mu,\epsilon)$ is real, because, in view of \eqref{(f,g)=(rho f, rho g)} and Lemma \ref{F is symplectic and reversible},
 \begin{align} \label{n mu e}
        n(\mu,\epsilon)=\frac{\overline{(\overline{\rho}f_1^-(\mu,\epsilon),\overline{\rho}f_0^-(\mu,\epsilon))}}{\|f_0^-(\mu,\epsilon)\|^2}=\frac{\overline{(f_1^-(\mu,\epsilon),f_0^-(\mu,\epsilon))}}{\|f_0^-(\mu,\epsilon)\|^2}=\overline{ n(\mu,\epsilon)}\,.
    \end{align}
This new basis has the property that $g_1^-(0,\epsilon)$has zero average, see \eqref{int g1-=0}. We shall exploit this feature crucially in Lemma \ref{Expansion of B flat}.

\begin{lemma}The basis $\mathcal{G}$ in \eqref{G basis set and g} is symplectic and reversible, i.e. satisfies \eqref{basis is symplectic} and \eqref{basis is reversible}. Each map $(\mu,\e) \mapsto f^\sigma_k(\mu,\e)$ is analytic as a map $B(\mu_0)\times B(\mu_0)\rightarrow H^2(\mathbb{T})\times H^1(\mathbb{T})$.
First, we provide the following expansion in $(\mu,\e)$ of the basis $\mathcal{G}$.
\end{lemma}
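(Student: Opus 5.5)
The plan is to derive each property of $\mathcal{G}$ from the corresponding property of $\mathcal{F}$ (Lemma \ref{F is symplectic and reversible}), using that $n(\mu,\epsilon)$ is real (see \eqref{n mu e}). We check symplecticity, then reversibility, then analyticity.

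\emph{Symplecticity.} Bilinearity of $(\mathcal{J}\cdot,\cdot)$ reduces each relation in \eqref{basis is symplectic} for $\mathcal{G}$ to relations already known for $\mathcal{F}$. Write $f_k^\sigma=f_k^\sigma(\mu,\epsilon)$ and $n=n(\mu,\epsilon)$. Since $g_1^+=f_1^+$ and $g_0^-=f_0^-$, the pairs not involving $g_1^-,g_0^+$ need no work. For $g_1^-$ we have
\[
(\mathcal{J}g_1^-,g_1^+)=(\mathcal{J}f_1^-,f_1^+)-n(\mathcal{J}f_0^-,f_1^+)=(\mathcal{J}f_1^-,f_1^+),
\]
because the cross term pairs an index $k=0$ vector with a $k=1$ vector and so vanishes. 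In the same way, $(\mathcal{J}g_1^-,g_0^-)=(\mathcal{J}f_1^-,f_0^-)-n(\mathcal{J}f_0^-,f_0^-)=0$, since $(\mathcal{J}f_0^-,f_0^-)=0$. The term $(\mathcal{J}g_1^-,g_1^-)$ expands into four pieces. Two are diagonal and vanish. The cross pieces $(\mathcal{J}f_1^-,f_0^-)$ and $(\mathcal{J}f_0^-,f_1^-)$ have mixed $k$ and also vanish.

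The only genuinely new identity is the mixed-index pairing
\[
(\mathcal{J}g_1^-,g_0^+)=(\mathcal{J}f_1^-,f_0^+)+n(\mathcal{J}f_1^-,f_1^+)-n(\mathcal{J}f_0^-,f_0^+)-n^2(\mathcal{J}f_0^-,f_1^+),
\]
where I have used $\bar n=n$. The first and last terms vanish because they have mixed $k$. The middle terms give $n\big((\mathcal{J}f_1^-,f_1^+)-(\mathcal{J}f_0^-,f_0^+)\big)=n(1-1)=0$ by \eqref{basis is symplectic}. The pairings $(\mathcal{J}g_0^+,g_0^\pm)$ and $(\mathcal{J}g_0^+,g_1^+)$ follow the same pattern. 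This cancellation is the reason the two corrections $-nf_0^-$ and $+nf_1^+$ are chosen with opposite signs. Checking the signs here is the main step: I would write out the $4\times4$ Gram matrix $((\mathcal{J}g_i,g_j))$ explicitly and verify it entry by entry, using that $\mathcal{J}^*=-\mathcal{J}$ to handle the reversed pairings.

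\emph{Reversibility.} Since $\bar\rho$ is antilinear and $n$ is real, $\bar\rho g_1^-=\bar\rho f_1^- - n\,\bar\rho f_0^- = -f_1^-+nf_0^-=-g_1^-$. Similarly $\bar\rho g_0^+=f_0^++nf_1^+=g_0^+$, and the unchanged vectors keep their parity. So \eqref{basis is reversible} holds.

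\emph{Analyticity and basis property.} The denominator satisfies $\|f_0^-(\mu,\epsilon)\|^2=1+\mathcal{O}(\mu\epsilon)\neq0$ by \eqref{46 f-0}. Hence $n(\mu,\epsilon)$ is a quotient of analytic functions with nonvanishing denominator and is analytic. The $g_k^\sigma$ are then analytic $Y$-valued maps. They form a basis of $\mathcal{V}_{\mu,\epsilon}$ because the change of basis from $\mathcal{F}$ is unipotent (triangular with ones on the diagonal), hence invertible. Alternatively, linear independence follows directly from symplecticity.
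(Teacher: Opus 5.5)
Your proposal is correct and is essentially the argument the paper relies on (the paper gives no details and only refers to the analogous lemma of Berti--Maspero--Ventura): each relation in \eqref{basis is symplectic} and \eqref{basis is reversible} for $\mathcal{G}$ reduces to the corresponding relation for $\mathcal{F}$ because $n(\mu,\epsilon)$ is real, the only nontrivial cancellation being $(\mathcal{J}g_1^-,g_0^+)=n-n=0$, and analyticity follows from $\|f_0^-(\mu,\epsilon)\|^2\neq 0$. A cosmetic point: in the order $(f_1^+,f_1^-,f_0^+,f_0^-)$ the change-of-basis matrix is not literally triangular, but it equals $\mathrm{Id}_4+N$ with $N^2=0$ (and it becomes upper triangular with unit diagonal in the order $(f_1^+,f_0^+,f_0^-,f_1^-)$), so your invertibility conclusion stands.
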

\begin{proof}
    The proof is analogous to the one of \cite[Lemma 4.3]{BMV1}.
\end{proof}
\begin{lemma} [New expansion of the basis $\mathcal{G}$] \label{new expansion of the basis G} For small values of $(\mu,\e)$ the basis $\mathcal{G}$ in \eqref{F basis set and f} has the expansion

\begin{equation} \label{43 g+1}
\begin{aligned} 
g_1^+(\mu,\epsilon) &= 
\vet{\frac{1}{\sqrt{\ck}}\cos(x)}{\sqrt{\ck}\sin(x)}
+ \im\mu \frac{1-\kappa}{4\ck^2} 
\vet{\frac{1}{\sqrt{\ck}}\sin(x)}{\sqrt{\ck}\cos(x)}
+ \epsilon
\begin{bmatrix}
\frac{2-\kappa}{(1-2\kappa)\sqrt{\ck}} \cos(2x) \\
\frac{\ck^2\sqrt{\ck}}{1-2\kappa} \sin(2x)
\end{bmatrix} \\
& \quad + \mathcal{O}(\epsilon^2)
\begin{bmatrix}
\mathit{even}_0(x) \\
\mathit{odd}(x)
\end{bmatrix}+ \mathcal{O}(\mu^2)
\begin{bmatrix}
\mathit{even}_0(x) + \im \mathit{odd}(x) \\
\mathit{odd}(x) + \im \mathit{even}_0(x)
\end{bmatrix}+ \im \mu \epsilon
\begin{bmatrix}
\mathit{odd}(x) \\
\mathit{even}(x)
\end{bmatrix}
+ \mathcal{O}(\mu^2 \epsilon, \mu \epsilon^2)\,,
\end{aligned}
\end{equation}

\begin{equation} \label{44 g-1}
\begin{aligned}
g_1^-(\mu,\epsilon) &= 
\vet{-\frac{1}{\sqrt{\ck}}\sin(x)}{\sqrt{\ck}\cos(x)}
+ \im\mu\frac{1-\kappa}{4\ck^2} 
\vet{\frac{1}{\sqrt{\ck}}\cos(x)}{-\sqrt{\ck}\sin(x)}
+ \epsilon
\begin{bmatrix}
-\frac{2-\kappa}{(1-2\kappa)\sqrt{\ck}} \sin(2x) \\
\frac{\ck^2\sqrt{\ck}}{1-2\kappa} \cos(2x)
\end{bmatrix}+ \mathcal{O}(\epsilon^2)
\begin{bmatrix}
\mathit{odd}(x) \\
\mathit{even}_0(x)
\end{bmatrix} \\
& \quad + \mathcal{O}(\mu^2)
\begin{bmatrix}
\mathit{odd}(x) + \im \mathit{even}_0(x) \\
\mathit{even}_0(x) + \im \mathit{odd}(x)
\end{bmatrix}
   +  \mu \epsilon\left(\frac{-\kappa}{4\ck\sqrt{\ck}}\vet{0}{1}+
\im\begin{bmatrix}
\mathit{even}(x) \\
\mathit{odd}(x)
\end{bmatrix}\right)
+ \mathcal{O}(\mu^2 \epsilon, \mu \epsilon^2)\,,
\end{aligned}
\end{equation}

{\begin{equation} \label{45 g+0}
\begin{aligned}
g_0^+(\mu,\epsilon)
&=
\begin{bmatrix}
1\\
0
\end{bmatrix}
+\epsilon
\begin{bmatrix}
\cos(x)\\
-\ck\sin(x)
\end{bmatrix}
+\mathcal{O}(\epsilon^2)
\begin{bmatrix}
\mathit{even}_0(x)\\
\mathit{odd}(x)
\end{bmatrix}
\\
&\quad
+\mu\epsilon
\left(
\begin{bmatrix}
\frac14\cos(x)\\[1mm]
\frac{\kappa-1}{4\ck}\sin(x)
\end{bmatrix}
+\im
\begin{bmatrix}
\mathit{odd}(x)\\
\mathit{even}_0(x)
\end{bmatrix}
\right)
+\mathcal{O}(\mu^2\epsilon,\mu\epsilon^2).
\end{aligned}
\end{equation}}

\begin{equation} \label{46 g-0}
\begin{aligned}
g_0^-(\mu,\epsilon) &=
\begin{bmatrix}
0 \\
1
\end{bmatrix}
+ \frac{1}{2}\mu \epsilon\left(
\vet{\frac{1}{\ck}\sin(x)}{\cos(x)}
+ \im
\begin{bmatrix}
\mathit{even}_0(x) \\
\mathit{odd}(x)
\end{bmatrix}\right)
+ \mathcal{O}(\mu^2 \epsilon, \mu \epsilon^2)\,.
\end{aligned}
\end{equation}
In particular, at $\mu = 0$, the basis $\{ g^+_1(0,\epsilon), g^-_1(0,\epsilon), g^+_0(0,\epsilon), g^-_1(0,\epsilon)\}$ is real and
\begin{equation} \label{48 g-0=01}
\begin{aligned}
g_1^+(0,\epsilon) = 
\begin{bmatrix}
\mathit{even}_0(x) \\
\mathit{odd}(x)
\end{bmatrix}, \quad
g_1^-(0,\epsilon) = 
\begin{bmatrix}
\mathit{odd}(x) \\
\mathit{even}_0(x)
\end{bmatrix},  \quad
g_0^+(0,\epsilon) =\vet{1}{0}+
\begin{bmatrix}
\mathit{even}_0(x) \\
\mathit{odd}(x)
\end{bmatrix}, \quad
g_0^-(0,\epsilon) = 
\begin{bmatrix}
0 \\
1
\end{bmatrix}.
\end{aligned}
\end{equation}
and, for any $\epsilon$, 
\begin{align}\label{int g1-=0}
       \int_\mathbb{T} g^-_1(0,\epsilon) dx=0\,.
    \end{align}
\end{lemma}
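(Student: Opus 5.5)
The plan is to obtain the expansions of $\mathcal{G}$ directly from Lemma \ref{expansion of the basis F} and the definition \eqref{G basis set and g}. The only new ingredient is the real scalar $n(\mu,\epsilon)$, which I will expand to the orders needed. Since $g_1^+=f_1^+$ and $g_0^-=f_0^-$, the expansions \eqref{43 g+1} and \eqref{46 g-0} coincide with \eqref{43 f+1} and \eqref{46 f-0}. The one slightly new feature there is that the $\mathcal{O}(\epsilon^2)$ part of $g_1^+$ is recorded with $\mathit{even}_0$ first component, and this is already in \eqref{43 f+1}. So the work concerns $g_1^-$ and $g_0^+$.

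\textbf{Step 1: expansion of $n$.} Using \eqref{46 f-0}, $\|f_0^-(\mu,\epsilon)\|^2=1+\mathcal{O}(\mu\epsilon)$. Indeed, the $\mu\epsilon$ correction $\frac12\mu\epsilon[\frac{1}{\ck}\sin x,\cos x]^\top$ is orthogonal to $[0,1]^\top$, and the imaginary part drops out of the real part of the product. Hence $\|f_0^-\|^2=1+\mathcal{O}(\mu^2\epsilon,\mu\epsilon^2)$.

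The numerator $(f_1^-,f_0^-)$ is, to leading order, the average of the second component of $f_1^-(\mu,\epsilon)$. From \eqref{44 f-1}, the explicit terms at orders $1,\mu,\epsilon$ have zero average. The $\mathcal{O}(\mu^2)$ term has $\mathit{even}_0$ in its second component, so it also contributes zero. What remains is the average of the $\mathcal{O}(\epsilon^2)$ even part, plus the $\im\mu\epsilon$ and $\mu\epsilon$ contributions, plus the pairing of the $\mathcal{O}(1)$ part of $f_1^-$ with the $\frac12\mu\epsilon$ correction of $f_0^-$. That last pairing gives
\[
\tfrac12\mu\epsilon\Big(\tfrac{1}{\ck}\big(-\tfrac{1}{\sqrt{\ck}}\big)\tfrac12+\sqrt{\ck}\,\tfrac12\Big)=\tfrac{\mu\epsilon}{4}\,\tfrac{\ck^2-1}{\ck\sqrt{\ck}}=\tfrac{\mu\epsilon\,\kappa}{4\ck\sqrt{\ck}}.
\]
Since $n$ is real by \eqref{n mu e}, purely imaginary contributions cancel. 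I therefore expect $n(\mu,\epsilon)=r(\epsilon^2)+\frac{\kappa}{4\ck\sqrt{\ck}}\mu\epsilon+r(\mu^2\epsilon,\mu\epsilon^2)$. Determining exactly this $\mu\epsilon$ coefficient is the main obstacle. It requires knowing the real part of the $\mu\epsilon$ term of $f_1^-$ (only its imaginary part is listed in \eqref{44 f-1}), and verifying that it has zero-average second component. For this I would go back to the Appendix computation of $f_1^-$ through $U_{\mu,\epsilon}$, or use reversibility together with the parity structure \eqref{Parity structure}: the second component of $\mu\epsilon$ real part is even, and its mean must be extracted from the Kato expansion. The sign convention $g_1^-=f_1^- - n f_0^-$ then produces the term $-\frac{\kappa}{4\ck\sqrt{\ck}}\mu\epsilon[0,1]^\top$ in \eqref{44 g-1}. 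If the sign bookkeeping disagrees, it would be traced back to the mean of the real $\mu\epsilon$ part of $f_1^-$.

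\textbf{Step 2: substitution.} For $g_1^-=f_1^- - nf_0^-$, the term $-n f_0^-$ adds $-r(\epsilon^2)[0,1]^\top$. This exactly removes the average of the $\mathcal{O}(\epsilon^2)$ even second component, turning it into $\mathit{even}_0$. It also adds the $\mu\epsilon[0,1]^\top$ constant above, and everything else is $\mathcal{O}(\mu^2\epsilon,\mu\epsilon^2)$. At $\mu=0$, $n(0,\epsilon)=(f_1^-(0,\epsilon),[0,1]^\top)$ is exactly the mean of the second component, and $f_0^-(0,\epsilon)=[0,1]^\top$. This gives \eqref{int g1-=0} exactly, to all orders, together with the parities in \eqref{48 g-0=01}.

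For $g_0^+=f_0^+ + nf_1^+$, the $\epsilon^2$ part of $n$ times $f_1^+(0,0)$ is absorbed into $\mathcal{O}(\epsilon^2)[\mathit{even}_0,\mathit{odd}]^\top$. The $\mu\epsilon$ part adds $\frac{\kappa}{4\ck\sqrt{\ck}}\mu\epsilon\,[\frac{1}{\sqrt{\ck}}\cos x,\sqrt{\ck}\sin x]^\top$ to the term $\frac{\mu\epsilon}{4}[\ck^{-2}\cos x,-\ck^{-1}\sin x]^\top$ of \eqref{45 f+0}. The first component becomes $\frac14(\ck^{-2}+\kappa\ck^{-2})\cos x=\frac14\cos x$. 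The second becomes $\frac{1}{4\ck}(\kappa-1)\sin x$. This matches \eqref{45 g+0} and confirms the value of the $\mu\epsilon$ coefficient of $n$.

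Finally, symplecticity and reversibility of $\mathcal{G}$ and the realness at $\mu=0$ follow from those of $\mathcal{F}$ (Lemma \ref{F is symplectic and reversible}, \eqref{48 f-0=01}) and the realness of $n$.
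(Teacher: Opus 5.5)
Your argument is the paper's: expand the real scalar $n(\mu,\e)$ from \eqref{44 f-1} and \eqref{46 f-0} to get $n=\frac{\kappa}{4\ck\sqrt{\ck}}\mu\e+r(\e^2,\mu^2\e,\mu\e^2)$, then substitute into the definition of $\mathcal{G}$. Your treatment of \eqref{int g1-=0} and the recombination yielding \eqref{45 g+0} match the paper.

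The step you single out as the main obstacle is already settled by Lemma \ref{expansion of the basis F}. Formula \eqref{44 f-1} displays the whole $\mu\e$ coefficient of $f_1^-$, namely $\im[\mathit{even}(x),\mathit{odd}(x)]^\top$, with no real part; compare \eqref{45 f+0}, where a real $\mu\e$ part is written out when present. This is proved in \eqref{A17}: the only real block of $\dot{\mathscr{L}}_{0,0}$ is the $\Pi_0$ entry, and it annihilates the zero-mean vectors it meets when acting on $f_1^\pm$.

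Since the second component of that coefficient is $\im\,\mathit{odd}(x)$, it contributes nothing to $(f_1^-,f_0^-)$. The $\mu\e$ coefficient of $n$ is therefore exactly the pairing you computed, and nothing needs to be extracted from the Kato expansion. Note also that matching against \eqref{45 g+0} could not have ``confirmed'' this coefficient, since \eqref{45 g+0} is part of what is being proved.
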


\begin{proof}
   First note that, by \eqref{48 f-0=01}  $f^-_0(0,\epsilon)=\vet{0}{1}$, and thus $g^-_1(0,\epsilon)$ reduces to
\begin{align}
       g^-_1(0,\epsilon)=f^-_1(0,\epsilon)-\left(f^-_1(0,\epsilon),\vet{0}{1}\right)\vet{0}{1}
    \end{align}
which satisfies \eqref{int g1-=0}, recalling also that the first component of $f^-_1(0,\epsilon)$ is odd.
In order to prove \eqref{43 g+1}-\eqref{46 g-0} we note that $n(\mu,\epsilon)$ in \eqref{n mu e} is real, and satisfies, by \eqref{44 f-1}, \eqref{46 f-0},
\begin{equation}
\begin{aligned}
       n(\mu,\epsilon)&=\frac{1}{1+r(\mu^2\epsilon,\mu\epsilon^2)}\left[\left(\vet{-\frac{1}{\sqrt{\ck}}\sin(x)}{\sqrt{\ck}\cos(x)},\frac{1}{2}\mu\epsilon \vet{\frac{1}{\ck}\sin(x)}{\cos(x)}\right)
+ r(\e^2,\mu^2 \epsilon, \mu \epsilon^2)\right]\\
&=\mu \epsilon \frac{\kappa}{4\ck\sqrt{\ck}}+r(\e^2,\mu^2 \epsilon, \mu \epsilon^2).
 \end{aligned}
 \end{equation}
Hence, in view of \eqref{43 f+1}-\eqref{46 f-0}, the vectors $g^{\sigma}_k(\mu,\epsilon)$ satisfy the expansion \eqref{43 g+1}-\eqref{46 g-0}. Finally at $\mu=0$ the vectors $g^{\sigma}_k(0,\epsilon)$, $\sigma=\pm$, $k=0,1$ are real being real linear combinations of real vectors.
\end{proof}

We now state the main result of this section.

\begin{lemma} \label{B decomposition}
    The matrix that represents the Hamiltonian and reversible operator $\mathscr{L}_{\mu,\epsilon} : \mathcal{V}_{\mu,\epsilon} \to \mathcal{V}_{\mu,\epsilon}$ in the symplectic and reversible basis $\mathcal{F}$ of $\mathcal{V}_{\mu,\epsilon}$ defined in \eqref{G basis set and g}, is a Hamiltonian matrix $\mathsf{L}_{\mu,\epsilon} = \mathsf{J}_4 \mathsf{B}_{\mu,\epsilon}$, where $\mathsf{B}_{\mu,\epsilon}$ is a self-adjoint and reversibility preserving (i.e., satisfying \eqref{B are alternatively real or imaginary}) $4 \times 4$ matrix of the form
\begin{align} \label{B mu epsilon}
    \mathsf{B}_{\mu,\epsilon} = 
\begin{pmatrix}
E & F \\
F^* & G
\end{pmatrix}, \quad E = E^*, \quad G = G^*,
\end{align}
where $E, F, G$ are the $2 \times 2$ matrices
\begin{equation} \label{E}
\begin{aligned} 
E &:= 
\begin{pmatrix}
\mathsf{e}_{11} \epsilon^2(1+r'(\e,\mu\e^2))- \mathsf{e}_{22}\frac{\mu^2}{8}(1+r''(\e,\mu))  &
\im\left( \frac{1}{2}\mathsf{e}_{12} \mu + r_2(\mu \epsilon^2,\mu^2 \epsilon, \mu^3)\right) \\
- \im\left( \frac{1}{2}\mathsf{e}_{12} \mu + r_2(\mu \epsilon^2,\mu^2 \epsilon, \mu^3)\right) &
-\mathsf{e}_{22}\frac{\mu^2}{8}(1+r_5(\e,\mu))
\end{pmatrix}, 
\end{aligned}
\end{equation}
\begin{equation} \label{F 2nd}
\begin{aligned} 
F &:= 
\begin{pmatrix}
r_3(\epsilon^3, \mu \epsilon^2,\mu^2 \epsilon, \mu^3) &
\im \sqrt{\ck}\,\mu \epsilon  + \im\, r_4(\mu \epsilon^2,\mu^2 \epsilon, \mu^3) \\
\im\, r_6(\mu \epsilon,\mu^3) &
0
\end{pmatrix}, 
\end{aligned}
\end{equation}

\begin{equation} \label{G 2nd}
\begin{aligned} 
G &:= 
\begin{pmatrix}
1 +\kappa\mu^2+ r_8(\epsilon^3, \mu \epsilon^2,\mu^2 \epsilon, \mu^3) &
\im r_9(\mu \epsilon^2,\mu^2 \epsilon, \mu^3) \\
- \im r_9(\mu \epsilon^2,\mu^2 \epsilon, \mu^3) &
\mu +r_{10}(\mu^2 \epsilon, \mu^3)
\end{pmatrix}\,,
\end{aligned}
\end{equation}
where $\mathsf{e}_{11}$, $\mathsf{e}_{12}$, and $\mathsf{e}_{22}$ are defined in \eqref{e11e22e12}.


\end{lemma}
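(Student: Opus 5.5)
The plan follows the strategy of \cite[Lemma 4.4]{BMV1}. The structural part comes first and needs no computation. Since the basis $\mathcal{G}$ is symplectic and reversible, Lemma \ref{matrix representation mathsf L} gives $\mathsf{L}_{\mu,\e}=\mathsf{J}_4\mathsf{B}_{\mu,\e}$. Here $\mathsf{B}_{\mu,\e}$ is self-adjoint and has entries $(\mathcal{B}_{\mu,\e} g^{\sigma}_k,g^{\sigma'}_{k'})$ that are alternately real and purely imaginary. This yields $E=E^*$, $G=G^*$, and all the real/imaginary patterns in \eqref{E}--\eqref{G 2nd}.

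For the analytic part, I would Taylor expand $\mathcal{B}_{\mu,\e}$ in \eqref{mathcal B mu e} as
\[
\mathcal{B}_{\mu,\e}=\mathcal{B}_0+\e\mathcal{B}_1+\e^2\mathcal{B}_2+\mu\mathcal{B}^\flat+\mu\e\,\mathcal{B}^\sharp+\mu^2\mathcal{B}^{\flat\flat}+\ldots
\]
The coefficients come from Lemma \ref{lem:pa.exp}: the expansions of $a_\e,p_\e,g_\e,\Sigma_\e$, together with $\Sigma_{\mu,\e}=\Sigma_\e+\im\mu(\ldots)-\mu^2(\ldots)$ and \eqref{D+mu}. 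Each entry is then expanded by inserting the basis expansions \eqref{43 g+1}--\eqref{46 g-0}. I would write every entry as a real analytic function of $(\mu,\e)$ and compute its Taylor coefficients up to the orders displayed.

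Several structural facts remove many terms before any computation.

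\textbf{$\mu=0$ kernel.} At $\mu=0$ we have $\mathscr{L}_{0,\e}g^{-}_0(0,\e)=0$ and $\mathscr{L}_{0,\e}g^{-}_1(0,\e)=0$, because $g_1^-(0,\e)$ and $g_0^-(0,\e)$ lie in the generalized kernel, cf. \eqref{237}. By Hamiltonianity these vectors are annihilated by $\mathsf{B}_{0,\e}$ in the relevant entries. Consequently the entries $(2,2)$ of $E$, $(2,2)$ of $G$, $(2,2)$ of $F$, and the mixed entries of $F$ in the second column vanish identically at $\mu=0$. In particular $F_{22}\equiv0$: it is purely imaginary, and it is odd under $\mu\to-\mu$ after combining with conjugation, using the reality of the basis at $\mu=0$.

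\textbf{$\e=0$ entries.} The entries at $\e=0$ are obtained exactly from the Fourier multiplier $\mathcal{L}_{\mu,0}$ and its known eigenvalues \eqref{eigenvalues of Lmu0}. Matching their expansion to $\mu^2$ produces $\frac12\mathsf{e}_{12}\mu$ and $-\mathsf{e}_{22}\mu^2/8$ in $E$, and $1+\kappa\mu^2$ and $\mu$ in $G$.

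\textbf{$E_{22}$.} Showing that $E_{22}$ has no $\mu^2\e$ or $\mu\e$ terms, i.e. is of the form $\mu^2(1+r_5(\e,\mu))$, uses $\int g_1^-(0,\e)=0$ \eqref{int g1-=0}. The component $\mu\,\Pi_0$ of $|D+\mu|$ then contributes only at order $\mu^2$ in this entry.

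The main obstacle is the $\e^2$ coefficient $\mathsf{e}_{11}$ of $E_{11}$, namely
\[
(\mathcal{B}_2 f_1^+,f_1^+)+(\mathcal{B}_1 f_1^+,f_{1,1}^+)+\ldots
\]
where $f^+_{1,1}$ is the $\e$-correction in \eqref{43 g+1}. The second-order part $f^+_{1,2}$ also appears, but I would avoid computing it directly. Instead I would use the identity
\[
(\mathcal{B}_0 f^+_{1,2},f_1^+)=0,
\]
which holds because $\mathcal{B}_0 f_1^+=0$, and then reduce everything to explicit trigonometric integrals. These involve the coefficients $p_2^{[0]},a_2^{[0]},d_2,e_2,h_2$ from Lemma \ref{lem:pa.exp}; the $\kappa\Sigma_2$ terms are the delicate new capillary contributions. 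As a consistency check I would verify that $\mathsf{e}_{11}=2c_2$, with $c_2$ from \eqref{expcoef}.

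Similarly, the $\mu\e$ term $\im\sqrt{\ck}\mu\e$ in $F_{12}$ comes from $(\mathcal{B}^\flat g_0^-,g_1^+)$ and from the $\e$-parts of the basis. Finally, all remaining entries are bounded by the stated remainder orders using parity and orthogonality of Fourier modes.
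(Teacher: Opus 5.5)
Your overall route (expand $\mathcal{B}_{\mu,\e}$ and the basis $\mathcal{G}$, then compute the entries) is the paper's. However, the structural step that carries most of the weight is not justified. You assert $\mathscr{L}_{0,\e}g_1^-(0,\e)=0$ \emph{because} $g_1^-(0,\e)$ lies in the generalized kernel. By \eqref{237}, for $\e\neq0$ that space is four-dimensional while $\ker\mathscr{L}_{0,\e}$ is two-dimensional; for instance $g_0^+(0,\e)$ is in the generalized kernel but $\mathscr{L}_{0,\e}g_0^+(0,\e)\neq0$. The claimed form $-\mathsf{e}_{22}\frac{\mu^2}{8}(1+r_5(\e,\mu))$ of $E_{22}$ needs both $E_{22}(0,\e)\equiv0$ and $\partial_\mu E_{22}(0,\e)\equiv0$ identically in $\e$, and both rest on this fact. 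The first depends on it directly. The second depends on it through the term $2\,\mathrm{Re}\,(\mathcal{B}_\e g_1^-(0,\e),\dot g_1^-(0,\e))$ of $\partial_\mu\mathsf{B}_\e(0)$; your zero-average remark only disposes of the $\mathcal{B}^\flat$ contribution. The paper closes this in four steps. Reality and reversibility give the pattern \eqref{B_e 0}. Then \eqref{237} gives $\mathscr{L}_{0,\e}^2=0$ on $\mathcal{V}_{0,\e}$, so $\mathsf{L}_\e(0)^2=0$, which yields the dichotomy \eqref{E22=0 or E11=0=F11}. Next, the explicit computation $E_{11}(0,\e)=\mathsf{e}_{11}\e^2+r(\e^3)$ in \eqref{E110epsilon}, with $\mathsf{e}_{11}\neq0$ for $\kappa\neq\frac12$, excludes the second alternative. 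Hence $E_{22}(0,\e)\equiv0$ and the second column of \eqref{Le0} vanishes. So your computation of $\mathsf{e}_{11}$ must come first, since it is what certifies the kernel property. An alternative would be to show that $\tilde U_2$ is $\bar\rho$-anti-invariant (it comes from the translation mode), which would force $\ker\mathscr{L}_{0,\e}=\mathrm{span}\{g_1^-(0,\e),g_0^-(0,\e)\}$; nothing in your proposal supplies this.

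Two further shortcuts do not work.

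(a) $F_{22}=(\mathcal{B}_{\mu,\e}g_0^-,g_1^-)$ pairs two vectors with the same $\sigma$, so by \eqref{B are alternatively real or imaginary} it is real, not purely imaginary. Moreover, this analytic family has no $\mu\mapsto-\mu$ conjugation symmetry, since \eqref{D+mu} (with its $\mu\Pi_0$ term) holds only for $\mu\geq0$; indeed $G_{22}=\mu+\dots$ is real but not even in $\mu$. What can be proved is $F_{22}=r_7(\mu^2\e,\mu^3)$, and this is what the paper's computation actually gives after summing the four pieces (cf.\ \eqref{F0}, \eqref{F1}). Its $\mu$-linear part vanishes by the kernel property for $\partial_\mu\mathsf{B}_\e(0)$, by \eqref{int g1-=0} for $\mathcal{B}^\flat$, and by reality for $\mathcal{B}^\sharp$ and $\mathcal{B}^s$. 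This weaker form is all that is used later.

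(b) Matching the $\e=0$ eigenvalues \eqref{eigenvalues of Lmu0} cannot give the diagonal of $E$. Since $\mathsf{J}_2E$ has eigenvalues $-\im E_{12}\pm\sqrt{-E_{11}E_{22}}$, they fix only $E_{12}$ and the product $E_{11}E_{22}$. In particular they do not fix the sign of the $\mu^2$-coefficient of $E_{22}$, and that sign, together with $\mathsf{e}_{11}$, decides the instability criterion $\mathsf{e}_{11}\mathsf{e}_{22}>0$. That coefficient must be computed directly, as in the paper: $\frac{(1-\kappa)^2}{8\ck^3}$ from the matrix $Z$ in \eqref{Z}, $-\frac{1-\kappa}{4\ck}$ from $\mathcal{B}^\flat$, and $\frac{\kappa}{\ck^3}$ from $\mathcal{B}^s$, adding up to $-\mathsf{e}_{22}/8$.

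Finally, the term $\im\sqrt{\ck}\mu\e$ of $F_{12}$ does not come from $(\mathcal{B}^\flat g_0^-,g_1^+)$, which is $\mathcal{O}(\mu^2\e)$. It comes from the $\mp\im\mu p_\e$ entries, namely $\mu\e\,(\mathcal{B}^\sharp_1g_0^-(0,0),g_1^+(0,0))=\mu\e\sqrt{\ck}$; see \eqref{mathsf B sharp}.
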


We decompose $\mathcal{B}_{\mu,\epsilon}$ in \eqref{mathcal B mu e} as
\[
\mathcal{B}_{\mu,\epsilon} = \mathcal{B}_\epsilon +\mathcal{B}^{\flat} + \mathcal{B}^\sharp+\mathcal{B}^s,
\]
where $\mathcal{B}_\epsilon$, $\mathcal{B}^{\flat}$, $\mathcal{B}^\sharp$, $\mathcal{B}^s$ are the self-adjoint and reversibility preserving operators:
\begin{align} \label{B epsilon}
\mathcal{B}_\epsilon := \mathcal{B}_{0,\epsilon} := 
\begin{bmatrix}
1 + a_\epsilon(x)-\kappa \Sigma_{0,\epsilon} & -(\ck + p_\epsilon(x))\partial_x \\
\partial_x \circ (\ck + p_\epsilon(x)) & |D| 
\end{bmatrix}, 
\end{align}

\begin{align} \label{B b}
\mathcal{B}^{\flat} := 
\begin{bmatrix}
0 & 0 \\
0 & \mu\left(\mathrm{sgn}(D)+\Pi_0\right)
\end{bmatrix}, 
\end{align}

\begin{align} \label{B sharp}
\mathcal{B}^\sharp :=  
\begin{bmatrix}
0 & -\im\, \mu p_\epsilon \\
\im\, \mu p_\epsilon & 0
\end{bmatrix}, 
\qquad 
\mathcal{B}^s :=  
\begin{bmatrix}
-\kappa \left(\mu\dot{\Sigma}_{0,\epsilon}+\frac{1}{2}\mu^2\ddot{\Sigma}_{0,\epsilon}\right) & 0 \\
0 & 0
\end{bmatrix}, 
\end{align}
where $\ttf$ in \eqref{def:ttf}, $a_\e(x)$ and $p_\e(x)$ in \eqref{SN1} and 
\begin{align*}
    \dot{\Sigma}_{0,\epsilon}:=\frac{\im}{1+\mathfrak{p}_x}\left(g_\e(x)\circ\pa_x+\pa_x\circ g_\e(x)\right)\frac{1}{1+\mathfrak{p}_x}, \quad \ddot{\Sigma}_{0,\epsilon}:=-\frac{2g_{\e}(x)}{(1+\mathfrak{p}_x)^2}.
\end{align*}
We note that $\mathcal{B}^{\flat}$ and $\mathcal{B}^{\sharp}$ are linear in $\mu$.

\begin{lemma} [Expansion of $\mathsf{B}_\epsilon$] \label{Expansion of B epsilon}
The self-adjoint and reversibility preserving matrix 
$\mathsf{B}_\epsilon := \mathsf{B}_\epsilon(\mu)$ associated, as in \eqref{Bmuepsilon matrix representation}, 
with the self-adjoint and reversibility preserving operator 
$\mathcal{B}_\epsilon$ defined in \eqref{B epsilon}, with respect to the basis 
$\mathcal{G}$ of $\mathcal{V}_{\mu,\epsilon}$ in \eqref{G basis set and g}, expands as
\begin{equation} \label{Be90}
\begin{aligned}
\mathsf{B}_\epsilon =
\left(
\begin{NiceArray}{cc|ccc}[code-for-first-col=\scriptstyle]
\mathsf{e}_{11} \epsilon^2 + \frac{(1-\kappa)^2}{8\ck^3} \mu^2 + r_1(\epsilon^3, \mu \epsilon^4)
  & \im\, r_2(\mu \epsilon^3)
  &  r_3(\epsilon^3, \mu \epsilon^2)
  & \im\, r_4(\mu \epsilon^3)
  &  \\
- \im\, r_2(\mu \epsilon^3)
  & \frac{(1-\kappa)^2}{8\ck^3} \mu^2
  & \im\, r_6(\mu \epsilon)
  & 0
  &  \\
\hline
 r_3(\epsilon^3, \mu \epsilon^2)
  & - \im\, r_6(\mu \epsilon)
  & 1 + r_8(\epsilon^3, \mu \epsilon^2)
  & \im\, r_9(\mu \epsilon^2)
  & \\
- \im\, r_4(\mu \epsilon^3)
  & 0
  & - \im\, r_9(\mu \epsilon^2)
  & 0
  &  \\
\end{NiceArray}
\right)
\end{aligned}+\cO(\mu^2\e,\mu^3),
\end{equation}
where $\mathsf{e}_{11}$ is defined respectively in \eqref{e11e22e12}.
\end{lemma}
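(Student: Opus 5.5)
The plan follows the scheme of \cite[Lemma 4.4]{BMV1}. I would expand every entry $(\mathcal{B}_\epsilon g^{\sigma'}_{k'}, g^{\sigma}_k)$ of the matrix \eqref{Bmuepsilon matrix representation} as a Taylor polynomial in $(\mu,\epsilon)$, modulo $\cO(\mu^2\epsilon,\mu^3)$, using the expansions of Lemma \ref{new expansion of the basis G} for the basis $\mathcal{G}$ and of Lemma \ref{lem:pa.exp} for $a_\epsilon$, $p_\epsilon$ and $\Sigma_{0,\epsilon}$. The key structural input is that $\mathcal{B}_\epsilon=\mathcal{B}_{0,\epsilon}$ does not depend on $\mu$. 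Hence every entry has the form
\[
(\mathcal{B}_\epsilon g(\mu,\epsilon),g'(\mu,\epsilon)),
\]
and its $\mu$-dependence comes only from the basis. By Lemma \ref{matrix representation mathsf L}, each entry is real or purely imaginary according to \eqref{B are alternatively real or imaginary}, and the matrix is self-adjoint. It therefore suffices to compute the upper triangle and to fix the real/imaginary nature of each coefficient.

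\textbf{Step 1: entries at $\mu=0$.} Here the basis is real, so all purely imaginary entries vanish at $\mu=0$. This already produces the factor $\mu$ in $r_2$, $r_4$, $r_6$ and $r_9$.
\begin{itemize}
\item Since $\mathcal{L}_{0,\epsilon}$ has generalized kernel \eqref{237}, $\mathcal{B}_\epsilon g^-_0(0,\epsilon)=\mathcal{B}_\epsilon\vet{0}{1}=0$ exactly, because $|D|1=0$ and $\partial_x 1=0$. This gives the zero entries in the last column at $\mu=0$.
\item The same identity applied to $g^-_1(0,\epsilon)$, which lies in $\ker\mathcal{L}_{0,\epsilon}$ thanks to the zero-average normalization \eqref{int g1-=0}, gives vanishing of the $(2,2)$, $(2,3)$ and $(2,4)$ entries at $\mu=0$.
\item The $(1,1)$ entry at $\mu=0$ is $\epsilon^2$ times an explicit coefficient. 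I would compute it from the $\epsilon$ and $\epsilon^2$ terms of $\mathcal{B}_\epsilon$ tested on $g^+_1$, including its $\epsilon$-correction in $\cos(2x)$, $\sin(2x)$. I expect it to yield $\mathsf{e}_{11}$, using $c_2$ and the coefficients $p_2^{[0]}$, $a_2^{[0]}$, $d_2^{[0]}$, $h_2^{[0]}$; first-order contributions vanish by Fourier-mode orthogonality.
\item The $(3,3)$ entry is $1+\cO(\epsilon^2)$, and the $(1,3)$ entry is $\cO(\epsilon^3)$ after cancellation of the $\epsilon$ and $\epsilon^2$ terms. The latter should follow from the Stokes equations, equivalently from $\mathcal{L}_{0,\epsilon}U_3=\alpha_\epsilon \tilde U_2$.
\end{itemize}

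\textbf{Step 2: entries at $\epsilon=0$.} Here $\mathcal{B}_0$ is the Fourier multiplier with $1-\kappa\partial_x^2$, $\mp\ck\partial_x$ and $|D|$. Only the $\mu$ and $\mu^2$ corrections of $g^\pm_1$ matter, and these are combinations of $\sin x$, $\cos x$. Expanding $\mathcal{B}_0 g^\pm_1(\mu,0)$ gives $\mathcal{B}_0 f^\pm_1=0$ at order zero. The first correction is annihilated to first order as well, so the $\mu^2$ coefficient is a quadratic form,
\[
(\mathcal{B}_0 h,h),\qquad h=\im\tfrac{1-\kappa}{4\ck^2}\vet{\cdot}{\cdot}.
\]
This quadratic form should equal $\frac{(1-\kappa)^2}{8\ck^3}$. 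All $\mu$-dependence in the $0$-modes is trivial at $\epsilon=0$.

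\textbf{Step 3: mixed terms.} I would identify the leading mixed monomials $\mu\epsilon^3$ in $r_2$, $r_4$, $\mu\epsilon$ in $r_6$, and $\mu\epsilon^2$ in $r_8$, $r_9$. This only requires parity bookkeeping with the even/odd labels and vanishing averages $\mathit{even}_0$ in Lemma \ref{new expansion of the basis G}, together with the observation that entries of the last row/column involve $\mathcal{B}_\epsilon\vet{0}{1}$ up to $\cO(\mu\epsilon)$ corrections. These corrections pair with vectors whose relevant components average to zero.

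\textbf{Main obstacle.} The hardest step is the exact $\epsilon^2$ coefficient $\mathsf{e}_{11}$ in the $(1,1)$ entry. It requires the second-order corrections of $g^+_1(0,\epsilon)$, which are only qualitatively known in \eqref{43 g+1}, together with the capillary terms $\Sigma_1$, $\Sigma_2$. To avoid needing the unknown $\epsilon^2$ correction, I would use that $\mathcal{B}_\epsilon$ annihilates $g^+_1(0,\epsilon)$ up to the Jordan structure. Concretely, I would write
\[
(\mathcal{B}_\epsilon g,g)=(\mathcal{B}_0 g_0,g_0)+\epsilon\big[\ldots\big]+\epsilon^2\big[(\mathcal{B}_2 f^+_1,f^+_1)+2\,\mathrm{Re}(\mathcal{B}_1 f^+_1,g_1)+(\mathcal{B}_0 g_1,g_1)\big].
\]
In this expansion the $\epsilon^2$-correction $g_2$ of $g^+_1$ drops out, because its pairing is with $\mathcal{B}_0 f^+_1=0$. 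This is the calculation I would carry out symbolically.
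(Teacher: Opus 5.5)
Your overall scheme is the paper's: expand in $\mu$ with $\mathcal B_\epsilon$ independent of $\mu$; at $\mu=0$ use reality of the basis, \eqref{B are alternatively real or imaginary} and $\mathcal B_\epsilon\vet{0}{1}=0$; get $E_{11}(0,\epsilon)$ from the $\epsilon,\epsilon^2$ terms, where the correction $g^+_{1_2}$ drops out against $\mathcal B_0 f_1^+=0$; get the $\mu^2$ coefficient at $\epsilon=0$ as the quadratic form of $\dot g_1^\pm(0,0)$.

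\textbf{Main gap: the kernel property.} The load-bearing identity $E_{22}(0,\epsilon)\equiv 0$, i.e.\ $\mathscr L_{0,\epsilon}g_1^-(0,\epsilon)=0$, is asserted for a wrong reason. The zero-average normalization \eqref{int g1-=0} has nothing to do with it. Indeed $g_1^-(0,\epsilon)$ and $f_1^-(0,\epsilon)$ differ by a multiple of $\vet{0}{1}$, which is already in the kernel, so one is in the kernel if and only if the other is. That normalization is only needed for $\mathsf B^\flat$, because of $\Pi_0$. Also, a real $\overline{\rho}$-odd element of $\mathcal V_{0,\epsilon}$ with zero average has no reason to be annihilated.

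This identity must hold exactly in $\epsilon$: it is what removes every pure-$\epsilon$ and $\mu\epsilon^k$ term from the $(2,2)$ and $(2,4)$ entries, and the term $(\mathcal B_\epsilon g_1^-,\dot g_1^+)$ from the $(1,2)$ entry. So it cannot be checked order by order. The missing argument is the paper's. By \eqref{237}, $\mathscr L_{0,\epsilon}^2=0$ on $\mathcal V_{0,\epsilon}$, so $\mathsf L_\epsilon(0)=\mathsf J_4\mathsf B_\epsilon(0)$, with $\mathsf B_\epsilon(0)$ in the reduced form \eqref{B_e 0}, squares to zero. This gives $E_{11}E_{22}\equiv 0\equiv E_{22}F_{11}$. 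Your computation $E_{11}(0,\epsilon)=\mathsf e_{11}\epsilon^2+r(\epsilon^3)$ with $\mathsf e_{11}\neq 0$ then forces $E_{22}(0,\epsilon)\equiv 0$. So the $(1,1)$ computation must come first and produces the kernel property, not conversely. Alternatively, you could identify the second kernel vector with the $\overline{\rho}$-odd translation mode of the Stokes wave, but that would itself need proof.

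\textbf{Second gap: Step 3.} Parity bookkeeping does not give the stated orders of the $\mu$-linear terms. Take $\kappa\neq 1$ and the $\mu\epsilon^2$ coefficient of the $(1,2)$ entry. It equals $(\dot g_1^-(0,0),w_2)$, where $w_2=\mathcal B_2 f_1^++\mathcal B_1 g^+_{1_1}+\mathcal B_0 g^+_{1_2}$ is the $\epsilon^2$ coefficient of $\mathcal B_\epsilon g_1^+(0,\epsilon)$. It therefore contains $(\mathcal B_0\dot g_1^-(0,0),g^+_{1_2})$, with $\mathcal B_0\dot g_1^-(0,0)=\im\frac{1-\kappa}{2\ck}\vet{\sqrt{\ck}\cos(x)}{-\sin(x)/\sqrt{\ck}}$. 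This picks out exactly the first Fourier modes of the unknown correction $g^+_{1_2}=\vet{\mathit{even}_0(x)}{\mathit{odd}(x)}$, about which parity says nothing.

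What closes this is again structural. Write $\partial_\mu\mathsf B_\epsilon(0)=X+X^*$, with $X$ built from $\mathcal B_\epsilon g_k^\sigma(0,\epsilon)=-\mathcal J\mathscr L_{0,\epsilon}g_k^\sigma(0,\epsilon)$, read off from $\mathsf L_\epsilon(0)$:
\begin{itemize}
\item $\mathcal B_\epsilon g_1^-(0,\epsilon)=\mathcal B_\epsilon g_0^-(0,\epsilon)=0$;
\item $\mathcal B_\epsilon g_1^+(0,\epsilon)=E_{11}\mathcal J g_1^-(0,\epsilon)+F_{11}\mathcal J g_0^-(0,\epsilon)$;
\item $\mathcal B_\epsilon g_0^+(0,\epsilon)=F_{11}\mathcal J g_1^-(0,\epsilon)+G_{11}\mathcal J g_0^-(0,\epsilon)$.
\end{itemize}
Then $w_2=\mathsf e_{11}\mathcal J f_1^-$, which is orthogonal to $\dot g_1^-(0,0)$.

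\textbf{Minor points.} The lemma requires $G_{11}(0,\epsilon)=1+r(\epsilon^3)$, not $1+\cO(\epsilon^2)$ as you write, and $F_{11}(0,\epsilon)=r(\epsilon^3)$. Both need explicit cancellation of the $\epsilon$ and $\epsilon^2$ coefficients, e.g.\ $(\mathcal B_0 g^+_{0_1},g^+_{0_1})+2(\mathcal B_1 f_0^+,g^+_{0_1})+(\mathcal B_2 f_0^+,f_0^+)=2\ck^2-4\ck^2+2\ck^2=0$. Your appeal to the Stokes equations or to $\mathcal L_{0,\epsilon}U_3=\alpha_\epsilon\tilde U_2$ does not supply these.
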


\begin{proof}
    We expand the matrix $\mathsf{B}_\epsilon(\mu)$ as

\begin{equation} \label{expansion of B_e in mu}
\mathsf{B}_\epsilon(\mu) = \mathsf{B}_\epsilon(0) + \mu(\partial_\mu \mathsf{B}_\epsilon)(0) + \frac{\mu^2}{2} (\partial_\mu^2 \mathsf{B}_0)(0) + \mathcal{O}(\mu^2 \epsilon, \mu^3).
\end{equation}

\textbf{The matrix $\mathsf{B}_\epsilon(0)$.}
The main result of this long paragraph is to prove that the matrix $\mathsf{B}_\epsilon(0)$ has the expansion \eqref{Be(0)}. We start recalling that the self-adjoint $\mathsf{B}_\e(0)$ has the form 
\begin{align} \label{B epsilon 0}
    \mathsf{B}_{\epsilon}(0) = 
\left(
\begin{array}{cccc}
\left( \mathcal{B}_{\epsilon} g_1^{+}(\epsilon),\ g_{1}^{+}(\epsilon) \right) & 
\left( \mathcal{B}_{\epsilon} g_1^{-}(\epsilon),\ g_{1}^{+}(\epsilon) \right) & 
\left( \mathcal{B}_{\epsilon} g_0^{+}(\epsilon),\ g_{1}^{+}(\epsilon) \right) & 
\left( \mathcal{B}_{\epsilon} g_0^{-}(\epsilon),\ g_{1}^{+}(\epsilon) \right) \\
\left( \mathcal{B}_{\epsilon} g_1^{+}(\epsilon),\ g_{1}^{-}(\epsilon) \right) & 
\left( \mathcal{B}_{\epsilon} g_1^{-}(\epsilon),\ g_{1}^{-}(\epsilon) \right) & 
\left( \mathcal{B}_{\epsilon} g_0^{+}(\epsilon),\ g_{1}^{-}(\epsilon) \right) & 
\left( \mathcal{B}_{\epsilon} g_0^{-}(\epsilon),\ g_{1}^{-}(\epsilon) \right) \\
\left( \mathcal{B}_{\epsilon} g_1^{+}(\epsilon),\ g_{0}^{+}(\epsilon) \right) & 
\left( \mathcal{B}_{\epsilon} g_1^{-}(\epsilon),\ g_{0}^{+}(\epsilon) \right) & 
\left( \mathcal{B}_{\epsilon} g_0^{+}(\epsilon),\ g_{0}^{+}(\epsilon) \right) & 
\left( \mathcal{B}_{\epsilon} g_0^{-}(\epsilon),\ g_{0}^{+}(\epsilon) \right) \\
\left( \mathcal{B}_{\epsilon} g_1^{+}(\epsilon),\ g_{0}^{-}(\epsilon) \right) & 
\left( \mathcal{B}_{\epsilon} g_1^{-}(\epsilon),\ g_{0}^{-}(\epsilon) \right) & 
\left( \mathcal{B}_{\epsilon} g_0^{+}(\epsilon),\ g_{0}^{-}(\epsilon) \right) & 
\left( \mathcal{B}_{\epsilon} g_0^{-}(\epsilon),\ g_{0}^{-}(\epsilon) \right)
\end{array}
\right),
\end{align}
where $g^{\sigma}_k(\e):=g^{\sigma}_k(0,\e)$, for $\sigma=\pm$, $k=0,1$. The matrix $\mathsf{B}_\epsilon(0)$ is real, because the operator $\mathcal{B}_\epsilon$ is real and the basis $\{g_k^{\pm}(0, \epsilon)\}_{k=0,1}$ is real.
On the other hand, by \eqref{B are alternatively real or imaginary}, its matrix elements $(\mathsf{B}_\epsilon(0))_{i,j}$ vanish for $i + j$ odd. In addition, $g_0^{-}(0,\epsilon) = 
\begin{bmatrix}
0 \\ 1
\end{bmatrix}
$ by \eqref{48 f-0=01}, and, by \eqref{B epsilon}, we get $\mathcal{B}_\epsilon g_0^{-}(0,\epsilon) = 0$, for any $\epsilon$.
We deduce that the self-adjoint matrix $\mathsf{B}_\epsilon(0)$ in \eqref{B epsilon 0} has the form
\begin{equation} \label{B_e 0}
\mathsf{B}_\epsilon(0) =\left(
\begin{NiceArray}{cc|ccc}[code-for-first-col=\scriptstyle]
E_{11}(0,\epsilon)
  & 0
  & F_{11}(0,\epsilon)
  & 0
  &  \\
0
  & E_{22}(0,\epsilon)
  & 0
  & 0
  &  \\
\hline
F_{11}(0,\epsilon)
  & 0
  & G_{11}(0,\epsilon)
  & 0
  & \\
0
  & 0
  & 0
  & 0
  &  \\
\end{NiceArray}
\right),
\end{equation}
where \( E_{11}(0,\epsilon), E_{22}(0,\epsilon), G_{11}(0,\epsilon), F_{11}(0,\epsilon) \) are real.
We claim that \( E_{22}(0,\epsilon) \equiv 0 \) for any \( \epsilon \). As a first step, following \cite{BMV1}, we prove that

\begin{align} \label{E22=0 or E11=0=F11}
    \text{either } E_{22}(0,\epsilon) \equiv 0, \quad \text{or} \quad E_{11}(0,\epsilon) \equiv 0 \equiv F_{11}(0,\epsilon).
\end{align}
Indeed, by \eqref{237}, the operator \( \mathscr{L}_{0,\epsilon} \equiv \mathcal{L}_{0,\epsilon} \) possesses, for any sufficiently small \( \epsilon \neq 0 \), the eigenvalue 0 with a four dimensional generalized kernel 
\[
\mathcal{W}_{\epsilon} := \text{span}\{ U_1, \tilde{U}_2, U_3, U_4 \},
\]
spanned by \( \epsilon \)-dependent vectors. By Lemma \ref{kato thm} it results that \( \mathcal{W}_\epsilon = \mathcal{V}_{0,\epsilon} = \text{Rg}(P_{0,\epsilon}) \) and by \eqref{237} we have \( \mathscr{L}_{0,\epsilon}^2 = 0 \) on \( \mathcal{V}_{0,\epsilon} \). Thus the matrix

\begin{equation} \label{Le0}
\mathsf{L}_\epsilon(0) := \mathsf{J}_4 \mathsf{B}_\epsilon(0) = 
\left(
\begin{NiceArray}{cc|ccc}[code-for-first-col=\scriptstyle]
0
  & E_{22}(0,\epsilon)
  & 0
  & 0
  &  \\
-E_{11}(0,\epsilon)
  & 0
  & -F_{11}(0,\epsilon)
  & 0
  &  \\
\hline
0
  & 0
  & 0
  & 0
  & \\
-F_{11}(0,\epsilon)
  & 0
  & -G_{11}(0,\epsilon)
  & 0
  &  \\
\end{NiceArray}
\right),
\end{equation}
which represents \( \mathscr{L}_{0,\epsilon} : \mathcal{V}_{0,\epsilon} \to \mathcal{V}_{0,\epsilon} \), satisfies \( \mathsf{L}_\epsilon^2(0) = 0 \), namely

\begin{equation}
\mathsf{L}_\epsilon^2(0) =  
\left(
\begin{NiceArray}{cc|ccc}[code-for-first-col=\scriptstyle]
-(E_{11}E_{22})(0,\epsilon)
  & 0
  & -(E_{22}F_{11})(0,\e)
  & 0
  &  \\
0
  & -(E_{11}E_{22})(0,\epsilon)
  & 0
  & 0
  &  \\
\hline
0
  & 0
  & 0
  & 0
  & \\
0
  & -(E_{22}F_{11})(0,\epsilon)
  & 0
  & 0
  &  \\
\end{NiceArray}
\right)=0,
\end{equation}
which implies \eqref{E22=0 or E11=0=F11}. We now prove that the matrix \( \mathsf{B}_\epsilon(0) \) defined in \eqref{B_e 0} expands as

\begin{equation} \label{Be(0)}
\mathsf{B}_\epsilon(0) = 
\left(
\begin{NiceArray}{cc|ccc}[code-for-first-col=\scriptstyle]
\mathsf{e}_{11}\e^2+r(\e^3)
  & 0
  & r(\e^3)
  & 0
  &  \\
0
  & 0
  & 0
  & 0
  &  \\
\hline
r(\e^3)
  & 0
  & 1+r(\e^3)
  & 0
  & \\
0
  & 0
  & 0
  & 0
  &  \\
\end{NiceArray}
\right)\,,
\end{equation}
where $\mathsf{e}_{11}$ is in \eqref{E110epsilon}. We expand the operator $\mathcal{B}_{\e}$ in \eqref{B epsilon} as 
\begin{equation} \label{Be B0 B1 B2}
\begin{aligned}
    &\mathcal{B}_{\e}=\mathcal{B}_{0}+\e \mathcal{B}_{1}+\e^2 \mathcal{B}_{2}+\cO(\e^3), \quad \mathcal{B}_{0}:=\begin{bmatrix}
1-\kappa\pa^2_{x}&  -\ck\pa_x\\
        \ck\pa_x & ~~|D|
    \end{bmatrix},\\
    &\mathcal{B}_{1}:=\begin{bmatrix}
a_1(x)-\kappa\Sigma_1&  -p_1(x)\pa_x\\
       \pa_x\circ p_1(x) & ~~0
    \end{bmatrix}, \quad \mathcal{B}_{2}:=\begin{bmatrix}
a_2(x)-\kappa\Sigma_2&  -p_2(x)\pa_x\\
        \pa_x \circ p_2(x) & ~~ 0
    \end{bmatrix},
\end{aligned}
\end{equation}
where the remainder term $\cO(\e^3)\in\mathcal{L}(Y,X)$, the functions $a_1$, $p_1$, $a_2$, $p_2$,  are given in \eqref{SN1}-\eqref{aino2fd}, the operators $\Sigma_1$, $\Sigma_2$ are given in \eqref{Sigma epsilon}-\eqref{h2 h02}.

Our goal here is to determine the expansion of $E_{11}(0,\e)=\mathsf{e}_{11}\e^2+r(\e^3)$. By \eqref{43 g+1} we rewrite the real function $g^+_1(0,\e)$ as
\begin{equation} \label{expansion fp1 0 epsilon}
    \begin{aligned}
        & g^+_1(0,\e)=f^+_1+\e g^+_{1_1}+\e^2 g^+_{1_2}+\cO(\e^3),\\
        & f^+_1=\vet{\frac{1}{\sqrt{\ck}}\cos(x)}{\sqrt{\ck}\sin(x)}, \quad g^+_{1_1}:=\vet{\frac{2-\kappa}{(1-2\kappa)\sqrt{\ck}} \cos(2x)}{\frac{\ck^2\sqrt{\ck}}{1-2\kappa}\sin(2x)}, \quad g^+_{1_2}:=\vet{\mathit{even}_0(x)}{\mathit{odd}(x)},
    \end{aligned}
\end{equation}
where both $g^+_{1_2}$ and $\cO(\e^3)$ are vectors in $H^2(\mathbb{T},\mathbb{C})\times H^1(\mathbb{T},\mathbb{C})$. Since $\mathcal{B}_0 f^+_1=-\mathcal{J}\mathscr{L}_{0,0} f^+_1=0$, and both $\mathcal{B}_0$, $\mathcal{B}_1$ are self-adjoint real operators, it results
\begin{equation}
\begin{aligned}
E_{11}(0, \epsilon) &= \left( \mathcal{B}_\epsilon g_1^+(0,\epsilon), g_1^+(0,\epsilon) \right) \\
&= \epsilon \left( \mathcal{B}_1 f_1^+, f_1^+ \right) + \epsilon^2 \left[ \left( \mathcal{B}_2 f_1^+, f_1^+ \right) + 2 \left( \mathcal{B}_1 f_1^+, g^+_{1_1} \right) + \left( \mathcal{B}_0 g^+_{1_1}, g^+_{1_1} \right) \right] + \mathcal{O}(\epsilon^3).
\end{aligned}
\end{equation}
By \eqref{Be B0 B1 B2} one has
\begin{equation} \label{B1fp1 B2fp1 B0fp11}
\mathcal{B}_1 f_1^+ = 
\begin{bmatrix}
-\frac{3\kappa}{\sqrt{\ck}}\cos(2x) \\
2\sqrt{\ck}\sin(2x)
\end{bmatrix},
\quad 
\mathcal{B}_2 f_1^+ = 
\begin{bmatrix}
A^{[1]}_2 \cos(x) + A^{[3]}_2 \cos(3x) \\
B^{[1]}_2 \sin(x) + B^{[3]}_2 \sin(3x)
\end{bmatrix}, \quad
\mathcal{B}_0 g_{1_1}^+ = 
\begin{bmatrix}
\frac{3\kappa}{\sqrt{\ck}}\cos(2x) \\
-2\sqrt{\ck}\sin(2x)
\end{bmatrix},
\end{equation}
with 
\begin{equation} \label{ABABAB}
\begin{aligned}
A^{[1]}_2 :=\frac{118\kappa ^2+11\kappa -8}{16\left(2\kappa -1\right) \sqrt{\ck}}, \quad A^{[3]}_2 :=\frac{3\kappa \left(26\kappa +17\right)}{8\left(2\kappa -1\right)\sqrt{\ck}},\quad B^{[1]}_2 :=\frac{46\kappa ^2+47\kappa -8}{16\left(1-2\kappa \right)\ck\sqrt{\ck}},\quad B^{[3]}_2:=\frac{3\ck^2\sqrt{\ck}}{1-2\kappa}. 
\end{aligned}
\end{equation}
By \eqref{expansion fp1 0 epsilon}-\eqref{ABABAB}, a straightforward calculation reveals that
\begin{equation} \label{E110epsilon}
\begin{aligned}
E_{11}(0, \epsilon) &= \mathsf{e}_{11} \epsilon^2 + r(\epsilon^3)=\frac{2\kappa^2+\kappa+8}{8(1-2\kappa)\ck}\e^2+ r(\epsilon^3)\,. 
\end{aligned}
\end{equation}
Since $\mathsf{e}_{11}$ is not zero for $\kappa \not \in \fR_2$, the second alternative in \eqref{E22=0 or E11=0=F11} is ruled out, implying $E_{22}(0, \epsilon) \equiv 0$.
Next let us determine the expansion of $G_{11}(0,\e)$. By \eqref{45 g+0} we split the real-valued function $g_0^+(0, \epsilon)$ as
\begin{equation} \label{fp0 expansion}
\begin{aligned}
&g_0^+(0, \epsilon) = f_0^+ + \epsilon g_{0_1}^+ + \epsilon^2g_{0_2}^+ + \mathcal{O}(\epsilon^3), \\  
&f_0^+ = \begin{bmatrix} 1 \\ 0 \end{bmatrix},\quad
g_{0_1}^+ := 
\begin{bmatrix}
 \cos(x) \\
- \ck \sin(x)
\end{bmatrix}, \quad
g_{0_2}^+ := 
\begin{bmatrix}
\mathit{even}_0(x) \\
\mathit{odd}(x)
\end{bmatrix}. 
\end{aligned}
\end{equation}
Since, by \eqref{eigenfunc of mathcall L00}, \eqref{eigenfunc f+0}, and \eqref{Be B0 B1 B2}, $\mathcal{B}_0 f_0^+ = f_0^+$, using that $\mathcal{B}_0$, $\mathcal{B}_1$ are self-adjoint real operators, and $(f_0^+,f_0^+) = 1$, $(f_0^+, g_{0_1}^+)=0$, and $(f_0^+, g_{0_2}^+)=0$, we have
\begin{align*}
G_{11}(0, \epsilon) 
&= (\mathcal{B}_\epsilon g_0^+(0, \epsilon), g_0^+(0, \epsilon)) \\
&= 1 + \epsilon(\mathcal{B}_1 f_0^+, f_{0}^+) +\epsilon^2\left[(\mathcal{B}_0 g_{0_1}^+, g_{0_1}^+)+2(\mathcal{B}_1 f_0^+, g_{0_1}^+)+(\mathcal{B}_2 f_0^+, f_{0}^+)\right]+
r(\epsilon^3).
\end{align*}
By \eqref{Be B0 B1 B2}, \eqref{fp0 expansion}, and \eqref{SN1}, \eqref{pino1fd}, \eqref{aino1fd}, and \eqref{Sigma epsilon}-\eqref{h2 h02} one has
\begin{equation} \label{B1fp0fp0 B0fp01fp0}
    \begin{aligned}
        \left(\mathcal{B}_1 f_0^+,f^+_0\right)=0,\quad (\mathcal{B}_0 g_{0_1}^+, g_{0_1}^+)=\left(\mathcal{B}_2 f_0^+,f^+_0\right)=-(\mathcal{B}_1 f_0^+, g_{0_1}^+)=2\ck^2.
    \end{aligned}
\end{equation}
By \eqref{B1fp0fp0 B0fp01fp0}, we deduce $G_{11}(0, \epsilon) = 1 + r(\epsilon^3)$.

Next we provide the expansion of  $F_{11}(0, \epsilon)$.  
By \eqref{Be B0 B1 B2}, \eqref{expansion fp1 0 epsilon}, \eqref{fp0 expansion}, using that $\mathcal{B}_0$, $\mathcal{B}_1$ are self-adjoint and real, and $\mathcal{B}_0 f_1^+ = 0$, $\mathcal{B}_0 f_0^+ = f_0^+$, we obtain
\begin{align*}
F_{11}(0, \epsilon) 
&= \epsilon \left[ \left( \mathcal{B}_1 f_1^+, f_0^+ \right) + \left(  \mathcal{B}_0g_{1_1}^+, f_0^+ \right) \right] \\
&\quad + \epsilon^2 \left[ \left( \mathcal{B}_2 f_1^+, f_0^+ \right) + \left( \mathcal{B}_1 f_1^+, g_{0_1}^+ \right) + \left( \mathcal{B}_1 f_0^+, g_{1_1}^+ \right)  + \left( \mathcal{B}_0g_{1_2}^+, f_0^+ \right) + \left( \mathcal{B}_0 g_{1_1}^+, g_{0_1}^+ \right) \right] + r(\epsilon^3).
\end{align*}
A straightforward calculation reveals that all the scalar product vanish, yielding
\begin{align} \label{F110e}
    F_{11}(0, \epsilon) =  r(\epsilon^3).
\end{align}
The expansion \eqref{Be(0)} is proved.

The next step is to compute the terms linear in $\mu$ of $\mathsf{B}_\epsilon(\mu)$. We have 
\begin{align} \label{X+X}
    \partial_\mu \mathsf{B}_\epsilon(0) = X + X^* \quad \text{where} \quad 
X := \left( \left( \mathcal{B}_\epsilon g_k^\sigma(0,\epsilon), \, (\partial_\mu g_{k'}^{\sigma'})(0,\epsilon) \right) \right)_{\sigma,\sigma'=\pm;\,k,k'=0,1}. 
\end{align}
and now prove that 
\begin{equation} \label{X}
X = 
\left(
\begin{NiceArray}{cc|ccc}[code-for-first-col=\scriptstyle]
\cO(\e^4)
  & 0
  & \cO(\e^2)
  & 0
  &  \\
\cO(\e^3)
  & 0
  & \cO(\e)
  & 0
  &  \\
\hline
\cO(\e^3)
  & 0
  & \cO(\e^2)
  & 0
  & \\
\cO(\e^3)
  & 0
  & \cO(\e^2)
  & 0
  &  \\
\end{NiceArray}
\right).
\end{equation}
Indeed consider the matrix $\mathsf{L}_\epsilon(0)$ in \eqref{Le0}, where $E_{22}(0, \epsilon) = 0$, and recall that it represents the action of the operator $\mathscr{L}_{0,\epsilon} : \mathcal{V}_{0,\epsilon} \to \mathcal{V}_{0,\epsilon}$ in the basis $\{g_k^\sigma(0,\epsilon)\}$;  we deduce that
\[
\mathscr{L}_{0,\epsilon} g_1^-(0,\epsilon) = 0,\quad \mathscr{L}_{0,\epsilon} g_0^-(0,\epsilon) = 0.
\]
Thus also $\mathcal{B}_\epsilon g_1^-(0,\epsilon) =\mathcal{B}_\epsilon g_0^-(0,\epsilon) = 0$, and the second and fourth columns of the matrix $X$ in \eqref{X} are zero. Now we compute $\pa_\mu g_k^\sigma(0,\e)$.  
In view of \eqref{43 g+1}-\eqref{46 g-0} and by denoting with a dot the derivative w.r.t.\ $\mu$, one has
\begin{equation} \label{dotfp1 dotfp0 dotfn1 dotfn0}
    \begin{aligned}
\dot{g}_1^+(0,\epsilon) &= \im\frac{1-\kappa}{4\ck^2}  
\begin{bmatrix}
\frac{1}{\sqrt{\ck}}\sin(x) \\
\sqrt{\ck} \cos(x)
\end{bmatrix}
+ \im\epsilon 
\begin{bmatrix}
\mathit{odd}(x) \\
\mathit{even}(x)
\end{bmatrix}
+ \mathcal{O}(\epsilon^2),  \\
\dot{g}_1^-(0,\epsilon) &= \im\frac{1-\kappa}{4\ck^2}  
\begin{bmatrix}
\frac{1}{\sqrt{\ck}} \cos(x)\\
-\sqrt{\ck}\sin(x) 
\end{bmatrix}
+ \epsilon\frac{-\kappa}{4\ck\sqrt{\ck}}\begin{bmatrix}
0 \\
1
\end{bmatrix}+
\im\epsilon 
\begin{bmatrix}
\mathit{even}(x) \\
\mathit{odd}(x)
\end{bmatrix}
+ \mathcal{O}(\epsilon^2), \\
\dot{g}_0^+(0,\epsilon) &= \epsilon
\begin{bmatrix}
\frac14\cos(x)\\[1mm]
\frac{\kappa-1}{4\ck}\sin(x)
\end{bmatrix}+\im\epsilon 
\begin{bmatrix}
\mathit{odd}(x) \\
\mathit{even}_0(x)
\end{bmatrix}
+ \mathcal{O}(\epsilon^2),\\
\dot{g}_0^-(0,\epsilon) &= \frac{1}{2}\epsilon\begin{bmatrix}
\frac{1}{\ck} \sin(x)\\
\cos(x)
\end{bmatrix}+
\im\epsilon 
\begin{bmatrix}
\mathit{even}_0(x) \\
\mathit{odd}(x)
\end{bmatrix}
+ \mathcal{O}(\epsilon^2). 
\end{aligned}
\end{equation}
In view of \eqref{pa_t eta psi}, \eqref{43 g+1}-\eqref{46 g-0}, \eqref{Le0}, \eqref{E110epsilon}, \eqref{F110e}, and since $\mathcal{B}_\epsilon g_k^\sigma(0, \epsilon) = -\mathcal{J} \mathscr{L}_\epsilon g_k^\sigma(0, \epsilon)$, we have
\begin{equation} \label{Befp1 Befp0}
   \begin{aligned}
    \mathcal{B}_\epsilon g_1^+(0, \epsilon) &= E_{11}(0,\epsilon) \, \mathcal{J} g_1^-(0,\epsilon) + F_{11}(0,\epsilon) \, \mathcal{J} g_0^-(0, \epsilon) 
    \\
    &=\left(\epsilon^2 \mathsf{e}_{11}+\cO(\e^3)\right)\left(
    \begin{bmatrix}
        \sqrt{\ck} \cos(x) \\
        \frac{1}{\sqrt{\ck}} \sin(x)
    \end{bmatrix}+\e\begin{bmatrix}
        \frac{\ck^2\sqrt{\ck}}{1-2\kappa} \cos(2x) \\
        \frac{2-\kappa}{(1-2\kappa)\sqrt{\ck}} \sin(2x)
    \end{bmatrix}+ \cO(\epsilon^2)\right)+\cO(\e^3)\vet{1}{0}
    ,
    \\
    \mathcal{B}_\epsilon g_0^+(0, \epsilon) &= F_{11}(0,\epsilon) \, \mathcal{J} g_1^-(0,\epsilon) + G_{11}(0,\epsilon) \, \mathcal{J} g_0^- (0, \epsilon)
    = 
    \begin{bmatrix}
        1 \\
        0
    \end{bmatrix}
    +\mathcal{O}(\epsilon^3).
\end{aligned} 
\end{equation}
We deduce \eqref{X} by \eqref{dotfp1 dotfp0 dotfn1 dotfn0} and \eqref{Befp1 Befp0}.

Next we compute the terms quadratic in $\mu$. By denoting with a double dot the double derivative w.r.t. $\mu$, we arrive at
\begin{align} \label{ddotB00}
\partial_\mu^2 \mathsf{B}_0(0) = \left( \mathcal{B}_0 g_k^\sigma(0,0), \ddot{g}_{k'}^{\sigma'}(0,0) \right)
+ \left( \ddot{g}_k^\sigma(0,0), \mathcal{B}_0 g_{k'}^{\sigma'}(0,0) \right)
+ 2 \left( \mathcal{B}_0 \dot{g}_k^\sigma(0,0), \dot{g}_{k'}^{\sigma'}(0,0) \right)
= : Y + Y^* + 2Z. 
\end{align}
We claim that $Y = 0$. Indeed, its first, second and fourth column are zero, since
 $\mathcal{B}_0 g^\sigma_k(0,0)=-\mathcal{J}\mathscr{L}_{0,0} g^\sigma_k(0,0)=0$ for $g_k^\sigma(0,0) \in \{ g_1^+(0,0), g_1^-(0,0), g_0^-(0,0) \}$. The third column is also zero by noting that $\mathcal{B}_0 g_0^+ = g_0^+$ and
 \begin{align*}
 \ddot{g}_1^+(0,0) =
 \begin{bmatrix}
 \mathit{even}_0(x) + \im\mathit{odd}(x) \\
 \mathit{odd}(x) + \im\mathit{even}_0(x)
 \end{bmatrix}, \quad
 \ddot{g}_1^-(0,0) =
 \begin{bmatrix}
 \mathit{odd}(x) + \im\mathit{even}_0(x) \\
 \mathit{even}_0(x) + \im\mathit{odd}(x)
 \end{bmatrix},\quad \ddot{g}_0^+(0,0) = \ddot{g}_0^-(0,0) = 0.
 \end{align*}
 We claim that
\begin{equation} \label{Z}
Z = \left( \mathcal{B}_0 \dot{g}_k^\sigma(0,0), \dot{g}_{k'}^{\sigma'}(0,0) \right)_{\sigma,\sigma'=\pm;\,k,k'=0,1}
= \left(
\begin{NiceArray}{cc|ccc}[code-for-first-col=\scriptstyle]
\frac{(1-\kappa)^2}{8\ck^3}
  & 0
  & 0
  & 0
  &  \\
0
  &\frac{(1-\kappa)^2}{8\ck^3}
  & 0
  & 0
  &  \\
\hline
0
  & 0
  & 0
  & 0
  & \\
0
  & 0
  & 0
  & 0 
  &  \\
\end{NiceArray}
\right),
\end{equation}
Indeed, by \eqref{dotfp1 dotfp0 dotfn1 dotfn0}, we have $\dot{g}_0^+(0,0) = \dot{g}_0^-(0,0) = 0$. Therefore the last two columns of $Z$,
and by self-adjointness the last two rows, are zero. Also, using the orthogonality, we have
 \begin{align}
     \left( \mathcal{B}_0 \dot{g}_1^+(0,0), \dot{g}_1^-(0,0) \right)=0
 \end{align}
 By \eqref{Be B0 B1 B2}, \eqref{dotfp1 dotfp0 dotfn1 dotfn0}, we obtain the matrix \eqref{Z} with
$
\left( \mathcal{B}_0 \dot{g}_1^+(0,0), \dot{g}_1^+(0,0) \right)
= \left( \mathcal{B}_0 \dot{g}_1^-(0,0), \dot{g}_1^-(0,0) \right)
= \frac{(1-\kappa)^2}{8\ck^3}.
$
In conclusion, \eqref{expansion of B_e in mu}, \eqref{X+X}, \eqref{X}, \eqref{ddotB00}, the fact that $Y = 0$ and \eqref{Z} imply \eqref{Be90}, using also the self-adjointness of $\mathcal{B}_{\e}$ and \eqref{B are alternatively real or imaginary}. 
\end{proof}

\begin{lemma}
    [Expansion of $\mathsf{B}^{\flat}$] \label{Expansion of B flat} The self-adjoint and reversibility-preserving matrix $\mathsf{B}^{\flat}$ associated, as in \eqref{Bmuepsilon matrix representation}, to the self-adjoint and reversibility-preserving operator $\mathcal{B}^{\flat}$, defined in \eqref{B b}, with respect to the basis $\mathcal{G}$ of $\mathcal{V}_{\mu, \epsilon}$ in \eqref{G basis set and g}, admits the expansion
\begin{equation} \label{mathsfBb}
\mathsf{B}^{\flat} = 
\left(
\begin{NiceArray}{cc|ccc}[code-for-first-col=\scriptstyle]
-\frac{1-\kappa}{4\ck}\mu^2
  & \im\,(\frac{\ck}{2}\mu+r_2(\mu\e^2))
  & 0
  & 0
  &  \\
- \im\,(\frac{\ck}{2}\mu+r_2(\mu\e^2))
  &-\frac{1-\kappa}{4\ck}\mu^2
  & \im\, r_6(\mu\e)
  & 0
  &  \\
\hline
0
  & -\im\, r_6(\mu\e)
  & 0
  & 0
  & \\
0
  & 0
  & 0
  & \mu 
  &  \\
\end{NiceArray}
\right)
+ \mathcal{O}(\mu^2 \epsilon, \mu^3).
\end{equation}
\end{lemma}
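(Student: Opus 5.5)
We compute the entries $(\mathcal{B}^\flat g^{\sigma'}_{k'}(\mu,\e), g^\sigma_k(\mu,\e))$ directly. Since $\mathcal{B}^\flat=\mu\,\mathcal{B}^\flat_1$ with $\mathcal{B}^\flat_1:=\begin{bmatrix}0&0\\0&\mathrm{sgn}(D)+\Pi_0\end{bmatrix}$ independent of $\e$, we have $\mathsf{B}^\flat=\mu\,\mathsf{M}(\mu,\e)$, where $\mathsf{M}$ is the matrix of $\mathcal{B}^\flat_1$ in the basis $\mathcal{G}$. It therefore suffices to expand $\mathsf{M}(\mu,\e)=\mathsf{M}(0,\e)+\mu\,\partial_\mu\mathsf{M}(0,0)+\cO(\mu\e,\mu^2)$, keeping only the terms that survive modulo $\cO(\mu^2\e,\mu^3)$. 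Only second components enter. By the parity structure of Lemma \ref{new expansion of the basis G} and \eqref{B are alternatively real or imaginary}, the entries are alternately real and purely imaginary. We also use that $\mathrm{sgn}(D)$ maps $\cos(jx)\mapsto \im\sin(jx)$ and $\sin(jx)\mapsto -\im\cos(jx)$, since $\mathrm{sgn}(D)=\im\,\mathfrak H$ up to sign convention. Thus $\mathrm{sgn}(D)$ maps real even functions to $\im\,$odd functions and kills constants, while $\Pi_0$ picks out averages.

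\textbf{Step 1: $\mu=0$.} From \eqref{48 g-0=01}, the second components of $g_1^+(0,\e)$ and $g_0^+(0,\e)$ are odd, that of $g_1^-(0,\e)$ is $\mathit{even}_0$ by \eqref{int g1-=0}, and $g_0^-(0,\e)=[0,1]^\top$. Several consequences follow:
\begin{itemize}[label={}]
\item $\mathcal{B}^\flat_1 g_0^-(0,\e)=[0,1]^\top$. Hence $\mathsf{M}_{44}(0,\e)=1$, giving the entry $\mu$. Its pairings with $g_1^\pm(0,\e)$ and $g_0^+(0,\e)$ vanish, because their second components have zero average.
\item $\Pi_0$ contributes nothing on the other vectors.
\item $(\mathrm{sgn}(D) g^-_{1,2}, g^+_{1,2})$ gives the entry $(1,2)$. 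At $\e=0$, $\mathrm{sgn}(D)\sqrt{\ck}\cos x=\im\sqrt{\ck}\sin x$, paired with $\sqrt{\ck}\sin x$, gives $\im\ck/2$. The $\cO(\e)$ correction vanishes, because the $\e$-terms live on the second harmonic and pair to zero with first harmonics; by analyticity and reality, what remains is $r_2(\mu\e^2)$.
\item The entries $(3,3)$ and $(3,1)$ at $\mu=0$ pair odd with odd under $\mathrm{sgn}(D)$, which yields $\im\,$even against odd. These give zero real part, and they are real by reversibility, so they vanish.
\item The entry $(2,3)$ is only $\cO(\e)$, giving $r_6(\mu\e)$.
\end{itemize}

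\textbf{Step 2: the $\mu^2$ diagonal terms.} Next we compute $\partial_\mu \mathsf{M}(0,0)$ on the $(1,1)$ and $(2,2)$ entries using the $\im\mu\frac{1-\kappa}{4\ck^2}$ corrections in \eqref{43 g+1}--\eqref{44 g-1}. For $(1,1)$, the derivative is $2\,\mathrm{Re}(\mathrm{sgn}(D)\dot g^+_{1,2}, g^+_{1,2})$ with $\dot g^+_{1,2}=\im\frac{1-\kappa}{4\ck^2}\sqrt{\ck}\cos x$. Applying $\mathrm{sgn}(D)$ gives $-\frac{1-\kappa}{4\ck^2}\sqrt{\ck}\sin x$, which pairs with $\sqrt{\ck}\sin x$ to give $-\frac{1-\kappa}{8\ck}$. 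Doubling yields $-\frac{1-\kappa}{4\ck}$, hence the term $-\frac{1-\kappa}{4\ck}\mu^2$. The $(2,2)$ entry is computed analogously and gives the same value. The remaining $\mu$-derivatives of entries $(3,\cdot)$ and $(4,\cdot)$ vanish at $\e=0$, since $\dot g_0^\pm(0,0)=0$, or are absorbed in $\cO(\mu^2\e)$.

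\textbf{Main obstacle.} The $(4,4)$ and mixed $(\cdot,4)$ entries must hold exactly at order $\mu$, with no $\mu^2$ correction beyond $\cO(\mu^2\e,\mu^3)$. This needs $\partial_\mu g_0^-(0,0)=0$ from \eqref{46 g-0}. It also needs the $\frac12\mu\e$ term of $g_0^-$ and the $\mu\e$ constant in $g_1^-$ (coefficient $-\kappa/(4\ck\sqrt{\ck})$) to enter only at order $\mu^2\e$. The delicate point is the $(2,4)$ entry: the $\mu\e$ constant in $g_1^-$ interacts with $\Pi_0$. Its contribution is $\mu\cdot\mu\e$, so it is acceptable, and the zero-average property \eqref{int g1-=0} is exactly what prevents an $\cO(\mu)$ term there.
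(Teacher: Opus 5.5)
Your proposal is correct and follows essentially the paper's own route. You expand the basis $\mathcal{G}$ from Lemma \ref{new expansion of the basis G}, apply $\mu(\mathrm{sgn}(D)+\Pi_0)$ to second components and pair, using parity, reality and the zero-average property \eqref{int g1-=0} to kill the unwanted $\cO(\mu)$ entries, and the $\im\mu\frac{1-\kappa}{4\ck^2}$ corrections produce the diagonal $-\frac{1-\kappa}{4\ck}\mu^2$ terms.

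Two loose ends remain. You never check that $\partial_\mu\mathsf{M}_{12}(0,0)=0$; a one-line computation or your real/imaginary bookkeeping gives it. Also, the $\mu^2$ terms in rows $3,4$ vanish not only because $\dot g_0^\pm(0,0)=0$, but also because $g_0^+(0,0)$ has zero second component and the second components of $\dot g_1^\pm(0,0)$ have zero mean.
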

\begin{proof}
We compute the matrix 
\begin{align*}
\left( \left( \mathcal{B}^{\flat}g_k^\sigma(\mu,\epsilon), \, g_{k'}^{\sigma'}(\mu,\epsilon) \right) \right)_{\sigma,\sigma'=\pm;\,k,k'=0,1}
\end{align*}
with respect to the basis $\mathcal{G}$. Since $\mathcal{B}^{\flat}$ is self-adjoint and reversibility-preserving, it is enough to determine the entries in the upper triangular part and then use the corresponding symmetry relations. We begin with the last column and row. By \eqref{46 g-0} and the definition \eqref{B b}, we obtain
$
\mathcal{B}^{\flat} g_0^{-}(\mu,\epsilon)
=
\begin{bmatrix}
0\\
\mu
\end{bmatrix}
+\mathcal{O}(\mu^2\epsilon).
$
Using \eqref{43 g+1}--\eqref{46 g-0}, we infer that
\[
\begin{aligned}
\bigl( \mathcal{B}^{\flat} g_0^{-}(\mu,\epsilon), g_1^{+}(\mu,\epsilon) \bigr)
&= \mathcal{O}(\mu^2\epsilon),\quad \bigl( \mathcal{B}^{\flat} g_0^{-}(\mu,\epsilon), g_1^{-}(\mu,\epsilon) \bigr)= \mathcal{O}(\mu^2\epsilon),\\
\bigl( \mathcal{B}^{\flat} g_0^{-}(\mu,\epsilon), g_0^{+}(\mu,\epsilon) \bigr)
&= \mathcal{O}(\mu^2\epsilon),\quad \bigl( \mathcal{B}^{\flat} g_0^{-}(\mu,\epsilon), g_0^{-}(\mu,\epsilon) \bigr)
= \mu + \mathcal{O}(\mu^2\epsilon).
\end{aligned}
\]
This gives the expansion of the fourth column and, by self-adjointness, of the fourth row as well.

Next we determine the upper-left $3\times 3$ block. By \eqref{B b} together with \eqref{43 g+1}--\eqref{45 g+0}, we have
\begin{align*}
\mathcal{B}^{\flat} g_1^{+}(\mu,\epsilon)
&=
-\im \mu
\begin{bmatrix}
0\\
\sqrt{\ck}\cos(x)
\end{bmatrix}
-\mu^2\frac{1-\kappa}{4\ck^2}
\begin{bmatrix}
0\\
\sqrt{\ck}\sin(x)
\end{bmatrix}
-\im \mu\epsilon \frac{\ck^2\sqrt{\ck}}{1-2\kappa}
\begin{bmatrix}
0\\
\cos(2x)
\end{bmatrix} \\
&\qquad
+\im \mathcal{O}(\mu\epsilon^2)
\begin{bmatrix}
0\\
\mathit{even}_0(x)
\end{bmatrix}
+\mathcal{O}(\mu^2\epsilon,\mu^3),
\\[0.3em]
\mathcal{B}^{\flat} g_1^{-}(\mu,\epsilon)
&=\im \mu
\begin{bmatrix}
0\\
\sqrt{\ck}\sin(x)
\end{bmatrix}
-\mu^2\frac{1-\kappa}{4\ck^2}
\begin{bmatrix}
0\\
\sqrt{\ck}\cos(x)
\end{bmatrix}
+\im \mu\epsilon \frac{\ck^2\sqrt{\ck}}{1-2\kappa}
\begin{bmatrix}
0\\
\sin(2x)
\end{bmatrix} \\
&\qquad
+\im \mathcal{O}(\mu\epsilon^2)
\begin{bmatrix}
0\\
\mathit{odd}(x)
\end{bmatrix}
+\mathcal{O}(\mu^2\epsilon,\mu^3),
\\
\mathcal{B}^{\flat} g_0^{+}(\mu,\epsilon)
&=
\im \mu\epsilon 
\begin{bmatrix}
0\\
\ck\cos(x)
\end{bmatrix}
+\im \mathcal{O}(\mu\epsilon^2)
\begin{bmatrix}
0\\
\mathit{even}_0(x)
\end{bmatrix}
+\mathcal{O}(\mu^2\epsilon).
\end{align*}
Pairing these expansions with the basis vectors in $\mathcal{G}$, and using again \eqref{43 g+1}--\eqref{46 g-0}, we obtain
\[
\begin{aligned}
\bigl( \mathcal{B}^{\flat} g_1^{+}(\mu,\epsilon), g_1^{+}(\mu,\epsilon) \bigr)
&= -\mu^2\frac{1-\kappa}{4\ck} + \mathcal{O}(\mu^2\epsilon,\mu^3),\\
\bigl( \mathcal{B}^{\flat} g_1^{+}(\mu,\epsilon), g_1^{-}(\mu,\epsilon) \bigr)
&= -\im \mu\frac{\ck}{2} + \im \mathcal{O}(\mu\epsilon^2) + \mathcal{O}(\mu^2\epsilon,\mu^3),\\
\bigl( \mathcal{B}^{\flat} g_1^{+}(\mu,\epsilon), g_0^{+}(\mu,\epsilon) \bigr)
&= \mathcal{O}(\mu^2\epsilon,\mu^3),\\
\bigl( \mathcal{B}^{\flat} g_0^{+}(\mu,\epsilon), g_1^{-}(\mu,\epsilon) \bigr)
&= \im\mu\epsilon \frac{\ck\sqrt{\ck}}{2}
   + \im \mathcal{O}(\mu\epsilon^2)
   + \mathcal{O}(\mu^2\epsilon,\mu^3)\\
&= \im\,\mathcal{O}(\mu\epsilon)+\mathcal{O}(\mu^2\epsilon,\mu^3),\\
\bigl( \mathcal{B}^{\flat} g_0^{+}(\mu,\epsilon), g_0^{+}(\mu,\epsilon) \bigr)
&= \mathcal{O}(\mu^2\epsilon),\\
\bigl( \mathcal{B}^{\flat} g_1^{-}(\mu,\epsilon), g_1^{-}(\mu,\epsilon) \bigr)
&= -\mu^2\frac{1-\kappa}{4\ck} + \mathcal{O}(\mu^2\epsilon,\mu^3).
\end{aligned}
\]
In particular, the off-diagonal entry coupling $g_1^{-}(\mu,\e)$ and $g_0^{+}(\mu,\e)$ is purely imaginary of size $\mathcal{O}(\mu\epsilon)$, which we denote by $\im\, r_6(\mu\epsilon)$. Collecting the above expansions, and using self-adjointness together with reversibility to recover the remaining entries, we obtain exactly \eqref{mathsfBb}.
\end{proof}

Next, we consider $\mathcal{B}^{\sharp}$.
\begin{lemma} [Expansion of $\mathsf{B}^{\sharp}$] \label{Expansion of B sharp}
    The self-adjoint and reversibility-preserving matrix $\mathsf{B}^{\sharp}$ associated, as in \eqref{Bmuepsilon matrix representation}, to the self-adjoint and reversibility-preserving operators $\mathcal{B}^{\sharp}$, defined in \eqref{B sharp}, with respect to the basis $\mathcal{G}$ of $\mathcal{V}_{\mu,\epsilon}$ in \eqref{G basis set and g}, admits the expansion
\begin{align} \label{mathsf B sharp}
\mathsf{B}^{\sharp} =
\left(
\begin{NiceArray}{cc|ccc}[code-for-first-col=\scriptstyle]
0
  & \im\, r_2(\mu\e^2)
  & 0
  & \im\sqrt{\ck}\,\mu\e+\im\, r_4(\mu\e^2)
  &  \\
-\im\, r_2(\mu\e^2)
  & 0
  & -\im\, r_6(\mu\e)
  & 0
  &  \\
\hline
0
  & \im\, r_6(\mu\e)
  & 0
  & -\im\, r_9(\mu\e^2)
  & \\
-\im\sqrt{\ck}\, \mu \e -\im \,r_4(\mu\e^2)
  & 0
  & \im\, r_9(\mu\e^2)
  & 0
  &  \\
\end{NiceArray}
\right)
+ \mathcal{O}(\mu^2 \epsilon).
\end{align}
\end{lemma}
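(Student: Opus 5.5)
Since $\mathcal{B}^\sharp$ in \eqref{B sharp} is linear in $\mu$, namely $\mathcal{B}^\sharp=\mu\,\mathcal{B}^\sharp_1$ with
\[
\mathcal{B}^\sharp_1:=\begin{bmatrix}0&-\im p_\e\\ \im p_\e&0\end{bmatrix},
\]
each entry has the form $(\mathsf{B}^\sharp)_{ij}=\mu\,(\mathcal{B}^\sharp_1 g_k^\sigma(\mu,\e),g_{k'}^{\sigma'}(\mu,\e))$. Moreover $p_\e=\e p_1+\e^2p_2+\cO(\e^3)$ by Lemma \ref{lem:pa.exp}, so every entry is $\cO(\mu\e)$. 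Hence it suffices to evaluate the basis at $\mu=0$, i.e.\ to use $g_k^\sigma(0,\e)$ from \eqref{43 g+1}--\eqref{46 g-0} and \eqref{48 g-0=01}; the $\mu$-dependence of the basis only produces $\cO(\mu^2\e)$ corrections. Self-adjointness and \eqref{B are alternatively real or imaginary} reduce the work to the upper triangle, and since the basis at $\mu=0$ is real while $\mathcal{B}_1^\sharp$ is $\im$ times a real operator, every entry is purely imaginary at this order. Thus the entries with $i+j$ even vanish up to $\cO(\mu^2\e)$. Concretely, $(\mathcal{B}^\sharp_1 g,g)=2\,\mathrm{Re}(-\im (p_\e g_2,g_1))=0$ for real $g$. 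This gives the zero diagonal.

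For the remaining entries with $i+j$ odd, I will compute $\mathcal{B}^\sharp_1 g=\vet{-\im p_\e g_2}{\im p_\e g_1}$ using the explicit first-order terms. The key pairings are the following.

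The entry $(1,4)$. Since $g_0^-(0,\e)=\vet{0}{1}$, we have $\mathcal{B}^\sharp_1 g_0^-=\vet{-\im p_\e}{0}$, and therefore $(\mathcal{B}^\sharp_1 g_0^-,g_1^+)=-\im (p_\e,\tfrac{1}{\sqrt{\ck}}\cos x)+\dots$. With $p_1=-2\ck\cos x$ this gives $\im\e\,\ck/\sqrt{\ck}=\im\sqrt{\ck}\,\e$, so $(\mathsf{B}^\sharp)_{14}=\im\sqrt{\ck}\mu\e+\im r_4(\mu\e^2)$ once the sign and conjugation conventions are tracked (the entry is $(\mathcal{B}g_0^-,g_1^+)$).

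The entry $(1,2)$. At order $\mu\e$ the pairing $(\mathcal{B}^\sharp_1 f_1^-,f_1^+)$ involves $p_1$ times products of first harmonics, e.g.\ $\int\cos x\,\cos x\,\cos x=0$. So the first nontrivial contribution is $r_2(\mu\e^2)$.

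The entries $(2,3)$ and $(3,4)$. The pairing $(\mathcal{B}^\sharp_1 g_0^+,g_1^-)$ at leading order is $\im(p_1,\sqrt{\ck}\cos x)\ne0$ generically, giving $r_6(\mu\e)$. For $(3,4)$ we get $(\mathcal{B}^\sharp_1 g_0^-,g_0^+)=-\im(p_\e,1+\e\cos x+\dots)$; since $p_1$ has zero average, $(p_1,1)=0$, so the $\e$-order term vanishes and this entry is $\cO(\mu\e^2)$, i.e.\ $r_9$.

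The entries $(1,3)$ and $(2,4)$ are real-type, so they vanish at this order by the reality argument above. For $(2,4)$ one can also check directly that $(-\im p_\e,g^-_{1,1})$ is the pairing of an even function with an odd one, hence zero.

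The main obstacle is bookkeeping rather than conceptual: I have to check that the $\cO(\e)$ corrections of $g_1^\pm$ and $g_0^+$ (the $\cos 2x$, $\sin 2x$ and $\cos x$ terms) do not produce additional $\mu\e$ contributions in the entries $(1,2)$ and $(3,4)$. These products all have order $\e^2$ when combined with $p_\e=\cO(\e)$, so they fall into $r(\mu\e^2)$. Finally I must fix the sign conventions of the complex inner product \eqref{complex product}, so that the leading entry comes out as $+\im\sqrt{\ck}\mu\e$ in position $(1,4)$ and its conjugate $-\im\sqrt{\ck}\mu\e$ in position $(4,1)$.
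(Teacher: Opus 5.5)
Your proposal is correct and follows essentially the same route as the paper. You reduce to the real basis $g_k^\sigma(0,\e)$ at the cost of $\mathcal{O}(\mu^2\e)$, kill the same-$\sigma$ entries because they are both real (by reversibility) and purely imaginary, bound the $(2,3)$ entry by $\mathcal{O}(\mu\e)$, and extract the remaining leading terms from $p_1=-2\ck\cos(x)$ paired with the unperturbed basis; this gives $\im\sqrt{\ck}\,\mu\e$ in position $(1,4)$ and only $\mathcal{O}(\mu\e^2)$ in positions $(1,2)$ and $(3,4)$, with the only cosmetic difference being that your $\mathcal{B}^\sharp_1$ carries the full $p_\e$ whereas the paper's $\mathcal{B}^\sharp_1:=2\ck\cos(x)\mathcal{J}$ is its leading $\e$-coefficient.
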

\begin{proof}
    Since $\mathcal{B}^{\sharp} = -\mathrm{i} \mu p_{\epsilon} \mathcal{J}$ and $p_{\epsilon} = \mathcal{O}(\epsilon)$ by \eqref{SN1}, we have the expansion
 \begin{equation}
 \left( \mathcal{B}^{\sharp} g_k^\sigma (\mu, \epsilon),\, g_{k'}^{\sigma'}(\mu, \epsilon) \right)_{\sigma,\sigma'=\pm;\,k,k'=0,1}
 = \left( \mathcal{B}^{\sharp} g_k^\sigma (0, \epsilon),\, g_{k'}^{\sigma'}(0, \epsilon) \right)_{\sigma,\sigma'=\pm;\,k,k'=0,1}
 + \mathcal{O}(\mu^2 \epsilon).
 \end{equation}
The matrix entries $\left( \mathcal{B}^{\sharp} g_k^\sigma (0, \epsilon),\, g_{k'}^{\sigma}(0, \epsilon) \right),\,\sigma = \pm,\,k, k' = 0, 1,$ are zero, because they are simultaneously real by \eqref{B are alternatively real or imaginary}, and purely imaginary, being the operator $\mathcal{B}^{\sharp}$ purely imaginary and the basis $\{ g_k^{\pm}(0, \epsilon) \}_{k=0,1}$ real. Hence $\mathsf{B}^{\sharp}$ has the form
 \begin{equation}
 \mathsf{B}^{\sharp} =
 \left(
 \begin{NiceArray}{cc|ccc}[code-for-first-col=\scriptstyle]
 0 & \mathrm{i} a^{\sharp} & 0 & \mathrm{i} b^{\sharp} \\
 -\mathrm{i} a^{\sharp} & 0 & -\mathrm{i} c^{\sharp} & 0 \\
 \hline
 0 & \mathrm{i} c^{\sharp} & 0 & \mathrm{i} d^{\sharp} \\
 -\mathrm{i} b^{\sharp} & 0 & -\mathrm{i} d^{\sharp} & 0
 \end{NiceArray}
 \right)
 + \mathcal{O}(\mu^2 \epsilon),
 \end{equation}
 where
 \begin{equation}
 \begin{aligned}
 & \left( \mathcal{B}^{\sharp} g_1^{-}(0, \epsilon),\, g_1^{+}(0, \epsilon) \right) := \mathrm{i} a^{\sharp}, \quad \left( \mathcal{B}^{\sharp} g_1^{-}(0, \epsilon),\, g_0^{+}(0, \epsilon) \right) := \mathrm{i} c^{\sharp}, \\
 & \left( \mathcal{B}^{\sharp} g_0^{-}(0, \epsilon),\, g_1^{+}(0, \epsilon) \right) := \mathrm{i} b^{\sharp}, \quad \left( \mathcal{B}^{\sharp} g_0^{-}(0, \epsilon),\, g_0^{+}(0, \epsilon) \right) := \mathrm{i} d^{\sharp},
 \end{aligned}
 \end{equation}
 and $a^{\sharp}, b^{\sharp}, c^{\sharp}, d^{\sharp}$ are real numbers. As $\mathcal{B}^{\sharp} = \mathcal{O}(\mu \epsilon)$ in $\mathcal{L}(Y,X)$, we deduce that $c^{\sharp} = r(\mu \epsilon)$. To determine the leading order of $a^{\sharp}$, $b^{\sharp}$ and $d^{\sharp}$, by \eqref{pino1fd} and \eqref{B sharp} we use the expansion 
 \begin{equation} \label{B sharp B1 sharp}
 \mathcal{B}^{\sharp} = \mathrm{i} \mu \epsilon \mathcal{B}_1^{\sharp} + \mathcal{O}(\mu \epsilon^2),
 \quad \mathcal{B}_1^{\sharp} := 2\ck \cos(x)
 \begin{bmatrix}
 0 & \operatorname{Id} \\
 -\operatorname{Id} & 0
 \end{bmatrix},
 \end{equation}
 with $\mathcal{O}(\mu \epsilon^2) \in \mathcal{L}(Y,X)$. In view of \eqref{43 g+1}-\eqref{46 g-0}, we have $g_1^{\pm}(0, \epsilon) = g_1^{\pm}(0,0) + \mathcal{O}(\epsilon)$, $g_0^{+}(0, \epsilon) = g_0^{+}(0,0) + \mathcal{O}(\epsilon)$, $g_0^{-}(0, \epsilon) = \begin{bmatrix} 0 \\ 1 \end{bmatrix}$. By \eqref{B sharp B1 sharp} we obtain
 \[
 \mathcal{B}_1^{\sharp} g_1^{-}(0,0) =
 \begin{bmatrix}
  \ck\sqrt{\ck}\left(1 + \cos(2x)\right) \\
 \sqrt{\ck}\sin(2x)
 \end{bmatrix},
 \quad
 \mathcal{B}_1^{\sharp} g_0^{-}(0,0) =
 \begin{bmatrix}
 2 \ck \cos(x) \\
 0
 \end{bmatrix},
 \]
 and then
 \begin{align*}
 a^{\sharp} &= \mu \epsilon \left( \mathcal{B}_1^{\sharp} g_1^{-}(0,0),\, g_1^{+}(0,0) \right) + r(\mu \epsilon^2) = r(\mu \epsilon^2), \\
 b^{\sharp} &= \mu \epsilon \left( \mathcal{B}_1^{\sharp} g_0^{-}(0,0),\, g_1^{+}(0,0) \right) + r(\mu \epsilon^2) = \mu \epsilon \sqrt{\ck}+ r(\mu \epsilon^2), \\
 d^{\sharp} &= \mu \epsilon \left( \mathcal{B}_1^{\sharp} g_0^{-}(0,0),\, g_0^{+}(0,0) \right) + r(\mu \epsilon^2) = r(\mu \epsilon^2).
 \end{align*}
 This proves \eqref{mathsf B sharp}. 
 \end{proof}

Finally, we consider the term  $\mathcal{B}^{s}$ that takes into the account the capillary term.
\begin{lemma} [Expansion of $\mathsf{B}^{s}$] \label{Expansion of B s}
    The self-adjoint and reversibility-preserving matrix $\mathsf{B}^{s}$ associated, as in \eqref{Bmuepsilon matrix representation}, to the self-adjoint and reversibility-preserving operators $\mathcal{B}^{s}$, defined in \eqref{B sharp}, with respect to the basis $\mathcal{G}$ of $\mathcal{V}_{\mu,\epsilon}$ in \eqref{G basis set and g}, admits the expansion
\begin{equation}\label{mathsf B s}
\begin{aligned} 
\mathsf{B}^{s} &=
\left(
\begin{NiceArray}{cc|ccc}[code-for-first-col=\scriptstyle]
\frac{\kappa}{\ck^3}\mu^2
  & \im\,\frac{\kappa}{\ck}\mu+\im \,r_2(\mu\e^2)
  & 0
  & \im \,r_4(\mu\e^2)
  &  \\
-\im\,\frac{\kappa}{\ck}\mu-\im \,r_2(\mu\e^2)
  & \frac{\kappa}{\ck^3}\mu^2
  & \im \,r_6(\mu\e)
  & 0
  &  \\
\hline
0
  & -\im \,r_6(\mu\e)
  & \kappa\mu^2
  & \im \,r_9(\mu\e^2)
  & \\
-\im \,r_4(\mu\e^2)
  & 0
  & -\im \,r_9(\mu\e^2)
  & 0
  &  \\
\end{NiceArray}
\right)+ \mathcal{O}(\mu^2 \epsilon,\mu^3)\,.
\end{aligned}
\end{equation}
\end{lemma}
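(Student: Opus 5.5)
We compute the entries $\bigl(\mathcal{B}^s g_k^\sigma(\mu,\e), g_{k'}^{\sigma'}(\mu,\e)\bigr)$ by the same scheme used for $\mathsf{B}^\flat$ and $\mathsf{B}^\sharp$. Since $\mathcal{B}^s$ is already $\cO(\mu)$, it suffices to expand the basis to first order in $\mu$ and to order $\e^2$. We use \eqref{43 g+1}--\eqref{46 g-0}, together with the expansions $g_\e=1+\e g_1+\cO(\e^2)$ and $\mathfrak{p}=\e\sin x+\cO(\e^2)$ from \eqref{expfe}, \eqref{exp:g1}. Only the first component of $\mathcal{B}^s$ is nonzero, so only first components of the basis vectors enter. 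By self-adjointness and \eqref{B are alternatively real or imaginary}, it is enough to compute the upper triangular entries.

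\textbf{Fourth row and column.} The first component of $g_0^-(\mu,\e)$ is $\cO(\mu\e)$, so $\mathcal{B}^s g_0^-=\cO(\mu^2\e)$. The fourth column is therefore zero up to $\cO(\mu^2\e)$, except for terms arising through pairing. By self-adjointness, $(\mathcal{B}^s g_k^\sigma, g_0^-)=\overline{(\mathcal{B}^s g_0^-,g_k^\sigma)}=\cO(\mu^2\e)$. The entries $\im r_4(\mu\e^2)$ and $\im r_9(\mu\e^2)$ displayed in \eqref{mathsf B s} then serve as harmless upper bounds.

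\textbf{Zeroth order in $\e$.} At $\e=0$ we have $\dot\Sigma_{0,0}=2\im\pa_x$ and $\ddot\Sigma_{0,0}=-2$, so
$$\mathcal{B}^s|_{\e=0}=\begin{bmatrix}-2\im\kappa\mu\pa_x+\kappa\mu^2&0\\0&0\end{bmatrix}.$$
The $\mu^2$ entries come from two sources:
\begin{itemize}
\item the term $\kappa\mu^2$ paired with the leading first components, namely $\frac{1}{\sqrt{\ck}}\cos x$ and $1$, which gives $\frac{\kappa}{2\ck}\mu^2$ in the $(1,1)$ and $(2,2)$ entries and $\kappa\mu^2$ in the $(3,3)$ entry;
\item the cross term of $-2\im\kappa\mu\pa_x$ with the $\im\mu\frac{1-\kappa}{4\ck^2}$ corrections in $g_1^\pm$.
\end{itemize}
For the second source, consider the $(1,1)$ entry. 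We have $-2\im\kappa\mu\pa_x\bigl(\frac{1}{\sqrt\ck}\cos x\bigr)=2\im\kappa\mu\frac{1}{\sqrt\ck}\sin x$. Pairing it with $\im\mu\frac{1-\kappa}{4\ck^2}\frac{1}{\sqrt\ck}\sin x$ (conjugated), and counting it twice by symmetry, gives $\frac{\kappa(1-\kappa)}{4\ck^3}\mu^2$. The total is $\frac{\kappa}{2\ck}+\frac{\kappa(1-\kappa)}{4\ck^3}=\frac{2\kappa(1+\kappa)+\kappa-\kappa^2}{4\ck^3}$. This must be reconciled with the claimed value $\frac{\kappa}{\ck^3}$; I would recheck the factor $\frac12$ in $\mathcal{B}^s$ against $\ddot\Sigma$, and the normalization of the scalar product, since this constant bookkeeping is where an error is most likely.

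For the linear term, $(\mathcal{B}^s g_1^-,g_1^+)$ at leading order is $-2\im\kappa\mu\bigl(\pa_x(-\tfrac{1}{\sqrt\ck}\sin x),\tfrac{1}{\sqrt\ck}\cos x\bigr)=\im\kappa\mu/\ck$, which matches \eqref{mathsf B s}. The remaining $\mu$-linear pairings with $g_0^+$ vanish at $\e=0$ because $\pa_x 1=0$.

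\textbf{Order $\e$ and $\e^2$ corrections to the linear-in-$\mu$ part.} Here I use $\mu\dot\Sigma_{0,\e}=\mu\,\pa_\mu\Sigma_{\mu,\e}|_{\mu=0}$, which is real-symmetric times $\im$, together with reality of the basis at $\mu=0$. As in the proof of Lemma~\ref{Expansion of B sharp}, this forces the diagonal-parity entries at order $\mu$ to vanish and leaves only the imaginary off-parity entries.

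\begin{itemize}
\item $(1,2)$ entry: the $\mu\e$ coefficient must vanish to give $r_2(\mu\e^2)$. I would check this by pairing $\dot\Sigma_1$ (the $\e$-part, built from $g_1=-\cos x$ and $\mathfrak{p}_x=\e\cos x$) between $\cos x$ and $\sin x$ modes. Every term then carries an odd number of first harmonics multiplied by $\cos x$, so the average vanishes. The $\e$-corrections of $g_1^\pm$ are in mode $2$, and pairing them against $\dot\Sigma_0\,(\text{mode }1)$ gives zero by orthogonality.
\item $(2,3)$ entry: $\im\kappa\mu(\pa_x$-type$)$ applied between $g_1^-$ and the $\e\cos x$ part of $g_0^+$ gives $\cO(\mu\e)$, which is $r_6$.
\item $(1,3)$ entry: both vectors are even in the real first component, and the entry is real yet purely imaginary, hence $0$.
\end{itemize}

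\textbf{Main obstacle.} The main obstacle is the exact $\mu^2$ coefficient in the $(1,1)$ and $(2,2)$ entries, which requires careful tracking of the constants described above.
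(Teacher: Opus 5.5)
Your route is the paper's. You isolate the $\mu$-linear part $-\kappa\mu\dot\Sigma_{0,\e}$ and use that it is purely imaginary while the basis $\{g_k^\sigma(0,\e)\}$ is real, which kills the same-parity entries. You check by harmonic counting that the $\mu\e$ coefficient of the $(1,2)$ entry vanishes, and you compute the $\mu^2$ entries at $\e=0$. Those pieces are fine: your bound $\mathcal{B}^s g_0^-(\mu,\e)=\cO(\mu^2\e)$ for the fourth row and column is even slightly sharper than what is displayed, and $\im\frac{\kappa}{\ck}\mu$ in the $(1,2)$ entry is correct.

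However, the proposal does not prove the statement. The coefficient $\frac{\kappa}{\ck^3}\mu^2$ in the $(1,1)$ and $(2,2)$ entries is the lemma's main quantitative claim. Your computation gives $\frac{\kappa^2+3\kappa}{4\ck^3}\mu^2$ instead, and you leave this contradiction unresolved while pointing at the wrong suspects.

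Nothing is wrong with the factor $\frac12$ in $\mathcal{B}^s$ or with the normalization of the scalar product; the slip is in your doubling step. With $\alpha:=\frac{1-\kappa}{4\ck^2}$, a single cross pairing already equals
\[
\Bigl(\tfrac{2\im\kappa\mu}{\sqrt{\ck}}\sin(x),\,\tfrac{\im\alpha\mu}{\sqrt{\ck}}\sin(x)\Bigr)=\frac{\kappa\alpha}{\ck}\,\mu^2=\frac{\kappa(1-\kappa)}{4\ck^3}\,\mu^2 .
\]
The other cross pairing is its complex conjugate, i.e.\ the same real number, so together they give $\frac{\kappa(1-\kappa)}{2\ck^3}\mu^2$. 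This is precisely the diagonal entry of $-\kappa\mu\mathsf{B}^s_{11}$ in the paper's proof. Adding $\frac{\kappa}{2\ck}\mu^2=\frac{\kappa(1+\kappa)}{2\ck^3}\mu^2$ from the $\kappa\mu^2$ term yields $\frac{\kappa}{\ck^3}\mu^2$. The identical computation, with first components $-\frac{1}{\sqrt{\ck}}\sin(x)$ and $\frac{\im\alpha\mu}{\sqrt{\ck}}\cos(x)$, handles the $(2,2)$ entry.

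This constant matters downstream. Together with $\frac{(1-\kappa)^2}{8\ck^3}\mu^2$ from $\mathsf{B}_\e$ and $-\frac{1-\kappa}{4\ck}\mu^2$ from $\mathsf{B}^\flat$, it must give $\frac{3\kappa^2+6\kappa-1}{8\ck^3}\mu^2=-\frac{\mathsf{e}_{22}}{8}\mu^2$. Your value would not, and it would move the zero of $\mathsf{e}_{22}$, hence $\kappa_c$. With the corrected doubling your argument closes and coincides with the paper's.
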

\begin{proof}
    Recalling \eqref{expfe}, \eqref{SN1 g}, and \eqref{exp:g1}, we write the operator $\mathcal{B}^{s}$ in \eqref{B sharp} as
    \begin{align}
        \mathcal{B}^s=-\kappa\mu\mathcal{B}^s_1- \kappa \mu^2\mathcal{B}^s_2,
    \end{align}
    where
    \begin{equation}\label{B s 1}
            \begin{aligned} 
        \mathcal{B}^s_1:=&\frac{\im}{1+\mathfrak{p}_x}\left(g_\e(x)\circ\pa_x+\pa_x\circ g_\e(x)\right)\frac{1}{1+\mathfrak{p}_x} \Lambda_{\tilde{\Pi}}\\
        =&\underbrace{2\im\begin{bmatrix}
 \pa_x & 0 \\
0 & 0
\end{bmatrix}}_{=:\mathcal{B}^s_{11}}+\underbrace{3\im\e\begin{bmatrix}
-\pa_x\circ \cos(x)-\cos(x)\circ\pa_x & 0 \\
0 & 0
\end{bmatrix}+\cO(\e^2)}_{=:\mathcal{B}^s_{12}},
    \end{aligned}
    \end{equation}
and    
    \begin{align} \label{B s 2}
        \mathcal{B}^s_2:=-\frac{g_{\e}(x)}{(1+\mathfrak{p}_x)^2} \Lambda_{\tilde{\Pi}}=-\Lambda_{\tilde{\Pi}}+\cO(\e), \quad \Lambda_{\tilde{\Pi}}:=\begin{bmatrix}
\mathrm{Id} & 0 \\
0 & 0
\end{bmatrix},
    \end{align}
with $\cO(\e),\, \cO(\e^2)\in\mathcal{L}(Y, X)$.

Using \eqref{43 g+1}-\eqref{46 g-0}, we derive the following expansion:
\begin{equation} \label{Lambda f pm 10}
\begin{aligned}
  \mathcal{B}^s_{11} g^+_1(\mu,\e)&= -\frac{2\im}{\sqrt{\ck}} \vet{\sin(x)}{0}
-\mu\frac{1-\kappa}{2\ck^2\sqrt{\ck}}
\vet{\cos(x)}{0}
-\im\e \frac{4(2-\kappa)}{(1-2\kappa)\sqrt{\ck}}
\begin{bmatrix}
 \sin(2x) \\
0
\end{bmatrix} \\
& \quad
+\im\cO(\e^2)\vet{\mathit{odd}(x)}{0}+ \mathcal{O}(\mu^2, \mu \epsilon),\\
 \mathcal{B}^s_{11}g^-_1(\mu,\e)&=-\frac{2\im}{\sqrt{\ck}} \vet{\cos(x)}{0}
+ \mu\frac{1-\kappa}{2\ck^2\sqrt{\ck}}
\vet{\sin(x)}{0}
-\im\e\frac{4(2-\kappa)}{(1-2\kappa)\sqrt{\ck}}
\begin{bmatrix}
 \cos(2x) \\
0
\end{bmatrix} \\
& \quad
+\im\cO(\e^2)\vet{\mathit{even}(x)}{0}+ \mathcal{O}(\mu^2, \mu \epsilon),\\
 \mathcal{B}^s_{11} g^+_0(\mu,\e)&=-2\im \e\vet{\sin(x)}{0}+\cO(\e^2)\vet{\mathit{odd}(x)}{0}+\cO(\mu\e), \quad 
  \mathcal{B}^s_{11} g^-_0(\mu,\e)=\cO(\mu\e).
\end{aligned}
\end{equation}
Using \eqref{Lambda f pm 10} and  the self-adjoint and reversibility-preserving properties of $\mathcal{B}^s_{11}$, the matrix
$-\kappa\mu\mathsf{B}^{s}_{11}$ representing the action of 
the operator $-\kappa\mu\mathcal{B}^s_{11}$ on the basis 
$\mathcal{G}$ of $\mathcal{V}_{\mu,\epsilon}$ in \eqref{G basis set and g}
admits the expansion
\begin{equation}\label{mathsf B s11} 
\begin{aligned} 
-\kappa\mu\mathsf{B}^{s}_{11} &=
\left(
\begin{NiceArray}{cc|ccc}[code-for-first-col=\scriptstyle]
\mu^2\frac{\kappa(1-\kappa)}{2\ck^3}
  & \im\,\mu\frac{\kappa}{\ck}+\im \,r(\mu\e^2)
  & 0
  & 0
  &  \\
-\im\,\mu\frac{\kappa}{\ck}-\im \,r(\mu\e^2)
  &\mu^2\frac{\kappa(1-\kappa)}{2\ck^3}
  & \im\, r(\mu\e)
  & 0
  &  \\
\hline
0
  & -\im\, r(\mu\e)
  & 0
  & 0
  & \\
0
  & 0
  & 0
  & 0
  &  \\
\end{NiceArray}
\right) + \mathcal{O}(\mu^2 \epsilon,\mu^3).
\end{aligned}
\end{equation}
To determine the matrix representation of $-\kappa\mu\mathcal{B}^{s}_{12}$, we use the expansion
\begin{equation}
\left( -\kappa\mu\mathcal{B}^{s}_{12} g_k^\sigma (\mu, \epsilon),\, g_{k'}^{\sigma'}(\mu, \epsilon) \right)
= \left( -\kappa\mu\mathcal{B}^{s}_{12} g_k^\sigma (0, \epsilon),\, g_{k'}^{\sigma'}(0, \epsilon) \right)
+ \mathcal{O}(\mu^2 \epsilon).
\end{equation}
The matrix entries $\left( -\kappa\mu\mathcal{B}^{s}_{12} g_k^\sigma (0, \epsilon),\, g_{k'}^{\sigma}(0, \epsilon) \right),\, \sigma = \pm,\,k, k' = 0, 1,$ are zero, because they are simultaneously real by \eqref{B are alternatively real or imaginary}, and purely imaginary, being the operator $-\kappa\mu\mathcal{B}^{s}_{12}$ purely imaginary and the basis $\{ g_k^{\pm}(0, \epsilon) \}_{k=0,1}$ real. Hence the matrix $-\kappa\mu\mathsf{B}^{s}_{12}$
 representing the action of 
the operator $-\kappa\mu\mathcal{B}^s_{12}$ on the basis 
$\mathcal{G}$ of $\mathcal{V}_{\mu,\epsilon}$
has the form
\begin{equation}
-\kappa\mu\mathsf{B}^{s}_{12} =
\left(
\begin{NiceArray}{cc|ccc}[code-for-first-col=\scriptstyle]
0 & \mathrm{i} a^{s}_{12} & 0 & \mathrm{i} b^{s}_{12} \\
-\mathrm{i} a^{s}_{12} & 0 & -\mathrm{i} c^{s}_{12} & 0 \\
\hline
0 & \mathrm{i} c^{s}_{12} & 0 & \mathrm{i} d^{s}_{12} \\
-\mathrm{i} b^{s}_{12} & 0 & -\mathrm{i} d^{s}_{12} & 0
\end{NiceArray}
\right)
+ \mathcal{O}(\mu^2 \epsilon),
\end{equation}
where
\begin{equation}
\begin{aligned}
& \left( -\kappa\mu\mathcal{B}^{s}_{12} g_1^{-}(0, \epsilon),\, g_1^{+}(0, \epsilon) \right) := \mathrm{i} a^{s}_{12}, \quad \left( -\kappa\mu\mathcal{B}^{s}_{12} g_1^{-}(0, \epsilon),\, g_0^{+}(0, \epsilon) \right) := \mathrm{i} c^{s}_{12}, \\
& \left( -\kappa\mu\mathcal{B}^{s}_{12} g_0^{-}(0, \epsilon),\, g_1^{+}(0, \epsilon) \right) := \mathrm{i} b^{s}_{12}, \quad \left( -\kappa\mu\mathcal{B}^{s}_{12} g_0^{-}(0, \epsilon),\, g_0^{+}(0, \epsilon) \right) := \mathrm{i} d^{s}_{12},
\end{aligned}
\end{equation}
and $a^{s}_{12}, b^{s}_{12}, c^{s}_{12}, d^{s}_{12}$ are real numbers. 
 As $-\kappa\mu\mathcal{B}^{s}_{12} = \mathcal{O}(\mu \epsilon)$ in $\mathcal{L}(Y, X)$, we deduce that $c^{s}_{12} = r(\mu \epsilon)$. Let us compute the expansion of $a^{s}_{12}$, $b^{s}_{12}$ and $d^{s}_{12}$. In view of \eqref{43 g+1}-\eqref{46 g-0}, $g_1^{\pm}(0, \epsilon) = g_1^{\pm}(0,0) + \mathcal{O}(\epsilon)$, $g_0^{+}(0, \epsilon) = g_0^{+}(0,0) + \mathcal{O}(\epsilon)$, $g_0^{-}(0, \epsilon) = \begin{bmatrix} 0 \\ 1 \end{bmatrix}$. By \eqref{B s 1} we have

\[ 
-\kappa\mu\mathcal{B}^{s}_{12} g_1^{-}(0,0) = 
-\im\mu\e\frac{3\kappa}{2\sqrt{\ck}}\begin{bmatrix}
(1+3\cos(2x)) \\
0
\end{bmatrix}+\cO(\mu\e^2),
\quad
-\kappa\mu\mathcal{B}^{s}_{12} g_0^{-}(0,\e) =
\cO(\mu\e^2),
\]
and then
\begin{align*} 
a^{s}_{12} &=  \left( -\kappa\mu\mathcal{B}_{12}^{s} g_1^{-}(0,0),\, g_1^{+}(0,0) \right) + r(\mu \epsilon^2) = r(\mu \epsilon^2), \quad 
b^{s}_{12} = \cO(\mu\e^2),\quad d^{s}_{12}=\cO(\mu\e^2).
\end{align*}

Next we consider the matrix
$-\kappa\mu^2\mathsf{B}^{s}_2$ representing the action of the operator
$ -\kappa\mu^2\mathcal{B}^{s}_2$ on the basis 
$\mathcal{G}$.
Recalling \eqref{B s 2}, we have $-\kappa\mu^2\mathcal{B}^{s}_2 = \kappa \mu^2\Lambda_{\tilde{\Pi}} + \cO(\mu^2\e) $, hence one has 
\begin{equation}
\left( -\kappa\mu^2\mathcal{B}^{s}_2 g_k^\sigma (\mu, \epsilon),\, g_{k'}^{-\sigma'}(\mu, \epsilon) \right)
= \left( \kappa\mu^2\Lambda_{\tilde{\Pi}} g_k^\sigma (0, \epsilon),\, g_{k'}^{\sigma'}(0, \epsilon) \right)
+ \mathcal{O}(\mu^2\e,\mu^3).
\end{equation}
The matrix entries $\left( \kappa\mu^2\Lambda_{\tilde{\Pi}} g_k^\sigma (0, \epsilon),\, g_{k'}^{-\sigma}(0, \epsilon) \right),\, k, k' = 0, 1,\, \sigma = \pm$ are zero, because they are simultaneously purely imaginary by \eqref{B are alternatively real or imaginary}, and real, being the operator $\mathcal{B}^{s}_2$ real and the basis $\{ g_k^{\pm}(0, \epsilon) \}_{k=0,1}$ real. Hence $-\kappa\mu^2\mathcal{B}^{s}_2$, with respect to the basis $\mathcal{G}$ of $\mathcal{V}_{\mu,\epsilon}$ in \eqref{G basis set and g}, admits the expansion 
\begin{equation}
-\kappa\mu^2\mathsf{B}^{s}_2 =
\left(
\begin{NiceArray}{cc|ccc}[code-for-first-col=\scriptstyle]
a^s_2 & 0 & b^s_2 & 0 \\
0 & c^s_2 & 0 & d^s_2 \\
\hline
b^s_2 & 0 & e^s_2 & 0 \\
0 & d^s_2 & 0 & f^s_2
\end{NiceArray}
\right)
+ \mathcal{O}(\mu^2 \epsilon,\mu^3),
\end{equation}
where
\begin{align*}
a^{s}_2 &= \left( \kappa\mu^2\Lambda_{\tilde{\Pi}} g_1^+ (0, \epsilon),\, g_{1}^{+}(0, \epsilon) \right) =\left( \kappa\mu^2\Lambda_{\tilde{\Pi}} g_1^+(0,0) ,\, g_{1}^{+}(0,0) \right)
+r(\mu^2\e)=\mu^2\frac{\kappa}{2\ck}+r(\mu^2\e), \\
b^{s}_2 &:=\left( \kappa\mu^2\Lambda_{\tilde{\Pi}} g_1^+ (0, \epsilon),\, g_{0}^{+}(0, \epsilon) \right) = \left( \kappa\mu^2\Lambda_{\tilde{\Pi}} g_1^+(0,0) ,\, g_{0}^{+}(0,0) \right)+r(\mu^2\e)=r(\mu^2\e),\\
c^{s}_2 &:= \left( \kappa\mu^2\Lambda_{\tilde{\Pi}} g_1^- (0, \epsilon),\, g_{1}^{-}(0, \epsilon) \right) = \left( \kappa\mu^2\Lambda_{\tilde{\Pi}} g_1^-(0,0) ,\, g_{1}^{-}(0,0) \right)+r(\mu^2\e)=\mu^2\frac{\kappa}{2\ck}+r(\mu^2\e), \\
d^{s}_2 &
:=  \left( \kappa\mu^2\Lambda_{\tilde{\Pi}} g_1^- (0, \epsilon),\, g_{0}^{-}(0, \epsilon) \right)
= \left( \kappa\mu^2\Lambda_{\tilde{\Pi}} g_1^-(0,0) ,\, g_{0}^{-}(0,0) \right)+r(\mu^2\e)=r(\mu^2\e),\\
e^{s}_2 &:=\left( \kappa\mu^2\Lambda_{\tilde{\Pi}} g_0^+ (0, \epsilon),\, g_{0}^{+}(0, \epsilon) \right) =\left( \kappa\mu^2\Lambda_{\tilde{\Pi}} g_0^+(0,0),\, g_{0}^{+}(0,0) \right)+r(\mu^2\e)=\kappa\mu^2+r(\mu^2\e),\\
f^{s}_2 &:=\left( \kappa\mu^2\Lambda_{\tilde{\Pi}} g_0^- (0, \epsilon),\, g_{0}^{-}(0, \epsilon) \right) =\left( \kappa\mu^2\Lambda_{\tilde{\Pi}} g_0^-(0,0) ,\, g_{0}^{-}(0,0) \right)+r(\mu^2\e)=r(\mu^2\e).
\end{align*}
This verifies \eqref{mathsf B s}. 
\end{proof}
Lemma \ref{Expansion of B epsilon}-Lemma \ref{Expansion of B s} imply \eqref{B mu epsilon} where the matrix $E$ has the form 
\begin{equation} \label{E0} 
\begin{aligned} 
E &:= \begin{pmatrix}
E_{11} &
\im E_{12} \\
-\im E_{12} &
E_{22}
\end{pmatrix}=
\begin{pmatrix}
\mathsf{e}_{11} \epsilon^2(1+r'(\e,\mu\e^2))- \mathsf{e}_{22}\frac{\mu^2}{8}(1+r''(\e,\mu))  &
\im\left( \frac{1}{2}\mathsf{e}_{12} \mu + r_2(\mu \epsilon^2,\mu^2 \epsilon, \mu^3)\right) \\
- \im\left( \frac{1}{2}\mathsf{e}_{12} \mu + r_2(\mu \epsilon^2,\mu^2 \epsilon, \mu^3)\right) &
-\mathsf{e}_{22}\frac{\mu^2}{8}(1+r_5(\e,\mu))
\end{pmatrix}, 
\end{aligned}
\end{equation}
where $\mathsf{e}_{11}$, $\mathsf{e}_{12}$, and $\mathsf{e}_{22}$ are defined in \eqref{e11e22e12}. Moreover,
\begin{equation} \label{F0} 
\begin{aligned} 
F &:= 
\begin{pmatrix}
F_{11} &
\im F_{12} \\
\im F_{21} &
F_{22}
\end{pmatrix}=
\begin{pmatrix}
r_3(\epsilon^3, \mu \epsilon^2,\mu^2 \epsilon, \mu^3) &
\im \sqrt{\ck}\,\mu \epsilon  + \im\, r_4(\mu \epsilon^2,\mu^2 \epsilon, \mu^3) \\
\im\, r_6(\mu \epsilon,\mu^3) & r_7(\mu^2\e,\mu^3)

\end{pmatrix}, 
\end{aligned}
\end{equation}

\begin{equation}\label{G0} 
\begin{aligned} 
G &:= 
\begin{pmatrix}
G_{11} &
\im G_{12} \\
-\im G_{12} &
G_{22}
\end{pmatrix}=
\begin{pmatrix}
1 +\kappa\mu^2+ r_8(\epsilon^3, \mu \epsilon^2,\mu^2 \epsilon, \mu^3) &
\im r_9(\mu \epsilon^2,\mu^2 \epsilon, \mu^3) \\
- \im r_9(\mu \epsilon^2,\mu^2 \epsilon, \mu^3) &
\mu +r_{10}(\mu^2 \epsilon, \mu^3)
\end{pmatrix}. 
\end{aligned}
\end{equation}

\section{Block-Decoupling}\label{sec:BD}
In this section we block-decouple the $4 \times 4$ Hamiltonian matrix $\mathsf{L}_{\mu,\epsilon} = \mathsf{J}_4 \mathsf{B}_{\mu,\epsilon}$, where $\mathsf{B}_{\mu,\epsilon}$ obtained in Lemma \ref{B decomposition}.
We first perform a symplectic, reversibility-preserving change of coordinates to eliminate the $F_{11}$ term. This transformation reduces the size of the off-diagonal blocks to $\cO(\mu\e,\mu^3)$, while preserving the leading-order structure of the diagonal blocks.
\subsection{First Step of Block-Decoupling} 
\begin{lemma}  Conjugating the Hamiltonian and reversible matrix $\mathsf{L}_{\mu,\epsilon} = \mathsf{J}_4 \mathsf{B}_{\mu,\epsilon}$ obtained in Lemma \ref{B decomposition} through the symplectic and reversibility-preserving $4 \times 4$ matrix
\begin{align}
    &\tilde{Y} := \mathrm{Id}_4+m\begin{pmatrix} 0 & -P \\ Q & 0 \end{pmatrix} 
\quad \text{with} \quad 
Q := \begin{pmatrix} 1 & 0 \\ 0 & 0 \end{pmatrix},\quad P := \begin{pmatrix} 0 & 0 \\ 0 & 1 \end{pmatrix},\quad m:=m(\mu,\e):=-\frac{F_{11}(\mu,\e)}{G_{11}(\mu,\e)},
\end{align}
where $m=r(\e^3,\mu\e^2,\mu^2\e,\mu^3)$ is a real number, we obtain the Hamiltonian and reversible matrix
\begin{align} \label{L1mue YLY}
    \mathsf{L}_{\mu,\epsilon}^{(1)} := \tilde{Y}^{-1} \mathsf{L}_{\mu,\epsilon} \tilde{Y} 
= \mathsf{J}_4 \mathsf{B}_{\mu, \epsilon}^{(1)} 
= \begin{pmatrix}
\mathsf{J}_2 E^{(1)} & \mathsf{J}_2 F^{(1)} \\
\mathsf{J}_2 [F^{(1)}]^* & \mathsf{J}_2 G^{(1)}
\end{pmatrix},
\end{align}
where $\mathsf{B}_{\mu, \epsilon}^{(1)}$ is a self-adjoint and reversibility-preserving $4 \times 4$ matrix
\begin{align}
    \mathsf{B}_{\mu, \epsilon}^{(1)} = 
\begin{pmatrix}
E^{(1)} & F^{(1)} \\
[F^{(1)}]^* & G^{(1)}
\end{pmatrix}, 
\quad 
E^{(1)} = [E^{(1)}]^*, \quad G^{(1)} = [G^{(1)}]^*,
\end{align}
where the $2 \times 2$ reversibility-preserving matrices $E^{(1)}$, $F^{(1)}$, and $G^{(1)}$ have the following expansion
\begin{align} \label{E1}
E^{(1)} = 
\begin{pmatrix}
\mathsf{e}_{11} \epsilon^2(1+r'(\e,\mu\e^2))- \mathsf{e}_{22}\frac{\mu^2}{8}(1+r''(\e,\mu))  &
\im\left( \frac{1}{2}\mathsf{e}_{12} \mu + r_2(\mu \epsilon^2,\mu^2 \epsilon, \mu^3)\right) \\
- \im\left( \frac{1}{2}\mathsf{e}_{12} \mu + r_2(\mu \epsilon^2,\mu^2 \epsilon, \mu^3)\right) &
-\mathsf{e}_{22}\frac{\mu^2}{8}(1+r_5(\e,\mu))
\end{pmatrix},
\end{align},
\begin{equation} \label{F1}
\begin{aligned} 
F^{(1)} = 
\begin{pmatrix}
0 &
\im \sqrt{\ck}\mu \e+\im r_4(\mu\e^2,\mu^2\e,\mu^3) \\
\im r_6(\mu\e,\mu^3) & r_7(\mu^2\e,\mu^3)
\end{pmatrix},
\end{aligned}
\end{equation}

\begin{equation} \label{G1}
\begin{aligned} 
G^{(1)} = \begin{pmatrix}
1 +\kappa\mu^2+ r_8(\epsilon^3, \mu \epsilon^2,\mu^2 \epsilon, \mu^3) &
\im r_9(\mu \epsilon^2,\mu^2 \epsilon, \mu^3) \\
- \im r_9(\mu \epsilon^2,\mu^2 \epsilon, \mu^3) &
\mu +r_{10}(\mu^2 \epsilon, \mu^3)
\end{pmatrix},
\end{aligned}
\end{equation}
where $\mathsf{e}_{11}$, $\mathsf{e}_{22}$, and $\mathsf{e}_{12}$ are defined in \eqref{e11e22e12}.

\end{lemma}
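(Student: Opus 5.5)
We will verify the three properties of $\tilde Y$ and then compute $\tilde Y^*\mathsf{B}_{\mu,\e}\tilde Y$ blockwise. Write $\tilde Y=\mathrm{Id}_4+mS$ with $S=\begin{pmatrix}0&-P\\ Q&0\end{pmatrix}$. A direct computation gives $S^2=\begin{pmatrix}-PQ&0\\0&-QP\end{pmatrix}=0$, because $PQ=QP=0$. Hence $\tilde Y^{-1}=\mathrm{Id}_4-mS$, and $\tilde Y=\exp(mS)$.

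\textbf{Symplecticity and reversibility.} One checks that $\mathsf{J}_4 S$ is self-adjoint. Indeed, $\mathsf{J}_2Q=\begin{pmatrix}0&0\\-1&0\end{pmatrix}$ and $-\mathsf{J}_2P=\begin{pmatrix}0&-1\\0&0\end{pmatrix}$, so $\mathsf{J}_4S=\begin{pmatrix}0&-\mathsf{J}_2P\\ \mathsf{J}_2Q&0\end{pmatrix}$. This matrix is symmetric real, since $(-\mathsf{J}_2P)^\top=\mathsf{J}_2Q$. Therefore $S=-\mathsf{J}_4(\mathsf{J}_4S)$ is a Hamiltonian matrix with real self-adjoint generator $-\mathsf{J}_4 S$. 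Its only nonzero entries sit at positions $(1,4)$ and $(3,2)$, both with $i+j$ odd. So the generator is reversibility-preserving, provided $m$ is real.

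Reality of $m$ follows from \eqref{B are alternatively real or imaginary}: $F_{11}$ and $G_{11}$ sit at positions $(1,3)$ and $(3,3)$, so both are real. Moreover $G_{11}=1+O(\mu^2,\e^3,\mu\e^2)\neq0$. Hence $m=-F_{11}/G_{11}=r(\e^3,\mu\e^2,\mu^2\e,\mu^3)$ by \eqref{F0}–\eqref{G0}. Since the flow of a Hamiltonian reversibility-preserving matrix is symplectic and reversibility-preserving (\cite[Lemma 3.8]{BMV1}), so is $\tilde Y$. Then \eqref{L1=J4B} gives $\mathsf{L}^{(1)}=\mathsf{J}_4\mathsf{B}^{(1)}$ with $\mathsf{B}^{(1)}=\tilde Y^*\mathsf{B}\tilde Y$ self-adjoint and reversibility-preserving.

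\textbf{Blockwise computation.} We have $\tilde Y=\begin{pmatrix}\mathrm{Id}&-mP\\ mQ&\mathrm{Id}\end{pmatrix}$. Expanding $\tilde Y^*\mathsf B\tilde Y$ gives
\begin{align*}
E^{(1)}&=E+m(QF^*+FQ)+m^2QGQ,\\
F^{(1)}&=F+mQG-mEP-m^2QF^*P,\\
G^{(1)}&=G-m(PF+F^*P)+m^2PEP.
\end{align*}
The $(1,1)$ entry of $F^{(1)}$ is $F_{11}+mG_{11}-m(EP)_{11}-m^2(QF^*P)_{11}$. Here $(EP)_{11}=0$ since $P$ kills the first column, and $(QF^*P)_{11}=0$ likewise. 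This entry is therefore exactly $F_{11}+mG_{11}=0$ by the choice of $m$.

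The remaining corrections are bounded as follows:
\begin{itemize}
\item In $F^{(1)}$ they are $m\cdot O(1)\,G_{12}$-type and $mE_{12}$-type terms. The worst is $m\,\im E_{12}=O(\mu\e^3,\dots)$, which is absorbed into $r_4$, $r_6$, $r_7$.
\item In $E^{(1)}$ the corrections are $mF_{11}$-type and $m^2G_{11}$. Both are $O(m^2)$ and land in the $(1,1)$ entry. Since $m^2=r(\e^6,\mu^2\e^4,\dots,\mu^6)$, this is absorbed into $\e^2 r'$ and $\mu^2 r''$.
\item In $G^{(1)}$ the corrections are $mF$, $m^2E$. They are of size $O(m\mu\e)$ in the $(1,2)$ entry, fitting $r_9$, and $O(m^2\mu^2)$ on the diagonal, fitting $r_8$ and $r_{10}$.
\end{itemize}

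\textbf{Main obstacle.} The main difficulty is bookkeeping: each correction must be checked to fit the precise remainder classes. This matters most for $E^{(1)}$ and $G^{(1)}_{22}=\mu+r_{10}(\mu^2\e,\mu^3)$, whose leading orders must be preserved exactly.

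For $G^{(1)}_{22}$ the correction is $-m(PF+F^*P)_{22}+m^2E_{22}$. Here $(PF)_{22}=F_{22}=r_7(\mu^2\e,\mu^3)$, so $mF_{22}$ has size $O(\e^3)\cdot O(\mu^2\e,\mu^3)\subset r(\mu^2\e,\mu^3)$. Also $m^2E_{22}=O(\mu^2 m^2)$. Both are fine.

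For $E^{(1)}_{22}$, note that $Q$ only touches index 1. Hence $E^{(1)}_{22}=E_{22}$ exactly, and $r_5$ is untouched.
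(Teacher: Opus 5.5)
Your proof is the paper's own proof. You do the same blockwise expansion of $\tilde Y^*\mathsf{B}_{\mu,\e}\tilde Y$ and obtain exactly the paper's formulas $E^{(1)}=E+m(FQ+QF^*)+m^2QGQ$, $F^{(1)}=F+m(QG-EP)-m^2QF^*P$, $G^{(1)}=G-m(PF+F^*P)+m^2PEP$, with $F^{(1)}_{11}=F_{11}+mG_{11}=0$ by the choice of $m$. The paper only asserts the structural properties of $\tilde Y$, so your extra justification is welcome, but two points need correcting.

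\textbf{The reversibility step is mislabeled.} The nonzero entries of $\mathsf{J}_4S$ sit at $(1,4)$ and $(4,1)$, not at $(1,4)$ and $(3,2)$. More importantly, the paper's criterion says a reversibility-preserving matrix is real at $i+j$ even and purely imaginary at $i+j$ odd. A matrix with real nonzero entries at $i+j$ odd therefore does \emph{not} satisfy it; it anticommutes with $\rho_4$.

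What Lemma 3.8 of \cite{BMV1} needs is that the Hamiltonian matrix $mS$ itself commutes with $\rho_4$. This does follow from your observation, because $\mathsf{J}_4\rho_4=-\rho_4\mathsf{J}_4$. It is simpler to read it off $\tilde Y$ directly: its only off-diagonal entries are $m$ at $(3,1)$ and $-m$ at $(2,4)$, real and with $i+j$ even.

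Symplecticity can also be checked by hand. Since $\mathsf{J}_4S$ is real symmetric and $S^2=0$,
$\tilde Y^*\mathsf{J}_4\tilde Y=\mathsf{J}_4+m(S^*\mathsf{J}_4+\mathsf{J}_4S)+m^2S^*\mathsf{J}_4S=\mathsf{J}_4$.

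\textbf{Your bookkeeping for $E^{(1)}$ misses a term.} One has $FQ+QF^*=\begin{pmatrix}2F_{11}&-\im F_{21}\\ \im F_{21}&0\end{pmatrix}$. So besides the $(1,1)$ entry, the coefficient $E^{(1)}_{12}$ also changes, by $-mF_{21}=O(m\mu\e,m\mu^3)$. This is harmless, since it fits in $r_2(\mu\e^2,\mu^2\e,\mu^3)$.

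Symmetrically, in the other blocks:
\begin{itemize}
\item $G^{(1)}_{12}$ gains $+mF_{21}$, while $G^{(1)}_{11}$ is untouched.
\item $F^{(1)}_{21}$ is untouched, so $r_6$ does not change.
\item Only $r_4$ absorbs $m(G_{12}-E_{12})+m^2F_{21}$, and only $r_7$ absorbs $-mE_{22}$.
\end{itemize}
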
 
\begin{proof}
    Recalling $\mathsf{B}_{\mu,\e}$ defined in \eqref{B mu epsilon} and using the fact that $\tilde{Y}$ is symplectic (c.f. \eqref{YstarJ4Y=J4}) and reversibility preserving (c.f. \eqref{reversibility preserving}), one may find that
    \begin{align}
        \mathsf{L}_{\mu,\epsilon}^{(1)} := \tilde{Y}^{-1} \mathsf{L}_{\mu,\epsilon} \tilde{Y}= \tilde{Y}^{-1}\mathsf{J}_4[\tilde{Y}^{-1}]^*\tilde{Y}^{*}\mathsf{B}_{\mu,\e}\tilde{Y}=\mathsf{J}_4 \tilde{Y}^{*}\mathsf{B}_{\mu,\e}\tilde{Y}.
    \end{align}
A straightforward calculation reveals that
\begin{align}
    \mathsf{B}_{\mu, \epsilon}^{(1)}:=\tilde{Y}^{*}\mathsf{B}_{\mu,\e}\tilde{Y}=\begin{pmatrix}
E & F \\
F^* & G
\end{pmatrix}+m\begin{pmatrix}
FQ+QF^* & QG-EP\\
GQ-PE & -PF-F^*P
\end{pmatrix}+m^2 \begin{pmatrix}
QGQ & -QF^*P \\
-PFQ & PEP
\end{pmatrix},
\end{align}
or, equivalently,
\begin{align}
E^{(1)}:=E+\begin{pmatrix}
2mF_{11}+m^2 G_{11} & -\im m F_{21} \\
\im m F_{21}& 0
\end{pmatrix},\quad
G^{(1)}:=G+\begin{pmatrix}
0 & \im m F_{21} \\
-\im m F_{21}& -2m F_{22}+m^2 E_{22}
\end{pmatrix},
\end{align}
and 
\begin{align}
F^{(1)}:=\begin{pmatrix}
0 & \im\left(F_{12}+m G_{12}-m E_{12}+m^2 F_{21} \right) \\
\im  F_{21}& F_{22}-m E_{22}
\end{pmatrix}.
\end{align}
The matrices $E^{(1)}$ and $G^{(1)}$ are self-adjoint, since $m$ is real. In conclusion, recalling \eqref{E0}, \eqref{F0}, and \eqref{G0}, all theformulas in \eqref{E1}, \eqref{F1}, and \eqref{G1} are proved. 
\end{proof}

\subsection{Second Step of Block-Decoupling} 

We state the main result of this section.
\begin{lemma} \label{Step of block-decoupling}
     There exists a $2 \times 2$ reversibility-preserving matrix $\tilde{X}$, analytic in $(\mu, \epsilon)$, of the form
\begin{equation} \label{XX}
    \begin{aligned}
    \tilde{X} :=& \begin{pmatrix}
x_{11} & \im \,x_{12} \\
\im \,x_{21} & x_{22}
\end{pmatrix}=\begin{pmatrix}
r_{11}(\mu^2,\mu \epsilon) & \im\, r_{12}(\mu^3,\mu\epsilon) \\
\im \, r_{21}(\epsilon, \mu^2) &  r_{22}(\mu^3,\mu\e)
\end{pmatrix}, 
\end{aligned}
\end{equation}
where $x_{ij} \in \mathbb{R}, \ i, j = 1, 2$. By conjugating the Hamiltonian and reversible matrix $\mathsf{L}^{(1)}_{\mu, \epsilon}$ defined in \eqref{L1mue YLY}, with the symplectic and reversibility-preserving $4 \times 4$ matrix
\begin{align} \label{S1}
    \exp\left(S^{(1)}\right), \quad \text{where } \quad S^{(1)} := \mathsf{J}_4 \begin{pmatrix}
0 & M \\
M^* & 0
\end{pmatrix}, \quad M := \mathsf{J}_2 \tilde{X}, 
\end{align}
we get the Hamiltonian and reversible matrix
\begin{align} \label{L2mueps}
\mathsf{L}^{(2)}_{\mu, \epsilon} := \exp\left(S^{(1)}\right) \mathsf{L}^{(1)}_{\mu, \epsilon} \exp\left(-S^{(1)}\right) = \mathsf{J}_4 \mathsf{B}^{(2)}_{\mu, \epsilon} = \mathsf{J}_4 \begin{pmatrix}
\mathsf{J}_2 E^{(2)} & \mathsf{J}_2 F^{(2)} \\
\mathsf{J}_2 [F^{(2)}]^* & \mathsf{J}_2 G^{(2)}
\end{pmatrix}, 
\end{align}
where the $2\times 2$ self-adjoint and reversibility-preserving matrices $E^{(2)}$ and $G^{(2)}$ have the same expansion of $E^{(1)}$ and $G^{(1)}$, given in \eqref{E}, \eqref{G 2nd}, 
and
\begin{align} \label{F2}
F^{(2)} = \mu^2\begin{pmatrix}
r_3(\epsilon^3,\mu\e^2,\mu^3\e,\mu^5) & \im r_4(\epsilon^3,\mu^2\e^2,\mu^4\e,\mu^6) \\
\im r_6(\epsilon^3,\mu^2\e^2,\mu^3\e,\mu^5) & r_7(\mu\epsilon^3,\mu^2\e^2,\mu^4\e,\mu^6)
\end{pmatrix}. 
\end{align}
\end{lemma}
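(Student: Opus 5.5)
The plan follows the block-decoupling scheme of \cite[Section 5]{BMV1}. We look for the generator $S=S^{(1)}$ solving the homological equation
\[
[S,\,\mathsf{J}_4\,\mathrm{diag}(E^{(1)},G^{(1)})]+\mathsf{J}_4\begin{pmatrix}0&F^{(1)}\\ [F^{(1)}]^*&0\end{pmatrix}=0
\]
only to leading order, and then estimate the remainders via the Lie expansion
\[
\exp(S)\mathsf{L}^{(1)}\exp(-S)=\mathsf{L}^{(1)}+[S,\mathsf{L}^{(1)}]+\tfrac12[S,[S,\mathsf{L}^{(1)}]]+\dots
\]
The generator $S=\mathsf{J}_4\begin{pmatrix}0&M\\M^*&0\end{pmatrix}$ is Hamiltonian and reversible as soon as $\tilde X$ is reversibility-preserving, i.e.\ has real diagonal and imaginary off-diagonal entries. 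Hence $\exp(S)$ is symplectic and reversibility-preserving by \cite[Lemma 3.8]{BMV1}, and $\mathsf{L}^{(2)}=\mathsf{J}_4\mathsf{B}^{(2)}$ with $\mathsf{B}^{(2)}$ self-adjoint and reversibility-preserving.

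Writing $[S,\mathsf{J}_4\mathrm{diag}(E,G)]$ blockwise, the off-diagonal block of the homological equation becomes the linear Sylvester equation
\[
\mathsf{J}_2 E^{(1)}\tilde X-\tilde X\,\mathsf{J}_2 G^{(1)}=\mathsf{J}_2 F^{(1)} \quad (\text{up to sign conventions}).
\]
Its four real unknowns $x_{ij}$ form a $4\times4$ real linear system. The key point is that $\mathsf{J}_2G^{(1)}$ is not small: its entry $-(1+\kappa\mu^2+\dots)$ is $\mathcal{O}(1)$, while the other entries are $\mathcal{O}(\mu)$. We therefore invert the system perturbatively around the dominant operator $\tilde X\mapsto \tilde X\,\mathsf{J}_2G^{(1)}$, taken at leading order $\begin{pmatrix}0&\mu\\-1&0\end{pmatrix}$, with the $\mathsf{J}_2E^{(1)}$ part treated as a perturbation of size $\mathcal{O}(\mu,\e^2)$. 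Solving explicitly with $F^{(1)}_{11}=0$ (the first step has removed it), $F^{(1)}_{12}=\im\sqrt{\ck}\mu\e+\dots$, $F^{(1)}_{21}=\im r_6(\mu\e,\mu^3)$ and $F^{(1)}_{22}=r_7(\mu^2\e,\mu^3)$, we expect $x_{21}$ to be determined by $F_{12}/\mu$ type terms. This explains the size $r_{21}(\e,\mu^2)$, while the remaining entries have sizes $r(\mu^2,\mu\e)$ and $r(\mu^3,\mu\e)$ as in \eqref{XX}. A Neumann series (or Cramer's rule with determinant $\approx\mu$ or $1$, depending on the entry) gives analyticity in $(\mu,\e)$. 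In practice it is cleaner to first factor out the powers of $\mu$ and then apply the implicit function theorem.

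Next, the diagonal blocks: $E^{(2)}-E^{(1)}$ and $G^{(2)}-G^{(1)}$ come from $\tfrac12[S,\mathsf{J}_4\mathrm{offdiag}(F^{(1)})]$ plus higher commutators. We check, entry by entry, that these corrections are absorbed into the remainders $r',r'',r_2,r_5,r_8,r_9,r_{10}$. This relies on $S=\mathcal{O}(\e,\mu^2)$, $F^{(1)}=\mathcal{O}(\mu\e,\mu^3)$, and on the reversibility structure, which forces certain products to vanish. The delicate entry is $E_{11}$, where corrections of order $\mu^2\e^2$ must fit into $\mathsf{e}_{11}\e^2 r'(\e,\mu\e^2)$ or into $\mu^2 r''(\e,\mu)$; they do, since $\mu^2\e^2=\mu^2\cdot\e^2$.

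The main obstacle is the new off-diagonal block $F^{(2)}$. It consists of the residual of the approximate homological equation, plus $\tfrac12[S,[S,\mathsf{J}_4\mathrm{offdiag}]]$, $[S,\mathsf{J}_4\mathrm{diag}(E^{(1)}-E_0,G^{(1)}-G_0)]$ and the cubic terms. Showing that every entry carries the factor $\mu^2$ claimed in \eqref{F2} requires tracking exact cancellations, because a naive bound gives only $\mathcal{O}(\e^3)$ from terms such as $x_{21}\cdot E_{11}$. We would first solve the Sylvester equation exactly for the full $E^{(1)},G^{(1)}$, so that the first-order residual vanishes identically. Then only second- and higher-order commutators remain, and these are quadratic in $F^{(1)}$ and $S$ and hence contain $\mu^2$. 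If the exact solution spoils the sizes in \eqref{XX}, the fallback is to iterate the step a second time, as in \cite{BMV1}.
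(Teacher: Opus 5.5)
Your architecture matches the paper's: you solve the Sylvester equation \eqref{DX-XD=JF} exactly, so the Lie series collapses to \eqref{L2 mu epsilon}. But the step that carries the real content of the lemma, the factor $\mu^2$ in $F^{(2)}$, is not justified. The reason you give (the remaining terms are \emph{quadratic in $F^{(1)}$ and $S$, hence contain $\mu^2$}) does not hold. Once the homological equation is solved exactly, the block-off-diagonal part of $\mathsf{L}^{(2)}_{\mu,\e}$ comes only from $\mathrm{ad}_S^2(R^{(1)})$ and its conjugates by $\exp(\tau S)$. This is quadratic in $S$ but only linear in $F^{(1)}$, and $S$ is not $\mathcal{O}(\mu)$, because $x_{21}=-F^{(1)}_{12}/\mu+\dots=-\sqrt{\ck}\,\e+\dots$. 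With your own size $S=\mathcal{O}(\e,\mu^2)$, counting gives only $\mathcal{O}(\mu\e^3)$, i.e.\ one power of $\mu$.

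The second power of $\mu$ comes from an entrywise cancellation that your plan does not identify. Using $F^{(1)}_{11}=0$, $F^{(1)}_{22}=r(\mu^2\e,\mu^3)$ and $x_{11},x_{12},x_{22}=\mathcal{O}(\mu)$, all entries of $\widetilde E=\mathrm{Sym}(\mathsf{J}_2\tilde X\mathsf{J}_2[F^{(1)}]^*)$ and $\widetilde G=\mathrm{Sym}(\tilde X^*F^{(1)})$ are $\mathcal{O}(\mu^2)$ except two. The exceptions are $\widetilde E_{11}=x_{21}F^{(1)}_{12}$ and $\widetilde G_{11}=x_{21}F^{(1)}_{21}$, which are only $\mathcal{O}(\mu\e^2)$. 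In the off-diagonal block $\widetilde F=2(M\mathsf{J}_2\widetilde G-\widetilde E\mathsf{J}_2M)$ of $\mathrm{ad}_S^2(R^{(1)})$, these two entries are multiplied only by $x_{11},x_{12},x_{22}$, never by $x_{21}$. Conversely, $x_{21}$ only meets $\widetilde E_{12},\widetilde E_{22},\widetilde G_{12},\widetilde G_{22}$. This bookkeeping, carried out in Lemmas \ref{EG e11 tilde lemma} and \ref{high order terms lemma}, is what produces \eqref{F2}; without it the claim does not follow.

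Two further points.

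Your treatment of $E_{11}$ misjudges the correction. It is $\widetilde E_{11}=x_{21}F^{(1)}_{12}=-\ck\,\mu\e^2+\dots$, of size $\mu\e^2$ (consistent with $\tilde r_1(\mu\e^2,\dots)$ in Lemma \ref{EG e11 tilde lemma}), not $\mu^2\e^2$. A monomial $\mu\e^2$ is not of the form $\e^2r'(\e,\mu\e^2)+\mu^2r''(\e,\mu)$. So that entry can only be recorded as $\mathsf{e}_{11}\e^2(1+r'(\e,\mu))-\mathsf{e}_{22}\frac{\mu^2}{8}(1+r''(\e,\mu))$. This weaker form is enough for the Benjamin--Feir analysis, but your argument for that entry is wrong.

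The perturbative inversion you first propose also does not close as stated. The inverse of $\tilde X\mapsto\tilde X\mathsf{J}_2G^{(1)}$ has size $\mu^{-1}$, while $\mathsf{J}_2E^{(1)}$ has entries of size $\mu$ and $\e^2$. The perturbation is therefore of relative size $\mathcal{O}(1+\e^2/\mu)$, not small. The paper instead inverts the $4\times4$ system exactly, with $\det\mathsf{A}=\mu^2(1+\dots)$ as in \eqref{det A}. Analyticity at $\mu=0$ then follows because the columns of $\mathsf{A}^{-1}$ in \eqref{mathsf A -1} containing $\mu^{-1}$ or $\e^2\mu^{-1}$ entries act only on $-F^{(1)}_{21},F^{(1)}_{22},F^{(1)}_{12}$, all divisible by $\mu$. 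The component $-F^{(1)}_{11}$ sits in the regular column, and its vanishing after the first step is what keeps $x_{22}=\mathcal{O}(\mu)$.
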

The rest of the section is devoted to the proof of Lemma \ref{Step of block-decoupling}. For simplicity let $S = S^{(1)}$. The matrix $\exp(S)$ is symplectic and reversibility-preserving because the matrix $S$ in \eqref{S1} is Hamiltonian and reversibility-preserving, cf. \cite[Lemma 3.8]{BMV1}. Note that $S$ is reversibility-preserving, since $\tilde{X}$ has the form \eqref{XX}.

We now expand in Lie series the Hamiltonian and reversible matrix $
\mathsf{L}_{\mu,\epsilon}^{(2)} = \exp(S) \mathsf{L}_{\mu,\epsilon}^{(1)} \exp(-S)$.
We split $\mathsf{L}^{(1)}_{\mu,\epsilon}$ into its $2 \times 2$-diagonal and off-diagonal Hamiltonian and reversible matrices
\begin{equation} \label{L=D+R}
    \begin{aligned}
        &\mathsf{L}^{(1)}_{\mu,\epsilon} = D^{(1)} + R^{(1)},\\
&D^{(1)} := \begin{pmatrix} D_1 & 0 \\ 0 & D_0 \end{pmatrix}= \begin{pmatrix} \mathsf{J}_2 E^{(1)} & 0 \\ 0 & \mathsf{J}_2 G^{(1)} \end{pmatrix}, \quad 
R^{(1)} := \begin{pmatrix} 0 & \mathsf{J}_2 F^{(1)} \\ \mathsf{J}_2 [F^{(1)}]^* & 0 \end{pmatrix}, 
    \end{aligned}
\end{equation}
and perform the Lie expansion
\begin{equation} \label{L2 mu e}
    \begin{aligned}
        \mathsf{L}^{(2)}_{\mu,\epsilon} = \exp(S) \mathsf{L}^{(1)}_{\mu,\epsilon} \exp(-S) &= D^{(1)} + [S, D^{(1)}] + \frac{1}{2} [S, [S, D^{(1)}]] + R^{(1)} + [S, R^{(1)}]\\
        &+ \frac{1}{2} \int_0^1 (1 - \tau)^2 \exp(\tau S) \mathrm{ad}_S^3(D^{(1)}) \exp(-\tau S) \, d\tau\\
        &+ \int_0^1 (1 - \tau) \exp(\tau S) \mathrm{ad}_S^2(R^{(1)}) \exp(-\tau S) \, d\tau.
    \end{aligned}
\end{equation}

We look for a $4 \times 4$ matrix $S$ as in \eqref{S1} (which is Hamiltonian, reversibility-preserving and off-diagonal as the term $R^{(1)}$ we wish to eliminate) that solves the homological equation 
\[
R^{(1)} + [S, D^{(1)}] = 0,
\]
which, recalling \eqref{L=D+R}, reads
\begin{align} \label{before DX-XD=-JF}
\begin{pmatrix}
0 & \mathsf{J}_2 F^{(1)} + \mathsf{J}_2 M D_0 - D_1 \mathsf{J}_2 M \\
\mathsf{J}_2 [F^{(1)}]^* + \mathsf{J}_2 M^* D_1 - D_0 \mathsf{J}_2 M^* & 0
\end{pmatrix} = 0. 
\end{align}

Note that the equation $\mathsf{J}_2 F^{(1)} + \mathsf{J}_2 M D_0 - D_1 \mathsf{J}_2 M = 0$ implies also $\mathsf{J}_2 [F^{(1)}]^* + \mathsf{J}_2 M^* D_1 - D_0 \mathsf{J}_2 M^* = 0$ and vice versa, since $\mathsf{J}^*_2=-\mathsf{J}_2$ and $\mathsf{J}_2(\mathsf{J}_2 F^{(1)} + \mathsf{J}_2 M D_0 - D_1 \mathsf{J}_2 M)=(\mathsf{J}_2 [F^{(1)}]^* + \mathsf{J}_2 M^* D_1 - D_0 \mathsf{J}_2 M^*)^*\mathsf{J}^*_2$. Thus, writing 
\[
M = \mathsf{J}_2 \tilde{X}, \quad \text{or, equivalently,} \quad \tilde{X} = -\mathsf{J}_2 M,
\]
the equation \eqref{before DX-XD=-JF} amounts to solving the Sylvester equation
\begin{align} \label{DX-XD=JF}
D_1 \tilde{X} - \tilde{X} D_0 = -\mathsf{J}_2 F^{(1)}. 
\end{align}
We write the matrices $E^{(1)}$, $F^{(1)}$, $G^{(1)}$ in \eqref{L1mue YLY} as
\begin{align} \label{E1 F1 G1}
    E^{(1)} = \begin{pmatrix} E^{(1)}_{11} & \im E^{(1)}_{12} \\ -\im E^{(1)}_{12} & E^{(1)}_{22} \end{pmatrix}, \quad 
F^{(1)} = \begin{pmatrix} F^{(1)}_{11} & \im F^{(1)}_{12} \\ \im F^{(1)}_{21} & F^{(1)}_{22} \end{pmatrix}, \quad G^{(1)} = \begin{pmatrix} G^{(1)}_{11} & \im G^{(1)}_{12} \\ -\im G^{(1)}_{12} & G^{(1)}_{22} \end{pmatrix},
\end{align}
where the real numbers $E_{ij}^{(1)}, F_{ij}^{(1)}, G_{ij}^{(1)}, i, j = 1, 2,$ have the expansion in \eqref{E1}, \eqref{F1}, \eqref{G1}. Thus, by \eqref{L=D+R}, \eqref{XX} and \eqref{E1 F1 G1}, the equation \eqref{DX-XD=JF} amounts to solve the $4 \times 4$ real linear system

\begin{align} \label{Ax=f}
\underbrace{
\begin{pmatrix}
a & b & c & 0 \\
d & a & 0 & -c \\
e & 0 & a & -b \\
0 & -e & -d & a
\end{pmatrix}
}_{=: \mathsf{A}}
\underbrace{
\begin{pmatrix}
x_{11} \\ x_{12} \\ x_{21} \\ x_{22}
\end{pmatrix}
}_{=: \vec{x}}
=
\underbrace{
\begin{pmatrix}
- F_{21}^{(1)} \\ F_{22}^{(1)} \\ -F_{11}^{(1)} \\ F_{12}^{(1)}
\end{pmatrix}
}_{=: \vec{f}},
\end{align}
where by \eqref{E1}, \eqref{F1}, and \eqref{G1} 
\begin{equation} \label{a b c d e}
\begin{aligned}
a &= G^{(1)}_{12} - E^{(1)}_{12} = - \frac{1}{2}\mathsf{e}_{12}\mu  + r(\mu\epsilon^2, \mu^2 \epsilon, \mu^3), 
\quad b = G^{(1)}_{11} = 1+\kappa\mu^2 + r(\e^3,\mu\e^2,\mu^2 \epsilon, \mu^3), \\
c &= E^{(1)}_{22} =  -\frac{1}{8}\mathsf{e}_{22}\mu^2+  r(\mu^2\epsilon, \mu^3), 
\quad d = G^{(1)}_{22} = \mu  + r(\mu^2 \epsilon, \mu^3), \quad e = E^{(1)}_{11} =r(\mu^2,\e^2).
\end{aligned}
\end{equation}
Using also  \cite[Lemma 5.5]{BMV3}, one has 
\begin{equation} \label{det A}
\begin{aligned}
\det \mathsf{A} 
= (bd - a^2)^2 - 2ce \Big( a^2 + bd - \tfrac{1}{2} ce \Big) = \mu^2\left(1+r(\mu,\mu\e)\right),
\end{aligned}    
\end{equation}
which is not zero provided  $\mu \neq 0$ .
The inverse of $\mathsf{A} $ is computed in \cite[formula (5.23)]{BMV3} as 
\begin{align} \label{A inverse}
\mathsf{A}^{-1} = \frac{1}{\det\mathsf{A} }
\begin{pmatrix}
a (a^2 - bd - ce) & b(-a^2 + bd - ce) & -c(a^2 + bd - ce) & -2abc \\[6pt]
d(-a^2 + bd - ce) & a(a^2 - bd - ce) & 2acd & -c(-a^2 - bd + ce) \\[6pt]
-e(a^2 + bd - ce) & 2abe & a(a^2 - bd - ce) & b(a^2 - bd + ce) \\[6pt]
-2ade & -e(-a^2 - bd + ce) & d(a^2 - bd + ce) & a(a^2 - bd - ce)
\end{pmatrix}
\end{align}
and expands as 
\begin{align} \label{mathsf A -1}
\mathsf{A}^{-1} = 
\frac{1+r(\e,\mu)}{\mu}\left(
\begin{array}{cccc}
\frac{\mu}{2}\mathsf{e}_{12} & 1 & \frac{\mu^2}{8}\mathsf{e}_{22}&-\frac{\mu^2}{8}\mathsf{e}_{12}\mathsf{e}_{22} \\
\mu & \frac{\mu}{2}\mathsf{e}_{12}  & \frac{\mu^3}{8}\mathsf{e}_{12}\mathsf{e}_{22} & -\frac{\mu^2}{8}\mathsf{e}_{22} \\
r(\epsilon^2, \mu^2) & r(\epsilon^2, \mu^2) &\frac{\mu}{2}\mathsf{e}_{12} & -1\\
 r(\mu\epsilon^2, \mu^3) & r(\epsilon^2, \mu^2) & -\mu &\frac{\mu}{2}\mathsf{e}_{12}\\
\end{array}
\right).
\end{align}
Therefore, for any $\mu \neq 0$, there exists a unique solution $\vec{x} = \mathsf{A}^{-1} \vec{f}$ of the linear system \eqref{Ax=f}, namely a unique matrix $\tilde{X}$ which solves the Sylvester equation \eqref{DX-XD=JF}.
Explicitly, 
    by \eqref{Ax=f}, \eqref{mathsf A -1}, \eqref{E1 F1 G1}, \eqref{F1} we obtain, for any $\mu \neq 0$,
    \small
\begin{equation}
\begin{aligned}
\begin{pmatrix}
x_{11} \\ x_{12} \\ x_{21} \\ x_{22}
\end{pmatrix}
&=
\left(
\begin{array}{c}
r(\mu\epsilon,\mu^2) \\[6pt]
r(\mu\epsilon,\mu^3) \\[6pt]
 r(\e,\mu^2) \\[6pt]
 r(\mu\e,\mu^3)
\end{array}
\right),
\end{aligned}
\end{equation}
\normalsize
which proves \eqref{XX}. In particular, each $x_{ij}$ admits an analytic extension at $\mu = 0$.  
Note that, for $\mu = 0$, the resulting matrix $\tilde{X}$ still solves \eqref{DX-XD=JF}.

Since the matrix $S$ solves the homological equation $R^{(1)}+[S, D^{(1)}]  = 0$, identity \eqref{L2 mu e} simplifies to
\begin{align} \label{L2 mu epsilon}
\mathsf{L}_{\mu,\epsilon}^{(2)} = D^{(1)} + \frac{1}{2} [ S, R^{(1)} ]
+ \frac{1}{2} \int_0^1 (1 - \tau^2)\, \exp(\tau S) \, \mathrm{ad}_S^2 (R^{(1)}) \, \exp(-\tau S) \, d\tau.
\end{align}
This follows by using the Jacobi identity and the homological relation to cancel lower-order terms and to rewrite
\[
\frac{1}{2} [S, [S, D^{(1)}]] + [S, R^{(1)}] = \frac{1}{2} [S, R^{(1)}],
\quad
\operatorname{ad}_S^3 (D^{(1)}) = -\operatorname{ad}_S^2 (R^{(1)}).
\]
The matrix $\frac{1}{2}[S, R^{(1)}]$ is, by \eqref{S1}, \eqref{L=D+R}, the block-diagonal Hamiltonian and reversible matrix
\begin{align} \label{0.5 S R}
\frac{1}{2} [ S, R^{(1)} ]
=
\begin{pmatrix}
\frac{1}{2} \mathsf{J}_2 ( 
M \mathsf{J}_2 [F^{(1)}]^* - F^{(1)} \mathsf{J}_2 M^* ) & 0 \\
0 & \frac{1}{2} \mathsf{J}_2 ( M^* \mathsf{J}_2 F^{(1)} - [F^{(1)}]^* \mathsf{J}_2 M )
\end{pmatrix}
=
\begin{pmatrix}
\mathsf{J}_2 \widetilde{E} & 0 \\
0 & \mathsf{J}_2 \widetilde{G}
\end{pmatrix},
\end{align}
where, recalling \eqref{S1},
\begin{align} \label{E G tilde}
\widetilde{E} := \mathrm{Sym} ( \mathsf{J}_2 \tilde{X} \mathsf{J}_2 [F^{(1)}]^* )=\begin{pmatrix} \widetilde{E}^{(1)}_{11} & \im \widetilde{E}^{(1)}_{12} \\ -\im \widetilde{E}^{(1)}_{12} & \widetilde{E}^{(1)}_{22} \end{pmatrix}, 
\quad
\widetilde{G} := \mathrm{Sym} ( \tilde{X}^* F^{(1)} )=\begin{pmatrix} \widetilde{G}^{(1)}_{11} & \im \widetilde{G}^{(1)}_{12} \\ -\im \widetilde{G}^{(1)}_{12} & \widetilde{G}^{(1)}_{22} \end{pmatrix},
\end{align}
denoting $\mathrm{Sym}(A) := \frac{1}{2}( A + A^* )$.
\begin{lemma} \label{EG e11 tilde lemma}
    The self-adjoint and reversibility-preserving matrices $\widetilde{E}$, $\widetilde{G}$ in \eqref{E G tilde} have the form
    \begin{equation}\label{tilde EG and tilde e11}
    \begin{aligned} 
\widetilde{E} &= \begin{pmatrix}
\tilde{r}_1 (\mu \epsilon^2, \mu^3 \epsilon,\mu^5) & \im\, \tilde{r}_2 (\mu ^2\epsilon^2,\mu^3\e,\mu^5) \\
- \im \,\tilde{r}_2 (\mu ^2\epsilon^2,\mu^3\e,\mu^5) & \tilde{r}_5(\mu ^2\epsilon^2,\mu^4\e,\mu^5)
\end{pmatrix}, \quad
\widetilde{G} = \begin{pmatrix}
\tilde{r}_8 (\mu \epsilon^2, \mu^3 \epsilon,\mu^5) & \im\, \tilde{r}_9 (\mu^2 \epsilon^2, \mu^3 \epsilon,\mu^5) \\
- \im\, \tilde{r}_9 (\mu^2 \epsilon^2, \mu^3 \epsilon,\mu^5) & \tilde{r}_{10} (\mu^2 \epsilon^2, \mu^4 \epsilon,\mu^6)
\end{pmatrix}, \\
\end{aligned}
 \end{equation}
\end{lemma}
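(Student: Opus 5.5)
The plan is to compute $\widetilde{E}$ and $\widetilde{G}$ in \eqref{E G tilde} entry by entry, inserting the sizes of $\tilde X$ from \eqref{XX} and of $F^{(1)}$ from \eqref{F1}. Self-adjointness and the reversibility-preserving form come for free. The matrices $\tilde X$ and $F^{(1)}$ both have the pattern (real, imaginary; imaginary, real). Products and adjoints of such matrices, multiplied by $\mathsf{J}_2$ on either side, again land in the class of reversibility-preserving matrices, and $\mathrm{Sym}(\cdot)$ makes them self-adjoint. So the only work is to bound the four real scalars $\widetilde{E}_{11},\widetilde{E}_{12},\widetilde{E}_{22}$ and the analogous entries of $\widetilde{G}$.

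First, write out
\[
\mathsf{J}_2\tilde X\mathsf{J}_2=\begin{pmatrix}-x_{22}& \im x_{21}\\ \im x_{12}&-x_{11}\end{pmatrix},
\qquad
[F^{(1)}]^*=\begin{pmatrix}0&-\im F^{(1)}_{21}\\ -\im F^{(1)}_{12}&F^{(1)}_{22}\end{pmatrix},
\]
using $F^{(1)}_{11}=0$. This gives
\[
\widetilde E_{11}=x_{21}F^{(1)}_{12},\qquad
\widetilde E_{22}=x_{12}F^{(1)}_{21}-x_{11}F^{(1)}_{22},
\]
with the off-diagonal entry equal to the symmetrization of $x_{22}F^{(1)}_{21}$ and $x_{21}F^{(1)}_{22}$ (up to signs). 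In the same way, $\tilde X^*F^{(1)}$ gives
\[
\widetilde G_{11}=x_{21}F^{(1)}_{12}\ \text{(up to sign)},\qquad
\widetilde G_{22}=x_{12}F^{(1)}_{12}+x_{22}F^{(1)}_{22},
\]
and an off-diagonal entry built from $x_{11}F^{(1)}_{12}$, $x_{21}F^{(1)}_{22}$ and $x_{12}F^{(1)}_{21}$.

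Next, I would plug in the sizes
\[
x_{11}=r(\mu^2,\mu\e),\quad x_{12}=r(\mu^3,\mu\e),\quad x_{21}=r(\e,\mu^2),\quad x_{22}=r(\mu^3,\mu\e),
\]
\[
F^{(1)}_{12}=\sqrt{\ck}\mu\e+r(\mu\e^2,\mu^2\e,\mu^3),\quad F^{(1)}_{21}=r(\mu\e,\mu^3),\quad F^{(1)}_{22}=r(\mu^2\e,\mu^3).
\]
As a check, $x_{21}F^{(1)}_{12}=r(\e,\mu^2)\,r(\mu\e,\mu^3)=r(\mu\e^2,\mu^3\e,\mu^5)$, which matches $\tilde r_1$ and $\tilde r_8$.

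The main obstacle is that the crude sizes of $x_{ij}$ in \eqref{XX} do not give the sharper bounds claimed for $\widetilde E_{22}$, $\widetilde G_{22}$ and $\widetilde G_{12}$. For example, $x_{12}F^{(1)}_{21}$ only yields $r(\mu^2\e^2,\mu^4\e,\mu^6)$ if the $\mu\e$ part of $x_{12}$ meets only the $\mu\e$ part of $F^{(1)}_{21}$; similarly, $\tilde r_{10}$ needs $x_{12}F^{(1)}_{12}$ to contain no $\mu^3\e$ term. To overcome this, I would not use \eqref{XX} as a black box. Instead, I would recompute $\vec x=\mathsf{A}^{-1}\vec f$ from \eqref{mathsf A -1} while keeping the refined information. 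Each $x_{ij}$ would be written as an explicit linear combination of the $F^{(1)}_{kl}$ with coefficients from $\mathsf A^{-1}$, for instance $x_{11}\approx \tfrac12\mathsf e_{12}(-F^{(1)}_{21})+\mu^{-1}F^{(1)}_{22}+O(\mu)F^{(1)}_{12}$. The products would then be expanded monomial by monomial in $(\mu,\e)$. Where a product must vanish at a given order, I would argue by structure rather than arithmetic: the $\mu^{-1}$ entries of $\mathsf A^{-1}$ pair with $F^{(1)}_{22}=O(\mu^2\e,\mu^3)$ and $F^{(1)}_{12}=O(\mu\e)$, and every $F$-entry carries at least one factor $\mu$. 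This bookkeeping of which monomials survive is the heavy part. Finally, I would collect the entries, yielding \eqref{tilde EG and tilde e11}.
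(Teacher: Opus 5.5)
Your setup is the paper's proof: multiply out $\mathsf{J}_2\tilde X\mathsf{J}_2[F^{(1)}]^*$ and $\tilde X^*F^{(1)}$ entrywise and insert the sizes from \eqref{XX} and \eqref{F1}. However, the ``main obstacle'' you identify does not exist. The refined recomputation of $\vec x=\mathsf{A}^{-1}\vec f$ that you propose, and leave unexecuted, is unnecessary. As written, your proposal leaves $\widetilde E_{12}$, $\widetilde E_{22}$, $\widetilde G_{12}$, $\widetilde G_{22}$ unverified because of a miscalculation.

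With $F^{(1)}_{11}=0$ the entries are
\[
\widetilde E_{11}=x_{21}F^{(1)}_{12},\quad
\widetilde E_{12}=\tfrac12\bigl(x_{22}F^{(1)}_{21}+x_{21}F^{(1)}_{22}-x_{11}F^{(1)}_{12}\bigr),\quad
\widetilde E_{22}=x_{12}F^{(1)}_{21}-x_{11}F^{(1)}_{22},
\]
\[
\widetilde G_{11}=x_{21}F^{(1)}_{21},\quad
\widetilde G_{12}=\tfrac12\bigl(x_{11}F^{(1)}_{12}-x_{21}F^{(1)}_{22}-x_{22}F^{(1)}_{21}\bigr),\quad
\widetilde G_{22}=x_{12}F^{(1)}_{12}+x_{22}F^{(1)}_{22}.
\]
You have a few index slips, all harmless for the bounds:
\begin{itemize}
\item $\widetilde G_{11}$ involves $F^{(1)}_{21}$, not $F^{(1)}_{12}$.
\item The off-diagonal entry of $\widetilde E$ also contains $x_{11}F^{(1)}_{12}$.
\item The off-diagonal entry of $\widetilde G$ contains $x_{22}F^{(1)}_{21}$, not $x_{12}F^{(1)}_{21}$.
\end{itemize}

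Now just multiply the crude bounds, noting $F^{(1)}_{12},F^{(1)}_{21}=r(\mu\e,\mu^3)$:
\[
x_{12}F^{(1)}_{21},\ x_{22}F^{(1)}_{21},\ x_{12}F^{(1)}_{12}\;=\;r(\mu^3,\mu\e)\,r(\mu\e,\mu^3)\;=\;r(\mu^2\e^2,\mu^4\e,\mu^6).
\]
The cross products $\mu^3\cdot\mu\e=\mu^4\e$ are already in the allowed list. In particular, no $\mu^3\e$ monomial can arise in $x_{12}F^{(1)}_{12}$. The remaining products are:
\begin{itemize}
\item $x_{11}F^{(1)}_{22}=r(\mu^3\e^2,\mu^4\e,\mu^5)$;
\item $x_{22}F^{(1)}_{22}=r(\mu^3\e^2,\mu^4\e,\mu^6)$;
\item $x_{11}F^{(1)}_{12}$ and $x_{21}F^{(1)}_{22}$, both $r(\mu^2\e^2,\mu^3\e,\mu^5)$;
\item $x_{21}F^{(1)}_{12}$ and $x_{21}F^{(1)}_{21}$, both $r(\mu\e^2,\mu^3\e,\mu^5)$.
\end{itemize}
Summing gives exactly the six classes in \eqref{tilde EG and tilde e11}. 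The $\mu^3\e$ contributions land only in $\widetilde E_{11}$, $\widetilde G_{11}$, $\widetilde E_{12}$, $\widetilde G_{12}$, where they are permitted. So the paper's direct calculation suffices, and the fix is simply to carry out the products you had already set up.
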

\begin{proof}
    By \eqref{F1}, \eqref{XX}, a straightforward calculation reveals that
\begin{equation*} 
\begin{aligned}
\mathsf{J}_2 \tilde{X} \mathsf{J}_2 [F^{(1)}]^* 
&= \begin{pmatrix}
x_{21} F^{(1)}_{12} - x_{22} F^{(1)}_{11} & \im ( x_{21} F^{(1)}_{22} + x_{22} F^{(1)}_{21} ) \\
\im ( x_{11} F^{(1)}_{12} + x_{12} F^{(1)}_{11} ) & - x_{11} F^{(1)}_{22} + x_{12} F^{(1)}_{21}
\end{pmatrix} 
= \begin{pmatrix}
{r}_1 (\mu \epsilon^2, \mu^3 \epsilon,\mu^5) & \im\, {r}_2 (\mu ^2\epsilon^2,\mu^3\e,\mu^5) \\
- \im \,{r}_2 (\mu ^2\epsilon^2,\mu^3\e,\mu^5) & {r}_5(\mu ^2\epsilon^2,\mu^4\e,\mu^5)
\end{pmatrix},
\end{aligned}
\end{equation*}
and
 \begin{equation*} 
\begin{aligned}
 \tilde{X} ^* F^{(1)}
&= \begin{pmatrix}
x_{11} F^{(1)}_{11} + x_{21} F^{(1)}_{21} & \im ( x_{11} F^{(1)}_{12} - x_{21} F^{(1)}_{22} ) \\
\im ( -x_{12} F^{(1)}_{11} + x_{22} F^{(1)}_{21} ) &  x_{12} F^{(1)}_{12} + x_{22} F^{(1)}_{22}
\end{pmatrix} 
= \begin{pmatrix}
{r}_1 (\mu \epsilon^2, \mu^3 \epsilon,\mu^5) & \im\, {r}_2 (\mu ^2\epsilon^2,\mu^3\e,\mu^5) \\
- \im \,{r}_2 (\mu ^2\epsilon^2,\mu^4\e,\mu^6) & {r}_5(\mu ^2\epsilon^2,\mu^4\e,\mu^6)
\end{pmatrix}.
\end{aligned}
\end{equation*}
By symmetry, we verify \eqref{tilde EG and tilde e11}.
\end{proof}
 
 The last term in \eqref{L2 mu epsilon} is small. 

\begin{lemma} \label{high order terms lemma}
    The $4 \times 4$ Hamiltonian and reversibility matrix
\begin{align} \label{high order terms EG hat}
\frac{1}{2} \int_0^1 (1 - \tau^2) \exp(\tau S) \mathrm{ad}_S^2 (R^{(1)}) \exp(-\tau S) \, d\tau
= \begin{pmatrix}
\mathsf{J}_2 \widehat{E} & \mathsf{J}_2 F^{(2)} \\
\mathsf{J}_2 [F^{(2)}]^* & \mathsf{J}_2 \widehat{G}
\end{pmatrix}.
\end{align}
where the $2 \times 2$ self-adjoint and reversible matrices $\widehat{E}$, $\widehat{G}$ have entries
\begin{align} \label{Eij Gij}
\widehat{E}_{ij} = \widehat{G}_{ij} = \mu^2 r((\e^3,\mu \epsilon^2, \mu^3 \epsilon,\mu^5)), 
\quad i,j = 1,2,
\end{align}
and the $2 \times 2$ reversible matrix $F^{(2)}$ admits an expansion as in \eqref{F2}.
\end{lemma}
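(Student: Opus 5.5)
The plan is to compute $\mathrm{ad}_S^2(R^{(1)})$ explicitly in block form and then control the conjugation by $\exp(\tau S)$ through a Neumann-type (Lie series) bound. First, since $S$ and $R^{(1)}$ are both block off-diagonal, $[S,R^{(1)}]$ is block diagonal (it was computed in \eqref{0.5 S R} as twice $\mathrm{diag}(\mathsf{J}_2\widetilde E,\mathsf{J}_2\widetilde G)$), and consequently $\mathrm{ad}_S^2(R^{(1)})=[S,[S,R^{(1)}]]$ is again block off-diagonal, of the form
\[
\mathrm{ad}_S^2(R^{(1)})=\begin{pmatrix}0 & \mathsf{J}_2\widetilde F\\ \mathsf{J}_2\widetilde F^* & 0\end{pmatrix},\qquad \widetilde F = 2\big(\mathsf{J}_2\widetilde E\,\mathsf{J}_2\tilde X \cdot(-1)\ \text{type terms}\big),
\]
namely $\widetilde F$ is a sum of products of the entries of $\tilde X$ (given by \eqref{XX}) with those of $\widetilde E,\widetilde G$ (given by Lemma \ref{EG e11 tilde lemma}). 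Hamiltonianity and reversibility of this matrix are automatic, since $S$ and $R^{(1)}$ are Hamiltonian and reversible and these properties are preserved by commutators and by conjugation with the symplectic reversibility-preserving flow $\exp(\tau S)$ (\cite[Lemma 3.8]{BMV1}); thus it suffices to estimate sizes entrywise, while the real/imaginary pattern follows from reversibility.

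Second, I will multiply out the entries: each entry of $\widetilde F$ is of the form $x_{ij}\widetilde E_{kl}$ or $x_{ij}\widetilde G_{kl}$. Using $x_{11}=r(\mu^2,\mu\e)$, $x_{12},x_{22}=r(\mu^3,\mu\e)$, $x_{21}=r(\e,\mu^2)$ and the bounds $\widetilde E_{11},\widetilde G_{11}=r(\mu\e^2,\mu^3\e,\mu^5)$, $\widetilde E_{12},\widetilde G_{12}=r(\mu^2\e^2,\mu^3\e,\mu^5)$, etc., each product gains at least a factor $\mu^2$ times something in $(\e^3,\mu\e^2,\mu^3\e,\mu^5)$ (for instance $x_{21}\widetilde E_{11}=r(\mu\e^3,\mu^3\e^2,\dots)$, and one checks that the worst terms still carry the factor $\mu^2$ in the relevant slots, possibly after using the finer structure of $x_{21}$ coming from $F^{(1)}_{11}=0$). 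This yields the expansion \eqref{F2} for the leading contribution to $F^{(2)}$.

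Third, I will expand $\exp(\tau S)\,\mathrm{ad}_S^2(R^{(1)})\exp(-\tau S)=\sum_{n\ge0}\frac{\tau^n}{n!}\mathrm{ad}_S^{n+2}(R^{(1)})$. Since $S=\cO(\e,\mu^2)$ is small, the terms with $n\ge1$ are smaller by factors of $\|S\|$. The odd terms $\mathrm{ad}_S^{3+2k}(R^{(1)})$ are block diagonal, and they produce $\widehat E,\widehat G$: $\mathrm{ad}_S^3(R^{(1)})=[S,\mathrm{ad}_S^2 R^{(1)}]$ has entries given by products of $\tilde X$ and $\widetilde F$. These are $\mu^2 r(\cdots)$ by the previous step, and the extra factor $\tilde X=\cO(\e,\mu^2)$ gives \eqref{Eij Gij}. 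The even terms $\mathrm{ad}_S^{4+2k}$ are off-diagonal and are absorbed into $F^{(2)}$ with the same bounds. Integrating in $\tau$ against $\frac12(1-\tau^2)$ only changes constants, and the analyticity in $(\mu,\e)$ is inherited from the convergent series.

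The main obstacle is the bookkeeping in the second step: verifying that every product entry is genuinely $\mu^2$ times the stated remainders, rather than, say, $\mu\e^3$. This especially concerns the terms involving $x_{21}=r(\e,\mu^2)$, which is not small in $\mu$. There I will exploit that $x_{21}$ multiplies only entries of $\widetilde E,\widetilde G$ which already carry $\mu^2$ (e.g.\ $\widetilde E_{12},\widetilde E_{22}$). If a bound fails, I will go back to the explicit formula $\vec x=\mathsf{A}^{-1}\vec f$ with $F^{(1)}_{11}=0$ to sharpen $x_{21}$ to $r(\mu\e,\mu^2)$ before multiplying.
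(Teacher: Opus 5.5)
Your proposal is correct and follows the paper's proof. You write $\mathrm{ad}_S^2(R^{(1)})$ as a block-off-diagonal matrix whose block $\widetilde F=2(M\mathsf{J}_2\widetilde G-\widetilde E\mathsf{J}_2M)$ is bilinear in the entries of $\tilde X$ and of $\widetilde E,\widetilde G$, bound it via \eqref{XX} and Lemma~\ref{EG e11 tilde lemma}, and treat conjugation by $\exp(\tau S)$ as higher order (your even/odd Lie-series split just makes explicit that $\widehat E,\widehat G$ carry one extra factor of $S$). The bookkeeping closes exactly by the observation in your last paragraph: $x_{21}$ multiplies only $\widetilde E_{12},\widetilde E_{22},\widetilde G_{12},\widetilde G_{22}$, so your sample product $x_{21}\widetilde E_{11}$, which lacks the factor $\mu^2$, never occurs; your fallback of sharpening $x_{21}$ is unavailable anyway, since $F^{(1)}_{12}=\sqrt{\ck}\,\mu\e+\dots$ forces $x_{21}=-\sqrt{\ck}\,\e+r(\e^2,\mu\e,\mu^2)$.
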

\begin{proof}
  Each $\exp(\tau S) \operatorname{ad}_S^2 (R^{(1)}) \exp(-\tau S)$ is Hamiltonian and reversibility-preserving, and formula \eqref{high order terms EG hat} holds. In order to estimate its entries, we first compute $\operatorname{ad}_S^2 (R^{(1)})$. Using the form of $S$ in \eqref{S1} and $[S, R^{(1)}]$ in \eqref{0.5 S R} one gets
$
\operatorname{ad}_S^2 (R^{(1)}) = 
\begin{pmatrix}
0 & J_2 \widetilde{F} \\
J_2 \widetilde{F}^* & 0
\end{pmatrix}$ where  
$\widetilde{F} := 2 \left( M J_2 \widetilde{G} - \widetilde{E} J_2 M \right)$ 
and $\widetilde{E}$, $\widetilde{G}$ are defined in \eqref{E G tilde}. 
\begin{equation*} 
\begin{aligned}
M J_2 \widetilde{G} 
&= \begin{pmatrix}
x_{21} \widetilde{G}_{12} - x_{22} \widetilde{G}_{11} & \im ( x_{21} \widetilde{G}_{22} - x_{22} \widetilde{G}_{12} ) \\
\im ( x_{11} \widetilde{G}_{12} + x_{12} \widetilde{G}_{11} ) & - x_{11} \widetilde{G}_{22} - x_{12}\widetilde{G}_{12}
\end{pmatrix} 
= \mu^2\begin{pmatrix}
{r} (\e^3,\mu \epsilon^2, \mu^3 \epsilon,\mu^5) & \im\, {r} (\e^3,\mu ^2\epsilon^2, \mu^4 \epsilon,\mu^6) \\
 \im \,{r} (\e^3,\mu^2 \epsilon^2, \mu^3 \epsilon,\mu^5) & {r}(\mu\e^3,\mu^2 \epsilon^2, \mu^4 \epsilon,\mu^6)
\end{pmatrix},
\end{aligned}
\end{equation*}
\begin{equation*} 
\begin{aligned}
 \widetilde{E} J_2 M  
&= \begin{pmatrix}
x_{21} \widetilde{E}_{12} - x_{11} \widetilde{E}_{11} & -\im ( x_{12} \widetilde{E}_{11} + x_{22} \widetilde{E}_{12} ) \\
\im ( x_{11} \widetilde{E}_{12} - x_{21} \widetilde{E}_{22} ) & - x_{12} \widetilde{E}_{12} - x_{22}\widetilde{E}_{22}
\end{pmatrix} 
= \mu^2\begin{pmatrix}
{r}(\e^3,\mu \epsilon^2, \mu^3 \epsilon,\mu^5) & \im\, {r} (\e^3,\mu ^2\epsilon^2, \mu^4 \epsilon,\mu^6) \\
 \im \,{r} (\e^3,\mu^2 \epsilon^2, \mu^3 \epsilon,\mu^5) &{r}(\mu\e^3,\mu^2 \epsilon^2, \mu^4 \epsilon,\mu^6)
\end{pmatrix}.
\end{aligned}
\end{equation*}
Thus the matrix $\widetilde{F}$ has an expansion as in \eqref{F2}. Then, for any $\tau \in [0,1]$, the matrix $\exp(\tau S) \operatorname{ad}_S^2 (R^{(1)}) \exp(-\tau S) = \operatorname{ad}_S^2 (R^{(1)})(1 + \mathcal{O}(\mu, \epsilon))$. In particular the matrix $F^{(2)}$ in \eqref{high order terms EG hat} has the same expansion of $\widetilde{F}$, whereas the matrices $\widehat{E}$, $\widehat{G}$ have entries at least as in \eqref{Eij Gij}.
\end{proof}

\begin{proof} [Proof of Lemma \ref{Step of block-decoupling}]
    It follows by \eqref{L2 mu epsilon}-\eqref{0.5 S R}, \eqref{L=D+R} and Lemmata \ref{EG e11 tilde lemma} and \ref{high order terms lemma}.  
The matrix $E^{(2)} := E^{(1)} + \widetilde{E} + \widehat{E}$ has the expansion in \eqref{E}.  
Similarly, $G^{(2)} := G^{(1)} + \widetilde{G} + \widehat{G}$ has the expansion in \eqref{G 2nd}. 
\end{proof}

\subsection{Complete Block-Decoupling and Proof of the Main Result}

We now block-diagonalize the $4 \times 4$ Hamiltonian and reversible matrix $\mathsf{L}^{(2)}_{\mu,\epsilon}$ in \eqref{L2mueps}. First, we split it into its $2 \times 2$-diagonal and off-diagonal Hamiltonian and reversible matrices
\begin{equation} \label{L2mueps2}
\mathsf{L}^{(2)}_{\mu,\epsilon} = D^{(2)} + R^{(2)}, \qquad 
D^{(2)} := 
\begin{pmatrix}
\mathsf{J}_2 E^{(2)} & 0 \\
0 & \mathsf{J}_2 G^{(2)}
\end{pmatrix}, \qquad
R^{(2)} := 
\begin{pmatrix}
0 & \mathsf{J}_2 F^{(2)} \\
\mathsf{J}_2 [F^{(2)}]^* & 0
\end{pmatrix}.
\end{equation}

\begin{lemma} \label{lemma S2}
There exist a $4 \times 4$ reversibility-preserving Hamiltonian matrix $S^{(2)} := S^{(2)}(\mu,\epsilon)$ of the form \eqref{S1}, analytic in $(\mu, \epsilon)$, of size $\mathcal{O}(\mu\epsilon^3,\mu^2\e^2,\mu^4\e,\mu^6)$, and a $4 \times 4$ block-diagonal reversible Hamiltonian matrix $P := P(\mu, \epsilon)$, analytic in $(\mu, \epsilon)$, of size $\mu^2\mathcal{O}(\e^6,\mu\e^5,\mu^2\epsilon^4,\mu^4\e^3,\mu^6\e^2,\mu^8\e,\mu^{10})$ such that
\begin{equation} \label{D2+P}
\exp(S^{(2)})(D^{(2)} + R^{(2)}) \exp(-S^{(2)}) = D^{(2)} + P.
\end{equation}
\end{lemma}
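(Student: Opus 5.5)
Our plan is to follow the iterative scheme of \cite[Lemma 5.8]{BMV3} (see also \cite{BMV1}): solve a homological equation for a new generator, in the same way as in the second step, but now with a forcing $R^{(2)}$ that is already very small. We look for $S^{(2)}=\mathsf{J}_4\begin{pmatrix}0&M_2\\ M_2^*&0\end{pmatrix}$ with $M_2=\mathsf{J}_2\tilde X_2$, where $\tilde X_2$ is reversibility preserving of the form \eqref{XX}. As before, the equation $R^{(2)}+[S^{(2)},D^{(2)}]=0$ reduces to the Sylvester equation
\[
\mathsf{J}_2E^{(2)}\tilde X_2-\tilde X_2\,\mathsf{J}_2G^{(2)}=-\mathsf{J}_2F^{(2)} .
\]
Since $E^{(2)},G^{(2)}$ have the same expansions \eqref{E}, \eqref{G 2nd} as $E^{(1)},G^{(1)}$, the coefficients $a,b,c,d,e$ of the associated $4\times4$ real system $\mathsf{A}\vec x=\vec f$ obey \eqref{a b c d e}. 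Hence \eqref{det A} and the expansion \eqref{mathsf A -1} of $\mathsf{A}^{-1}$ still hold.

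The first key step is to estimate $\vec x=\mathsf{A}^{-1}\vec f$ with $\vec f$ built from $F^{(2)}$ in \eqref{F2}. Every entry of $F^{(2)}$ carries a factor $\mu^2$, and $\mathsf{A}^{-1}$ has a singular factor $1/\mu$ times entries bounded by $O(1)$. So $\tilde X_2=\mu\,O(\e^3,\mu\e^2,\mu^3\e,\mu^5)=O(\mu\e^3,\mu^2\e^2,\mu^4\e,\mu^6)$, which is the claimed size. These entries are analytic in $(\mu,\e)$ and extend analytically at $\mu=0$, since the $1/\mu$ is absorbed by the $\mu^2$ prefactor. The solution at $\mu=0$ still solves the equation by continuity. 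Reversibility of $\tilde X_2$ follows from the real/imaginary pattern of $F^{(2)}$ and $\mathsf{A}$ being real. Then $S^{(2)}$ is Hamiltonian and reversibility preserving, so $\exp(S^{(2)})$ is symplectic and reversibility preserving.

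Next we Lie-expand as in \eqref{L2 mu e}, \eqref{L2 mu epsilon}, which gives
\[
\exp(S^{(2)})(D^{(2)}+R^{(2)})\exp(-S^{(2)})=D^{(2)}+\tfrac12[S^{(2)},R^{(2)}]+\tfrac12\int_0^1(1-\tau^2)e^{\tau S^{(2)}}\mathrm{ad}^2_{S^{(2)}}(R^{(2)})e^{-\tau S^{(2)}}d\tau .
\]
The bracket $\tfrac12[S^{(2)},R^{(2)}]$ is block diagonal, as in \eqref{0.5 S R}, and has size $|S^{(2)}||F^{(2)}|=\mu^3O(\e^6,\ldots)$, within the stated bound for $P$.

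The main obstacle is that the integral remainder has a nonzero off-diagonal part, of size $|S^{(2)}|^2|F^{(2)}|$. So a single step does not give exactly \eqref{D2+P}. Two remedies are available. The first is to iterate: the new off-diagonal term is smaller by the factor $|S^{(2)}|/\mu=O(\e^3,\mu\e^2,\ldots)$, so a convergent KAM-type scheme composes the generators, and their sum still has size $O(\mu\e^3,\mu^2\e^2,\mu^4\e,\mu^6)$. The second, which we would try first as in \cite{BMV3}, is to solve directly the nonlinear equation for the off-diagonal part vanishing. Write it as
\[
\Phi(\tilde X_2,\mu,\e):=\Pi_{\rm off}\bigl(e^{S}(D^{(2)}+R^{(2)})e^{-S}\bigr)=0 .
\]
Dividing by $\mu$ (using $\mathsf{A}^{-1}=\mu^{-1}O(1)$ and $F^{(2)}=\mu^2O(\cdot)$), we apply the implicit function theorem in the rescaled unknown $\tilde X_2/\mu$ near $0$. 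The linearization is the invertible operator $\mu^{-1}\mathsf{A}$ up to small corrections, since the quadratic terms involve $S$ times $R^{(2)}$ and $S^2D^{(2)}$ with $D^{(2)}=O(\mu)$ after rescaling.

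Either route yields analyticity and the stated sizes. The block-diagonal remainder $P$ then collects the bracket and integral terms, each quadratic in the small quantities and carrying $\mu^2$. Together with Hamiltonianity and reversibility of conjugated matrices, this gives \eqref{D2+P}.
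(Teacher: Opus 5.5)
Your primary route is exactly the paper's proof: rewrite the exact off-diagonal condition $[S,D^{(2)}]=-R^{(2)}-\mathcal{R}(S)$ as a $4\times4$ fixed-point problem using $\mathsf{A}^{-1}=\mu^{-1}\mathcal{B}$, apply the implicit function theorem, and read off $P$ from the block-diagonal part; your sizes for $S^{(2)}$ and $P$ agree with the lemma. Your rescaling $\vec x=\mu\vec y$ is in fact a worthwhile refinement, because $\mathcal{R}$ vanishes to second order in $S$, so $\mu^{-2}\mathcal{R}(\mu Y)$ is analytic without the claim that $D^{(2)}$ carries a factor $\mu$ — a claim the paper makes and your last paragraph echoes, which does not hold literally since $E^{(2)}_{11}\sim\mathsf{e}_{11}\e^2$ and $G^{(2)}_{11}\sim 1$, so you can simply drop it.
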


\begin{proof}
For notational simplicity, we denote $S = S^{(2)}$. The equation \eqref{D2+P} is equivalent to the system
\begin{equation} \label{Pi_D Pi_varnothing}
\begin{aligned}
& \Pi_D \left( \exp(S)(D^{(2)} + R^{(2)}) \exp(-S) \right) - D^{(2)} = P  \ , \qquad 
\Pi_\varnothing \left( \exp(S)(D^{(2)} + R^{(2)}) \exp(-S) \right) = 0 ,
\end{aligned}
\end{equation}
where $\Pi_D$ is the projector onto the block-diagonal matrices and $\Pi_\varnothing$ onto the block-off-diagonal ones. The second equation in \eqref{Pi_D Pi_varnothing} is equivalent, by a Lie expansion, and since $[S, R^{(2)}]$ is block-diagonal, to
\begin{equation} \label{R2+SD2+R}
R^{(2)} + [S, D^{(2)}] + \underbrace{\Pi_\varnothing \int_0^1 (1 - \tau) \exp(\tau S) \mathrm{ad}_S^2 (D^{(2)} + R^{(2)}) \exp(-\tau S) d\tau}_{=:\mathcal{R}(S)} = 0.
\end{equation}
The ``nonlinear homological equation'' \eqref{R2+SD2+R},
\begin{equation} \label{SD=-R-R}
[S, D^{(2)}] = -R^{(2)}- \mathcal{R}(S),
\end{equation}
is equivalent to solve the $4 \times 4$ real linear system
\begin{equation} \label{Ax=f f=munu+mug}
\mathsf{A} \vec{x} = \vec{f}(\mu, \epsilon, \vec{x}), \qquad \vec{f}(\mu, \epsilon, \vec{x}) = \mu \vec{\nu}(\mu, \epsilon) + \mu \vec{g}(\mu, \epsilon, \vec{x})
\end{equation}
associated, as in \eqref{Ax=f}, to \eqref{SD=-R-R}. The vector $\mu \vec{\nu}(\mu, \epsilon)$ is associated with $-R^{(2)}$ where $R^{(2)}$ is in \eqref{L2mueps2}. The vector $\mu \vec{g}(\mu, \epsilon, \vec{x})$ is associated with the matrix $-\mathcal{R}(S)$, which is a Hamiltonian and reversible block-off-diagonal matrix (i.e. of the form \eqref{L=D+R}).
The factor $\mu$ is present in $D^{(2)}$ and $R^{(2)}$, see \eqref{E}, \eqref{G 2nd}, \eqref{F2} and the analytic function $\vec{g}(\mu, \epsilon, \vec{x})$ is quadratic in $\vec{x}$ (for the presence of $\mathrm{ad}_S^2$ in $\mathcal{R}(S)$). In view of \eqref{F2} one has
\begin{equation} \label{munu}
\mu \vec{\nu}(\mu, \epsilon) := (-F^{(2)}_{21}, F^{(2)}_{22}, -F^{(2)}_{11}, F^{(2)}_{12})^\top, \qquad F^{(2)}_{ij} = \mu r(\mu\epsilon^3,\mu^2\e^2,\mu^4\e,\mu^6).
\end{equation}
System \eqref{Ax=f f=munu+mug} is equivalent to $\vec{x} = \mathsf{A}^{-1} \vec{f}(\mu, \epsilon, \vec{x})$ and, writing $\mathsf{A}^{-1} = \frac{1}{\mu} \mathcal{B}(\mu, \epsilon)$ (cf. \eqref{mathsf A -1}), to
\[
\vec{x} = \mathcal{B}(\mu, \epsilon) \vec{\nu}(\mu, \epsilon) + \mathcal{B}(\mu, \epsilon) \vec{g}(\mu, \epsilon, \vec{x}).
\]
By the implicit function theorem this equation admits a unique small solution $\vec{x} = \vec{x}(\mu, \epsilon)$, analytic in $(\mu, \epsilon)$, with size $\mathcal{O}(\mu\epsilon^3,\mu^2\e^2,\mu^4\e,\mu^6)$ as $\vec{\nu}$ in \eqref{munu}. Then the first equation of \eqref{Pi_D Pi_varnothing} gives $P = [S, R^{(2)}] + \Pi_D \int_0^1 (1-\tau) \exp(\tau S) \text{ad}_S^2 (D^{(2)} + R^{(2)}) \exp(-\tau S) d\tau$, and its estimate follows from those of $S$ and $R^{(2)}$ (see \eqref{F2}). 
\end{proof}

\begin{proof} [{Proof of Theorems \ref{Complete BF thm} and \ref{thm:main}.}]
By Lemma \ref{lemma S2} and recalling \eqref{mathcal L= ichmu+mathscr L} the operator $\mathcal{L}_{\mu, \epsilon} : \mathcal{V}_{\mu, \epsilon} \to \mathcal{V}_{\mu, \epsilon}$ is represented by the $4 \times 4$ Hamiltonian and reversible matrix
\[
\im \ck \mu + \exp(S^{(2)}) \mathsf{L}^{(2)}_{\mu, \epsilon} \exp(- S^{(2)}) 
= \im \ck \mu + 
\begin{pmatrix}
\mathsf{J}_2 E^{(3)} & 0 \\
0 & \mathsf{J}_2 G^{(3)}
\end{pmatrix}
=: 
\begin{pmatrix}
\mathsf{U} & 0 \\
0 & \mathsf{S}
\end{pmatrix},
\]
where the matrices $E^{(3)}$ and $G^{(3)}$ expand as in \eqref{E}, \eqref{G 2nd}. Consequently, the matrices $\mathsf{U}$ and $\mathsf{S}$ expand as in \eqref{U S}. Theorem \ref{Complete BF thm} is proved. Theorem \ref{thm:main} is a straightforward corollary.
\end{proof}

\appendix
\section{Expansion of the Stokes waves in deep water}\label{sec:App2}

In this Appendix we provide the expansions  \eqref{exp:Sto}-\eqref{expcoef}, \eqref{expfe}, 
\eqref{pino1fd}-\eqref{aino2fd}.
\\[1mm]
\noindent
{\bf Proof  of \eqref{exp:Sto}-\eqref{expcoef}.}
Writing
 \begin{equation}\label{etapsic}
\begin{aligned}
 & \eta_\e(x) = \e \eta_1(x) + \e^2 \eta_2(x) + \cO(\e^3) \, , \\
 &  \psi_\e(x) = \e \psi_1(x) + \e^2 \psi_2(x) + \cO(\e^3) \, ,  
  \end{aligned}
\qquad \quad c_\e = \ck + \e c_1 + \e^2 c_2+ \cO(\e^3) \, ,  
\end{equation}
where  $\eta_i$ is $\mathit{even}(x)$ and $\psi_i$ is $\mathit{odd}(x)$ for $i=1,2$, 
we solve order by order in $ \e $ the equations \eqref{travelingWWstokes},  
that we rewrite  as
\begin{equation}
\label{Sts}
\begin{cases}
-c \, \psi_x +  \eta   + \dfrac{\psi_x^2}{2} - 
\dfrac{\eta_x^2}{2(1+\eta_x^2)} ( c  -  \psi_x )^2-\kappa\left(\dfrac{\eta_x}{(1+\eta_x^2)^{\frac{1}{2}}}\right)_x  = 0 \\
c \, \eta_x+G(\eta)\psi = 0 	\, ,
\end{cases}
\end{equation}
having substituted $G(\eta)\psi $ with $-c \, \eta_x $  in the first equation.
 We expand the Dirichlet-Neumann operator 
$ G(\eta)=  G_0+ G_1(\eta) + G_2(\eta) + \cO(\eta^3)  $
where, according to \cite[formula (2.14)]{CS}, 
\begin{equation} \label{expDiriNeu}
\begin{aligned}
 G_0 & := |D|  \,, \\
 G_1(\eta) & := D \eta D -  G_0\eta G_0 = D \eta D -  |D|\eta  |D|, \\
 G_2(\eta) & := -\frac12  \Big( G_0 
 {\eta}^2 |D|^2 +|D|^2{\eta}^2 G_0 - 2G_0\eta G_0\eta G_0\Big) = -\frac12  \Big(|D| 
 {\eta}^2 |D|^2 +|D|^2{\eta}^2 |D| - 2|D|\eta |D|\eta |D|\Big)\, .
\end{aligned}
\end{equation}
{\bf First order in $ \e $.}  Substituting the expansions in \eqref{etapsic} into \eqref{Sts}, we get the linear system 
\begin{equation}\label{cB0}
 \left\{\begin{matrix} - \ck (\psi_1)_x + \eta_1 -\kappa (\eta_1)_{xx}= 0 \\
 \ck (\eta_1)_x + G_0\psi_1 =0 \, ,  \end{matrix}\right.
 \quad  \text{i.e.} \, \vet{\eta_1}{\psi_1} \in \text{Ker }\cB_0  \text{ with } \cB_0 := \begin{bmatrix} 1-\kappa\pa^2_{x} & -\ck\pa_x \\ \ck\pa_x & G_0  \end{bmatrix},  
\end{equation}
where $\eta_1$ is $\mathit{even}(x)$ and $\psi_1$ is $\mathit{odd}(x)$.
\begin{lemma}\label{lem:B0}
If $\kappa \in \R_+ \setminus \fR$,  the kernel of 
 the linear operator $\cB_0$ in \eqref{cB0} is one dimensional and given by
\begin{equation}\label{chk}
\text{Ker }\cB_0= \text{span}\,\Big\{\vet{\cos(x)}{\ck\sin(x)} \Big\}.
\end{equation}
\end{lemma}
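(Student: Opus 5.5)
The operator $\cB_0$ in \eqref{cB0} is a Fourier multiplier, so I would compute its kernel mode by mode. Write $\eta_1=\sum_{k} \hat\eta_k e^{\im kx}$ and $\psi_1=\sum_k \hat\psi_k e^{\im kx}$. Then $\cB_0 \vet{\eta_1}{\psi_1}=0$ splits into independent $2\times 2$ systems, one for each $k\in\Z$:
\begin{equation*}
\begin{pmatrix} 1+\kappa k^2 & -\im \ck k \\ \im \ck k & |k| \end{pmatrix}\vet{\hat\eta_k}{\hat\psi_k}=0 .
\end{equation*}
A nonzero solution at mode $k$ exists if and only if the determinant vanishes:
\begin{equation*}
(1+\kappa k^2)|k| - \ck^2 k^2 = 0 .
\end{equation*}
For $k=0$ the matrix is $\mathrm{diag}(1,0)$. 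This would allow $\psi$ constant, but a constant is not odd, so the parity requirement on $\psi_1$ rules it out.

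For $k\neq 0$, set $n=|k|\ge 1$. Dividing by $n$, the condition becomes $1+\kappa n^2 = (1+\kappa)n$, which factors as $(n-1)(\kappa n -1)=0$. So the solutions are $n=1$, and $n=1/\kappa$ if this is an integer $\geq 2$. The hypothesis $\kappa\notin\fR$ excludes exactly the second possibility, so only $n=1$ survives. This is the injectivity of \eqref{speedn}.

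It remains to describe the kernel on the modes $k=\pm1$. The matrix there has rank one, and its null vector is $(\hat\eta_k,\hat\psi_k)$ with $\hat\psi_{\pm1}=\mp\im\ck\,\hat\eta_{\pm1}$. On real functions this gives a two-dimensional space, spanned by $\vet{\cos x}{\ck\sin x}$ and $\vet{-\sin x}{\ck\cos x}$. Imposing that $\eta_1$ be even and $\psi_1$ odd leaves only the first vector, which proves \eqref{chk}.

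One point needs care. If the kernel is meant without the parity restriction, it also contains $\vet{0}{1}$ from $k=0$ and the odd–even partner at $k=\pm1$. The lemma as used in Appendix~\ref{sec:App2} concerns the even–odd subspace, so I would state that restriction explicitly. Apart from this, there is no real obstacle: the only substantive step is the factorization $(n-1)(\kappa n-1)$, which makes the role of $\fR$ transparent.
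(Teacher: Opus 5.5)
Your proof is correct and follows the paper's route. Both arguments reduce $\cB_0$ mode by mode to a $2\times 2$ matrix with determinant $(1+\kappa k^2)k-\ck^2k^2$, whose vanishing forces $k=1$ unless $\kappa\in\fR$; your factorization $(n-1)(\kappa n-1)=0$ is exactly what lies behind the paper's ``it implies $\kappa\in\fR_k$''. Both also discard $k=0$ because $\psi_1$ is odd. The only difference is cosmetic: the paper works directly in the even--odd basis $\{(\cos kx,0),(0,\sin kx)\}$, while you use complex exponentials and impose parity at the end. Your version usefully makes explicit that the kernel is one-dimensional only on the even--odd subspace, since the full kernel also contains $(0,1)$ and $(-\sin x,\ck\cos x)$.
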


\begin{proof} The action of $\cB_0$ on each subspace spanned by $\footnotesize{\,\Big\{\vet{\cos(kx)}{0}, \vet{0}{\sin(kx)}\Big\}} $, $k\in \N$, is represented by the $2\times 2$ matrix $\footnotesize{ \begin{bmatrix} 1+\kappa k^2 & -\ck k \\ -\ck k & k  \end{bmatrix}}$. Its determinant is given by
 $$ (1+\kappa k^2)k - \ck^2 k^2\stackrel{\eqref{exp:Sto}}{=} \left(\frac{(1+\kappa k^2)}{k} - (1+\kappa)\right) k^2$$
 and, provided $\kappa \in \R_+ \setminus \fR$, vanishes if and only if $k=1$.
Indeed, for $k$ large enough, the determinant goes asymptotically to $+\infty$ so it is uniformly bounded away from zero,  whereas if for some $k \in \N$ it vanishes, it implies that $\kappa \in \fR_k \subset \fR$, absurd.
  For $k=1$ we obtain the kernel of $\cB_0$ given in \eqref{chk}. For $k=0$ it has no kernel since $\psi_1(x)$ is odd.
\end{proof}
We set
$
\eta_1(x) := \cos(x)$, $\psi_1(x) := \ck \sin(x) 
$
in agreement with 
\eqref{exp:Sto}. 
\\[1mm]
{\bf Second order in $ \e $.} 
By \eqref{Sts}, and since 
$ \ck^2 (\eta_1)_x^2 = (G_0\psi_1)^2  $, we get  the linear system
\begin{equation}\label{syslin2}
 \cB_0 \vet{\eta_2}{\psi_2} = \vet{c_1(\psi_1)_x-\frac12 (\psi_1)_x^2 + \frac12 (G_0\psi_1)^2 }{-c_1(\eta_1)_x - G_1(\eta_1)\psi_1} \, , 
\end{equation}
where  $\cB_0$ is the self-adjoint operator in \eqref{cB0}. 
System \eqref{syslin2} admits a solution if and only if its right-hand term is orthogonal to the Kernel  of $\cB_0$ in \eqref{chk}, namely
\begin{equation}\label{orth1}
 \Big(\vet{c_1(\psi_1)_x-\frac12 (\psi_1)_x^2 + \frac12 (G_0\psi_1)^2 }{-c_1(\eta_1)_x - G_1(\eta_1)\psi_1}\;,\;\vet{\cos(x)}{\ck\sin(x)}\Big)=0 \, . 
\end{equation}
In view of the first order expansion \eqref{exp:Sto} and \eqref{expDiriNeu},
  it results $
 [G_0\psi_1](x)=  \ck\sin(x)$, $\big[G_1(\eta_1)\psi_1\big](x) 
 =0$
so that \eqref{orth1} implies
 $c_1=0$, in agrement with \eqref{exp:Sto}. Equation
 \eqref{syslin2} reduces to 
\begin{align}\label{sisto2}
\begin{bmatrix} 1-\kappa\pa_{xx} & -\ck\pa_x \\ \ck\pa_x & G_0  \end{bmatrix}
\vet{\eta_2}{\psi_2} 
  = \vet{-\frac{1+\kappa}{2}\cos(2x) }{ 0}.
\end{align}
Setting $ \eta_2 = \eta_2^{[2]} \cos(2x) $ and  $ \psi_2 =
 \psi_2^{[2]}  \sin (2x) $, system 
\eqref{sisto2} amounts to 
\begin{align*}
\left\{ \begin{matrix} \big( (1+4\kappa)\eta_2^{[2]} -2\ck  \psi_2^{[2]} \big) \cos(2x)  =  -\frac{1+\kappa}{2}\cos(2x)   \\ (-2\ck \eta_2^{[2]}   + 2  \psi_2^{[2]})\sin(2x)  =   0 , \end{matrix}\right.
\end{align*}
which leads to the expansions of $ \eta_2^{[2]} $, $ \psi_2^{[2]} $ 
given in
\eqref{expcoef}.

\noindent
{\bf Third order in $ \e $.} 
It remains to determine 
$ c_2 $ in \eqref{expcoef}.
 We get the linear system 
\begin{equation}\label{syslin3}
 \cB_0 \vet{\eta_3}{\psi_3} = \vet{c_2(\psi_1)_x 
 - (\psi_1)_x (\psi_2)_x - (\eta_1)_x^2 (\psi_1)_x \ck + 
 (\eta_1)_x (\eta_2)_x \ck^2-\frac{3}{2}\kappa (\eta_1)^2_x (\eta_1)_{xx}
 }{-c_2(\eta_1)_x - G_1(\eta_1)\psi_2- G_1(\eta_2)\psi_1 - G_2(\eta_1)\psi_1} \, . 
\end{equation}
System \eqref{syslin3} has 
a solution if and only if the right hand side is orthogonal to the Kernel of
$ \cB_0 $ given in \eqref{chk}. This condition determines uniquely $ c_2 $.
Denoting  $\Pi_1$ the $L^2$-orthogonal projector on span$\, \{\cos(x),\sin(x)\} $, it results 
\begin{align*} 
& c_2 (\psi_1)_x = c_2\ck\cos(x)\, , \quad c_2 (\eta_1)_x = -c_2 \sin(x) \, , \quad \Pi_1[ (\psi_1)_x (\psi_2)_x] = 
\psi_2^{[2]}  \ck \cos(x)\, ,\\ 
& \Pi_1 [\ck (\eta_1)_x^2 (\psi_1)_x ] = \tfrac14 \ck^2\cos(x)\, , \quad \Pi_1[\ck^2 (\eta_1)_x (\eta_2)_x] = \eta_2^{[2]}\ck^2 \cos(x) \, , \quad \Pi_1[\frac{3}{2}\kappa(\eta_1)^2_x(\eta_1)_{xx}] = -\tfrac38\kappa \cos(x) \, ,
\end{align*}
and, in view of \eqref{expDiriNeu}, and \eqref{exp:Sto}, \eqref{expcoef}, 
\begin{align*}
 \Pi_1[ G_1(\eta_1)\psi_2]  = 0 \, ,  \quad 
  \Pi_1[G_2(\eta_1)\psi_1] 
 =   \frac{1}{4}\ck\sin(x) \, , \quad \Pi_1[G_1(\eta_2)\psi_1] =  
  \eta_2^{[2]} \ck\sin(x) \, . 
\end{align*}
Imposing the orthogonality condition gives $c_2$ as in   \eqref{expcoef}.

\noindent{\bf Proof of \eqref{expfe}.} 
We expand  the function $\mathfrak{p}(x)  = \e\mathfrak{p}_1(x) + \e^2 \mathfrak{p}_2(x) + \cO(\e^3)  $ defined by the fixed point equation \eqref{def:ttf}.  
Then 
$ \mathfrak{p}(x)
 = \mathfrak H \big[\e\eta_1 +\e^2\big(\eta_2 + 
 (\eta_1)_x \mathfrak{p}_1 \big)+\cO(\e^3)\big] $, 
and, using that $\mathfrak H \cos(kx) = \sin(kx)$, for any  $ k \in \N$, we obtain 
\begin{align} \label{pfra1}
 \mathfrak{p}_1(x) 
  = \mathfrak H\cos(x)  = \sin(x) \, , \quad
 \mathfrak{p}_2(x) = \mathfrak H(\eta_2+(\eta_1)_x
 \mathfrak{p}_1  ) 
 = \frac{2-\kappa }{2(1-2\kappa)}\sin(2x) \, .
\end{align}
The expansion \eqref{expfe} is proved.
\\[1mm]
{\bf Proof of Lemma \ref{lem:pa.exp}.} 
In view of \eqref{exp:Sto}-\eqref{expcoef}, the expansions of the functions $B$, $V$  in \eqref{espV} and \eqref{espB} are
\begin{align}
 B= : \e B_1(x) + \e^2 B_2(x) + \cO(\e^3) = \e \ck\sin(x) + \e^2 \frac{4\kappa +1}{2(1-2\kappa)}\ck\sin(2x) + \cO(\e^3) \label{espB1}  
  \end{align}
  and
  \begin{align}
 V= : \e V_1(x) + \e^2 V_2(x)  + \cO(\e^3) = \e \ck\cos(x) + \e^2 \Big[\frac{1}{2}\ck  + \frac{4\kappa +1}{2(1-2\kappa)}\ck\cos(2x)\Big]+\cO(\e^3)
  \, .\label{espV1} 
\end{align}
In  view of  \eqref{def:pa}, denoting derivatives w.r.t $x$ with  a prime and suppressing dependence on $x$ when trivial, we have
\begin{align}
\ck+p_\e(x)  &= \left(\ck+\e^2 c_2 - V(x) - V'(x)\mathfrak{p}(x)+\cO(\e^3)\right) \left(1-\mathfrak{p}'(x)+(\mathfrak{p}'(x))^2+\cO(\e^3)\right)\notag \\
&= \ck + \e \underbrace{(- V_1 -\ck \mathfrak{p}_1')}_{=: p_1}+\e^2 \underbrace{\big( c_2 + V_1\mathfrak{p}_1' - V_2 - V_1'\mathfrak{p}_1 -\ck \mathfrak{p}_2' +\ck(\mathfrak{p}_1')^2 \big)}_{=:p_2} + \, \cO(\e^3) \, .\label{pino12imp}
\end{align} 
Similarly by \eqref{def:pa}
\begin{align}
 1+a_\e(&x) : = \frac{1}{1+\mathfrak{p}_x(x)} - (\ck+p_\e(x))B_x(x+\mathfrak{p}(x)) \notag \\
   = &  1+\e\underbrace{\big(-\mathfrak{p}_1'- \ck B_1'\big)}_{=:a_1} +\e^2\underbrace{\big((\mathfrak{p}_1')^2-\mathfrak{p}_2'- \ck B_2'-\ck B_1''\mathfrak{p}_1(x)+ B_1' V_1 + \ck  B_1'\mathfrak{p}_1' \big)}_{=:a_2}+\cO(\e^3)\, . \label{aino12imp}
 \end{align}
By \eqref{espV1}, \eqref{pfra1}, \eqref{exp:Sto},  \eqref{espB1} we 
deduce that the functions $p_1 $, $p_2 $, $a_1 $, $a_2 $ in \eqref{pino12imp} and \eqref{aino12imp} have an expansion as in \eqref{pino1fd}-\eqref{aino2fd}.

In view of \eqref{exp:Sto}-\eqref{expcoef}, the expansion of the function $l(x)$ in \eqref{esp l} is 
\begin{align}
 l(x)&=: 1 + \e^2 l_2(x)  + \cO(\e^3)= 1+ \e^2  \Big[-\frac{3}{4}+\frac{3}{4}\cos(2x)\Big]+\cO(\e^3)
 \label{espl1} \, . 
\end{align}
In  view of  \eqref{def:Sigma g}, denoting derivatives w.r.t $x$ with  a prime and suppressing dependence on $x$ when trivial, we have
\begin{align*}
g_\e(x)  = \left(1-\e\mathfrak{p}_1'+\e^2((\mathfrak{p}_1')^2-\mathfrak{p}_2')+\cO(\e^3)\right) \left(1+\e^2 l_2+\cO(\e^3)\right)= 1 + \e \underbrace{(-\mathfrak{p}_1')}_{=: g_1}+\e^2 \underbrace{\left((\mathfrak{p}_1')^2-\mathfrak{p}_2'+l_2\right)}_{=:g_2} + \, \cO(\e^3) \, .
\end{align*} 
By \eqref{expfe} and \eqref{espl1}, the expansion \eqref{exp:g1} follows.

Finally the operator $\Sigma_\e$ in \eqref{def:Sigma g} expands as
$\Sigma_\e = \pa_{xx} + \e \Sigma_1 + \e^2 \Sigma_2 + \cO(\e^3)$
where
\begin{align*}
    \Sigma_1:=& (g_{1}-2\mathfrak{p}_1')\,\pa_{xx}+(g_1'-2\,\mathfrak{p}_1'')\pa_{x}-\mathfrak{p}_1''',\\ \notag
    \Sigma_2:=&(g_2-2\mathfrak{p}_2'+3(\mathfrak{p}_1')^2-2g_1\mathfrak{p}_1') \pa_{xx}+(g_2'-2\mathfrak{p}_2''-2g_1\mathfrak{p}_1''-2g_1'\mathfrak{p}_1'+6\mathfrak{p}_1''\mathfrak{p}_1')\pa_{x}\\ 
    &+2(\mathfrak{p}_1'')^2-\mathfrak{p}_2'''-\mathfrak{p}_1''g_1'+3\mathfrak{p}_1'''\mathfrak{p}_1'-g_1\mathfrak{p}_1'''.
\end{align*}
Inserting the expansions of $g_1,g_2$ in \eqref{exp:g1} and those of $\mathfrak{p}_1$, $\mathfrak{p}_2$ in \eqref{pfra1} proves the claimed expansions 
in \eqref{Sigma 1} with coefficients in \eqref{d1x}--\eqref{h2 h02}.
\qed

\section{Expansion of the Kato Basis}\label{secA1}

In this appendix we prove Lemma \ref{expansion of the basis F}. We provide the expansion of the basis $f_k^{\pm}(\mu,\epsilon) = U_{\mu,\epsilon} f_k^{\pm}$, $k = 0,1$, in \eqref{F basis set and f}, where $f_k^{\pm}$ defined in \eqref{eigenfunc of mathcall L00 2} belong to the subspace $\mathcal{V}_{0,0} := \operatorname{Rg}(P_{0,0})$. We first Taylor-expand the transformation operators $U_{\mu,\epsilon}$ defined in \eqref{U transformation operators}. We denote $\partial_{\epsilon}$ with a prime and $\partial_{\mu}$ with a dot.
The next lemma follows as \cite[Lemma A.1]{BMV1}
\begin{lemma} \label{U mu epsilon P00}
    The first jets of $U_{\mu,\epsilon} P_{0,0}$ are
\begin{align} \label{A1}
    &U_{0,0} P_{0,0} = P_{0,0}, \quad\quad U'_{0,0} P_{0,0} = P'_{0,0} P_{0,0}, \quad\quad \dot{U}_{0,0} P_{0,0} = \dot{P}_{0,0} P_{0,0},\\ \label{A2}
    &\dot{U}'_{0,0} P_{0,0} = \left( \dot{P}'_{0,0} - \frac{1}{2} P_{0,0} \dot{P}'_{0,0} \right) P_{0,0}, 
\end{align}
where
\begin{align} \label{A3}
    P'_{0,0} &= \frac{1}{2\pi \im} \oint_{\Gamma} (\mathscr{L}_{0,0} - \lambda)^{-1} \mathscr{L}'_{0,0} (\mathscr{L}_{0,0} - \lambda)^{-1} d\lambda\,,  \\ \label{A4}
    \dot{P}_{0,0} &= \frac{1}{2\pi \im} \oint_{\Gamma} (\mathscr{L}_{0,0} - \lambda)^{-1} \dot{\mathscr{L}}_{0,0} (\mathscr{L}_{0,0} - \lambda)^{-1} d\lambda\,, 
\end{align}
and
\begin{align} \label{A5a}
    \dot{P}'_{0,0} &= -\frac{1}{2\pi \im} \oint_{\Gamma} (\mathscr{L}_{0,0} - \lambda)^{-1} \dot{\mathscr{L}}_{0,0} (\mathscr{L}_{0,0} - \lambda)^{-1} \mathscr{L}'_{0,0} (\mathscr{L}_{0,0} - \lambda)^{-1} d\lambda \\ \label{A5b}
    &\quad -\frac{1}{2\pi \im} \oint_{\Gamma} (\mathscr{L}_{0,0} - \lambda)^{-1} \mathscr{L}'_{0,0} (\mathscr{L}_{0,0} - \lambda)^{-1} \dot{\mathscr{L}}_{0,0} (\mathscr{L}_{0,0} - \lambda)^{-1} d\lambda \\ \label{A5c}
    &\quad + \frac{1}{2\pi \im} \oint_{\Gamma} (\mathscr{L}_{0,0} - \lambda)^{-1} \dot{\mathscr{L}}'_{0,0} (\mathscr{L}_{0,0} - \lambda)^{-1} d\lambda.
\end{align}
The operators $\mathscr{L}'_{0,0}$ and $\dot{\mathscr{L}}_{0,0}$ are
\begin{align} \label{A6}
    \mathscr{L}'_{0,0}=\begin{bmatrix}
        \partial_x\circ p_1(x)  &0\\
        -a_1(x)+\kappa \Sigma_1 &p_1(x)\circ \partial_x
    \end{bmatrix},~~\dot{\mathscr{L}}_{0,0}=\begin{bmatrix}
        0 &\mathrm{sgn}(D)+\Pi_0\\
        2\im \kappa \partial_{x} &0
    \end{bmatrix},
\end{align}
where $\mathrm{sgn}(D)$ is defined in \eqref{D+mu}, \eqref{sgn D} 
and $a_1(x)$, $p_1(x)$, $\Sigma_1$ are given in Lemma \ref{lem:pa.exp}.

The operator $\dot{\mathscr{L}}'_{0,0}$ is
\begin{align} \label{A8}
    \dot{\mathscr{L}}'_{0,0}=\begin{bmatrix}
        \im p_1(x) & 0\\
        \im \kappa\left(2 d_1(x) \pa_x + e_1(x)\right)        & \im p_1(x)
    \end{bmatrix}
\end{align}
with $d_1(x), e_1(x)$ in \eqref{d1x}. 
\end{lemma}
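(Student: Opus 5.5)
The plan is to Taylor-expand the Kato transformation operator $U_{\mu,\epsilon}$ in \eqref{U transformation operators} around $(\mu,\epsilon)=(0,0)$, and then restrict the result to $\mathcal{V}_{0,0}=\mathrm{Rg}(P_{0,0})$.

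\textbf{Expansion of $U_{\mu,\epsilon}$.} Write
\[
P_{\mu,\epsilon}=P_{0,0}+\Delta,\qquad \Delta=\epsilon P'_{0,0}+\mu\dot P_{0,0}+\mu\epsilon\dot P'_{0,0}+\dots
\]
Then $(\mathrm{Id}-\Delta^2)^{-1/2}=\mathrm{Id}+\tfrac12\Delta^2+\mathcal{O}(\Delta^4)$. Hence, applied to $P_{0,0}$,
\[
U_{\mu,\epsilon}P_{0,0}=\big(\mathrm{Id}+\tfrac12\Delta^2+\dots\big)P_{\mu,\epsilon}P_{0,0}.
\]
Here the second summand $(\mathrm{Id}-P_{\mu,\epsilon})(\mathrm{Id}-P_{0,0})$ is annihilated by the right factor $P_{0,0}$. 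Reading off orders gives \eqref{A1} at once:
\begin{itemize}
\item At order zero, $U_{0,0}P_{0,0}=P_{0,0}^2=P_{0,0}$.
\item At first order, $U'_{0,0}P_{0,0}=P'_{0,0}P_{0,0}$, and likewise for $\dot U_{0,0}$; the $\Delta^2$ term is quadratic and does not contribute.
\end{itemize}

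\textbf{The mixed derivative.} For \eqref{A2} I collect the $\mu\epsilon$ coefficient. It receives two contributions:
\begin{itemize}
\item $\dot P'_{0,0}P_{0,0}$, coming from $P_{\mu,\epsilon}P_{0,0}$;
\item $\tfrac12(P'_{0,0}\dot P_{0,0}+\dot P_{0,0}P'_{0,0})P_{0,0}$, coming from $\tfrac12\Delta^2P_{0,0}$.
\end{itemize}
To simplify the second contribution I differentiate the identity $P_{\mu,\epsilon}^2=P_{\mu,\epsilon}$ to second order at $(0,0)$:
\[
\dot P'P+P\dot P'+P'\dot P+\dot PP'=\dot P',\qquad P=P_{0,0}.
\]
Multiplying by $P$ on the right gives $(P'\dot P+\dot PP')P=-P\dot P'P$. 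Substituting, the $\mu\epsilon$ coefficient becomes $(\dot P'_{0,0}-\tfrac12P_{0,0}\dot P'_{0,0})P_{0,0}$, which is \eqref{A2}.

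\textbf{Derivatives of the projector.} Formulas \eqref{A3}--\eqref{A5c} follow by differentiating the Riesz integral \eqref{Projection P mu e} using the resolvent identity
\[
\partial(\mathscr L-\lambda)^{-1}=-(\mathscr L-\lambda)^{-1}(\partial\mathscr L)(\mathscr L-\lambda)^{-1}.
\]
This is legitimate because $\Gamma$ lies in the resolvent set uniformly for small $(\mu,\epsilon)$ and the map $(\mu,\epsilon)\mapsto\mathscr{L}_{\mu,\epsilon}$ is analytic (Lemma~\ref{kato thm}). Differentiating once in $\epsilon$ or in $\mu$ gives \eqref{A3}, \eqref{A4}. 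Applying the product rule to the mixed derivative gives the three terms \eqref{A5a}--\eqref{A5c}, with the sign coming from the $-\frac1{2\pi\im}$ prefactor.

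\textbf{The operators in \eqref{A6} and \eqref{A8}.}
\begin{itemize}
\item $\mathscr L'_{0,0}$ is the $\epsilon$-coefficient of $\mathcal J\mathcal B_{0,\epsilon}$, computed from \eqref{Be B0 B1 B2}: multiplying $\mathcal B_1$ by $\mathcal J$ gives the entries $\partial_x\circ p_1$, $p_1\partial_x$ and $-a_1+\kappa\Sigma_1$.
\item For $\dot{\mathscr L}_{0,0}$, differentiate \eqref{mathscr L mu e} in $\mu$ at $\epsilon=0$. The $|D+\mu|$ term gives $\mathrm{sgn}(D)+\Pi_0$ via \eqref{D+mu}. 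The term $\kappa(\partial_x+\im\mu)^2$ gives $2\im\kappa\partial_x$, entering with sign $+$ in the lower-left entry after applying $\mathcal J$. The $\im\ck\mu$ part has already been removed in \eqref{mathcal L= ichmu+mathscr L}.
\item $\dot{\mathscr L}'_{0,0}$ is the $\mu\epsilon$ coefficient. The $\im\mu p_\epsilon$ entries give $\im p_1$ on the diagonal. The mixed derivative of $-\kappa\Sigma_{\mu,\epsilon}$ is the $\epsilon$-part of $\dot\Sigma_{0,\epsilon}$, namely $\im(d_1\partial_x+\partial_x\circ d_1)$ plus lower order, i.e.\ $\im(2d_1\partial_x+e_1)$ once the first-order coefficients of $\Sigma_1$ are matched.
\end{itemize}

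\textbf{Expected main obstacle.} The only delicate step is this last one: tracking exactly how the $\epsilon$-terms of $(1+\mathfrak p_x)^{-1}$ and $g_\epsilon$ combine inside $\Sigma_{\mu,\epsilon}$. I would verify it by comparing with $\Sigma_1=d_1\partial_x^2+e_1\partial_x+h_1$, using the fact that the symbol of $\Sigma_\mu$ is obtained from that of $\Sigma$ by the shift $\xi\mapsto\xi+\mu$, so that $\partial_\mu$ acts as $-\im\,\partial_\xi$ on the principal and subprincipal parts.
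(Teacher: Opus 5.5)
Your proposal is correct and follows the paper's route. The paper defers to \cite[Lemma A.1]{BMV1}, which uses exactly your steps: write $U_{\mu,\epsilon}P_{0,0}=(\mathrm{Id}-(P_{\mu,\epsilon}-P_{0,0})^2)^{-1/2}P_{\mu,\epsilon}P_{0,0}$, use the mixed derivative of $P_{\mu,\epsilon}^2=P_{\mu,\epsilon}$ to turn $\frac12(P'_{0,0}\dot P_{0,0}+\dot P_{0,0}P'_{0,0})P_{0,0}$ into $-\frac12P_{0,0}\dot P'_{0,0}P_{0,0}$, and differentiate the Riesz integral and the coefficients of $\mathscr{L}_{\mu,\epsilon}$ directly.

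One small correction to your cross-check for \eqref{A8}. Under $\xi\mapsto\xi+\mu$, the $\mu$-derivative acts as $\partial_\xi$ on the symbol, not $-\im\partial_\xi$; taken literally, your recipe would drop the factor $\im$. Also, since $d_1=g_1-2\mathfrak{p}_1'$ and $e_1=d_1'$, the $\epsilon$-coefficient of $\dot\Sigma_{0,\epsilon}$ is exactly $\im(d_1\pa_x+\pa_x\circ d_1)=\im(2d_1\pa_x+e_1)$, with no lower-order remainder.
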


By the Lemma \ref{U mu epsilon P00}, we have the Taylor expansion
\begin{equation} \label{the expandsion of f sigma mu}
    \begin{aligned}
    f_k^\sigma (\mu, \epsilon) &= f_k^\sigma + \epsilon P'_{0,0} f_k^\sigma + \mu \dot{P}_{0,0} f_k^\sigma  + \mu \epsilon \big( \dot{P}'_{0,0} - \frac{1}{2} P_{0,0} \dot{P}'_{0,0} \big) f_k^\sigma + \mathcal{O}(\mu^2, \epsilon^2).
\end{aligned}
\end{equation}
In order to compute the vectors $P'_{0,0} f_k^\sigma$ and $\dot{P}_{0,0} f_k^\sigma$ using \eqref{A3} and \eqref{A4}, it is useful to know the action of $(\mathscr{L}_{0,0} - \lambda)^{-1}$ on the vectors
\begin{equation} \label{A10}
    \begin{aligned}
        f^+_k:=\vet{\frac{1}{\sqrt{\ck}}\cos(kx)}{\sqrt{\ck}\sin(kx)},~f^-_k:=\vet{-\frac{1}{\sqrt{\ck}}\sin(kx)}{\sqrt{\ck}\cos(kx)},~f^+_{-k}:=\vet{\frac{1}{\sqrt{\ck}}\cos(kx)}{-\sqrt{\ck}\sin(kx)},~f^-_{-k}:=\vet{\frac{1}{\sqrt{\ck}}\sin(kx)}{\sqrt{\ck}\cos(kx)},~k\in\mathbb{N}.
    \end{aligned}
\end{equation}

\begin{lemma} \label{Lemma B2}
    The space $Y=H^2(\mathbb{T},\mathbb{C})\times H^1(\mathbb{T},\mathbb{C})$ decomposes as $Y = \mathcal{V}_{0,0} \oplus \mathcal{U} \oplus \mathcal{W}_{Y}$, with
\begin{equation*}
    \mathcal{W}_{Y} = \overline{\bigoplus_{k=2}^{\infty} \mathcal{W}_k}^{Y}
\end{equation*}
where the subspaces $\mathcal{V}_{0,0}, \mathcal{U}$, and $\mathcal{W}_k$, defined below, are invariant under $\mathscr{L}_{0,0}$ and the following properties hold:

\begin{itemize}
\item[(i)] $\mathcal{V}_{0,0} = \text{span}\{f_1^+, f_1^-, f_0^+, f_0^-\}$ is the generalized kernel of $\mathscr{L}_{0,0}$. For any $\lambda \neq 0$ the operator $\mathscr{L}_{0,0} - \lambda : \mathcal{V}_{0,0} \to \mathcal{V}_{0,0}$ is invertible and
    \begin{equation} \label{A11-0}
        (\mathscr{L}_{0,0} - \lambda)^{-1} f_1^+ = -\frac{1}{\lambda} f_1^+, \quad (\mathscr{L}_{0,0} - \lambda)^{-1} f_1^- = -\frac{1}{\lambda} f_1^-,
    \end{equation}
    \begin{equation} \label{A11}
        (\mathscr{L}_{0,0} - \lambda)^{-1} f_0^- = -\frac{1}{\lambda} f_0^-,
    \end{equation}
    \begin{equation} \label{A12}
        (\mathscr{L}_{0,0} - \lambda)^{-1} f_0^+ = -\frac{1}{\lambda} f_0^+ + \frac{1}{\lambda^2} f_0^- .
    \end{equation}

    \item[(ii)] $\mathcal{U} := \text{span}\{ f_{-1}^+, f_{-1}^- \}$. For any $\lambda \neq \pm \im \,2\ck$ the operator $\mathscr{L}_{0,0} - \lambda : \mathcal{U} \to \mathcal{U}$ is invertible and
    \begin{equation} \label{A13}
    \begin{aligned}
        (\mathscr{L}_{0,0} - \lambda)^{-1} f_{-1}^+ &= \frac{1}{\lambda^2 + 4\ck^2} \left(-\lambda f_{-1}^+ + 2\ck f_{-1}^-\right),\\
        (\mathscr{L}_{0,0} - \lambda)^{-1} f_{-1}^- &= \frac{1}{\lambda^2 + 4\ck^2} \left(-2\ck f_{-1}^+ - \lambda f_{-1}^-\right).
    \end{aligned}
    \end{equation}

    \item[(iii)] Each subspace $\mathcal{W}_k := \text{span}\{ f_k^+, f_k^-, f_{-k}^+, f_{-k}^- \}$ is invariant under $\mathscr{L}_{0,0}$. Let
    \begin{equation*}
        \mathcal{W}_{L^2} = \overline{\bigoplus_{k=2}^{\infty} \mathcal{W}_k}^{ L^2}.
    \end{equation*}
    For any $|\lambda| < \delta(\kappa)$ small enough, the operator $\mathscr{L}_{0,0} - \lambda : \mathcal{W}_{Y} \to \mathcal{W}_{L^2}$ is invertible and for any $f \in \mathcal{W}_{L^2}$,
    \begin{equation} \label{A14}
        (\mathscr{L}_{0,0} - \lambda)^{-1} f = \left( \ck^2 \partial^2_{x} + |D| (1-\kappa \pa^2_{x}) \right)^{-1} \begin{bmatrix} \ck \partial_x & -|D|  \\ 1-\kappa \pa^2_{x} & \ck \partial_x \end{bmatrix} f
        + \lambda \varphi_f(\lambda, x),
    \end{equation}
    for some analytic function $\lambda \mapsto \varphi_f(\lambda, \cdot) \in Y=H^2(\mathbb{T},\mathbb{C})\times H^1(\mathbb{T},\mathbb{C})$.
\end{itemize}
\end{lemma}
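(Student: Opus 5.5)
The plan is to exploit the fact that $\mathscr{L}_{0,0}$, given by \eqref{mathcal L mu 0} at $\mu=0$, is a Fourier multiplier matrix
$$\mathscr{L}_{0,0}=\begin{bmatrix} \ck\partial_x & |D| \\ -1+\kappa\partial_x^2 & \ck\partial_x\end{bmatrix}.$$
It therefore leaves invariant each space spanned by the modes $\cos(kx),\sin(kx)$ in both components. The decomposition $Y=\mathcal{V}_{0,0}\oplus\mathcal{U}\oplus\mathcal{W}_Y$ is then just the Fourier decomposition, grouped as follows: mode $0$ together with the part of mode $1$ spanned by $f_1^\pm$; the complementary part of mode $1$, spanned by $f_{-1}^\pm$; and the modes $k\ge2$. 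I will verify invariance and compute the resolvent separately on each piece.

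For (i), I check directly from \eqref{eigenfunc of mathcall L00}, \eqref{eigenfunc f+0} that $\mathscr{L}_{0,0}f_1^\pm=0$, $\mathscr{L}_{0,0}f_0^-=0$ and $\mathscr{L}_{0,0}f_0^+=-f_0^-$. Then \eqref{A11-0}–\eqref{A11} follow since $(\mathscr{L}_{0,0}-\lambda)(-\lambda^{-1}f)=f$ for $f$ in the kernel. For \eqref{A12} one verifies
$$(\mathscr{L}_{0,0}-\lambda)\big(-\lambda^{-1}f_0^++\lambda^{-2}f_0^-\big)=\lambda^{-1}f_0^-+f_0^+-\lambda^{-1}f_0^-=f_0^+ .$$

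For (ii), a short computation with $\ck^2=1+\kappa$ gives $\mathscr{L}_{0,0}f_{-1}^+=-2\ck f_{-1}^-$ and $\mathscr{L}_{0,0}f_{-1}^-=2\ck f_{-1}^+$. Hence on $\mathcal{U}$ the operator is represented by the $2\times2$ matrix with zero diagonal and off-diagonal entries $2\ck$ and $-2\ck$. Subtracting $\lambda$ gives determinant $\lambda^2+4\ck^2$. Inverting this matrix explicitly yields \eqref{A13}, valid for $\lambda\neq\pm\im\,2\ck$.

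For (iii), $\mathcal{W}_k$ is the span of modes $\pm k$ in both components, so it is invariant. Since the matrix has the form $\begin{bmatrix}a&b\\c&a\end{bmatrix}$ with commuting multipliers $a=\ck\partial_x$, $b=|D|$, $c=-(1-\kappa\partial_x^2)$, its inverse is $(a^2-bc)^{-1}\begin{bmatrix}a&-b\\-c&a\end{bmatrix}$ with $a^2-bc=\ck^2\partial_x^2+|D|(1-\kappa\partial_x^2)$. This is exactly the leading term of \eqref{A14}.

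The key point, and the main obstacle, is a uniform lower bound for the symbol $|k|(1+\kappa k^2)-\ck^2k^2$ over $|k|\ge2$. Exactly as in the proof of Lemma \ref{lem:B0}, it vanishes only if $\kappa\in\fR_{|k|}$, which is excluded. It also grows like $\kappa|k|^3$ when $\kappa>0$ (or like $k^2$ when $\kappa=0$), so only finitely many $k$ need checking, giving a positive lower bound $c(\kappa)$. The growth rate also gives the mapping property $\mathcal{W}_{L^2}\to\mathcal{W}_Y$ by comparing orders. For $\kappa>0$ the first-row entries have order $1$ over order $3$, a gain of $2$ derivatives, and the second-row entry $1-\kappa\partial_x^2$ has order $2$ over order $3$, a gain of $1$, matching $H^2\times H^1$. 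The case $\kappa=0$ needs separate bookkeeping of the regularity, which I would check carefully.

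Finally, set $T:=\mathscr{L}_{0,0}^{-1}|_{\mathcal{W}_{L^2}}$, which is bounded into $\mathcal{W}_Y\subset\mathcal{W}_{L^2}$. For $|\lambda|<\delta(\kappa):=\|T\|^{-1}$, a Neumann series gives
$$(\mathscr{L}_{0,0}-\lambda)^{-1}=T(\mathrm{Id}-\lambda T)^{-1}=T+\lambda\, T^2(\mathrm{Id}-\lambda T)^{-1}.$$
This yields \eqref{A14} with $\varphi_f(\lambda)=T^2(\mathrm{Id}-\lambda T)^{-1}f$, which is analytic in $\lambda$ with values in $Y$.
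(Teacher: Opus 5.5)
Your proposal is correct and is essentially the paper's argument: direct verification of the kernel relations on $\mathcal{V}_{0,0}$ and of the $2\times2$ action on $\mathcal{U}$, the explicit multiplier inverse on $\mathcal{W}$, and a Neumann series giving $\varphi_f=T^2(\mathrm{Id}-\lambda T)^{-1}f$ (the same as the paper's $T\sum_{k\ge1}\lambda^{k-1}T^k f$). You are in fact more careful than the paper, which does not spell out the uniform lower bound on the symbol $|k|(|k|-1)(\kappa|k|-1)$ over $|k|\ge2$. Your hesitation at $\kappa=0$ is also well founded: there the symbol is only of size $k^2$, so the first row of $T$ gains one derivative and $T$ maps $\mathcal{W}_{L^2}$ into $H^1\times H^1$ rather than $H^2\times H^1$. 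Item (iii), and the choice of $Y$, therefore has to be read with $Y=H^1\times H^1$ in that case, as in \cite{BMV1}; this is a defect of the statement, which the paper's proof also passes over, not of your argument.
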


\begin{proof}
    By inspection the spaces $\mathcal{V}_{0,0}, \mathcal{U}$ and $\mathcal{W}_k$ are invariant under $\mathscr{L}_{0,0}$ and, by Fourier series, they decompose $Y=H^2(\mathbb{T},\mathbb{C})\times H^1(\mathbb{T},\mathbb{C})$. Formulas \eqref{A11}-\eqref{A12} follow using that $f_1^+, f_1^-, f_0^-$ are in the kernel of $\mathscr{L}_{0,0}$, and $\mathscr{L}_{0,0} f_0^+ = -f_0^-$. Formula \eqref{A13} follows using that $\mathscr{L}_{0,0} f_{-1}^+ = -2\ck f_{-1}^-$ and $\mathscr{L}_{0,0} f_{-1}^- = 2\ck f_{-1}^+$. Let us prove item $(iii)$. Let $\mathcal{W} := \mathcal{W}_{Y}$. The operator $(\mathscr{L}_{0,0} - \lambda \mathrm{Id})|_{\mathcal{W}}$ is invertible for any 
$$\lambda \notin \{ \pm \im k\ck \pm \im\sqrt{|k|\left(1+\kappa k^2\right)}, k \geq 2, k \in \mathbb{N} \}$$
and 
$$
(\mathscr{L}_{0,0}|_{\mathcal{W}})^{-1} = \left( \ck^2 \partial^2_{x} + |D| (1-\kappa \pa^2_{x}) \right)^{-1} \begin{bmatrix} \ck \partial_x & -|D|  \\ 1-\kappa \pa^2_{x} & \ck \partial_x \end{bmatrix} \Big|_{\mathcal{W}}.
$$
By Neumann series, for any $\lambda$ such that 
$$|\lambda| \| (\mathscr{L}_{0,0}|_{\mathcal{W}})^{-1} \|_{\mathcal{L}(\mathcal{W}, Y)} < 1$$
we have 
$$
(\mathscr{L}_{0,0}|_{\mathcal{W}} - \lambda)^{-1} = (\mathscr{L}_{0,0}|_{\mathcal{W}})^{-1} (\text{Id} - \lambda (\mathscr{L}_{0,0}|_{\mathcal{W}})^{-1})^{-1} 
= (\mathscr{L}_{0,0}|_{\mathcal{W}})^{-1} \sum_{k \geq 0} ((\mathscr{L}_{0,0}|_{\mathcal{W}})^{-1} \lambda)^k.
$$
Formula \eqref{A14} follows with 
$$\varphi_f(\lambda, x) := (\mathscr{L}_{0,0}|_{\mathcal{W}})^{-1} \sum_{k \geq 1} \lambda^{k-1} [(\mathscr{L}_{0,0}|_{\mathcal{W}})^{-1}]^k f.$$ 
\end{proof}

To prove Lemma \ref{expansion of the basis F}, we shall also use the following formulas obtained by \eqref{A6} and \eqref{eigenfunc of mathcall L00 2}:

\begin{equation} \label{A15}
    \begin{aligned}
        &\mathscr{L}'_{0,0} f^+_1=\vet{2\sqrt{\ck}\sin(2x)}{\frac{3\kappa}{\sqrt{\ck}}\cos(2x)}\,,\quad \mathscr{L}'_{0,0} f^-_1=\vet{2\sqrt{\ck}\cos(2x)}{-\frac{3\kappa}{\sqrt{\ck}}\sin(2x)}\,,\quad \mathscr{L}'_{0,0} f^+_0=\vet{2\ck\sin(x)}{2\ck^2\cos(x)}, \quad \mathscr{L}'_{0,0}f^-_0=\vet{0}{0},\\
        &\dot{\mathscr{L}}_{0,0} f^+_1=\vet{-\im\sqrt{\ck}\cos(x)}{-\frac{2\im\kappa}{\sqrt{\ck}}\sin(x)}, \quad \dot{\mathscr{L}}_{0,0} f^-_1=\vet{\im \sqrt{\ck} \sin(x)}{-\frac{2\im\kappa}{\sqrt{\ck}}\cos(x)}\,,\quad \dot{\mathscr{L}}_{0,0} f^+_0=\vet{0}{0}, \quad \dot{\mathscr{L}}_{0,0}f^-_0=\vet{1}{0}=f^+_0.
    \end{aligned}
\end{equation}
We finally calculate $P'_{0,0} f^\sigma_k$ and $\dot{P}_{0,0} f^\sigma_k$.

\begin{lemma} One has
    \begin{equation} \label{A16}
        \begin{aligned}
    P'_{0,0}f^+_1&=\vet{\frac{2-\kappa}{(1-2\kappa)\sqrt{\ck}}\cos(2x)}{\frac{\ck^2\sqrt{\ck}}{1-2\kappa}\sin(2x)},\quad P'_{0,0}f^-_1=\vet{-\frac{2-\kappa}{(1-2\kappa)\sqrt{\ck}}\sin(2x)}{\frac{\ck^2\sqrt{\ck}}{1-2\kappa}\cos(2x)}, \quad 
            P'_{0,0}f^+_0= \sqrt{\ck}f^+_{-1}, \\
            P'_{0,0}f^-_0& =\vet{0}{0}, \quad \dot{P}_{0,0}f^+_0=\vet{0}{0}, \quad \dot{P}_{0,0}f^-_0=\vet{0}{0}, \quad 
            \dot{P}_{0,0}f^+_1=\im\frac{1-\kappa}{4\ck^2}f^-_{-1}, \quad  \dot{P}_{0,0}f^-_1=\im\frac{1-\kappa}{4\ck^2}f^+_{-1},
        \end{aligned}
    \end{equation}
\end{lemma}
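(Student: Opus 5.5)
The plan is to compute each vector in \eqref{A16} directly from the contour formulas \eqref{A3}--\eqref{A4}, using the explicit resolvent of Lemma \ref{Lemma B2} and the vectors $\mathscr{L}'_{0,0}f^\sigma_k$, $\dot{\mathscr{L}}_{0,0}f^\sigma_k$ listed in \eqref{A15}. The scheme is the same for every entry:
\begin{enumerate}
\item Apply the inner resolvent to $f^\sigma_k\in\mathcal{V}_{0,0}$ using \eqref{A11-0}--\eqref{A12}. This gives $-\lambda^{-1}f^\sigma_k$, plus $\lambda^{-2}f^-_0$ when $f^\sigma_k=f^+_0$.
\item Apply $\mathscr{L}'_{0,0}$ or $\dot{\mathscr{L}}_{0,0}$, which is independent of $\lambda$.
\item Split the result along $Y=\mathcal{V}_{0,0}\oplus\mathcal{U}\oplus\mathcal{W}_Y$, writing it in the basis \eqref{A10}.
\item Apply the outer resolvent componentwise.
\item Compute the residue at $\lambda=0$. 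This is the only singularity inside $\Gamma$, since the poles $\pm\im 2\ck$ of \eqref{A13} and the nonzero eigenvalues on $\mathcal{W}_Y$ lie outside $\Gamma$.
\end{enumerate}

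Three general observations do most of the work.
\begin{itemize}
\item Any component in $\mathcal{V}_{0,0}$ contributes nothing. There the outer resolvent produces only terms of the form $\lambda^{-2},\lambda^{-3},\lambda^{-4}$, which have zero residue. Combined with $\mathscr{L}'_{0,0}f^-_0=0$, $\dot{\mathscr{L}}_{0,0}f^+_0=0$ and $\dot{\mathscr{L}}_{0,0}f^-_0=f^+_0\in\mathcal{V}_{0,0}$, this immediately gives $P'_{0,0}f^-_0=\dot P_{0,0}f^\pm_0=0$.
\item For a component $g\in\mathcal{W}_Y$ multiplied by $-\lambda^{-1}$, formula \eqref{A14} shows that the residue yields exactly $-(\mathscr{L}_{0,0}|_{\mathcal{W}})^{-1}g$. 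The term $\lambda\varphi_g$ is analytic and does not contribute.
\item For a component in $\mathcal{U}$ multiplied by $-\lambda^{-1}$, \eqref{A13} gives the residues
\[
-\lambda^{-1}(\mathscr{L}_{0,0}-\lambda)^{-1}f^+_{-1}\ \rightsquigarrow\ -\tfrac{1}{2\ck}f^-_{-1},\qquad
-\lambda^{-1}(\mathscr{L}_{0,0}-\lambda)^{-1}f^-_{-1}\ \rightsquigarrow\ \tfrac{1}{2\ck}f^+_{-1}.
\]
\end{itemize}

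With these rules the entries reduce as follows.
\begin{itemize}
\item $P'_{0,0}f^\pm_1$. By \eqref{A15}, $\mathscr{L}'_{0,0}f^\pm_1$ lies purely in $\mathcal{W}_2$. Hence $P'_{0,0}f^\pm_1=-(\mathscr{L}_{0,0}|_{\mathcal{W}})^{-1}\mathscr{L}'_{0,0}f^\pm_1$. This is evaluated with the explicit inverse in \eqref{A14} at frequency $k=2$. There the scalar symbol $\ck^2\partial_x^2+|D|(1-\kappa\partial_x^2)$ equals $-4(1+\kappa)+2(1+4\kappa)=2(2\kappa-1)$, which produces the factor $(1-2\kappa)^{-1}$ and the coefficients $\tfrac{2-\kappa}{(1-2\kappa)\sqrt{\ck}}$ and $\tfrac{\ck^2\sqrt{\ck}}{1-2\kappa}$.
\item $P'_{0,0}f^+_0$. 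Here $\mathscr{L}'_{0,0}f^+_0=2\ck[\sin(x);\ck\cos(x)]$, which is a frequency-one vector. I would decompose it as $\alpha f^-_{-1}+\beta f^-_1$. The $f^-_1$ part is killed by the first rule. The $f^-_{-1}$ part gives $\tfrac{\alpha}{2\ck}f^+_{-1}$, and this should match $\sqrt{\ck}f^+_{-1}$.
\item $\dot P_{0,0}f^\pm_1$. Similarly, $\dot{\mathscr{L}}_{0,0}f^\pm_1$ is a frequency-one vector. I would decompose it along $f^\mp_1$ and $f^\mp_{-1}$. Only the $\mathcal{U}$ component survives, and the third rule gives the multiple $\im\tfrac{1-\kappa}{4\ck^2}f^\mp_{-1}$. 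The coefficient comes from the combination $\sqrt{\ck}$ versus $2\kappa/\sqrt{\ck}$ in \eqref{A15}.
\end{itemize}

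The main obstacle is purely computational bookkeeping: getting the projections onto $f^\pm_{\pm1}$ and the $\mathcal{W}_2$ inversion right, with correct signs and $\kappa$-dependence. Nothing conceptual is expected to go wrong once the residue rules above are in place. I would double-check the $\mathcal{W}_2$ computation by verifying directly that $\mathscr{L}_{0,0}$ applied to the claimed $P'_{0,0}f^+_1$ returns $-\mathscr{L}'_{0,0}f^+_1$.
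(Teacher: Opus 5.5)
Your proposal is correct and follows the paper's proof: the contour formulas \eqref{A3}--\eqref{A4}, the resolvent on $\mathcal{V}_{0,0}\oplus\mathcal{U}\oplus\mathcal{W}_Y$ from Lemma~\ref{Lemma B2}, and the residue at $\lambda=0$ (your explicit rule $-(\mathscr{L}_{0,0}|_{\mathcal W})^{-1}g$ with symbol $2(2\kappa-1)$ also gets the overall sign directly, whereas the paper's intermediate display slips a sign before arriving at \eqref{A16}). One labeling slip: $\dot{\mathscr{L}}_{0,0}f^\pm_1$ lies in $\mathrm{span}\{f^\pm_1,f^\pm_{-1}\}$, not in $\mathrm{span}\{f^\mp_1,f^\mp_{-1}\}$; for instance $\dot{\mathscr{L}}_{0,0}f^+_1=-\im\big(\frac{\ck^2+2\kappa}{2\ck}f^+_1+\frac{\ck^2-2\kappa}{2\ck}f^+_{-1}\big)$, and it is your third rule that then sends it to the stated multiple of $f^\mp_{-1}$.
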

\begin{proof}

    We first calculate $P'_{0,0} f^+_1$. By \eqref{A3}, \eqref{A11-0}, \eqref{A11}, and \eqref{A15} we deduce
    \begin{align*}
        P'_{0,0} f^+_1=-\frac{1}{2\pi \im}\oint_{\Gamma} \frac{1}{\lambda} \left(\mathscr{L}_{0,0}-\lambda\right)^{-1} \vet{2\sqrt{\ck}\sin(2x)}{\frac{3\kappa}{\sqrt{\ck}}\cos(2x)} d\lambda.
    \end{align*}
We note that 
\begin{equation*}
    \vet{2\sqrt{\ck}\sin(2x)}{\frac{3\kappa}{\sqrt{\ck}}\cos(2x)}=\left(\frac{3\kappa}{2\ck}-\ck\right)f^-_2+\left(\frac{3\kappa}{2\ck}+\ck\right)f^-_{-2}\in\mathcal{W}  . 
\end{equation*}
Therefore by \eqref{A11} and \eqref{A14} there is an analytic function $\lambda \mapsto \varphi(\lambda,\cdot)\in Y$ so that we obtain
\begin{align*}
    P'_{0,0} f^+_1&=-\frac{1}{2\pi \im} \oint_\Gamma \frac{1}{\lambda}\left(\left( \ck^2 \partial^2_{x} + |D| (1-\kappa \pa^2_{x}) \right)^{-1} 
    \begin{bmatrix} \ck \partial_x & -|D|  \\ 1-\kappa \pa^2_{x} & \ck \partial_x \end{bmatrix} \vet{2\sqrt{\ck}\sin(2x)}{\frac{3\kappa}{\sqrt{\ck}}\cos(2x)}
        +\lambda\varphi(\lambda)\right) d\lambda\\
        &=-\frac{1}{2\pi \im} \oint_\Gamma \frac{1}{\lambda}\left(\vet{\frac{2-\kappa}{(1-2\kappa)\sqrt{\ck}}\cos(2x)}{\frac{\ck^2\sqrt{\ck}}{1-2\kappa}\sin(2x)}
        +\lambda\varphi(\lambda)\right) d\lambda.
\end{align*}
Thus, by means of the residue theorem we obtain the first identity in \eqref{A16}. Similarly, one may calculate $P'_{0,0} f^-_1$. By \eqref{A3}, \eqref{A11}, and \eqref{A15}, one has $P'_{0,0}f^-_0=0$. Next, we calculate $P'_{0,0}f^+_0$. By \eqref{A3}, \eqref{A11}, \eqref{A12}, and \eqref{A15} we get 
\[
P'_{0,0} f^+_0 = -\frac{1}{2\pi \im} \oint_{\Gamma} \frac{1}{\lambda} (\mathscr{L}_{0,0} - \lambda)^{-1} \vet{2\ck\sin(x)}{2\ck^2\cos(x)} d\lambda= -\frac{1}{2\pi \im} \oint_{\Gamma} \frac{1}{\lambda} (\mathscr{L}_{0,0} - \lambda)^{-1}2\ck\sqrt{\ck}f^-_{-1}d\lambda.
\]

By \eqref{A13} we get
\[
P'_{0,0} f^+_0 = -\frac{1}{2\pi \im} \oint_{\Gamma} 
2\ck\sqrt{\ck}\left( -\frac{2\ck}{\lambda (\lambda^2 + 4\ck^2)} f^+_{-1} 
- \frac{1}{\lambda^2 + 4\ck^2} f^-_{-1} 
\right) d\lambda.
\]
Applying the residue theorem, we obtain
\[
P'_{0,0} f^+_0 = \sqrt{\ck} f^+_{-1}.
\]
Thus, we obtain the third identity in \eqref{A16}. Now, we calculate $\dot{P}_{0,0} f^+_1$. First, we have 
\begin{align*}
    \dot{P}_{0,0} f^+_1=\frac{1}{2\pi }\oint_\Gamma \frac{1}{\lambda}\left(\mathscr{L}_{0,0}-\lambda\right)^{-1} \vet{\sqrt{\ck} \cos(x)}{\frac{2\kappa}{\sqrt{\ck}}\sin(x)} d\lambda,
\end{align*}
and then, writing
\begin{align*}
   \vet{\sqrt{\ck} \cos(x)}{\frac{2\kappa}{\sqrt{\ck}}\sin(x)}=\frac{\ck^2+2\kappa}{2\ck}f^+_1+\frac{\ck^2-2\kappa}{2\ck}f^+_{-1},
\end{align*}
and using \eqref{A13}, we conclude using again the residue theorem 
\begin{align*}
    \dot{P}_{0,0}f^+_1=\im\frac{1-\kappa}{4\ck^2}  f^-_{-1}.
\end{align*}
Similarly, we have
\begin{align*}
    \dot{P}_{0,0}f^-_1=\im\frac{1-\kappa}{4\ck^2} f^+_{-1}.
\end{align*}
Finally, in view of \eqref{A15}, we have
\begin{equation*}
    \begin{aligned}
        \dot{P}_{0,0}f^+_0=&\frac{1}{2\pi \im}\oint_\Gamma \left(\mathscr{L}_{0,0}-\lambda\right)^{-1}\dot{\mathscr{L}}_{0,0}\left(\frac{1}{\lambda^2}f^-_0-\frac{1}{\lambda}f^+_0\right) d\lambda=0,\\
        \dot{P}_{0,0}f^-_0=&\frac{1}{2\pi \im}\oint_\Gamma \left(\mathscr{L}_{0,0}-\lambda\right)^{-1}\dot{\mathscr{L}}_{0,0}\left(\frac{-1}{\lambda}f^-_0\right) d\lambda=0.
    \end{aligned}
\end{equation*}
In conclusion, all the formulas in \eqref{A16} are proven.
\end{proof}

So far we have obtained the linear terms of the expansions \eqref{43 f+1}, \eqref{44 f-1}, \eqref{45 f+0}, and \eqref{46 f-0}. We now provide further information about the expansion of the basis at $\mu = 0$. 

\begin{lemma} 
     The basis $\{ f_k^{\sigma}(0,\e), \quad k = 0,1, \quad \sigma = \pm \}$ is real. For any $\epsilon$ it results $f_0^-(0,\epsilon) \equiv f_0^-$. The property \eqref{48 f-0=01} holds.
\end{lemma}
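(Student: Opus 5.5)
The plan is to work at $\mu=0$, where everything is real, and to use the structure of the transformation operator $U_{0,\e}$ together with the known generalized kernel of $\mathscr{L}_{0,\e}$ described in \eqref{237}.

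\textbf{Step 1: reality.} By Lemma \ref{properties of U and P}-(iii), $U_{0,\e}$ is a real operator. The vectors $f_k^\sigma$ in \eqref{eigenfunc of mathcall L00 2} are real. Hence $f_k^\sigma(0,\e)=U_{0,\e}f_k^\sigma$ is real.

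\textbf{Step 2: $f_0^-(0,\e)\equiv f_0^-$.} Here I use two facts. First, $\mathscr{L}_{0,\e}f_0^-=0$ for every $\e$: the column $\vet{0}{1}$ is killed, since $|D|1=0$ and $(\ck+p_\e)\pa_x 1=0$. Second, $P_{0,0}f_0^-=f_0^-$. From the first fact and \eqref{Projection P mu e}, $(\mathscr{L}_{0,\e}-\lambda)^{-1}f_0^-=-\lambda^{-1}f_0^-$, so $P_{0,\e}f_0^-=f_0^-$. It follows that
\[
\bigl[P_{0,\e}P_{0,0}+(\mathrm{Id}-P_{0,\e})(\mathrm{Id}-P_{0,0})\bigr]f_0^-=f_0^- .
\]
It remains to check that $(\mathrm{Id}-(P_{0,\e}-P_{0,0})^2)^{-1/2}$ fixes $f_0^-$. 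Since $(P_{0,\e}-P_{0,0})f_0^-=0$, the vector $f_0^-$ lies in the kernel of $(P_{0,\e}-P_{0,0})^2$. The power series of $(\mathrm{Id}-T)^{-1/2}$ in $T=(P_{0,\e}-P_{0,0})^2$ then acts as the identity on $f_0^-$. Therefore $U_{0,\e}f_0^-=f_0^-$.

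\textbf{Step 3: parities in \eqref{48 f-0=01}.} I plan to use reversibility from Lemma \ref{F is symplectic and reversible}, namely $\bar\rho f_k^\sigma(0,\e)=\sigma f_k^\sigma(0,\e)$. Combined with reality, this gives $f_k^+(0,\e)=\vet{\mathit{even}}{\mathit{odd}}$ and $f_k^-(0,\e)=\vet{\mathit{odd}}{\mathit{even}}$, as in \eqref{Parity structure}.

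The remaining issue is the zero average of the first components of $f_1^+(0,\e)$ and of $f_0^+(0,\e)-f_0^+$. This is where symplecticity is used. By \eqref{basis is symplectic}, $(\mathcal{J}f_1^+(0,\e),f_0^-(0,\e))=0$ and $f_0^-(0,\e)=\vet{0}{1}$. Since
\[
\mathcal{J}\vet{u}{v}=\vet{v}{-u},
\]
this pairing equals minus the average of the first component $u$. So the first component of $f_1^+(0,\e)$ has zero average. In the same way, $(\mathcal{J}f_0^-,f_0^+)=-1$ gives that the first component of $f_0^+(0,\e)$ has average exactly $1$. Writing $f_0^+(0,\e)=\vet{1}{0}+\vet{\mathit{even}_0}{\mathit{odd}}$ then follows.

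\textbf{Expected obstacle.} The only delicate point is the functional-calculus argument in Step 2. It requires that $(P_{0,\e}-P_{0,0})^2$ be small in $\mathcal{L}(Y)$, which holds for small $\e$ by the analyticity in Lemma \ref{kato thm}. With that, the square-root series converges and fixes every vector annihilated by $P_{0,\e}-P_{0,0}$. The rest is bookkeeping with parities.
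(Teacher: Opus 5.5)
Your proposal is correct and follows the paper's proof essentially step by step: reality from Lemma~\ref{properties of U and P}-(iii), $P_{0,\e}f_0^-=f_0^-$ via the resolvent identity, parities from reversibility, and zero averages from symplectic pairings against $f_0^-(0,\e)=\vet{0}{1}$; your explicit check that $(\mathrm{Id}-(P_{0,\e}-P_{0,0})^2)^{-1/2}$ fixes $f_0^-$ also fills in a detail the paper only cites from \eqref{U transformation operators}. One harmless slip: by \eqref{basis is symplectic} one has $(\mathcal{J}f_0^-,f_0^+)=+1$, so the relation behind your ``same way'' computation should read $(\mathcal{J}f_0^+(0,\e),f_0^-(0,\e))=-1$, which indeed forces the first component of $f_0^+(0,\e)$ to have average $1$.
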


\begin{proof}
    The reality of the basis $f_k^\sigma(0,\epsilon)$ is a consequence of Lemma \ref{properties of U and P}-(iii). Since, recalling \eqref{mathscr L mu e} and \eqref{mathcal B mu e}, $\mathscr{L}_{0,\epsilon} f_0^- = 0$ for any $\epsilon$ (c.f. \eqref{237}), we deduce $(\mathscr{L}_{0,\epsilon} - \lambda)^{-1} f_0^- = -\frac{1}{\lambda} f_0^-$ and then, using also the residue theorem,
\[
P_{0,\epsilon} f_0^- = -\frac{1}{2\pi \im} \oint_\Gamma (\mathscr{L}_{0,\epsilon} - \lambda)^{-1} f_0^- \, d\lambda = f_0^-.
\]
In particular $P_{0,\epsilon} f_0^- = P_{0,0} f_0^-$, for any $\epsilon$ and we get, by \eqref{U transformation operators}, $f_0^-(0,\epsilon) = U_{0,\epsilon} f_0^- = f_0^-$, for any $\epsilon$.

Let us prove property \eqref{48 f-0=01}. In view of \eqref{Parity structure} and since the basis is real, we know that
\[
f_k^+(0,\epsilon) = \begin{bmatrix}\mathit{even}(x)\\ \mathit{odd}(x)\end{bmatrix}, \quad f_k^-(0,\epsilon) = \begin{bmatrix}\mathit{odd}(x)\\ \mathit{even}(x)\end{bmatrix},
\]
for any $k = 0, 1$. By Lemma \ref{F is symplectic and reversible} the basis $\{f_k^\sigma(0,\epsilon)\}$ is symplectic (c.f. \eqref{basis is symplectic}) and, since
\[
\mathcal{J} f_0^-(0,\epsilon) = \mathcal{J} f_0^- = \begin{bmatrix}1\\0\end{bmatrix},
\]
for any $\epsilon$, we get
\[
0 = (\mathcal{J} f_0^-(0,\epsilon),\, f_1^+(0,\epsilon)) = \left(\begin{bmatrix}1\\0\end{bmatrix}, f_1^+(0,\epsilon)\right), \qquad 
1 = (\mathcal{J} f_0^-(0,\epsilon),\, f_0^+(0,\epsilon)) = \left(\begin{bmatrix}1\\0\end{bmatrix}, f_0^+(0,\epsilon)\right).
\]
Thus the first component of both $f_1^+(0,\epsilon)$ and $f_0^+(0,\epsilon) - \begin{bmatrix}1\\0\end{bmatrix}$ has zero average, proving \eqref{48 f-0=01}.
\end{proof}

\begin{lemma} \label{fmu0 eq f0} 
For any small $\mu$, we have $f_0^+(\mu,0) \equiv f_0^+$ and $f_0^-(\mu,0) \equiv f_0^-$. Moreover, the vectors $f_1^+(\mu,0)$ and $f_1^-(\mu,0)$ have both components with zero space average.
\end{lemma}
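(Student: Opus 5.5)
At $\e=0$ the coefficients are flat, so $\mathscr{L}_{\mu,0}$ is a Fourier multiplier. The plan is to identify the spectral projector $P_{\mu,0}$ explicitly on the relevant Fourier modes, and then read off $f_k^\sigma(\mu,0)=U_{\mu,0}f_k^\sigma$ from formula \eqref{U transformation operators}.

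\textbf{Step 1: Fourier invariance.} By \eqref{mathscr L mu e}, \eqref{mathcal B mu e} with $\e=0$, the operator
\[
\mathscr{L}_{\mu,0}=\mathcal{J}\begin{bmatrix}1-\kappa(\pa_x+\im\mu)^2 & -\ck\pa_x\\ \ck\pa_x & |D+\mu|\end{bmatrix}
\]
leaves invariant each space of vectors supported on the single Fourier mode $e^{\im jx}$, $j\in\Z$. The same then holds for its resolvent, for $P_{\mu,0}$ defined in \eqref{Projection P mu e}, and for $U_{\mu,0}$ defined in \eqref{U transformation operators}. Indeed, $U_{\mu,0}$ is built by products, sums and the analytic functional calculus $(\mathrm{Id}-(P_{\mu,0}-P_{0,0})^2)^{-1/2}$ of operators preserving the mode decomposition. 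Consequently $U_{\mu,0}$ maps $\mathrm{span}\{e^{\im jx}\}\otimes\mathbb{C}^2$ into itself for every $j$.

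\textbf{Step 2: the zero mode.} On $j=0$ the operator $\mathscr{L}_{\mu,0}$ acts on constant vectors $(\alpha,\beta)^\top$ by
\[
\mathcal{J}\begin{bmatrix}1+\kappa\mu^2 & 0\\ 0&\mu\end{bmatrix}, \qquad \text{since } |D+\mu|1=\mu \text{ for } \mu\in[0,\tfrac12).
\]
Its eigenvalues are $\pm\im\sqrt{\mu(1+\kappa\mu^2)}$, which are small. Both therefore lie inside $\Gamma$, and $P_{\mu,0}$ restricted to constants is the identity, exactly as $P_{0,0}$ is. Hence on the zero-mode block $P_{\mu,0}-P_{0,0}=0$, so the square root is the identity there, and $U_{\mu,0}=P_{\mu,0}P_{0,0}=\mathrm{Id}$ on that block. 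This gives $f_0^\pm(\mu,0)=f_0^\pm$.

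\textbf{Step 3: the modes $j=\pm1$.} Since $f_1^\pm$ are supported on $j=\pm1$, Step 1 shows that $f_1^\pm(\mu,0)=U_{\mu,0}f_1^\pm$ are supported on $j=\pm1$ as well. In particular both components have zero space average.

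\textbf{Main obstacle.} The only delicate point is justifying that the zero-mode block of $\mathscr{L}_{\mu,0}$ has all of its spectrum inside $\Gamma$, uniformly for small $\mu$. This uses \eqref{D+mu} and the eigenvalue list \eqref{eigenvalues of Lmu0} with $k=0$. One must also check that $\Gamma$ separates this block's spectrum from the modes $|j|\ge2$ and from $\lambda_{-1}$, which is already guaranteed by Lemma \ref{kato thm}. Everything else is bookkeeping of the block structure.
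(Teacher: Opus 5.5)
Your proof is correct. For $f_0^\pm(\mu,0)=f_0^\pm$ it is essentially the paper's argument. The paper notes that $\mathcal{Z}=\mathrm{span}\{f_0^+,f_0^-\}$ is invariant under $\mathscr{L}_{\mu,0}$, with eigenvalues $\pm\im\sqrt{(1+\kappa\mu^2)\mu}$ inside $\Gamma$. Hence $P_{\mu,0}f_0^\pm=f_0^\pm=P_{0,0}f_0^\pm$, and so $U_{\mu,0}f_0^\pm=f_0^\pm$. Your remark that $P_{\mu,0}-P_{0,0}$ vanishes on the zero-mode block is exactly what the paper leaves implicit in that last implication.

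The two routes differ for the zero-average claim. The paper never uses that $U_{\mu,0}$ is a Fourier multiplier. It combines the first part with the symplecticity of the Kato basis (Lemma \ref{F is symplectic and reversible}). Pairing $f_1^\sigma(\mu,0)$ with $\mathcal{J}f_0^+(\mu,0)=[0,-1]^{\top}$ forces the second component to have zero average. Pairing with $\mathcal{J}f_0^-(\mu,0)=[1,0]^{\top}$ does the same for the first component.

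Your Step 3 instead uses the fact that at $\e=0$ every ingredient of $U_{\mu,0}$ commutes with the Fourier mode decomposition: the resolvent, both projectors, and the norm-convergent binomial series for the inverse square root. This gives a stronger conclusion: $f_1^\pm(\mu,0)$ is supported on $e^{\pm\im x}$. It also makes Step 3 independent of Step 2.

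The price is that your argument is tied to flat coefficients. The paper's symplectic mechanism is the same one it uses for \eqref{48 f-0=01} at $\mu=0$, $\e\neq 0$, where no Fourier-multiplier structure is available. It yields only zero averages, but that is all that is used later.

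Finally, the ``main obstacle'' you flag is not a real one. You never need $\Gamma$ to separate the zero block from the modes $|j|\ge 2$ or from $\lambda_{-1}$; it suffices that the two zero-mode eigenvalues are small, hence inside $\Gamma$.
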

\begin{proof}
The operator $\mathscr{L}_{\mu,0} = 
\begin{bmatrix}
\ck\partial_x & |D+\mu| \\
-1+\kappa(\pa^2_{x}+2\im \mu\pa_x-\mu^2) & \ck\partial_x
\end{bmatrix}$ 
leaves invariant the subspace $\mathcal{Z} := \text{span}\{ f_0^+, f_0^- \}$ since 
$\mathscr{L}_{\mu,0} f_0^+ = -(1+\kappa\mu^2)f_0^-$ and $\mathscr{L}_{\mu,0} f_0^- = \mu f_0^+$. 
The operator $\mathscr{L}_{\mu,0}|_{\mathcal{Z}}$ has the two eigenvalues $\pm \im \sqrt{(1+\kappa\mu^2)\mu}$, 
which, for small $\mu$, lie inside the loop $\Gamma$ around $0$ in \eqref{Projection P mu e}. Then, by \eqref{spectrum separated by Gamma}, 
we have $\mathcal{Z} \subseteq \mathcal{V}_{\mu,0} = \text{Rg}(P_{\mu,0})$ and 

\[
P_{\mu,0} f_0^\pm = f_0^\pm, \quad 
f_0^\pm (\mu,0) = U_{\mu,0} f_0^\pm = f_0^\pm, 
\quad \text{for any $\mu$ small.}
\]

The basis $\{ f_k^\sigma (\mu,0) \}$ is symplectic (c.f. \eqref{basis is symplectic}). Then, since 
$\mathcal{J} f_0^+ = \begin{bmatrix} 0 \\ -1 \end{bmatrix}$ 
and $\mathcal{J} f_0^- = \begin{bmatrix} 1 \\ 0 \end{bmatrix}$, we have

\[
\begin{aligned}
0 &= \big( \mathcal{J} f_0^+ (\mu,0), f_1^\sigma (\mu,0) \big) 
= \Big( \begin{bmatrix} 0 \\ -1 \end{bmatrix}, f_1^\sigma (\mu,0) \Big), \quad 
0 = \big( \mathcal{J} f_0^- (\mu,0), f_1^\sigma (\mu,0) \big) 
= \Big( \begin{bmatrix} 1 \\ 0 \end{bmatrix}, f_1^\sigma (\mu,0) \Big),
\end{aligned}
\]
namely both the components of $f_1^\pm (\mu,0)$ have zero average.
\end{proof}

We finally consider the $\mu \epsilon$ term in the expansion \eqref{the expandsion of f sigma mu}.

\begin{lemma} 
The derivatives $ \partial_\mu \partial_\epsilon f_k^{\sigma}(0,0) = \left( \dot{P}_{0,0}^{\prime} - \frac{1}{2} P_{0,0} \dot{P}_{0,0}^{\prime} \right) f_k^{\sigma}$ satisfy
\begin{equation} \label{A17}
\begin{aligned}
\partial_\mu \partial_\epsilon f_1^+(0,0) &= \im \begin{bmatrix} \mathit{odd}(x) \\ \mathit{even}(x) \end{bmatrix}, & \partial_\mu \partial_\epsilon f_1^-(0,0) &= \im \begin{bmatrix} \mathit{even}(x) \\ \mathit{odd}(x) \end{bmatrix}, \\
\partial_\mu \partial_\epsilon f_0^+(0,0) &=\textcolor{black}{\frac{1}{4}
\begin{bmatrix}
\ck^{-2}\cos(x)\\
-\ck^{-1}\sin(x)
\end{bmatrix}
+\im
\begin{bmatrix}
\mathit{odd}(x)\\
\mathit{even}_0(x)
\end{bmatrix}.} & \partial_\mu \partial_\epsilon f_0^-(0,0) &= \frac{1}{2}\begin{bmatrix} \frac{1}{\ck}\sin(x) \\ \cos(x) \end{bmatrix}+\im \begin{bmatrix} \mathit{even}_0(x) \\ \mathit{odd}(x) \end{bmatrix}.
\end{aligned}
\end{equation}
\end{lemma}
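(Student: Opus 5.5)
I will start from formula \eqref{A2} of Lemma \ref{U mu epsilon P00}, which gives $\partial_\mu\partial_\epsilon f_k^\sigma(0,0) = (\dot P'_{0,0} - \tfrac12 P_{0,0}\dot P'_{0,0}) f_k^\sigma$. I will then evaluate $\dot P'_{0,0} f_k^\sigma$ through the three contour integrals \eqref{A5a}--\eqref{A5c}.

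\textbf{Computing the contour integrals.} For each $f_k^\sigma$, I first apply the innermost resolvent, which acts on $\mathcal V_{0,0}$ by \eqref{A11-0}--\eqref{A12}. I then apply $\mathscr L'_{0,0}$ or $\dot{\mathscr L}_{0,0}$ using \eqref{A15}. Next I decompose the result into the invariant subspaces of Lemma \ref{Lemma B2}:
\begin{itemize}
\item the component in $\mathcal V_{0,0}$, handled by \eqref{A11-0}--\eqref{A12};
\item the component in $\mathcal U=\mathrm{span}\{f^\pm_{-1}\}$, handled by \eqref{A13};
\item the component in $\mathcal W_Y$, handled by \eqref{A14}.
\end{itemize}
I apply the next operator and the outer resolvent in the same way, and take residues at $\lambda=0$. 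The term \eqref{A5c} needs only $\dot{\mathscr L}'_{0,0}$ in \eqref{A8} applied to $f_k^\sigma$, followed by one resolvent.

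\textbf{Parity structure.} For $f_1^\pm$ I do not need explicit coefficients, only the parity structure. Each of $\mathscr L'_{0,0}$, $\dot{\mathscr L}_{0,0}$, $\dot{\mathscr L}'_{0,0}$ and the resolvents is compatible with $\bar\rho$. Moreover, $\dot{\mathscr L}_{0,0}$ and $\dot{\mathscr L}'_{0,0}$ carry exactly one factor of $\im$: the $\mathrm{sgn}(D)$ entry maps $\cos\mapsto -\im\sin$, and the remaining entries carry $2\im\kappa\partial_x$, $\im p_1$ and $\im\kappa(2d_1\partial_x+e_1)$. The operator $\mathscr L'_{0,0}$ is real.

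So the mixed derivative is $\im$ times a real vector with the parity of $\bar\rho$-eigenvalue $\sigma$ but with the real and imaginary roles swapped. Concretely, $\im[\mathit{odd};\mathit{even}]$ for $f_1^+$ and $\im[\mathit{even};\mathit{odd}]$ for $f_1^-$. The same argument gives the imaginary parts for $f_0^\pm$. The real parts of $\partial_\mu\partial_\epsilon f_0^\pm$ then come from the $\mathrm{sgn}(D)$ contributions, which produce the real sine/cosine vectors.

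\textbf{Explicit computation for $f_0^\pm$.} Here I compute the coefficients explicitly.

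For $f_0^-$, the term \eqref{A5a} starts from $\mathscr L'_{0,0}f_0^-=0$, so it vanishes. In \eqref{A5b}, the innermost step gives $-\lambda^{-1}\dot{\mathscr L}_{0,0}f_0^- = -\lambda^{-1}f_0^+$. Applying the resolvent produces a double pole, and $\mathscr L'_{0,0}f_0^+ = 2\ck\sqrt{\ck}f^-_{-1}$ then lands in $\mathcal U$. The outer resolvent \eqref{A13} and the residue computation give a multiple of $f^-_{-1}$, namely $\tfrac12[\ck^{-1}\sin x;\cos x]$. The term \eqref{A5c} adds an imaginary vector of the displayed parity. Finally, the correction $-\tfrac12P_{0,0}\dot P'_{0,0}$ only projects onto $\mathcal V_{0,0}$; I check that its real part vanishes there, using the orthogonality of $f^\pm_{-1}$ to $\mathcal V_{0,0}$.

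For $f_0^+$, I do the same computation. The innermost resolvent in \eqref{A12} has poles of order two, so the residues also involve $\lambda$-derivatives of \eqref{A13}. This yields the real vector $\tfrac14[\ck^{-2}\cos x; -\ck^{-1}\sin x]$. The zero average of the second imaginary component follows because $\mathcal W$ and $\mathcal U$ contain no constants, and because the $\mathcal V_{0,0}$-components are removed by the projector correction.

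\textbf{Main obstacle.} I expect the hardest step to be the bookkeeping of the higher-order poles in the $f_0^+$ case together with the $\kappa$-dependent terms $2\im\kappa\partial_x$ in $\dot{\mathscr L}_{0,0}$. I would organize the computation by first collecting all vectors landing in $\mathcal U$, since only those produce real $\sin x$/$\cos x$ contributions, and then summing the residues.
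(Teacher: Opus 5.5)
Your overall route matches the paper's: evaluate \eqref{A5a}--\eqref{A5c} on each $f_k^\sigma$ through the invariant splitting of Lemma~\ref{Lemma B2}, take residues at $\lambda=0$, and observe that $-\frac12P_{0,0}\dot P'_{0,0}$ does not touch $\mathcal U$-components. Your explicit $f_0^-$ computation is also correct.

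\textbf{The gap: reality bookkeeping.} Your whole treatment of $f_1^\pm$ rests on the claim that $\dot{\mathscr L}_{0,0}$ is purely imaginary, and this is false. By \eqref{A6}, its $(1,2)$ entry is $\mathrm{sgn}(D)+\Pi_0$, and $\Pi_0$ is real. Conversely, $\mathrm{sgn}(D)$ is purely imaginary and annihilates constants, so it can never produce the real parts of $\partial_\mu\partial_\e f_0^\pm(0,0)$, contrary to what you claim. Taken literally, your argument would show that all four mixed derivatives are purely imaginary, which contradicts the statement. In fact your own $f_0^-$ computation uses $\dot{\mathscr L}_{0,0}f_0^-=f_0^+$, which is exactly the real $\Pi_0$ contribution.

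\textbf{The missing step.} You need the paper's splitting $\dot{\mathscr L}_{0,0}=\dot{\mathscr L}^{I}_{0,0}+\dot{\mathscr L}^{II}_{0,0}$, with $\dot{\mathscr L}^{II}_{0,0}$ the $\Pi_0$ part, followed by a check that the $\Pi_0$ paths vanish for $f_1^\pm$. In \eqref{A5b}, $\Pi_0$ acts on the second component of $-\lambda^{-1}f_1^\pm$, namely $\sqrt{\ck}\sin x$ or $\sqrt{\ck}\cos x$. In \eqref{A5a}, it acts on $(\mathscr L_{0,0}-\lambda)^{-1}\mathscr L'_{0,0}f_1^\pm\in\mathcal W_2$. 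Both have zero mean. Only after this check do ``$\im$ times a real vector'' and the claimed parity for $f_1^\pm$ follow.

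For $f_0^+$, the real part likewise comes solely from the $\Pi_0$ path in \eqref{A5b}, entered through the $\lambda^{-2}f_0^-$ term of \eqref{A12}. By \eqref{A13} it equals $\frac{2\ck^{3/2}}{2\pi\im}\oint_\Gamma\lambda^{-3}(\mathscr L_{0,0}-\lambda)^{-1}f^-_{-1}\,d\lambda=\frac{1}{4\ck^{3/2}}f^+_{-1}$. The $2\im\kappa\partial_x$ terms you flag as the main obstacle only feed the imaginary part and never need to be evaluated.

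\textbf{Two smaller justifications also need repair.} First, the correction $-\frac12P_{0,0}\dot P'_{0,0}$ leaves the real $\mathcal U$-components untouched because $\mathcal U\subset\ker P_{0,0}$. Here $P_{0,0}$ is the Riesz projector along the invariant complement $\mathcal U\oplus\mathcal W_Y$; orthogonality is not the reason. For $\kappa>0$ one has $(f_1^+,f^+_{-1})=\frac12(\ck^{-1}-\ck)\neq0$, so an orthogonal projection would create a spurious real $f_1^\pm$-component.

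Second, the correction does not remove $\mathcal V_{0,0}$-components; it halves them. The zero mean of the imaginary second component of $\partial_\mu\partial_\e f_0^+(0,0)$ holds for a different reason. Each of the three integrals applies $\mathscr L'_{0,0}$ or $\dot{\mathscr L}'_{0,0}$ exactly once, and each shifts Fourier modes by $\pm1$. All other factors are Fourier multipliers. Starting from the mode-$0$ vector $f_0^+$, every contribution therefore lies in mode $1$, so its $\mathcal V_{0,0}$-part lies in $\mathrm{span}\{f_1^\pm\}$.
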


 \begin{proof}
 Recall \eqref{A6} and decompose the Fourier multiplier operator $\dot{\mathscr{L}}_{0,0}$ as
\begin{align*}
    \dot{\mathscr{L}}_{0,0}=\dot{\mathscr{L}}_{0,0}^{I}+\dot{\mathscr{L}}_{0,0}^{II}\,,\quad \dot{\mathscr{L}}_{0,0}^{I}:=\begin{bmatrix}
        0 &\mathrm{sgn}(D)\\
        2\im \kappa \partial_{x} &0
    \end{bmatrix}\,,\quad
    \dot{\mathscr{L}}_{0,0}^{II}:=\begin{bmatrix}
        0 &\Pi_0\\
       0 &0
    \end{bmatrix}.
        \end{align*}
 Moreover, we write 
 \begin{align*}
      \dot{P}_{0,0}^{\prime} = \eqref{A5a}^{I}+\eqref{A5a}^{II}+\eqref{A5b}^{I}+\eqref{A5b}^{II}+\eqref{A5c}
 \end{align*}
 where
\begin{align*}
   \eqref{A5a}^{I} &:= -\frac{1}{2\pi \im} \oint_{\Gamma} (\mathscr{L}_{0,0} - \lambda)^{-1} \dot{\mathscr{L}}_{0,0}^{I} (\mathscr{L}_{0,0} - \lambda)^{-1} \mathscr{L}'_{0,0} (\mathscr{L}_{0,0} - \lambda)^{-1} d\lambda\,,\\
  \eqref{A5a}^{II} &:= -\frac{1}{2\pi \im} \oint_{\Gamma} (\mathscr{L}_{0,0} - \lambda)^{-1} \dot{\mathscr{L}}_{0,0}^{II} (\mathscr{L}_{0,0} - \lambda)^{-1} \mathscr{L}'_{0,0} (\mathscr{L}_{0,0} - \lambda)^{-1} d\lambda\,,\\
  \eqref{A5b}^{I} &:= -\frac{1}{2\pi \im} \oint_{\Gamma} (\mathscr{L}_{0,0} - \lambda)^{-1} \mathscr{L}'_{0,0} (\mathscr{L}_{0,0} - \lambda)^{-1}\dot{\mathscr{L}}_{0,0} ^{I}(\mathscr{L}_{0,0} - \lambda)^{-1}  d\lambda\,,\\
  \eqref{A5b}^{II} &:= -\frac{1}{2\pi \im} \oint_{\Gamma} (\mathscr{L}_{0,0} - \lambda)^{-1} \mathscr{L}'_{0,0} (\mathscr{L}_{0,0} - \lambda)^{-1}\dot{\mathscr{L}}_{0,0} ^{II}(\mathscr{L}_{0,0} - \lambda)^{-1}  d\lambda\,.
        \end{align*}
 Note that is $\eqref{A5a}^{I}$, $\eqref{A5b}^{I}$ and \eqref{A5c} are purely imaginary. Indeed, $\dot{\mathscr{L}}_{0,0}$ is purely imaginary, whereas $\mathscr{L}_{0,0}^{\prime}$ in \eqref{A6} is real, and $\dot{\mathscr{L}}_{0,0}^{\prime}$ in \eqref{A8} is purely imaginary (see Lemma \ref{properties of U and P}-(iii) of \cite{BMV1}). Consequently, when applied to the real vectors $f_k^{\sigma}$, for $k = 0,1$ and $\sigma = \pm$, these operators produce purely imaginary vectors.
 
We first compute $(\partial_\mu \partial_\epsilon f_k^{\sigma}(0,0))$. Using \eqref{A11-0}, \eqref{A11} and \eqref{A15} we obtain
\begin{align*}
   \eqref{A5a}^{II} f_1^+ = \frac{1}{2\pi \im} \oint_{\Gamma} (\mathscr{L}_{0,0} - \lambda)^{-1} \dot{\mathscr{L}}_{0,0}^{II} (\mathscr{L}_{0,0} - \lambda)^{-1} \frac{1}{\lambda} \vet{2\sqrt{\ck}\sin(2x)}{\frac{3\kappa}{\sqrt{\ck}}\cos(2x)} d\lambda=0,
        \end{align*}
because, by Lemma~\ref{Lemma B2}, $ (\mathscr{L}_{0,0} - \lambda)^{-1}\vet{2\sqrt{\ck}\sin(2x)}{\frac{3\kappa}{\sqrt{\ck}}\cos(2x)}\in\mathcal{W} $ and therefore this vector has zero average, implying that it lies in the kernel of $\dot{\mathscr{L}}_{0,0}^{II} $.
Moreover, $\eqref{A5b}^{II} f_1^+=0$, since $\dot{\mathscr{L}}_{0,0} ^{II}(\mathscr{L}_{0,0} - \lambda)^{-1}f_1^+=0$. Combining the above identities, we conclude that $  \dot{P}'_{0,0}f_1^+$
is a purely imaginary vector. Since $P_{0,0}$ is a real operator, it follows that $ \left( \dot{P}'_{0,0} - \frac{1}{2} P_{0,0} \dot{P}'_{0,0} \right)f_1^+$ is also purely imaginary. Finally, \eqref{Parity structure} implies that $\partial_\mu \partial_\epsilon f_1^+(0,0)$ has the structure claimed
in \eqref{A17}. The structure for $\partial_\mu \partial_\epsilon f_1^-(0,0)$ follows by the same argument.

Next, we determine
$(\partial_\mu\partial_\epsilon f_0^+)(0,0)$.
We first show that all contributions except
$\eqref{A5b}^{II}f_0^+$ are purely imaginary and have
the required parity and zero-average properties. By \eqref{A12} and \eqref{A15}, we have
\begin{align*}
   \eqref{A5a}^{I}f_0^+& = -\frac{1}{2\pi \im} \oint_{\Gamma} (\mathscr{L}_{0,0} - \lambda)^{-1} \dot{\mathscr{L}}_{0,0}^{I} (\mathscr{L}_{0,0} - \lambda)^{-1} \mathscr{L}'_{0,0} \left(\frac{-1}{\lambda}f_0^++\frac{1}{\lambda^2}f_0^-\right) d\lambda\\
   & = -\frac{1}{2\pi \im} \oint_{\Gamma} (\mathscr{L}_{0,0} - \lambda)^{-1} \dot{\mathscr{L}}_{0,0}^{I} (\mathscr{L}_{0,0} - \lambda)^{-1}  \frac{-1}{\lambda}\vet{2\ck\sin(x)}{2\ck^2\cos(x)}d\lambda\,. \end{align*}
Since $ (\mathscr{L}_{0,0} - \lambda)^{-1} $ and $\dot{\mathscr{L}}_{0,0}^{I}$ are Fourier multiplier operators, they preserve the zero-average property of functions. Therefore, $\eqref{A5a}^{I}f_0^+$ has zero average.
Moreover, 
\begin{align*}
   \eqref{A5a}^{II}f_0^+& = 0 \ \ \text{since}\ \  \dot{\mathscr{L}}_{0,0}^{II} (\mathscr{L}_{0,0} - \lambda)^{-1}  \vet{2\ck\sin(x)}{2\ck^2\cos(x)}=0\,.
   \end{align*}
Next, $\eqref{A5b}^{I}f_0^+=0$ because $\dot{\mathscr{L}}_{0,0}^{I}f_0^+=\dot{\mathscr{L}}_{0,0}^{I}f_0^-=0$.
\textcolor{black}{Using also that
$\dot{\mathscr{L}}_{0,0}^{II}f_0^+=0$ and
$\dot{\mathscr{L}}_{0,0}^{II}f_0^-=f_0^+$, we obtain
\begin{align*}
\eqref{A5b}^{II}f_0^+
&=
-\frac{1}{2\pi\im}
\oint_\Gamma
(\mathscr{L}_{0,0}-\lambda)^{-1}
\mathscr{L}'_{0,0}
(\mathscr{L}_{0,0}-\lambda)^{-1}
\dot{\mathscr{L}}_{0,0}^{II}
(\mathscr{L}_{0,0}-\lambda)^{-1}
f_0^+
\,d\lambda
\\
&=
-\frac{1}{2\pi\im}
\oint_\Gamma
(\mathscr{L}_{0,0}-\lambda)^{-1}
\mathscr{L}'_{0,0}
\left(
-\frac{1}{\lambda^3}f_0^+
+\frac{1}{\lambda^4}f_0^-
\right)
\,d\lambda
\\
&=
\frac{2\ck^{3/2}}{2\pi\im}
\oint_\Gamma
\frac{1}{\lambda^3}
(\mathscr{L}_{0,0}-\lambda)^{-1}
f_{-1}^-
\,d\lambda.
\end{align*}}
 \textcolor{black}{By \eqref{A13},
\begin{align*}
\eqref{A5b}^{II}f_0^+
&=
\frac{2\ck^{3/2}}{2\pi\im}
\oint_\Gamma
\frac{-2\ck f_{-1}^+-\lambda f_{-1}^-}
{\lambda^3(\lambda^2+4\ck^2)}
\,d\lambda.
\end{align*}
The coefficient of $f_{-1}^-$ has zero residue at
$\lambda=0$, whereas
\[
-\frac{4\ck^{5/2}}
{\lambda^3(\lambda^2+4\ck^2)}
=
-\frac{\ck^{1/2}}{\lambda^3}
+\frac{1}{4\ck^{3/2}}\frac{1}{\lambda}
+\mathcal{O}(\lambda).
\]
Therefore, by the residue theorem,
\begin{equation}
\eqref{A5b}^{II}f_0^+
=
\frac{1}{4\ck^{3/2}}f_{-1}^+.
\end{equation}}
Finally, by \eqref{A12} and \eqref{A8} where $p_1(x) = -2\ck \cos(x)\,,d_1=-3\cos(x),e_1=3\sin(x)$, we obtain
\begin{align*}
   \eqref{A5c}f_0^+& = \frac{1}{2\pi \im} \oint_{\Gamma} (\mathscr{L}_{0,0} - \lambda)^{-1} \dot{\mathscr{L}}'_{0,0}\left(\frac{-1}{\lambda}f_0^++\frac{1}{\lambda^2}f_0^-\right) d\lambda\,,
   \end{align*}
which is a vector with zero average. \textcolor{black}{All the remaining contributions to
$\dot P'_{0,0}f_0^+$ are purely imaginary vectors with the
parity and zero-average properties displayed in \eqref{A17}.
The only real contribution is
\[
\frac{1}{4\ck^{3/2}}f_{-1}^+.
\]
Since
\[
f_{-1}^+\in\mathcal{U}\subset\ker(P_{0,0}),
\]
the correction term
$-\frac12P_{0,0}\dot P'_{0,0}f_0^+$
does not alter this resonant component. Consequently,
\[
(\partial_\mu\partial_\epsilon f_0^+)(0,0)
=
\frac{1}{4\ck^{3/2}}f_{-1}^+
+\im
\begin{bmatrix}
\mathit{odd}(x)\\
\mathit{even}_0(x)
\end{bmatrix},
\]
as claimed in \eqref{A17}.} We finally consider $(\partial_\mu \partial_\epsilon f_0^-(0,0))$. By \eqref{A11} and $\mathscr{L}_{0,0}^{\prime} f_0^- = 0$ (c.f. \eqref{A15}), it follows that for M=I,II,
\begin{equation}
\eqref{A5a}^{M}f_0^- = -\frac{1}{2\pi \im} \oint_{\Gamma} (\mathscr{L}_{0,0} - \lambda)^{-1} \dot{\mathscr{L}}_{0,0}^{M} (\mathscr{L}_{0,0} - \lambda)^{-1} \mathscr{L}'_{0,0} \frac{-1}{\lambda} f_0^-d\lambda=0\,.
\end{equation}
Next, by \eqref{A11} and $\dot{\mathscr{L}}_{0,0}^{I} f_0^- = 0$, we obtain $\eqref{A5b}^{I}f_0^- = 0$. Moreover, since $\dot{\mathscr{L}}_{0,0}^{II}f_0^-=f_0^+$, we have
\begin{align*}
   \eqref{A5b}^{II}f_0^-& =-\frac{1}{2\pi \im} \oint_{\Gamma} (\mathscr{L}_{0,0} - \lambda)^{-1}\mathscr{L}'_{0,0}  (\mathscr{L}_{0,0} - \lambda)^{-1}  \dot{\mathscr{L}}_{0,0}^{II}\frac{-1}{\lambda}f_0^-d\lambda\\
   & =-\frac{1}{2\pi \im} \oint_{\Gamma} (\mathscr{L}_{0,0} - \lambda)^{-1}\mathscr{L}'_{0,0}  \frac{-1}{\lambda}\left(\frac{-1}{\lambda}f_0^++\frac{1}{\lambda^2}f_0^-\right)d\lambda\\
   & =-\frac{1}{2\pi \im} \oint_{\Gamma} (\mathscr{L}_{0,0} - \lambda)^{-1}\frac{1}{\lambda^2}\vet{2\ck\sin(x)}{2\ck^2\cos(x)} d\lambda= \frac{1}{2}\begin{bmatrix}\frac{1}{\ck}\sin(x) \\ \cos(x) \end{bmatrix}\,.
   \end{align*}
Finally, by \eqref{A11} and \eqref{A8},
\begin{equation}
 \eqref{A5c} f_0^- = -\frac{1}{2\pi \im} \oint_{\Gamma} (\mathscr{L}_{0,0} - \lambda)^{-1} \frac{1}{\lambda} \begin{bmatrix} 0 \\ \im p_1(x)\end{bmatrix} d \lambda\,.
\end{equation} 
This vector has zero average since $(\mathscr{L}_{0,0} - \lambda)^{-1}$ is a Fourier multiplier and therefore preserves the zero-average property.

This completes the proof of Lemma \ref{expansion of the basis F}.
\end{proof}

\footnotesize

\vspace{1em}
\noindent{\bf Acknowledgments:}
 (i) T.-Y. Hsiao is  supported by the European Union ERC CONSOLIDATOR GRANT 2023 GUnDHam, Project Number: 101124921. Views and opinions expressed are however those of the authors only and do not necessarily reflect those of the European Union or the European Research Council. Neither the European Union nor the granting authority can be held responsible for them. (ii) X. Wang is supported by National Key R$\&$D Program of China, Grant No.2021YFA1000800. The authors would like to express their sincere gratitude to Prof. Alberto Maspero and Prof. Massimiliano Berti for their helpful guidance and constructive suggestions. In particular, we are grateful to Prof. Berti for his insightful explanations on Hamiltonian PDEs. We also express our sincere appreciation to Prof. Maspero for his patient guidance, invaluable advice, and continuous encouragement throughout this work.

\end{document}